\pgfplotsset{compat=1.17}
\newtheoremstyle{theoremstyle}
  {10pt}      
  {5pt}       
  {\itshape}  
  {}          
  {\bfseries} 
  {:}         
  {.5em}      
  {}          
\newtheoremstyle{examplestyle}
  {10pt}      
  {5pt}       
  {}          
  {}          
  {\bfseries} 
  {:}         
  {.5em}      
  {}          
\newtheoremstyle{sub-style}
  {10pt}      
  {5pt}       
  {}          
  {}          
  {\bfseries} 
  {.}         
  {.5em}      
  {}          
\theoremstyle{theoremstyle}
\newtheorem{theorem}{Theorem}[section]
\newtheorem*{theorem*}{Theorem}
\newtheorem{lemma}[theorem]{Lemma}
\newtheorem{proposition}[theorem]{Proposition}
\newtheorem*{proposition*}{Proposition}
\newtheorem{corollary}[theorem]{Corollary}
\newtheorem*{corollary*}{Corollary}
\theoremstyle{examplestyle}
\newtheorem{example}[theorem]{Example}
\newtheorem*{example*}{Example}
\newtheorem{definition}[theorem]{Definition}
\newtheorem*{definition*}{Definition}
\newtheorem{remark}[theorem]{Remark}
\newtheorem*{remarks*}{Remarks}
\newtheorem*{remark*}{Remark}
\theoremstyle{sub-style}
\newtheorem{sub}[theorem]{}
\newcommand{\Z}{\mathbb{Z}}
\newcommand{\C}{\mathbb{C}}
\newcommand{\Q}{\mathbb{Q}}
\newcommand{\R}{\mathbb{R}}
\begin{document}

\title[Markoff equation and T-Singularities]{Unfocused notes on the Markoff equation and T-Singularities}

\subjclass[2020]{Primary: 14J17, 11J06; Secondary: 28A78, 28A80, 14F08}

\author{Markus Perling}

\begin{abstract}
We consider minimal resolutions of the singularities for weighted projective planes
of type $\mathbb{P}(e^2, f^2, g^2)$, where $e, f, g$ satisfy the Markoff equation
$ e^2 + f^2 + g^2 = 3efg$. We give a complete classification of such resolutions in
terms of continued fractions similar to classical work of Frobenius.
In particular, we investigate the behaviour of resolutions under mutations and describe
a Cantor set emerging as limits of continued fractions.
\end{abstract}

\maketitle


\section{Introduction}

The main aim of these notes is to describe the minimal resolutions of singularities for weighted
projective planes $\mathbb{P}(e^2, f^2, g^2)$, where $e, f, g$ are integral
solutions of the Markoff equation: $$e^2 + f^2 + g^2 = 3efg.$$
It has been observed in \cite{HackingProkhorov10} that these singularities
are T-singularities in the sense of Wahl \cite{Wahl81}. Such singularities have been
connected to exceptional vector bundles on rational surfaces by results of
Hacking-Prokhorov \cite{HackingProkhorov10} and Hacking \cite{Hacking13}. This aspect
has also been investigated in our earlier work \cite{HillePerling08}, \cite{Perling18}.
We think that the complete understanding of these resolutions and the fractal
structure emerging via the associated Markoff tree are of independent interest, but our
hope is that this also may lead to further insights into the global structure of exceptional collections.

Consider a T-singularity of type $\frac{1}{n^2}(1, nk - 1)$, i.e. a cyclic quotient
singularity of order $n^2$ with weights $1, nk - 1$. The minimal resolution of such
a singularity can be desribed via Hirzebruch-Jung continued fractions $\llbracket a_1,
\dots, a_t \rrbracket = a_1 - 1 / (a_2 - 1 / (a_3 - \cdots (a_{t - 1} - 1 / a_t) \cdots ))$.
These continued fractions have been classified for T-singularities in \cite{KollarShepherdbarron88}
(see also Section \ref{tsingsection}). Our first basic tool will be a simplified representation
of these Hirzebruch-Jung continued fractions in terms of integer sequences
$c_1, \dots, c_s$ (see Section \ref{tsingsection}), which is based on the
Koll\'ar--{Shepherd-Barron} classification. It turns out that the $c_i$ can be
recovered from the {\em regular} continued fraction expansion $n^2 / (n k - 1) =
[c_s + 1, c_{s - 1}, \dots, c_2, c_1 + x, c_1 + 2 - x, c_2, \dots, c_{s - 1}, c_s + 1]$,
where $x = 2$ if $s$ is even and $x = 0$ if $s$ is odd (Theorem \ref{LEProposition}).

Now, for any Markoff triple $(e, f, g)$ with $1 < e, f < g$, $\mathbb{P}(e^2, f^2, g^2)$ has
the three T-singularities $\frac{1}{e^2}(1, e w_e - 1)$, $\frac{1}{f^2}(1, f w_f - 1)$
and $\frac{1}{g^2}(1, g w_g - 1)$, respectively, where we call $w_e, w_f, w_g$ the
{\em T-weights} of this triple. When passing from one Markoff triple to the next,
say $(e, g, E)$ with $E = 3fg - e$, we will show in Section \ref{finalchapter}
how to construct the continued fraction
of $E^2 / (E w_E - 1)$ from those of $\frac{1}{e^2}(1, e w_e - 1)$, $\frac{1}{f^2}(1, f w_f - 1)$
and $\frac{1}{g^2}(1, g w_g - 1)$, respectively, and thereby obtain a complete classification
of the minimal resolutions of T-singularities associated to Markoff triples in terms of
their mutations along the Markoff tree.

A strongly related question is the distribution of quotients $g / w_g \approx g^2 / (g w_g - 1)$
and whether limits of
such quotients exists when mutating Markoff triples downward paths in the Markoff tree.
This is very closely related to classical work by Frobenius \cite{Frobenius}. In loc. cit.,
Frobenius derives a complete description of continued fraction expansions of quotients
$g / r_g$, where $r_g$ is associated to a triple that contains $g$ such that
$r_g^2 \equiv -1 \mod g$. Using Frobenius' description, it is straightforward to see
that limits $\lim_{i \rightarrow \infty} r_{g_i} / g_i$ exist, for any sequence $g_i$
that follows a path downward the Markoff tree (Proposition \ref{limitsexist}).
It turns out that the set of these limit points forms a Cantor set. We will show that
this set has measure and Hausdorff dimension zero (Theorems \ref{lebesguetheorem} and
\ref{hausdorfftheorem}). Moreover, we will characterize this Cantor set by its natural
intervals and we will show that the boundary points of these intervals are quadratic
numbers, which are explicitly described in Proposition \ref{quadraticspectrum1}.

The quotients $w_g / g$ are related to the $r_g / g$ simply by the affine transform
$w_g / g = 3 r_g / g - 1$ for every $g$ (Corollary \ref{affinetransform}).
Therefore, their limit points via the Markoff tree exist
as well and the resulting Cantor set is an affine transform of the one
associated to the $r_g / g$. Their dimension and measure therefore coincide (Theorem \ref{tcantor}).
The boundary points are quadratic as well and their explicit formulas and periodic continued
fraction expansions are computed in Secton \ref{finalchapter}.

We point out that the quotients $r_g/g$ have shown up in earlier work as the slopes of
exceptional vector bundles on $\mathbb{P}^2$, see \cite{Rudakov89a}, \cite{DrezetLePotier},
and their fractal nature has already been observed in \cite{DrezetLePotier}.

{\bf Overview.}
In Section \ref{treesection} we fix some general conventions regarding the Markoff tree that
we will follow for the rest of these notes.
In Section \ref{frobenius}, we collect all fundamental facts and computations related to
Markoff triples that will be needed in the subsequent sections. This includes some well-known
formulas, but also some new formulas for which we could not find any reference in the literature.
Also, we review Frobenius' results on continued fractions related to Markoff triples.
In Section \ref{cantor} we use the results of Section \ref{frobenius} to analyze the
Cantor set that emerges from the quotients $r_g / g$.
In Section \ref{equationsection}, we introduce T-weights and perform some calculations
similar as in Section \ref{frobenius}.
In Section \ref{tsingsection}, we analyze the continued fraction expansions of T-singularities.
We then apply these results in the final Section \ref{finalchapter} in order to classify
the minimal resolutions for T-singularities associated to the Markoff equation and to
describe the Cantor set that emerges from the quotients $w_g / g$.

In Appendix \ref{continuants} we collect some basic facts about continued fractions
and continuants that we will need throughout the text.
In Appendix \ref{tcontinuants}
we introduce a new class of polynomials related to T-singularities. These are not
needed in the main body of these notes, but they may be of independent interest.
In Appendix \ref{cantorappendix}, we collect some material that we need in order
to deal with the Hausdorff dimension.
In Appendix \ref{firstfewlevels}, we display the first few levels of the Markoff tree,
both only the maximal elements and the triples. We found it quite useful to be able to
occasionally pinpoint a specific example and its location in the tree in one peek.
Appendix \ref{threehundred} has a computer generated list of the first 300 Markoff numbers
together with their weights and T-weights. If needed, the table can be extracted from
this documents \LaTeX{} source.
In Appendix \ref{experiments} we present some simplistic numerical
experiments regarding the growth of Markoff numbers and the uniqueness conjecture.

\section{The Markoff Tree}\label{treesection}

In this section we will state some general facts and fix our general conventions regarding the Markoff tree
(we refer to \cite{Aigner13} for an overview).
We are exclusively interested
in positive integral solutions $(e, f, g)$ of the Markoff equation $e^2 + f^2 + g^2 = 3efg$, which we will call
{\em Markoff triples}. If for any given Markoff triple $(e, f, g)$ we denote $E = 3fg - e$, $F = 3eg - f$, and $G = 3ef - g$,
then  we obtain three more Markoff triples $(E, f, g)$, $(e, F, g)$, and $(e, f, G)$. We call these substitutions
{\em mutations} of the triple $(e, f, g)$. By rearranging the formulas
$3eg = f + F$, $3fg = e + E$, and $3ef = g + G$, it becomes obvious that mutations are reversible.
All Markoff triples can be obtained by mutation, starting from the fundamental solution $(1, 1, 1)$.
So we can consider
the set of all Markoff triples as vertices of a trivalent acyclic graph whose edges are labeled by
the mutations, where the fundamental solution is the natural root. By the symmetry of the Markoff equation,
we have a natural action of the symmetric group $S_3$ on the set of Markoff triples. This action has an almost
everywhere trivial stabilizer with exceptions $(1, 1, 1)$ and the permutations of $(1, 2, 1)$. These triples,
which are the only ones where $e, f, g$ are not pairwise distinct, are called {\em singular} solutions of the
Markoff equation. All other Markoff triples are called {\em regular}.
The set of regular Markoff triples up to permutation forms a binary tree, whose root is represented by the
triple $(1, 5, 2)$. Figure \ref{treefig1} shows the first few levels of this tree.
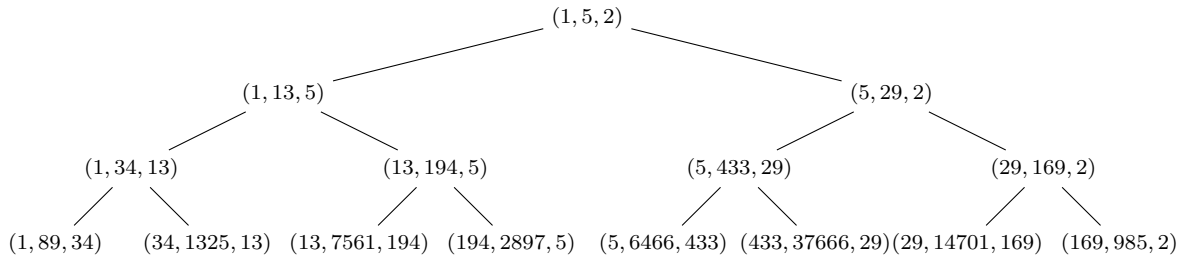
\begin{figure}[ht]
\footnotesize
\tikzstyle{level 1}=[level distance = 1cm, sibling distance=8cm]
\tikzstyle{level 2}=[sibling distance=4cm]
\tikzstyle{level 3}=[sibling distance=2cm]
\begin{tikzpicture}[]
  \node {$(1, 5, 2)$}
    child {node {$(1, 13, 5)$}
      child {node {$(1, 34, 13)$}
        child {node[align=right] {$(1, 89, 34)$}}
        child {node[align=right] {$(34, 1325, 13)$}}
      }
      child {node {$(13, 194, 5)$}
        child {node[align=right] {$(13, 7561, 194)$}}
        child {node[align=right] {$(194, 2897, 5)$}}
      }
    }
    child {node {$(5, 29, 2)$}
      child {node {$(5, 433, 29)$}
        child {node[align=right] {$(5, 6466, 433)$}}
        child {node[align=right] {$(433, 37666, 29)$}}
      }
      child {node {$(29, 169, 2)$}
        child {node[align=right] {$(29, 14701, 169)$}}
        child {node[align=right] {$(169, 985, 2)$}}
      }
    };
\end{tikzpicture}
\caption{A part of the Markoff tree.}\label{treefig1}
\end{figure}
As in Figure \ref{treefig1}, we will use from now on the convention that every regular Markoff
triple will be represented by $(e, g, f)$, such that $0 < e, f < g$. Because $g < E, F$, mutation
of $e$ or $f$ does mean that we descend downwards
the Markoff tree, where we fix the direction as indicated in Figure \ref{mutationfig}.
\begin{figure}[ht]
\footnotesize
\tikzstyle{level 1}=[level distance = 1cm, sibling distance=4cm]
\begin{tikzpicture}[]
  \node {$(e, g, f)$}
    child {node {$(e, F, g)$}}
    child {node {$(g, E, f)$}};
\end{tikzpicture}
\caption{Left and right mutation.}\label{mutationfig}
\end{figure}
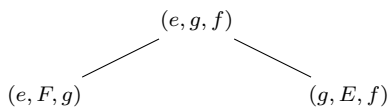
We will in call the triple $(e, F, g)$ the {\em left mutation} and $(g, E, f)$ the {\em right mutation} of $(e, f, g)$,
respectively. With this convention, if we go upwards by mutating $g$, then we will either end up with the triple
$(e, f, G)$ (if $e < f$) or $(G, e, f)$ (if $e > f$). By requiring that the root is represented by $(1, 5, 2)$,
the ordering of every representative is thus uniquely fixed. In particular, if we traverse any level below the root
from left to right then we find that alternatingly $e < f$ or $e > f$, with $1 = e < f$ for the leftmost vertex
and $e > f = 2$ for the rightmost vertex.

It follows that, given a triple $(e, g, f)$, then starting from this triple, there are precisely four branches 
where at least one of $e, f, g$ is preserved, as indicated by Figure \ref{chains}.
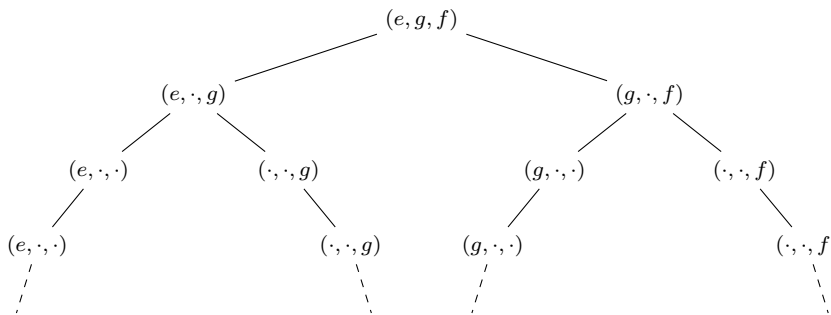
\begin{figure}[ht]
\footnotesize
\tikzstyle{level 1}=[level distance = 1cm, sibling distance=6cm]
\tikzstyle{level 2}=[level distance = 1cm, sibling distance=2.5cm]
\begin{tikzpicture}[]
  \node {$(e, g, f)$}
    child {node {$(e, \cdot, g)$}
      child {node {$(e, \cdot, \cdot)$}
    child {node[left=.3cm] {$(e, \cdot, \cdot)$}
      child[dashed] {node[left=.2cm] {\ }}
    }
      }
      child {node {$(\cdot, \cdot, g)$}
        child {node[right=0.3cm] {$(\cdot, \cdot, g)$}
            child[dashed] {node[right=.2cm] {\ }}
    }
      }
    }
    child {node {$(g, \cdot, f)$}
      child {node {$(g, \cdot, \cdot)$}
    child {node[left=.3cm] {$(g, \cdot, \cdot)$}
      child[dashed] {node[left=.2cm] {\ }}
    }
      }
      child {node {$(\cdot, \cdot, f)$}
    child {node[right=.3cm] {$(\cdot, \cdot, f)$}
      child[dashed] {node[right=.2cm] {\ }}
    }
      }
    };
\end{tikzpicture}
\caption{How the elements in a Markoff triple propagate downwards.}\label{chains}
\end{figure}

The two outmost branches of the Markoff tree are of the form
$$
(1, F_{2n + 1}, F_{2n - 1}) \text{ for } n \geq 2 \quad \text  \and \quad
(P_{2n -1}, P_{2n + 1}, 2) \text{ for } n \geq 1,
$$
where $F_i$ and $P_i$ denote the Fibonacci and Pell numbers. Note that $5 = F_5 = P_3$.
We call these the {\em Fibonacci} and {\em Pell branches}, respectively.

\section{The Markoff tree and finite continued fractions \`a la Frobenius}\label{frobenius}

In this section we give an expository overview around some aspects of Frobenius' classical account \cite{Frobenius}
on the Markoff equation which are related to finite continued fractions. We also refer to
\cite[Chapter II]{Cassels57}, \cite{CusickFlahive}, \cite{Aigner13}, \cite{Reutenauer19}. We will
paraphrase some of the material to suit our needs and complement it by a few observations of our own which, though probably well-known
to specialists, we couldn't locate in the literature.

\subsection{Weights and coweights}

Denote $(e, g, f)$ a regular Markoff triple. Then we set:
\begin{align*}
r_e & := 
\begin{cases}
0 & \text{ if } e = 1, \\
f^{-1}g \bmod e & \text{ else},
\end{cases}\\
r_f & := g^{-1}e \bmod f, \\
r_g & := e^{-1} f \bmod g.
\end{align*}
Note that the choice $r_e = 0$ for $e = 1$ is made for consistency, as we will see below.
Then the Markoff equation implies that
$$
r_e^2 \equiv -1 \bmod e \,\, (\text{for } e > 1), \quad r_f^2 \equiv -1 \bmod f, \quad \text{ and } \quad r_g^2 \equiv -1 \bmod g.
$$
We choose to call the triple $(r_e, r_g, r_f)$ the {\em weights} associated to the triple $(e, g, f)$. Moreover, we define the
{\em coweights} $(s_e, s_g, s_f)$ via the following equations:
$$
r_e^2 = -1 + s_e e, \quad r_f^2 = -1 + s_f f, \quad r_g^2 = -1 + s_g g.
$$
Note that in particular $s_e = 1$ for $e = 1$. The weights and coweights satisfy a number of nice equations, in particular
\begin{align}
g r_f - f r_g & = e, \nonumber \\
e r_g - g r_e & = f, \label{cofactorequations1} \\
e r_f - f r_e & = G \nonumber
\end{align}
(see \cite[\S II.3]{Cassels57} or \cite[p. 602]{Frobenius}).

\begin{remark*}
In \cite{Rudakov89a} the weights have been identified with the first Chern classes of the
vector bundles in an exceptional collection on $\mathbb{P}^2$ and the ratios $r_e / e, r_g / g, r_f / f$
with their slopes.
\end{remark*}

\begin{lemma}\label{cofactorlemmaf}
Let $(e, g, f)$ be a regular Markoff triple.
\begin{enumerate}[(i)]
\item\label{cofactorlemmafi}
The weights and coweights transform under mutation as follows:
\begin{center}
\tikzstyle{level 1}=[level distance = 2cm, sibling distance=4cm]
\begin{tikzpicture}[]
  \node{\begin{tabular}{r@{\hskip 1pt}rrr@{\hskip 1pt}l}( & $e$, & $g$, & $f$ & )\\ (& $r_e$, & $r_g$, & $r_f$ & )\\ ( & $s_e$, & $s_g$, & $s_f$ & )\end{tabular}}
    child {node {\begin{tabular}{r@{\hskip 1pt}rrr@{\hskip 1pt}l}( & $e$, & $F$, & $g$ & )\\ (& $r_e$, & $r_F$, & $r_g$ & )\\ ( & $s_e$, & $s_F$, & $s_g$ & )\end{tabular}}}
    child {node {\begin{tabular}{r@{\hskip 1pt}rrr@{\hskip 1pt}l}( & $g$, & $E$, & $f$ & )\\ (& $r_g$, & $r_E$, & $r_f$ & )\\ ( & $s_g$, & $s_E$, & $s_f$ & )\end{tabular}}};
\end{tikzpicture}
\end{center}
where
$$
r_E = 3f r_g - r_e, \quad s_E = 3f s_g - s_e, \quad r_F = 3e r_g - r_f, \ \ \text{ and } \ \ s_F = 3e s_g - s_f.
$$
\item\label{cofactorlemmafii}
The {\em slopes} of a regular Markoff triple are strictly increasing:
$$
\frac{r_e}{e} < \frac{r_g}{g} < \frac{r_f}{f}
$$
(see also \cite[Prop. 3.1]{Rudakov89a}).
\item\label{cofactorlemmafiii}
We have  $r_e < e - r_e$, $r_f < f - r_f$, and $r_g < g - r_g$ for all $e, g$ and all $f > 2$.
\end{enumerate}
\end{lemma}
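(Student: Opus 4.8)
The plan is to derive all three parts from the cofactor relations (\ref{cofactorequations1}), the Markoff equation, and one elementary size estimate, proving them in the order (ii), (iii), (i) — the normalisation step in (i) will rely on the bound obtained in (iii). Part (ii) is immediate: the first two relations of (\ref{cofactorequations1}) say $e r_g - g r_e = f > 0$ and $g r_f - f r_g = e > 0$, that is $g r_e < e r_g$ and $f r_g < g r_f$, which is exactly $r_e/e < r_g/g < r_f/f$. The size estimate I will use is that every regular triple satisfies $2e < g$ and $2f < g$, hence $g > 2\max(e,f)$ and $g^2 > 2(e^2+f^2)$: from $e^2+f^2+g^2 = 3efg < 3g^2$ one gets $ef < g$, which already gives $2f < g$ when $e \ge 2$ and $2e < g$ when $f \ge 2$; since $f \ge 2$ always, only the case $e = 1$ remains, where $g$ is the larger root of $g^2 - 3fg + (f^2+1) = 0$ and exceeds $2f$ as soon as $f > 1$.

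For (iii) I induct on the distance of $(e,g,f)$ from the root $(1,5,2)$ in the Markoff tree, writing $\rho_e := e - 2r_e$, $\rho_g := g - 2r_g$, $\rho_f := f - 2r_f$; the assertion is $\rho_e,\rho_g \ge 1$ always and $\rho_f \ge 1$ when $f > 2$. For the root, $r_1 = 0$, $r_5 = 2$, $r_2 = 1$, so $\rho_1 = \rho_5 = 1$ and $\rho_2 = 0$. In the step, pass from $(e,g,f)$ to its children $(g,E,f)$ and $(e,F,g)$. The two entries retained from the parent keep their weights — a one-line congruence check, for instance the first-slot weight $f^{-1}E$ of $(g,E,f)$ is $\equiv e^{-1}f \pmod g$ because $E \equiv -e$ and $e^2+f^2 \equiv 0 \pmod g$, and the other three cases are identical — so only $\rho_E \ge 1$ and $\rho_F \ge 1$ remain. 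Doubling the relevant cofactor relation of the new triple and substituting $2r_E = E - \rho_E$, $2r_g = g - \rho_g$ converts $g r_E - E r_g = f$ into $g\rho_E = E\rho_g - 2f$, which is positive since $\rho_g \ge 1$ by induction and $E = 3fg - e > (3f-1)g \ge 2g > 2f$; likewise $F r_g - g r_F = e$ becomes $g\rho_F = F\rho_g + 2e > 0$. Hence $\rho_E,\rho_F \ge 1$, which closes the induction; the restriction $f > 2$ enters only because on the Pell branch the third entry is $2$, where $\rho = 0$.

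For (i), fix $(e,g,f)$. Each weight formula is checked by a congruence plus a normalisation. Using $3fg = E + e$ and $e r_g - g r_e = f$ one computes $g(3f r_g - r_e) \equiv e r_g - g r_e = f \pmod E$, while by definition $g r_E \equiv f \pmod E$; as $\gcd(g,E) = 1$ this gives $3f r_g - r_e \equiv r_E \pmod E$ (the computation for $r_F$ additionally uses $G = 3ef - g$ and $e r_f - f r_e = G$). To turn these congruences into equalities one checks $0 \le 3f r_g - r_e < E$: positivity follows from $r_e < r_g$ (read off $g r_e = e r_g - f < e r_g$), and $3f r_g - r_e < 3f r_g < \tfrac32 fg < 3fg - e = E$ by the bound $r_g < g/2$ from (iii) together with $2e < 3fg$; thus $3f r_g - r_e$ is the representative of $r_E$ in $[0,E)$. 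For the coweights, substitute $r_E = 3f r_g - r_e$ into $s_E E = r_E^2 + 1$ and use $r_g^2 = s_g g - 1$, $r_e^2 = s_e e - 1$; after cancellation the claim $s_E = 3f s_g - s_e$ becomes $e s_g + g s_e = 2r_e r_g + 3f$, and clearing denominators this is $(e r_g)^2 + (g r_e)^2 + e^2 + g^2 = eg(2r_e r_g + 3f)$, which holds because $e^2 + g^2 = 3efg - f^2$ (Markoff) and $(e r_g - g r_e)^2 = f^2$ (cofactor relation). Here no size estimate is needed, since $s_E = (r_E^2+1)/E$ is automatically a nonnegative integer; the statements for $r_F, s_F$ are symmetric.

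The one genuinely delicate point is the normalisation in (i) — showing that the a priori integers $3f r_g - r_e$ and $3e r_g - r_f$ already lie in the correct residue windows $[0,E)$ and $[0,F)$ — and this is exactly where the bound $r_g < g/2$ from (iii) is indispensable, which is why I would present the proof of (iii) before that of (i); everything else is bookkeeping with (\ref{cofactorequations1}) and the Markoff equation.
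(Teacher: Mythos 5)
Your proof is correct, but it is organised quite differently from the paper's. The paper proves the parts in the order (i), (ii), (iii), each feeding the next: for (i) it rewrites $g = 3ef - G$ using the relations (\ref{cofactorequations1}) of the \emph{parent} triple to get $g = e(3er_g - r_f) - Fr_e$, and then compares this with the identity $g = er_F - Fr_e$ for the \emph{child} triple; since that linear equation determines $r_F$ uniquely, the formula $r_F = 3er_g - r_f$ drops out with no residue-window check at all. It then gets (ii) by induction over the tree from (i), and (iii) by induction from (ii) using the monotonicity of the slopes. You go the other way: (ii) falls out of (\ref{cofactorequations1}) directly and without induction (which is genuinely shorter than the paper's argument); (iii) is an independent induction via the doubled cofactor relation $g\rho_E = E\rho_g - 2f$; and only then do you prove (i) by a congruence modulo $E$ plus the normalisation $0 \le 3fr_g - r_e < E$, which is where your appeal to $r_g < g/2$ from (iii) enters. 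The trade-off is that the paper's comparison trick silently disposes of exactly the ``delicate point'' you flag — it never needs to know that $3fr_g - r_e$ lands in the canonical window — at the cost of making (ii) and (iii) downstream of (i); your version exposes the normalisation issue explicitly and shows that (ii) and (iii) need nothing beyond the classical cofactor identities and the invariance of the retained weights under mutation. Your coweight verification (reducing $s_E = 3fs_g - s_e$ to $es_g + gs_e = 2r_er_g + 3f$ and then to $(er_g - gr_e)^2 = f^2$ plus the Markoff equation) is a correct expansion of what the paper leaves as ``direct computation.''
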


\begin{proof}
(\ref{cofactorlemmafi})
We only consider the left mutation; the right mutation follows analogously. With $F = 3eg - f$ and using the Markoff equation we get
$g^{-1} F \bmod e = - f^{-1}g \bmod e = g^{-1}f \bmod e = r_e$. Similarly,
we have $F^{-1} e \bmod g = -f^{-1}e \bmod g = e^{-1}f \bmod g = r_g$. Hence, $r_e$ and $r_g$ (and thus
$s_e$ and $s_g$) don't change under left mutation. To determine $r_F$, we write first
$$
g = 3ef - G =  3ef + f r_e - e r_f = 3e(e r_g - gr_e) + f r_e - e r_f = e(3er_g - r_f) - F r_e,
$$
where we use (\ref{cofactorequations1}) twice. On the other hand, using (\ref{cofactorequations1}) for the
triple $(e, F, g)$, we have
$$
g = e r_F - F r_e.
$$
Comparing terms then yields $r_F = 3er_g - r_f$. Similarly, the equality $s_F = 3es_g - s_f$ follows via evaluation
of $r_F^2 = -1 + F s_F$ by
direct computation, using (\ref{cofactorequations1}) and the Markoff equation.

(\ref{cofactorlemmafii})
The statement is trivially true for the triple $(1, 5, 2)$. Then we use (\ref{cofactorlemmafi}) and induction over the Markoff tree.

(\ref{cofactorlemmafiii})
The statement is obviously true for $g = 5$. By (\ref{cofactorlemmafii}) we get for left- and
right mutation:
$$
\frac{r_e}{e} < \frac{r_F}{F} < \frac{r_g}{g} < \frac{1}{2} \quad \text{ and } \quad \frac{r_g}{g} < \frac{r_E}{E} < \frac{r_f}{f} < \frac{1}{2}
$$
and by induction over the Markoff tree, the assertion follows for all $g$ and therefore for all $e > 1$ and $f > 2$.
For $e = 1$ the assertion is trivially true.
\end{proof}

\begin{remark*}
Lemma \ref{cofactorlemmaf} (\ref{cofactorlemmafi}) can also be inferred from Cohn's matrices \cite{Cohn55}
(see \cite[\S 4.3]{Aigner13}).
\end{remark*}

The slopes indeed contain some partial information about a triple's position in the Markoff tree.

\begin{corollary}\label{cofactorcorollary}\ 
\begin{enumerate}[(i)]
\item\label{cofactorcorollaryi}
Consider the $n$-th level of the Markoff tree (where the $0$-th level given by the triple $(1, 5, 2)$)
and denote $m_1, \dots, m_{2^n}$ the maximal elements of the Markoff triples on this level, enumerated
from left to right, with weights $r_{m_1}, \dots, r_{m_{2^n}}$. Then
the slopes $r_{m_i}/m_i$ are strictly increasing from left to right:
$$
0 < \frac{r_{m_1}}{m_1} < \cdots < \frac{r_{m_{2^n}}}{m_{2^n}} < \frac{1}{2}.
$$
\item\label{cofactorcorollaryii}
Let $(m_1, m_2, m_3)$ be a regular solution of the Markoff equation with $2 < m_1 < m_2 < m_3$ and set $r'_i = m_{i + 1}^{-1} m_{i + 2} \bmod m_i$
for $i = 1, 2, 3$ (where we read the indices modulo $3$). Then this solution shows up as a left mutation in the Markoff tree
(i.e. as regular Markoff triple $(m_1, m_3, m_2)$) iff one (and therefore all) of the inequalities $r'_i < m_i - r'_i$ holds.
\end{enumerate}
\end{corollary}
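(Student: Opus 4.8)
The plan is to treat the two parts separately: part (i) by an induction on the level of the tree phrased in terms of intervals rather than the slopes directly, and part (ii) by a short modular computation of the weights in the two candidate positions of the triple, followed by an appeal to Lemma \ref{cofactorlemmaf}(\ref{cofactorlemmafiii}).

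For part (i) I would attach to each triple $T=(e,g,f)$ of the tree the open interval $I(T):=(r_e/e,\,r_f/f)$ and prove, by induction on the level $n$, that the $2^n$ intervals attached to the level-$n$ triples, read from left to right, tile $(0,1/2)$ into consecutive pieces meeting only at their endpoints. The base case $n=0$ is $I\bigl((1,5,2)\bigr)=(r_1/1,\,r_2/2)=(0,1/2)$. For the induction step I would use that the left child $(e,F,g)$ and the right child $(g,E,f)$ of $T$ carry the intervals $(r_e/e,\,r_g/g)$ and $(r_g/g,\,r_f/f)$ respectively — because, as recorded in the proof of Lemma \ref{cofactorlemmaf}(\ref{cofactorlemmafi}), left mutation leaves $r_e,r_g$ unchanged and right mutation leaves $r_g,r_f$ unchanged — so that the point $r_g/g$ splits $I(T)$ into exactly these two pieces in the correct order; applying Lemma \ref{cofactorlemmaf}(\ref{cofactorlemmafii}) to $T$ and to each of its children shows all the endpoints involved are strictly increasing, so the refined family still tiles $(0,1/2)$ consecutively. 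Since each maximal slope $r_{m_i}/m_i$ lies in the interior of its interval $I(T_i)$ (again Lemma \ref{cofactorlemmaf}(\ref{cofactorlemmafii})) and the $I(T_i)$ tile $(0,1/2)$ in left-to-right order, the inequalities $0<r_{m_1}/m_1<\cdots<r_{m_{2^n}}/m_{2^n}<1/2$ follow at once.

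For part (ii) I would first note that $(m_1,m_2,m_3)$ is automatically a regular Markoff triple (pairwise distinct entries, all $>2$), hence occurs exactly once in the tree; since $m_3$ is its unique maximum it is placed in the middle slot, and since the triple is not the root it occurs either as the left mutation $(m_1,m_3,m_2)$ of its parent or as the right mutation $(m_2,m_3,m_1)$, with no other possibility (these two cases being distinguished, per the conventions of Section \ref{treesection}, by whether the first entry is smaller or larger than the third). So the plan is to compute the weight triple $(r_e,r_g,r_f)$ in both positions. In the left position $(e,g,f)=(m_1,m_3,m_2)$ the defining congruences give immediately $(r_e,r_g,r_f)=(r'_1,r'_3,r'_2)$. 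In the right position $(e,g,f)=(m_2,m_3,m_1)$ the same congruences produce instead the modular inverses of the $r'_i$ — for instance $r_e\equiv m_1^{-1}m_3\equiv(m_3^{-1}m_1)^{-1}=(r'_2)^{-1}\pmod{m_2}$ — and since $(r'_i)^2\equiv-1$ we have $(r'_i)^{-1}\equiv-r'_i$, so that $(r_e,r_g,r_f)=(m_2-r'_2,\,m_3-r'_3,\,m_1-r'_1)$.

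Finally I would feed this into Lemma \ref{cofactorlemmaf}(\ref{cofactorlemmafiii}), applicable in both positions since all entries exceed $2$, which asserts $r_e<e-r_e$, $r_g<g-r_g$, $r_f<f-r_f$: in the left position this reads $r'_i<m_i-r'_i$ for every $i$, while in the right position it reads $r'_i>m_i-r'_i$ for every $i$ (equality $r'_i=m_i-r'_i$ being impossible because $\gcd(r'_i,m_i)=1$). Hence the triple is a left mutation iff $r'_i<m_i-r'_i$ holds for some $i$, and a single such inequality then forces all three. I expect the main obstacle to lie in the bookkeeping of part (ii): correctly matching each of the two possible orderings to ``left'' versus ``right'' mutation under the conventions of Section \ref{treesection}, and — above all — spotting that in the other ordering every $r'_i$ is replaced by its complement $m_i-r'_i$; this simultaneous complementation is exactly what makes the three inequalities flip together and is the heart of both the equivalence and the ``therefore all'' clause. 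Part (i), by contrast, is routine once one argues with the intervals $I(T)$ rather than trying to compare the slopes of non-sibling triples directly.
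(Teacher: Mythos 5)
Your proof is correct, and it is essentially the intended argument: the paper states this corollary without proof, positioning it as a direct consequence of Lemma \ref{cofactorlemmaf}, and both your interval-tiling induction for (i) and your computation in (ii) — identifying $(r_e,r_g,r_f)=(r'_1,r'_3,r'_2)$ in the left position versus $(m_2-r'_2,\,m_3-r'_3,\,m_1-r'_1)$ in the right position, then invoking Lemma \ref{cofactorlemmaf}(\ref{cofactorlemmafiii}) — supply exactly the omitted details. The complementation observation in (ii), which makes all three inequalities flip simultaneously, is indeed the crux of the ``one and therefore all'' clause, and your exclusion of equality via $\gcd(r'_i,m_i)=1$ closes the last gap.
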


Figure \ref{treefig0} shows the Markoff triples, their weights and their coweights of
the first four levels of the Markoff tree.
\setlength{\tabcolsep}{1pt}
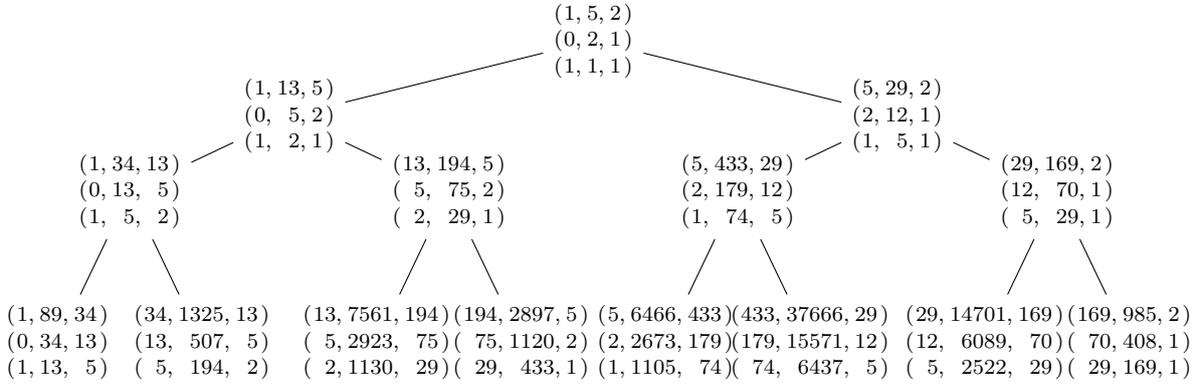
\begin{figure}[ht]
\footnotesize
\tikzstyle{level 1}=[level distance = 1cm, sibling distance=8cm]
\tikzstyle{level 2}=[sibling distance=4.2cm]
\tikzstyle{level 3}=[sibling distance=1.9cm, level distance = 2cm]
\begin{tikzpicture}[]
  \node {\begin{tabular}{r@{\hskip .5pt}rrr@{\hskip 1pt}l}( & 1, & 5, & 2 & )\\ (& 0, & 2, & 1 & )\\ ( & 1, & 1, & 1 & )\end{tabular}}
    child {node {\begin{tabular}{r@{\hskip .5pt}rrr@{\hskip 1pt}l}( & 1, & 13, & 5 & )\\ (& 0,& 5, & 2 & )\\ ( & 1, & 2, & 1 & )\end{tabular}}
      child {node {\begin{tabular}{r@{\hskip .5pt}rrr@{\hskip 1pt}l}( & 1, & 34, & 13 & )\\ (& 0, & 13, & 5 & )\\ ( & 1, & 5, & 2 & )\end{tabular}}
        child {node {\begin{tabular}{r@{\hskip .5pt}rrr@{\hskip 1pt}l}( & 1, & 89, & 34 & )\\ (& 0, & 34, & 13 & )\\ ( & 1, & 13, & 5 & )\end{tabular}}}
        child {node {\begin{tabular}{r@{\hskip .5pt}rrr@{\hskip 1pt}l}( & 34, & 1325, & 13 & )\\ (& 13, & 507, & 5 & )\\ ( & 5, & 194, & 2 & )\end{tabular}}}
      }
      child {node {\begin{tabular}{r@{\hskip .5pt}rrr@{\hskip 1pt}l}( & 13, & 194, & 5 & )\\ (& 5, & 75, & 2 & )\\ ( & 2, & 29, & 1 & )\end{tabular}}
        child {node {\begin{tabular}{r@{\hskip .5pt}rrr@{\hskip 1pt}l}( & 13, & 7561, & 194 & )\\ (& 5, & 2923, & 75 & )\\ ( & 2, & 1130, & 29 & )\end{tabular}}}
        child {node {\begin{tabular}{r@{\hskip .5pt}rrr@{\hskip 1pt}l}( & 194, & 2897, & 5 & )\\ (& 75, & 1120, & 2 & )\\ ( & 29, & 433, & 1 & )\end{tabular}}}
      }
    }
    child {node {\begin{tabular}{r@{\hskip .5pt}rrr@{\hskip 1pt}l}( & 5, & 29, & 2 & )\\ (& 2, & 12, & 1 & )\\ ( & 1, & 5, & 1 & )\end{tabular}}
      child {node {\begin{tabular}{r@{\hskip .5pt}rrr@{\hskip 1pt}l}( & 5, & 433, & 29 & )\\ (& 2, & 179, & 12 & )\\ ( & 1, & 74, & 5 & )\end{tabular}}
        child {node {\begin{tabular}{r@{\hskip .5pt}rrr@{\hskip 1pt}l}( & 5, & 6466, & 433 & )\\ (& 2, & 2673, & 179 & )\\ ( & 1, & 1105, & 74 & )\end{tabular}}}
        child {node {\begin{tabular}{r@{\hskip .5pt}rrr@{\hskip 1pt}l}( & 433, & 37666, & 29 & )\\ (& 179, & 15571, & 12 & )\\ ( & 74, & 6437, & 5 & )\end{tabular}}}
      }
      child {node {\begin{tabular}{r@{\hskip .5pt}rrr@{\hskip 1pt}l}( & 29, & 169, & 2 & )\\ (& 12, & 70, & 1 & )\\ ( & 5, & 29, & 1 & )\end{tabular}}
        child {node {\begin{tabular}{r@{\hskip .5pt}rrr@{\hskip 1pt}l}( & 29, & 14701, & 169 & )\\ (& 12, & 6089, & 70 & )\\ ( & 5, & 2522, & 29 & )\end{tabular}}}
        child {node {\begin{tabular}{r@{\hskip .5pt}rrr@{\hskip 1pt}l}( & 169, & 985, & 2 & )\\ (& 70, & 408, & 1 & )\\ ( & 29, & 169, & 1 & )\end{tabular}}}
      }
    };
\end{tikzpicture}
\caption{The first four levels of the Markoff tree with triples
$(e, g, f)$, $(r_e, r_g, r_f)$, $(s_e, s_g, s_f)$.}\label{treefig0}
\end{figure}
In the Fibonacci branch, any triple is of the form $(e, g, f) = (1, F_{2n + 1}, F_{2n - 1})$ for $n \geq 2$ and correspondingly, we have
$$
(r_e, r_g, r_f) = (0, F_{2n - 1}, F_{2n - 3}) \quad \text{ and } \quad (s_e, s_g, s_f) = (1, F_{2n - 3}, F_{2n - 5}).
$$
Similarly, in the Pell branch, any triple is of the form $(e, g, f) = (P_{2n - 1}, P_{2n + 1}, 2)$ for $n \geq 1$ with
$$
(r_e, r_g, r_f) = (P_{2n - 2}, P_{2n}, 1) \quad \text{ and } \quad (s_e, s_g, s_f) = (P_{2n - 3}, P_{2n - 1}, 1).
$$

\begin{definition}
For any real numbers $x, y \geq 2/3$ we set
\begin{enumerate}[(i)]
\item $\Delta_x := 9x^2 - 4$ 
\item $\Delta_{x, y} := \frac{1}{2}(x \sqrt{\Delta_y} + y \sqrt{\Delta_x})$.
\end{enumerate}
\end{definition}

For a regular Markoff triple $(e, g, f)$, the inequalities $2ef < g < 3ef$ hold trivially.
The following lemma sharpens these inequalities.

\begin{lemma}\label{somewhatsharpbound}
Let $(e, g, f)$ be a regular Markoff triple.
\begin{enumerate}[(i)]
\item\label{somewhatsharpboundi}
The following inequalities hold:
$$
\Delta_{e, f} - \frac{ef}{g} < g < \Delta_{e, f}.
$$
In particular,
$$
g = \lfloor\Delta_{e, f}\rfloor.
$$
\item\label{somewhatsharpboundii}
The first inequality can be sharpened to:
$$
\Delta_{e, f} - \frac{2}{3g} < \Delta_{e, f} - \frac{2}{9ef} < g.
$$
\item\label{somewhatsharpboundiii}
The second to:
$$
g + \frac{2ef}{g \sqrt{\Delta_e} \sqrt{\Delta_f}} < \Delta_{e, f}.
$$
\end{enumerate}
\end{lemma}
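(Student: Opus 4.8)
The plan is to derive all three parts from a single exact formula for the gap $\Delta_{e,f}-g$. Reading the Markoff equation as the quadratic $g^{2}-3efg+(e^{2}+f^{2})=0$ in $g$ and taking the larger root (since $g$ is the maximum of the triple) gives $2g=3ef+\sqrt{9e^{2}f^{2}-4e^{2}-4f^{2}}$, that is, $6g=9ef+S$ where $S:=\sqrt{81e^{2}f^{2}-36e^{2}-36f^{2}}$; I also set $T:=\sqrt{\Delta_{e}\Delta_{f}}=\sqrt{(9e^{2}-4)(9f^{2}-4)}$. Two short computations then carry the argument. First, $T^{2}-S^{2}=16$, so $0<S<T$ and $T-S=16/(T+S)$. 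Second, squaring the definition $2\Delta_{e,f}=e\sqrt{\Delta_{f}}+f\sqrt{\Delta_{e}}$ gives $36\Delta_{e,f}^{2}=162e^{2}f^{2}-36e^{2}-36f^{2}+18efT$, while $36g^{2}=162e^{2}f^{2}-36e^{2}-36f^{2}+18efS$; subtracting and using $T-S=16/(T+S)$ yields the master identity
$$
\Delta_{e,f}-g=\frac{8ef}{(T+S)\,(\Delta_{e,f}+g)}.
$$
In particular $\Delta_{e,f}>g$, which is the second inequality of (i).

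Everything else is bookkeeping with a short list of elementary estimates, valid for every regular Markoff triple (where $e\ge 1$ and $f\ge 2$): the trivial bound $g>2ef$; the bound $S\ge 6ef$ (from $81-36/f^{2}-36/e^{2}\ge 36$), which gives $g=\tfrac{1}{6}(9ef+S)\ge\tfrac{5}{2}ef$; the bound $T>4$ (immediate from $T^{2}=S^{2}+16>16$); and $T^{2}=\Delta_{e}\Delta_{f}<81e^{2}f^{2}$, hence $(T+S)^{2}<4T^{2}<324e^{2}f^{2}$ since $S<T$. Then the first inequality of (i) follows from the master identity together with $(T+S)(\Delta_{e,f}+g)>T\cdot 2g>8g$, which gives $\Delta_{e,f}-g<ef/g<1$ (using $g>2ef$); combined with $\Delta_{e,f}>g$ this yields $g=\lfloor\Delta_{e,f}\rfloor$. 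For (ii), the inequality $\tfrac{2}{9ef}<\tfrac{2}{3g}$ is just $g<3ef$, and $\Delta_{e,f}-g<\tfrac{2}{9ef}$ follows by plugging $(T+S)(\Delta_{e,f}+g)>2S\cdot 2g\ge 60e^{2}f^{2}>36e^{2}f^{2}$ into the master identity.

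The part I expect to be the main obstacle is (iii): the bound $\Delta_{e,f}+g<2g+ef/g$ coming from (i) is too weak, so one has to feed the sharper estimate (ii) back in. Via the master identity, the inequality $\Delta_{e,f}-g>\tfrac{2ef}{gT}$ of (iii) is equivalent to $4gT>(T+S)(\Delta_{e,f}+g)$. Using $\Delta_{e,f}+g<2g+\tfrac{2}{9ef}$ from (ii), it suffices to check $4gT-2g(T+S)\ge\tfrac{2(T+S)}{9ef}$; the left-hand side equals $2g(T-S)=32g/(T+S)$, so this reduces to $(T+S)^{2}\le 144\,efg$. Finally $(T+S)^{2}<324e^{2}f^{2}$ whereas $144\,efg\ge 144\cdot\tfrac{5}{2}\,e^{2}f^{2}=360e^{2}f^{2}$, and $324<360$ closes the argument. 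The only genuinely delicate point is thus keeping the two margins in (iii) aligned; once the master identity is established, the rest is mechanical.
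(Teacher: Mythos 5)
Your proof is correct, and it takes a genuinely different route from the paper's. The paper handles each inequality separately by squaring (sometimes twice) and reducing via the Markoff equation; for part (iii) it resorts to an analytic argument over the real points of the Markoff variety, splitting into the case $e=f$ and the general case, and locating local extrema of an auxiliary function. You instead extract a single exact identity
$$
\Delta_{e,f}-g=\frac{8ef}{(T+S)\left(\Delta_{e,f}+g\right)},\qquad S=3(2g-3ef),\ T=\sqrt{\Delta_e\Delta_f},\ T^2-S^2=16,
$$
and then all three parts drop out from the crude bounds $S\geq 6ef$, $4<T<9ef$, $g\geq\tfrac{5}{2}ef$. I checked the arithmetic throughout: the identity is right (it is the same computation the paper performs implicitly when it verifies $9(2g-3ef)^2<(9e^2-4)(9f^2-4)$, namely $T^2=S^2+16$, but you exploit it as an equality rather than an inequality); the estimate $(T+S)(\Delta_{e,f}+g)>2S\cdot 2g\geq 60e^2f^2>36e^2f^2$ gives (ii); and the feedback of (ii) into (iii), reducing that part to $(T+S)^2\leq 144efg$ with margins $324e^2f^2$ versus $360e^2f^2$, is handled correctly --- this is exactly where the naive bound from (i) would fail, as you note. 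What your approach buys is uniformity and the complete elimination of the calculus in the paper's proof of (iii); what the paper's version of (iii) buys is validity for all positive real solutions of the Markoff equation (your argument uses $f\geq 2$ and integrality-flavoured bounds), but for the lemma as stated about regular Markoff triples this extra generality is not needed.
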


\begin{proof}
(\ref{somewhatsharpboundi})
The last assertion follows from the inequalities because $ef/g < 1$.

Squaring both sides and rearranging terms, the second inequality is equivalent to
$$
2e^2 + 2f^2 + 2g^2 - 9e^2f^2 < ef \sqrt{9e^2 - 4} \sqrt{9f^2 - 4}.
$$
Using the Markoff equation and dividing by $ef$ this is equivalent to:
$$
3 (2g - 3ef) < \sqrt{9e^2 - 4} \sqrt{9f^2 - 4}.
$$
Again squaring both sides, we obtain equivalently:
$$
9(2g - 3ef)^2 < (9e^2 - 4)(9f^2 - 4),
$$
which, using again the Markoff equation, is trivially verified.

For the first inequality, squaring both sides and using the Markoff equation leads to the expression
$$
9(2g - 3ef)^2 + 12(2g - 3ef)(2 + \frac{ef}{g}) + 4(2 + \frac{ef}{g})^2 < (9e^2 - 4)(9f^2 - 4),
$$
which, by using the Markoff equation, reduces to:
$$
4 < 3(2g - 3ef)(2 + \frac{ef}{g}) + (2 + \frac{ef}{g})^2.
$$
This inequality follows from $\frac{ef}{g} > 0$ and $2g - 3ef = g - G > 1$.

(\ref{somewhatsharpboundii})
We need only to show the second inequality. For this, it suffices to show the squared inequality
$\Delta_{e, f}^2 < (g + 2 / 9ef)^2$, for which we get:
$$
ef \sqrt{\Delta_e} \sqrt{\Delta_f} < \left(g + \frac{2}{9ef}\right)^2 - \frac{1}{2}(e^2 \Delta_f + f^2 \Delta_e).
$$
Squaring again and plugging in the following formula which can easily be derived from the Markoff equation:
$$
9e^2f^2 - 4(e^2 + f^2) = (g - G)^2,
$$
we get:
$$
e^2f^2(9(g - G)^2 + 16) < \left(2(g + \frac{2}{9ef})^2 - (g - G)^2 - 2(e^2 + f^2)\right)^2.
$$
Now we divide both sides by $e^2f^2(g - G)$ and first observe for the l.h.s.:
$$
9 + \frac{16}{e^2f^2(g - G)} \leq 10
$$
for every Markoff number $\geq 5$.
For the r.h.s., we get by using the Markoff equation and rearranging terms:
\begin{align*}
\frac{1}{e^2f^2(g - G)} \left(2(g + \frac{2}{9ef})^2 - (g - G)^2 - 2(e^2 + f^2)\right)^2
	& = \frac{1}{e^2f^2(g - G)} \left(3ef(g - G) + R\right)^2 \\
	& = 9ef(g - G) + \frac{6R}{ef} + \frac{R^2}{e^2f^2(g - G)} \\
	& > 10
\end{align*}
for every regular Markoff triple, where we denote $R := 8g / 9ef + 8/81e^2f^2 > 0$.
So, the second inequality follows.

(\ref{somewhatsharpboundiii})
We show that the inequality holds for all positive real solutions $(e, g, f) \in \R^3$ of the Markoff equation.
By considering the Markoff equation a quadratic equation in $g$, we get the following expression:
$$
g = \frac{ef}{2}\left(3 \pm \sqrt{9 - \frac{4}{e^2} - \frac{4}{f^2}}\right).
$$
The boundary of the projection of the positive solutions of the Markoff equation to the $e$-$f$-plane
coincides with the branching points for this equation given by the curve $9e^2f^2 = 4(e^2 + f^2)$.
With
$$
\inf_{e \to \infty} \sqrt{\frac{4e^2}{9e^2 - 4}} = \inf_{f \to \infty} \sqrt{\frac{4f^2}{9f^2 - 4}} = \frac{2}{3}
$$
we see that both functions
$$
g + \frac{2ef}{g \sqrt{\Delta_e} \sqrt{\Delta_f}} = \frac{ef}{2}\left(3 + \sqrt{9 - \frac{4}{e^2} - \frac{4}{f^2}}\right)
+ 4 / \left( (3 + \sqrt{9 - \frac{4}{e^2} - \frac{4}{f^2}}) \sqrt{\Delta_e} \sqrt{\Delta_f} \right)
$$ and $\Delta_{e, f}$
are well-defined functions over the projection of the positive component of the Markoff variety to the
$e$-$f$-plane (note that without loss
of generality we choose the bigger solutions for $g$).

We first consider the case $e = f$. Then the formulas simplify to:
$$
g + \frac{2e^2}{g \Delta_e} < e \sqrt{\Delta_e}
\quad \text{ and } \quad
g = \frac{e}{2}\left(3e + \sqrt{9 e^2 - 8}\right),
$$
where we claim that the inequality holds for $e \geq \frac{2\sqrt{2}}{3}$.
Eliminating $g$, we get:
$$
\frac{\left(3e + \sqrt{9 e^2 - 8}\right)}{2 \sqrt{\Delta_e}} + \frac{4}{\Delta_e^{3/2} (3e + \sqrt{9e^2 - 8})} < 1.
$$
As $\underset{e \rightarrow \infty}{\lim}(\text{l.h.s.}) = 1$, it suffices to show that the l.h.s is
strictly monotonously increasing for $e \geq \frac{2\sqrt{2}}{3}$. This means that for the first derivative of the l.h.s.,
after rearranging terms and clearing some denominators, it suffices to show:
$$
2(3e - \sqrt{9e^2-8}) > \frac{1}{3e + \sqrt{9e^2 - 8}}\left(\frac{9e\sqrt{9e^2 - 8}}{\Delta_e} + 1\right)
$$
for $e \geq 2\sqrt{2}/3$, which can straightforwardly be verified.

For the general case, we fix $C := 9 - \frac{4}{e^2} - \frac{4}{f^2}$, where we note that $0 \leq C < 9$.
Then, after eliminating $g$ and
$f^2 = 4(9 - C - \frac{4}{e^2})$ and some rearrangements, the inequality takes the following form:
$$
\sqrt{9 - \frac{4}{e^2}}\sqrt{C + \frac{4}{e^2}}(3 + \sqrt{C})\left(\sqrt{9 - \frac{4}{e^2}} + \sqrt{C + \frac{4}{e^2}}
- \sqrt{C} - 3\right) - \frac{2}{e^2}(9 - C - \frac{4}{e^2}) > 0
$$
It is straightforward to verify that the l.h.s. of this inequality has four local extrema at $e^2 = \frac{4}{9 - C}$
and $e^2 = \frac{8}{9 - C}$ within its domain of definition, and moreover
$\underset{e \rightarrow \infty}{\lim}(\text{l.h.s.}) = 0$.
As $e^2 = \frac{8}{9 - C}$ iff $e^2 = f^2$, we have seen before that the inequality holds in that case and therefore
we have maxima at $e^2 = \frac{8}{9 - C}$. For degree reasons it follows that there are no further local extrema and
hence the inequality holds for all $e^2 \geq \frac{8}{9 - C}$ and we are done.
\end{proof}

Consider a subtree of the Markoff tree whose root is a regular Markoff triple $(e, g, f)$. Then the two outer
branches of this tree consist of triples of the form $(e, F_i, F_{i - 1})$ and $(E_i, E_{i - 1}, f)$ with
$E_{i + 1} = 3f E_i - E_{i - 1}$ and $F_{i + 1} = 3e F_i - F_{i - 1}$ for $i \geq 0$, where we set $E_{-1} = e$,
$F_{-1} = f$, and $F_0 = E_0 = g$. Standard methods for solving recurrences of the form $x_{i + 1} = 3n x_i - x_{i - 1}$
for $i \geq 0$ yield:
$$
x_i = \lambda_+ \left(\frac{3n + \sqrt{\Delta_n}}{2}\right)^i + \lambda_- \left(\frac{3n - \sqrt{\Delta_n}}{2}\right)^i
$$
with
$$
\lambda_\pm = \frac{1}{2} \left(x_0 \pm \frac{x_1 - x_{-1}}{\sqrt{\Delta_n}} \right).
$$
Using Lemma \ref{cofactorlemmaf} (\ref{cofactorlemmafi}), we get the following formulas (compare also \cite[\S 10.2]{Aigner13})
from which we can see that the corresponding weights and coweights follow the same growth as the Markoff numbers.

\begin{lemma}\label{growthlemma}
Denote $P_\pm := (3f \pm \sqrt{\Delta_f})/2$ and $Q_\pm := (3e \pm \sqrt{\Delta_e})/2$. Then for $i \geq 0$ we have:
\begin{enumerate}[(i)]
\item\label{growthlemmai}
\begin{align*}
E_i & = \lambda_+ P_+^i + \lambda_- P_-^i,
\quad \text{where} \quad \lambda_\pm = \frac{1}{2} \left(g \pm \frac{E - e}{\sqrt{\Delta_f}} \right),\\
F_i & = \kappa_+ Q_+^i + \kappa_- Q_-^i,
\quad \text{where} \quad  \kappa_\pm = \frac{1}{2} \left(g \pm \frac{F - f}{\sqrt{\Delta_e}} \right).
\end{align*}
\item\label{growthlemmaii}
\begin{align*}
r_{E_i} & = \lambda_+ P_+^i + \lambda_- P_-^i,
\quad \text{where} \quad \lambda_\pm = \frac{1}{2} \left(r_g \pm \frac{r_E - r_e}{\sqrt{\Delta_f}} \right),\\
r_{F_i} & = \kappa_+ Q_+^i + \kappa_- Q_-^i,
\quad \text{where} \quad  \kappa_\pm = \frac{1}{2} \left(r_g \pm \frac{r_F - r_f}{\sqrt{\Delta_e}} \right).
\end{align*}
\item\label{growthlemmaiii}
\begin{align*}
s_{E_i} & = \lambda_+ P_+^i + \lambda_- P_-^i,
\quad \text{where} \quad \lambda_\pm = \frac{1}{2} \left(s_g \pm \frac{s_E - s_e}{\sqrt{\Delta_f}} \right),\\
s_{F_i} & = \kappa_+ Q_+^i + \kappa_- Q_-^i,
\quad \text{where} \quad  \kappa_\pm = \frac{1}{2} \left(s_g \pm \frac{s_F - s_f}{\sqrt{\Delta_e}} \right).
\end{align*}
\end{enumerate}
\end{lemma}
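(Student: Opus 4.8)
The plan is to reduce all six formulas to the single closed-form solution of the recurrence $x_{i+1} = 3n x_i - x_{i-1}$ recorded immediately before the statement, the point being that the Markoff numbers, their weights, and their coweights all propagate along an outer branch by the \emph{same} linear recurrence, differing only in their initial data.

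Part (\ref{growthlemmai}) requires nothing new. By construction $(E_i)_{i \geq -1}$ is the solution of $x_{i+1} = 3f x_i - x_{i-1}$ with $x_{-1} = e$, $x_0 = g$, so $x_1 = 3fg - e = E$; substituting these into the displayed solution, with $n = f$ and hence characteristic roots $P_\pm = (3f \pm \sqrt{\Delta_f})/2$, gives $E_i = \lambda_+ P_+^i + \lambda_- P_-^i$ with $\lambda_\pm = \tfrac12\bigl(g \pm (E - e)/\sqrt{\Delta_f}\bigr)$. The formula for $F_i$ is the mirror statement with $n = e$, roots $Q_\pm$, and initial data $F_{-1} = f$, $F_0 = g$, $F_1 = F = 3eg - f$.

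For parts (\ref{growthlemmaii}) and (\ref{growthlemmaiii}) the only thing to establish is that $(r_{E_i})_{i \geq -1}$ and $(s_{E_i})_{i \geq -1}$ also satisfy $x_{i+1} = 3f x_i - x_{i-1}$, now with $x_{-1} = r_e$, $x_0 = r_g$ (resp. $x_{-1} = s_e$, $x_0 = s_g$); once this is in hand the closed forms follow by the same substitution as above, with $x_1 = r_E$ (resp. $x_1 = s_E$). I would prove this recurrence by induction along the right outer branch. In the conventions of Section \ref{treesection} the $i$-th triple of that branch is $(E_{i-1}, E_i, f)$: for $i = 0$ it is the root $(e, g, f)$, and for $i \geq 1$ one has $f < E_{i-1} < E_i$ since $(E_i)$ is strictly increasing, so the triple is already in normal form; its weights and coweights are $(r_{E_{i-1}}, r_{E_i}, r_f)$ and $(s_{E_{i-1}}, s_{E_i}, s_f)$. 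Its right mutation is $(E_i, E_{i+1}, f)$ with $E_{i+1} = 3fE_i - E_{i-1}$, and Lemma \ref{cofactorlemmaf} (\ref{cofactorlemmafi}), applied to this triple, keeps $r_f$, $s_f$ fixed and yields $r_{E_{i+1}} = 3f r_{E_i} - r_{E_{i-1}}$ and $s_{E_{i+1}} = 3f s_{E_i} - s_{E_{i-1}}$. Taking $i = 0$ is the base case ($r_E = 3f r_g - r_e$, $s_E = 3f s_g - s_e$, again by Lemma \ref{cofactorlemmaf} (\ref{cofactorlemmafi}) applied to the root), and general $i$ is the inductive step. The left outer branch is identical: its $i$-th triple is $(e, F_i, F_{i-1})$, left mutation keeps $r_e$, $s_e$ fixed, and Lemma \ref{cofactorlemmaf} (\ref{cofactorlemmafi}) gives $r_{F_{i+1}} = 3e r_{F_i} - r_{F_{i-1}}$, $s_{F_{i+1}} = 3e s_{F_i} - s_{F_{i-1}}$; substituting $x_{-1} = r_f$, $x_0 = r_g$, $x_1 = r_F$ (resp. the $s$-analogues) and $n = e$ into the displayed solution produces the stated $\kappa_\pm$.

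I do not expect a genuine obstacle here beyond bookkeeping. The one point deserving care is the verification that each triple along an outer branch really is the left (resp. right) mutation of its predecessor and stays in the normal form of Section \ref{treesection}, so that Lemma \ref{cofactorlemmaf} (\ref{cofactorlemmafi}) applies verbatim at every step; this is exactly the content of the description of the outer branches together with the monotonicity of $(E_i)$ and $(F_i)$ established before the statement. The argument is insensitive to the case $e = 1$: the recurrences are linear, and the conventions $r_e = 0$, $s_e = 1$ feed in without modification, recovering in particular the Fibonacci- and Pell-branch formulas displayed above.
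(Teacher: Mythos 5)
Your proposal is correct and matches the paper's (implicit) argument: the paper likewise derives the closed forms from the standard solution of $x_{i+1} = 3n x_i - x_{i-1}$ stated just before the lemma, combined with Lemma \ref{cofactorlemmaf} (\ref{cofactorlemmafi}) to see that the weights and coweights propagate along the outer branches by the same recurrence with initial data $(r_e, r_g, r_E)$, $(s_e, s_g, s_E)$, etc. Your explicit induction verifying that each triple $(E_{i-1}, E_i, f)$ (resp.\ $(e, F_i, F_{i-1})$) is in normal form so that the mutation rule applies verbatim is exactly the bookkeeping the paper leaves to the reader.
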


\subsection{Continued fractions}

Recall the following construction of the {\em Stern-Brocot tree}. We start with a binary tree that contains triples of the form
$(\frac{\lambda}{\lambda'}, \frac{\nu}{\nu'}, \frac{\mu}{\mu'})$ with $\lambda, \lambda', \mu,$ $\mu', \nu, \nu' \in \Z$,
where for the moment we formally consider the fractions as pairs of integers rather than
rational numbers. Its root is $(\frac{1}{0}, \frac{1}{1}, \frac{0}{1})$ and descendants are formed by the rule
\begin{center}
\tikzstyle{level 1}=[level distance = 1cm, sibling distance=4cm]
\begin{tikzpicture}[]
  \node {$(\frac{\lambda}{\lambda'}, \frac{\nu}{\nu'}, \frac{\mu}{\mu'})$}
    child {node {$(\frac{\lambda}{\lambda'}, \frac{\lambda + \nu}{\lambda' + \nu'}, \frac{\nu}{\nu'})$}}
    child {node {$(\frac{\nu}{\nu'}, \frac{\mu + \nu}{\mu' + \nu'}, \frac{\mu}{\mu'})$}};
\end{tikzpicture}
\end{center}
The middle entry of each triple is a pair of coprime positive integers and in particular
a well-defined rational number. The {\em Stern-Brocot tree} now is the binary tree consisting of these
middle entries, containing every positive rational number precisely once.
Note that by abuse of notion, we refer to the tree formed by above triples as Stern-Brocot tree as well.
Now, starting with
$$
(m_{\frac{1}{0}}, m_{\frac{1}{1}}, m_{\frac{0}{1}}) := (1, 5, 2)
$$
and extending to a bijection of trees,
we define a bijection between triples $(\frac{\lambda}{\lambda'}, \frac{\nu}{\nu'}, \frac{\mu}{\mu'})$ and regular Markoff triples
$(m_{\frac{\lambda}{\lambda'}}, m_{\frac{\nu}{\nu'}}, m_{\frac{\mu}{\mu'}})$. In particular,
we get a surjective map $\frac{\mu}{\nu} \mapsto m_{\frac{\mu}{\nu}}$
from $\Q_{\geq 0} \cup \{\frac{1}{0}\}$ to the set of Markoff numbers (one version of the uniqueness conjecture
states that this map is a bijection). This indexing of Markoff numbers has been introduced by Frobenius \cite{Frobenius}.

Now it is a natural question whether a positive rational number $\mu/\nu$ enough to construct the corresponding
Markoff triple (and vice versa) without explicitly referring to the binary trees?
To see that this is indeed possible, we start with the following observation. If we consider any triple of the form
$(\frac{\lambda}{\lambda'}, \frac{\nu}{\nu'}, \frac{\mu}{\mu'})$ and write the fractions as lattice vectors $(\lambda, \lambda'),
(\mu, \mu'), (\nu, \nu') \in \Z^2$, then by construction, every two of these lattice vectors form a basis of $\Z^2$.
Conversely, for any primitive lattice vector $(\mu, \nu) \in \Z^2$ it is easy to see that there exist precisely two
lattice vectors $(\mu_1, \nu_1), (\mu_2, \nu_2) \in [0, \mu] \times [0, \nu] \subset \Z^2$ such that $(\mu, \nu), (\mu_1, \nu_1)$
and $(\mu, \nu), (\mu_2, \nu_2)$ form bases of $\Z$, respectively. Also it is immediately clear that $(\mu, \nu) =
(\mu_1, \nu_1) + (\mu_2, \nu_2)$ and hence $(\mu_1, \nu_1), (\mu_2, \nu_2)$ form a basis as well. It follows that
either $(\mu_1/\nu_1, \mu/\nu, \mu_2/\nu_2)$ or $(\mu_2/\nu_2, \mu/\nu, \mu_1/\nu_1)$ represents a vertex in the
Stern-Brocot tree.

Now, consider any fraction $\mu / \nu$ with corresponding Stern-Brocot triple $(\mu_1 / \nu_1, \mu / \nu, \mu_2 / \nu_2)$.
We denote $(m_{\mu_1 / \nu_1}, m_{\mu/\nu}, m_{\mu_2 / \nu_2})$ the corresponding Markoff triple and
$(r_{\mu_1 / \nu_1}, r_{\mu/\nu}, r_{\mu_2 / \nu_2})$, $(s_{\mu_1 / \nu_1}, s_{\mu/\nu}, s_{\mu_2 / \nu_2})$ the associated
weights and coweights. Frobenius has determined the continued fraction expansion of $m_{\mu/\nu}/r_{\mu/\nu}$ as follows
(regarding the definition of length of a continued fraction, please check the discussion right after Formula
\ref{Cassini2} in Appendix \ref{continuants}).

\begin{theorem}[\cite{Frobenius}, \S's 10, 11, \cite{CusickFlahive}, Ch. 2]\label{frobeniustheorem}
With above notation, assume that $m_{\mu/\nu} > 2$. Then the continued fraction expansion of
$\frac{m_{\mu/\nu}}{r_{\mu/\nu}}$ has the following properties.
\begin{enumerate}[(i)]
\item\label{frobeniustheoremi}
If we write $\frac{m_{\mu/\nu}}{r_{\mu/\nu}} = [a_1, \dots, a_n]$ with $a_1, a_n > 1$ then $n = 2s$ is even.
\item\label{frobeniustheoremii}
The continued fraction expansion $\frac{m_{\mu/\nu}}{r_{\mu/\nu}} = [a_1, \dots, a_{2s}]$ is symmetric,
i.e  $a_i = a_{2s + 1 - i}$ for every $1 \leq i < 2s$.
\item\label{frobeniustheoremiii}
The continued fraction expansion of $\frac{m_{\mu/\nu}}{r_{\mu/\nu}}$ begins and ends in $2$, i.e.
$$
\frac{m_{\mu/\nu}}{r_{\mu/\nu}} = [2, S(\mu, \nu), 2].
$$
\item\label{frobeniustheoremiv}
For the case $\nu = 1$ with $\frac{m_{\mu/1}}{r_{\mu/1}} = \frac{F_{2\mu + 3}}{F_{2\mu + 1}}$ we have:
$$
S(\mu, 1) = 1_{2\mu - 2},
$$
where $1_{2\mu - 2}$ denotes the $2\mu-2$-fold repetition of $1$.
\item\label{frobeniustheoremv}
For $\nu > 1$ and $1 \leq i < \nu$ we set
$$
\kappa(i) := \lfloor i\frac{\mu}{\nu} \rfloor - \lfloor (i - 1)\frac{\mu}{\nu} \rfloor.
$$
Then:
$$
S(\mu, \nu) = 1_{2\kappa(1)}, 2, 2, 1_{2\kappa(2)}, 2, 2, \dots, 2, 2, 1_{2\kappa(\nu - 1)}, 2, 2, 1_{2\kappa(1)}.
$$
\item\label{frobeniustheoremvi}
Consider a Markoff triple $(m_{\mu_1/\nu_1}, m_{\mu/\nu}, m_{\mu_2/\nu_2})$ with $\mu, \nu > 1$. Then
$$
S(\mu, \nu) = S(\mu_1, \nu_1), 2, 2, 1, 1, S(\mu_2, \nu_2) = S(\mu_2, \nu_2), 1, 1, 2, 2, S(\mu_1, \nu_1).
$$
\end{enumerate}
\end{theorem}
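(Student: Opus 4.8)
The plan is to prove Theorem \ref{frobeniustheorem} by induction over the Markoff tree, using the Stern-Brocot recursion as the scaffolding and the mutation formulas for weights (Lemma \ref{cofactorlemmaf}(\ref{cofactorlemmafi})) as the engine. The base case is the root triple $(1,5,2)$, for which one checks directly that $29/12 = [2,2,2,2,2]$... wait, one rather works with a slightly larger seed: since part (\ref{frobeniustheoremiv}) handles the whole Fibonacci branch $(1, F_{2\mu+3}, F_{2\mu+1})$ with $S(\mu,1) = 1_{2\mu-2}$, I would first verify that case separately. Recall $F_{2\mu+3}/F_{2\mu+1} = [2, 1_{2\mu-2}, 2]$ follows from the identity $F_{n+1}/F_{n-1} = [2,1,1,\dots,1,2]$, which is itself a routine induction on $\mu$ using the continuant recursion from Appendix \ref{continuants} together with the Cassini-type relation $F_{n+1}F_{n-1} - F_n^2 = \pm 1$; simultaneously this pins down $r_{F_{2\mu+3}} = F_{2\mu+1}$, consistent with the formula for weights on the Fibonacci branch given in the text. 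So parts (\ref{frobeniustheoremi})--(\ref{frobeniustheoremiv}) all hold along the left spine.

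The heart of the matter is part (\ref{frobeniustheoremvi}): given the Stern-Brocot triple $(\mu_1/\nu_1, \mu/\nu, \mu_2/\nu_2)$ with $\mu,\nu>1$, express $S(\mu,\nu)$ in terms of $S(\mu_1,\nu_1)$ and $S(\mu_2,\nu_2)$. Parts (\ref{frobeniustheoremi})--(\ref{frobeniustheoremiii}) then follow formally: symmetry and the bracketing $[2, S, 2]$ are preserved by the concatenation rule $S \mapsto S_1, 2,2,1,1, S_2$ because the reverse of that word is $\widetilde{S_2}, 1,1,2,2, \widetilde{S_1}$, and if each $S_i$ is a palindrome the two expressions for $S$ given in (\ref{frobeniustheoremvi}) are reverses of each other, forcing $S$ palindromic; evenness of the length is inherited since $|S| = |S_1| + |S_2| + 4$. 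Part (\ref{frobeniustheoremv}) is then a bookkeeping consequence: unwinding the recursion (\ref{frobeniustheoremvi}) down the left edge from $\mu/\nu$ produces exactly the blocks $1_{2\kappa(i)}, 2, 2$ with the $\kappa(i)$ counting lattice points under the line of slope $\mu/\nu$, which is the standard combinatorial description of the Stern-Brocot / continued-fraction path of $\mu/\nu$; I would phrase this via the three-distance / Beatty-sequence identity $\kappa(i) = \lfloor i\mu/\nu\rfloor - \lfloor(i-1)\mu/\nu\rfloor$ matching the exponents produced by iterating (\ref{frobeniustheoremvi}).

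To actually prove (\ref{frobeniustheoremvi}) I would translate the concatenation of continued fractions into matrix products. Writing $[a_1,\dots,a_n]$ via the product of $\begin{pmatrix} a_i & -1 \\ 1 & 0\end{pmatrix}$, the numerator/denominator $(m_{\mu/\nu}, r_{\mu/\nu})$ is read off from such a product, and the two Stern-Brocot children correspond to multiplying the $2\times 2$ "Markoff matrix" of the parent (encoding $(m, r)$ and the cofactor data from \eqref{cofactorequations1}) on the left or right by a fixed unipotent matrix — this is precisely Cohn's matrix description alluded to in the remark after Lemma \ref{cofactorlemmaf}. So one computes: the continuant of the word $S_1, 2,2,1,1,S_2$ equals, by the continuant addition law (Appendix \ref{continuants}), a bilinear combination of the continuants of $S_1$ and $S_2$ with coefficients given by the continuants of the connecting word $2,2,1,1$; matching this against $m_{\mu/\nu} = m_{\mu_1/\nu_1} + m_{\mu_2/\nu_2}$... no, against the Cohn-matrix product, recovers both $m_{\mu/\nu}$ and $r_{\mu/\nu}$ with the correct value, and the second expression for $S(\mu,\nu)$ comes from the mirror computation with $S_2$ first.

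The main obstacle I anticipate is bridging the two languages cleanly: showing that the Stern-Brocot descent rule on $(\frac{\lambda}{\lambda'}, \frac{\nu}{\nu'}, \frac{\mu}{\mu'})$, the mutation rule on Markoff triples with weights, and left/right multiplication by the fixed matrices $\left(\begin{smallmatrix}1&1\\0&1\end{smallmatrix}\right)$, $\left(\begin{smallmatrix}1&0\\1&1\end{smallmatrix}\right)$ on Cohn matrices are all literally the same recursion — once that dictionary is set up, (\ref{frobeniustheoremvi}) reduces to the single verification that conjugating/multiplying by the connecting block $2,2,1,1$ reproduces the weight-mutation formulas $r_E = 3f r_g - r_e$ etc., which is a finite check. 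Keeping track of which of the two children is "left" (so that the palindrome and the initial/terminal $2$ land consistently with the convention $(1,5,2)$ at the root) is the fiddly part; I would handle it by fixing once and for all the normalization at $(1,13,5)$, i.e. $13/5 = [2,1,1,2]$ so $S(1,2)$... wait, $(1,13,5)$ corresponds to which $\mu/\nu$? — to $\mu/\nu = 1/2$, and indeed $13/5 = [2,1,1,2]$ gives $S(1,2) = 1,1$, matching $S(\mu,1) = 1_{2\cdot 1 - 2}$ is empty for the parent and the connecting $1,1$ with empty $S_1, S_2$... hmm, this shows the recursion needs the Fibonacci and Pell branches as *simultaneous* base cases, not just the root, so I would state the induction as: (\ref{frobeniustheoremvi}) for all interior vertices, with (\ref{frobeniustheoremiv}) and its Pell analogue as the boundary data, and verify the two outermost branches by the explicit Fibonacci/Pell computations before turning the crank.
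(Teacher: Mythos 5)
First, a point of comparison that matters here: the paper does not prove Theorem \ref{frobeniustheorem} at all. It is imported wholesale from Frobenius and from Cusick--Flahive (the bracketed citation in the theorem header is the ``proof''), and the surrounding text only uses it. So there is no in-paper argument to measure you against; your proposal has to stand on its own. On its own terms, your strategy --- verify the Fibonacci and Pell branches explicitly as boundary data, prove the concatenation rule (\ref{frobeniustheoremvi}) by the continuant factorization law (equivalently Cohn matrices), and then derive (\ref{frobeniustheoremi})--(\ref{frobeniustheoremiii}) formally from palindromicity and length bookkeeping --- is exactly the standard modern proof (it is how Aigner, \S\S 4 and 7, organizes it), and the key reduction you identify is right: once the dictionary between Stern--Brocot descent, weight mutation $r_E = 3f r_g - r_e$, and multiplication by the fixed block $2,2,1,1$ is set up, (\ref{frobeniustheoremvi}) is a finite continuant computation of precisely the kind the paper itself carries out later in the proof of Proposition \ref{quadraticspectrum1} (where $K(2, S(\mu,\nu),1,1,2) = 3g - r_g$ etc.\ are computed). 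The argument that the two expressions in (\ref{frobeniustheoremvi}) coincide as words because both evaluate to $g/r_g$ and normalized continued fraction expansions are unique is also correct and is the clean way to get (\ref{frobeniustheoremii}).

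The one genuine gap is part (\ref{frobeniustheoremv}). You describe it as ``bookkeeping,'' but deriving the closed formula $S(\mu,\nu) = 1_{2\kappa(1)}, 2,2, 1_{2\kappa(2)}, \dots$ with $\kappa(i) = \lfloor i\mu/\nu\rfloor - \lfloor (i-1)\mu/\nu\rfloor$ from the recursion (\ref{frobeniustheoremvi}) is the standard-factorization theorem for Christoffel words (which the paper explicitly flags as the relevant combinatorics after Corollary \ref{frobeniuscorollary}), and it requires an actual induction showing that the Beatty-type increments of $\mu/\nu$ are obtained by splicing those of $\mu_1/\nu_1$ and $\mu_2/\nu_2$ at the Stern--Brocot mediant; asserting that the blocks ``come out right'' is not yet a proof. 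Two smaller corrections: the triple $(1,13,5)$ sits at $\mu/\nu = 2/1$, not $1/2$ (your own check $13/5=[2,1,1,2]$, $S(2,1)=1,1$ then goes through), and your instinct at the end is right --- the induction must be stated with both outermost branches as simultaneous base cases, since (\ref{frobeniustheoremvi}) only applies when $\mu,\nu>1$.
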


\begin{example}
For $\mu/\nu = 8/5$ we get $\kappa(1) = \kappa(3) = 1$ and $\kappa(2) = \kappa(4) = 2$, so
$$
\frac{m_{8/5}}{r_{8/5}} = [2, 1, 1, 2, 2, 1, 1, 1, 1, 2, 2, 1, 1, 2, 2, 1, 1, 1, 1, 2, 2, 1, 1, 2] = \frac{4400489}{1701181},
$$
and thus $s_{8/5} = 657658$. The vector $(8, 5)$ splits into $(3, 2) + (5, 3)$ such that every two of $(8, 5)$, $(3, 2)$, $(5, 3)$
form a basis of $\Z^2$.
For $(5, 3)$ we get $\kappa(1) = 1$, $\kappa(2) = 2$, hence
$$
\frac{m_{5/3}}{r_{5/3}} = [2, 1, 1, 2, 2, 1, 1, 1, 1, 2, 2, 1, 1, 2] = \frac{7561}{2923}
$$
with $s_{3/2} = 1130$.
For $(3, 2)$, we get $\kappa(1) = 1$, hence
$$
\frac{m_{3/2}}{r_{3/2}} = [2, 1, 1, 2, 2, 1, 1, 2] = \frac{194}{75}
$$
with $s_{3/2} = 29$. 
This data corresponds to the Markoff triple
$$
(7561, 4400489, 194).
$$
\end{example}

We observe that we can partition the continued fraction expansion of $m_{\mu/\nu} / r_{\mu/\nu}$ into segments
of the form $2, 1_{2\kappa(i)}, 2$ and
$$
[2, 1_{2\kappa(i)}, 2] = \frac{F_{2\kappa(i) + 5}}{F_{2\kappa(i) + 3}},
$$
so, in a sense, the continued fraction expansions yield a decomposition of a Markoff number into Fibonacci numbers. For
sake of exposition, we choose to call a sequence of the form $2, 1_k, 2$ for $k \geq 0$ a {\em Fibonacci segment}.
The following statement then follows by induction and $\sum_{i = 1}^{\nu - 1} \kappa(i) = \mu - 1$.

\begin{corollary}\label{frobeniuscorollary}\ 
\begin{enumerate}[(i)]
\item The length of the continued fraction expansion of $\frac{m_{\mu/\nu}}{r_{\mu/\nu}}$ is $2(\mu + \nu - 1)$.
\item The continued fraction expansion of $\frac{m_{\mu/\nu}}{r_{\mu/\nu}}$ consists of $\nu$ Fibonacci segments.
In particular, it contains $2(\mu - 1)$ ones and $2\nu$ twos.
\end{enumerate}
\end{corollary}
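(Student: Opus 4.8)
The plan is to obtain both assertions directly from the explicit shape of the continued fraction $\frac{m_{\mu/\nu}}{r_{\mu/\nu}} = [2, S(\mu,\nu), 2]$ furnished by Theorem \ref{frobeniustheorem}(iii)--(v): I would exhibit the partition into Fibonacci segments explicitly and then count entries. Recall that a \emph{Fibonacci segment} is a string $2, 1_k, 2$ with $k \geq 0$. When $\nu = 1$, Theorem \ref{frobeniustheorem}(iv) gives the continued fraction $[2, 1_{2\mu-2}, 2]$, which is a single Fibonacci segment; it has $2(\mu-1)$ ones, $2 = 2\nu$ twos, and length $2\mu = 2(\mu+\nu-1)$, so the corollary holds, and we may assume $\nu > 1$.

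For $\nu > 1$, Theorem \ref{frobeniustheorem}(v) presents $S(\mu,\nu)$ as the $\nu$ runs of ones $1_{2\kappa(1)}, 1_{2\kappa(2)}, \dots, 1_{2\kappa(\nu-1)}, 1_{2\kappa(1)}$, consecutive runs being separated by a pair $2,2$ (an empty run $1_0$ just meaning that the two adjacent pairs of twos coalesce). Hence, after prepending and appending a $2$, the string $[2, S(\mu,\nu), 2]$ is \emph{literally} the concatenation of the $\nu$ Fibonacci segments $[2, 1_{2\kappa(1)}, 2], [2, 1_{2\kappa(2)}, 2], \dots, [2, 1_{2\kappa(\nu-1)}, 2], [2, 1_{2\kappa(1)}, 2]$: each separating pair $2,2$ is precisely the trailing $2$ of one segment immediately followed by the leading $2$ of the next, while the outer two $2$'s are the leading $2$ of the first segment and the trailing $2$ of the last. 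This is the partition into $\nu$ Fibonacci segments asserted in (ii). Since each segment contains exactly two $2$'s and distinct segments overlap in no entry, the continued fraction has $2\nu$ twos; and since its $1$'s are precisely those lying in the runs, their number is $2\sum_{i=1}^{\nu-1}\kappa(i) + 2\kappa(1)$. Now $\sum_{i=1}^{\nu-1}\kappa(i)$ telescopes to $\lfloor(\nu-1)\mu/\nu\rfloor = \lfloor\mu - \mu/\nu\rfloor = \mu - 1 - \lfloor\mu/\nu\rfloor$, the last equality holding because $\mu/\nu \notin \Z$ (as $\gcd(\mu,\nu) = 1$ and $\nu \geq 2$); together with $\kappa(1) = \lfloor\mu/\nu\rfloor$ this gives $2(\mu-1)$ ones, completing (ii).

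Finally, (i) is immediate since the continued fraction is built only from $1$'s and $2$'s, so its length is $2(\mu-1) + 2\nu = 2(\mu+\nu-1)$. An alternative proof of (ii), which is the route suggested by the remark preceding the corollary, replaces the explicit segmentation by an induction along the Stern--Brocot tree using Theorem \ref{frobeniustheorem}(vi): for a triple $(\mu_1/\nu_1, \mu/\nu, \mu_2/\nu_2)$ with $\mu,\nu > 1$ one has $(\mu,\nu) = (\mu_1,\nu_1)+(\mu_2,\nu_2)$, and $[2, S(\mu,\nu), 2]$ is the concatenation of $[2, S(\mu_1,\nu_1), 2]$ with the string $(2,1,1,S(\mu_2,\nu_2),2)$ --- and the latter is nothing but $[2, S(\mu_2,\nu_2), 2]$ with $1,1$ inserted right after its leading $2$, i.e. with its first Fibonacci segment $[2,1_k,2]$ enlarged to $[2,1_{k+2},2]$. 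So the numbers of Fibonacci segments add up to $\nu_1+\nu_2 = \nu$ and the numbers of ones to $2(\mu_1-1)+2(\mu_2-1)+2 = 2(\mu-1)$, with the two spines $\nu = 1$ and $\mu = 1$ (on the latter all $\kappa(i) = 0$) as base cases. In either approach the point demanding care is the combinatorial bookkeeping around the empty runs $1_0$, respectively around the base cases where Theorem \ref{frobeniustheorem}(vi) no longer applies; that is the main thing to get right.
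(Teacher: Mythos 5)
Your proof is correct and follows essentially the same route as the paper, whose entire justification is the one line ``follows by induction and $\sum_{i=1}^{\nu-1}\kappa(i)=\mu-1$'': your first argument makes the segment count from Theorem \ref{frobeniustheorem}(v) explicit, and your alternative is exactly the induction via Theorem \ref{frobeniustheorem}(vi). One small point in your favour: your telescoping gives $\sum_{i=1}^{\nu-1}\kappa(i)=\mu-1-\lfloor\mu/\nu\rfloor$, which shows the identity quoted in the paper is literally correct only for $\mu<\nu$; the deficit is restored by the repeated final run $1_{2\kappa(1)}$, and your bookkeeping handles this correctly.
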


It follows that just from the pair $(m_{\mu/\nu}, r_{\mu/\nu})$ (or even just $\mu/ \nu$) we can reconstruct
the complete Markoff triple.
It has already been pointed out by Frobenius \cite[\S 11]{Frobenius}, the composition of the continued fractions of
$\frac{m_{\mu/\nu}}{r_{\mu/\nu}}$ can be interpreted in terms of Christoffel words and their standard factorizations.
We want to add the observation that this also allows another very nice graphical interpretation. For this, consider
the set $[0, \mu + \nu] \times [0, \nu] \subset \Z^2$ as the set of $(\mu + \nu + 1) (\nu + 1)$ corner points of a collection of
$1 \times 1$-boxes
in the plane. Then there are precisely $(\mu - 1) + 2\nu$ boxes that touch the diagonal from $(0, 0)$ to $(\mu + \nu, \nu)$.
Now we fill the first and the last box, as well as every corner box with $2$'s, and the remaining boxes with pairs $1, 1$.
Then, if we read off the numbers from left to right, we obtain the continued fraction expansion of $\frac{m_{\mu/\nu}}{r_{\mu/\nu}}$.
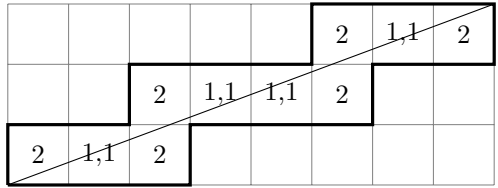
\begin{figure}[ht]
\begin{tikzpicture}[]
\draw[step=0.8cm,gray,very thin] (0, 0) grid (6.4,2.4);
\draw (0, 0) -- (6.4, 2.4);
\draw[very thick] (0, 0) -- (2.4, 0) -- (2.4, 0.8) -- (4.8, 0.8) -- (4.8, 1.6) -- (6.4, 1.6) --
(6.4, 2.4) -- (4.0, 2.4) -- (4.0, 1.6) -- (1.6, 1.6) -- (1.6, 0.8) -- (0, 0.8) -- (0, 0);
\node at (0.4, 0.4) {2};
\node at (1.2, 0.4) {1,1};
\node at (2.0, 0.4) {2};
\node at (2.0, 1.2) {2};
\node at (2.8, 1.2) {1,1};
\node at (3.6, 1.2) {1,1};
\node at (4.4, 1.2) {2};
\node at (4.4, 2.0) {2};
\node at (5.2, 2.0) {1,1};
\node at (6.0, 2.0) {2};
\end{tikzpicture}
\caption{The snake diagram and continued fraction expansion of $\frac{m_{5/3}}{r_{5/3}} = \frac{7561}{2923}$.}\label{snakefigure1}
\end{figure}
Also, every row in this this diagram contains precisely one Fibonacci segment.
Figure \ref{snakefigure1} shows the corresponding ``snake diagram'' for $(\mu, \nu) = (5, 3)$. For more details on Christoffel
words and snake diagrams we refer to \cite[\S 7]{Aigner13}.

\begin{remark*}
Note that in order to reconstruct a Markoff triple from a pair $(m_{\mu/\nu}, r_{\mu/\nu})$, we
need the Euclidean algorithm to compute $\mu_1 / \nu_1$ and $\mu_2 / \nu_2$. Therefore, while
we do not have to deal explictly with mutations, implicitly we still are required to backtrack the full path
to $(m_{\mu/\nu}, r_{\mu/\nu})$ via the Stern-Brocot tree.
\end{remark*}

\begin{sub}\label{complement}
A rational number $\mu/\nu > 0$ and its inverse $\nu / \mu$ correspond to opposite positions in the Stern-Brocot tree
with respect to reflection along the middle axis. So given a pair $(m, r)$ consisting of a Markoff number and
a weight $r$, we have seen above that we can determine $(\mu, \nu)$ such that $(m, r) = (m_{\mu/\nu}, r_{\mu/\nu})$ together
with the Markoff triple which gives rise to $(m, r)$. Moreover, once given $(\mu, \nu)$, we can also compute the opposite
Markoff triple corresponding to $(m_{\nu/\mu}, r_{\nu/\mu})$. This is nicely reflected in the corresponding continued fraction
expansions as follows. Both sequences $S(\mu, \nu) = x_1, \dots, x_{2(\mu + \nu - 2)}$ and $S(\nu, \mu) = y_1, \dots, y_{2(\mu + \nu - 2)}$
have the same length by Corollary \ref{frobeniuscorollary} and moreover, as has been observed by Frobenius \cite[p. 620]{Frobenius},
$x_i + y_i = 3$ for every $1 \leq i \leq 2(\mu + \nu - 2)$. In other words, we
obtain $S(\nu, \mu)$ from $S(\mu, \nu)$ by changing $2$'s to $1$'s and vice versa.
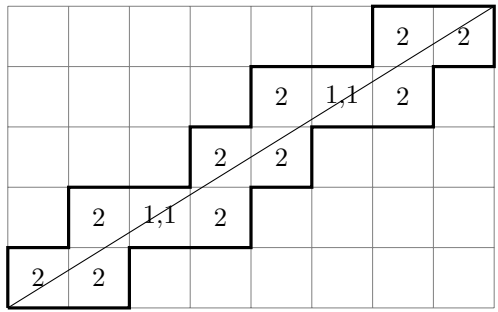
\begin{figure}[ht]
\begin{tikzpicture}[]
\draw[step=0.8cm,gray,very thin] (0, 0) grid (6.4,4.0);
\draw (0, 0) -- (6.4, 4.0);
\draw[very thick] (0, 0) -- (1.6, 0) -- (1.6, 0.8) -- (3.2, 0.8) -- (3.2, 1.6) -- (4.0, 1.6) --
(4.0, 2.4) -- (5.6, 2.4) -- (5.6, 3.2) -- (6.4, 3.2) -- (6.4, 4.0) -- (4.8, 4.0) -- (4.8, 3.2) --
(3.2, 3.2) -- (3.2, 2.4) -- (2.4, 2.4) -- (2.4, 1.6) -- (0.8, 1.6) -- (0.8, 0.8) -- (0, 0.8) -- (0, 0);
\node at (0.4, 0.4) {2};
\node at (1.2, 0.4) {2};
\node at (1.2, 1.2) {2};
\node at (2.0, 1.2) {1,1};
\node at (2.8, 1.2) {2};
\node at (2.8, 2.0) {2};
\node at (3.6, 2.0) {2};
\node at (3.6, 2.8) {2};
\node at (4.4, 2.8) {1,1};
\node at (5.2, 2.8) {2};
\node at (5.2, 3.6) {2};
\node at (6.0, 3.6) {2};
\end{tikzpicture}
\caption{The snake diagram and continued fraction expansion of $\frac{m_{3/5}}{r_{3/5}} = \frac{37666}{15571}$.}\label{snakefigure2}
\end{figure}
Figure \ref{snakefigure2} shows the case opposite of $(\mu, \nu) = (5, 3)$ from Figure \ref{snakefigure1}.
\end{sub}

\section{Cantor set}\label{cantor}

Consider the $n$-th level of the Markoff tree, where $n > 1$ and denote $m_1, \dots, m_{2^n}$ the maximal elements of the Markoff triples
in this level, enumerated from left to right. By Corollary \ref{cofactorcorollary} (\ref{cofactorcorollaryi}), we have
$$
\frac{F_{2n + 3}}{F_{2n + 1}} = \frac{m_1}{r_{m_1}} > \cdots > \frac{m_{2^n}}{r_{m_{2^n}}} = \frac{P_{2n + 1}}{P_{2n - 1}}.
$$
Moreover, both $F_{2n + 3}/F_{2n + 1}$ and $P_{2n + 1}/P_{2n - 1}$ are monotonous, bounded sequences which converge from below
and above, respectively, to their respective limits. Using
\begin{align*}
\lim_{i \to \infty} \frac{F_{2i + 1}}{F_{2i - 1}} & = \lim_{i \to \infty} [2, 1_{2(i - 1)}, 2]  = \lim_{i \to \infty} [2, 1_{2i}] = [2, \bar{1}] =
\frac{1}{2}(3 + \sqrt{5}), \\
\lim_{i \to \infty} \frac{P_{2i + 1}}{P_{2i - 1}} & = \lim_{i \to \infty} [2_{2i}] = [\bar{2}] = 1 + \sqrt{2},
\end{align*}
we see that the spectrum of fractions $m/r$ for Markoff numbers $m \geq 5$ is bounded:
\begin{equation}\label{cantorbound}
1 + \sqrt{2} < \frac{m}{r} < \frac{1}{2}(3 + \sqrt{5}).
\end{equation}
We can consider the Fibonacci- and Pell branch as particular infinite paths in the Markoff tree and above limits
as limits over these paths. More generally, we can encode every vertex of a binary tree by its path starting from
the root. We may write every such path as a finite word in the alphabet $\{L, R\}$, where the empty word stands
for the root and $L$ and $R$ designate left- and right mutation, respectively. Similarly, an infinite word in this
alphabet denotes an infinite path which we can consider as the limit over its finite starting segments in the
obvious sense.

\begin{definition}
For any $X \in \{L, R\}$ we denote by $X^n$ $n$-fold repetition for any $n > 0$ and by $\bar{X}$ we denote infinite repetition.
\end{definition}

In particular, words of the form $L^n$ and $R^n$ for $n \geq 0$ describe the entries in the Fibonacci- and Pell
branches, and $\bar{L} = \lim_{n \to \infty} L^n$, $\bar{R} = \lim_{n \to \infty} R^n$, respectively.

\begin{definition}
We denote by $\mathfrak{P}_f$ the set of finite paths, by $\mathfrak{P}$ the set of infinite paths, and by
$\bar{\mathfrak{P}}_f \subset \mathfrak{P}$ paths which are of the form $p\bar{X}$ for $p \in \mathfrak{P}_f$ and $X \in \{L, R\}$.
\end{definition}

Clearly, both $\mathfrak{P}_f$, $\bar{\mathfrak{P}}_f$ are countable; in particular, mapping a path $X_1 X_2 \cdots X_n$ with
$n > 0$ to $X_1 X_2 \dots X_{n - 1} \bar{X}_n$ is a surjection from $\mathfrak{P}_f \setminus \{\emptyset\}$ to $\bar{\mathfrak{P}}_f$.
The set $\mathfrak{P}$ has the same cardinality as the real numbers. Now let $p = \lim_{n \to \infty} p_n \in \mathfrak{P}$,
where $p_n$ denotes the starting segment of length $n$ of $p$ which also corresponds to a rational number $\mu_n / \nu_n$ in
the Stern-Brocot tree.

\begin{proposition}\label{limitsexist}
With above notation, let $p = \lim_{n \to \infty} p_n \in \mathfrak{P}$. Then the limit
$\lim_{n \to \infty} m_{\mu_n/\nu_n}/r_{\mu_n/\nu_n}$ $= \mathfrak{l}_p$ exists. In particular, every $m_{\mu_n/\nu_n}/r_{\mu_n/\nu_n}$ is
a rational convergent of the continued fraction expansion of $\mathfrak{l}_p$.
\end{proposition}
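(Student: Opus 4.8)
The plan is to track the reciprocal slope $\sigma_n := r_{\mu_n/\nu_n}/m_{\mu_n/\nu_n} = 1/q_n$ of the maximal element together with its two Stern--Brocot neighbours. Write the Markoff triple attached to the prefix $p_n$ as $(e_n, g_n, f_n)$ with weights $(r_{e_n}, r_{g_n}, r_{f_n})$, so $g_n = m_{\mu_n/\nu_n}$ is the maximal element, $e_n$ and $f_n$ are its left and right neighbours, and $q_n := m_{\mu_n/\nu_n}/r_{\mu_n/\nu_n} = g_n/r_{g_n}$. Put $\ell_n := r_{e_n}/e_n$ and $u_n := r_{f_n}/f_n$, so that $\ell_n < \sigma_n < u_n$ by Lemma \ref{cofactorlemmaf} (\ref{cofactorlemmafii}). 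Reading off the mutations from Figure \ref{mutationfig}, a left mutation sends $(e_n, g_n, f_n)$ to $(e_n, g_{n+1}, g_n)$ and a right mutation sends it to $(g_n, g_{n+1}, f_n)$, and by Lemma \ref{cofactorlemmaf} (\ref{cofactorlemmafi}) the attached weights are carried along unchanged. Hence after a left mutation $\ell_{n+1} = \ell_n$ and $u_{n+1} = \sigma_n$, while after a right mutation $\ell_{n+1} = \sigma_n$ and $u_{n+1} = u_n$. In particular the closed intervals $[\ell_n, u_n]$ are nested, $(\ell_n)$ is non-decreasing, $(u_n)$ is non-increasing, and for each $n$ the number $\sigma_n$ is an endpoint of $[\ell_{n+1}, u_{n+1}]$; set $\ell_\infty := \lim \ell_n \le \lim u_n =: u_\infty$.

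\emph{Existence of $\mathfrak{l}_p$.} If $p$ is eventually $\bar L$, then from some index on $\sigma_n = u_{n+1}$, so $\sigma_n$ decreases to $u_\infty$; symmetrically, if $p$ is eventually $\bar R$ then $\sigma_n$ increases to $\ell_\infty$ (this covers the Fibonacci and Pell branches). In the remaining case $p$ has infinitely many $L$'s and infinitely many $R$'s, and then $e_n \to \infty$ and $f_n \to \infty$, since at every right mutation $e_{n+1} = g_n > e_n$ and at every left mutation $f_{n+1} = g_n > f_n$. The third identity in (\ref{cofactorequations1}), $e_n r_{f_n} - f_n r_{e_n} = 3 e_n f_n - g_n$, gives
$$
u_n - \ell_n \;=\; \frac{e_n r_{f_n} - f_n r_{e_n}}{e_n f_n} \;=\; 3 - \frac{g_n}{e_n f_n},
$$
and since $g_n = \lfloor \Delta_{e_n, f_n}\rfloor$ by Lemma \ref{somewhatsharpbound} (\ref{somewhatsharpboundi}), we get $g_n/(e_n f_n) > \tfrac{1}{2}\bigl(\sqrt{9 - 4/f_n^2} + \sqrt{9 - 4/e_n^2}\bigr) - 1/(e_n f_n) \to 3$, hence $u_n - \ell_n \to 0$. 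Thus $\ell_\infty = u_\infty =: \sigma_\infty$ and $\sigma_n \to \sigma_\infty$ by squeezing. In all cases $\sigma_n$ converges, and its limit lies in $\bigl[\tfrac12(3 - \sqrt5),\, \sqrt2 - 1\bigr]$ by (\ref{cantorbound}); so $\mathfrak{l}_p := \lim q_n = 1/\sigma_\infty$ exists.

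\emph{Each $q_n$ is a convergent of $\mathfrak{l}_p$.} I would prove the stronger statement that $q_n$ is a convergent of $q_{n+1}$ and then let $n \to \infty$. Since $g_n > 2$, Theorem \ref{frobeniustheorem} (\ref{frobeniustheoremiii}) gives $q_n = [2, S(\mu_n, \nu_n), 2]$. If $p_{n+1}$ is the right mutation of $p_n$, then applying Theorem \ref{frobeniustheorem} (\ref{frobeniustheoremvi}) to $p_{n+1}$ yields $S(\mu_{n+1}, \nu_{n+1}) = S(\mu_n, \nu_n), 2, 2, 1, 1, S'$, with $S'$ the block of the preserved right neighbour, so the partial quotients of $q_n$ form a strict initial segment of those of $q_{n+1}$. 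If $p_{n+1}$ is the left mutation, then $q_{n+1} = [2, S'', 2, 2, 1, 1, S(\mu_n, \nu_n), 2]$ with $S''$ the block of the preserved left neighbour; the two expressions for $S(\mu_n, \nu_n)$ in Theorem \ref{frobeniustheorem} (\ref{frobeniustheoremvi}), namely $S'', 2, 2, 1, 1, S' = S', 1, 1, 2, 2, S''$, force one of $S', S''$ to be a prefix of the other, and unwinding this together with the identity $[\dots, a, 1] = [\dots, a+1]$ relating the two finite continued-fraction representations of $q_n$ exhibits $q_n$ as a convergent of $q_{n+1}$ in this case too. The nodes where Theorem \ref{frobeniustheorem} (\ref{frobeniustheoremvi}) is unavailable --- the root and the Fibonacci and Pell branches --- are handled separately via Theorem \ref{frobeniustheorem} (\ref{frobeniustheoremiv}) and its Pell analogue, with (\ref{cantorbound}) ruling out the wrong truncation in the finitely many small cases that could a priori cause trouble. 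Since being a convergent is transitive along the path, passing to the limit $q_n \to \mathfrak{l}_p$ shows every $q_n$ is a convergent of $\mathfrak{l}_p$.

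\emph{Where the difficulty lies.} The existence part is soft: the only genuine input is that in the mixed case both neighbouring Markoff numbers blow up, which makes the sharp asymptotics $g_n \sim \Delta_{e_n, f_n} \sim 3 e_n f_n$ of Lemma \ref{somewhatsharpbound} collapse the nested intervals. The real work is the left-mutation step in the last paragraph: turning ``the continued fraction of $q_n$ extends to that of $q_{n+1}$'' into a precise statement requires the word combinatorics of the standard (Christoffel) factorizations of the sequences $S(\mu, \nu)$ --- in particular the prefix relation between the two neighbour blocks, combined with the two-representation ambiguity of finite continued fractions --- together with the separate bookkeeping at the handful of low-level nodes where Frobenius' recursion does not apply.
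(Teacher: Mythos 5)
Your proposal is correct in substance, but its two halves relate quite differently to the paper's proof. For \emph{existence}, you take a genuinely different route. The paper gets convergence essentially for free: once each $m_{\mu_n/\nu_n}/r_{\mu_n/\nu_n}$ is seen to be a convergent of its successor, the partial quotients stabilize and the resulting infinite continued fraction with positive integer entries converges automatically. You instead give a self-contained metric argument --- nesting of the slope intervals $[r_{e_n}/e_n,\, r_{f_n}/f_n]$ from Lemma \ref{cofactorlemmaf} (\ref{cofactorlemmafii}), the identity $e r_f - f r_e = G = 3ef - g$ from (\ref{cofactorequations1}), and $g_n/(e_nf_n) \to 3$ in the mixed case --- which does not rely on Frobenius' explicit expansions at all. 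That is a legitimate and somewhat more robust argument; note also that you do not actually need the sharp bounds of Lemma \ref{somewhatsharpbound} here, since $u_n - \ell_n = G_n/(e_nf_n) < \max(e_n,f_n)/(e_nf_n) = 1/\min(e_n,f_n) \to 0$, because $G_n$ is a non-maximal entry of the parent triple.

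For the \emph{convergent} claim, your right-mutation step coincides with the paper's. Your left-mutation step, however, takes an unnecessary and underdeveloped detour: you invoke the first factorization of Theorem \ref{frobeniustheorem} (\ref{frobeniustheoremvi}) for the new triple and then propose to compare the two factorizations of the \emph{old} block $S(\mu_n,\nu_n)$ to force a prefix relation between the neighbour blocks; the ``unwinding'' you gesture at is not carried out and would require real Christoffel-word combinatorics. The paper sidesteps all of this by applying the \emph{second} equality of Theorem \ref{frobeniustheorem} (\ref{frobeniustheoremvi}) directly to the mutated triple $(e_n, F, g_n)$, giving $S(F) = S(g_n), 1, 1, 2, 2, S(e_n)$ and hence $q_{n+1} = [2, S(g_n), 1, 1, 2, 2, S(e_n), 2]$, of which $q_n = [2, S(g_n), 2] = [2, S(g_n), 1, 1]$ is visibly a truncation. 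Replace your prefix argument by this one line and the rest of your proof stands.
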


\begin{proof}
As we have considered the Fibonacci and Pell branches already above, it suffices to
consider any Stern-Brocot triple $(\mu_1 / \nu_1, \mu / \nu, \mu_2 / \nu_2)$ with $\mu, \nu > 1$.
Then by Theorem \ref{frobeniustheorem} we have
\begin{align*}
[2, S(\mu, \nu), 2] = [2, S(\mu_1, \nu_1), 2, 2, 1, 1, S(\mu_2, \nu_2), 2] = [2, S(\mu_2, \nu_2), 1, 1, 2, 2, S(\mu_1, \nu_1), 2],
\end{align*}
so we can view the continued fraction expansion of $m_{\mu/\nu}/r_{\mu/\nu}$ as continuation of its predecessors'
(note that $[2, S(\mu_2, \nu_2), 2] = [2, S(\mu_2, \nu_2), 1, 1]$). In particular, we can consider mutation as a process
of extending continued fractions to the right:
\begin{center}
\footnotesize
\tikzstyle{level 1}=[level distance = 1.3cm, sibling distance=8cm]
\begin{tikzpicture}[]
  \node[align=center] {$[2, S(\mu_1, \nu_1), 2]$, $[2, S(\mu, \nu), 2]$, $[2, S(\mu_2, \nu_2), 2]$}
    child {node[align=center] at (1, 0.5) {$[2, S(\mu_1, \nu_1), 2]$, $[2, S(\mu_1, \nu_1), 2, 2, 1, 1, S(\mu, \nu), 2]$, $[2, S(\mu, \nu), 2]$}}
    child {node[align=center]  at (-1, 0) {$[2, S(\mu, \nu), 2]$, $[2, S(\mu, \nu), 2, 2, 1, 1, S(\mu_2, \nu_2), 2]$, $[2, S(\mu_2, \nu_2), 2]$}};
\end{tikzpicture}
\end{center}
where:
\begin{align*}
[2, S(\mu_1, \nu_1), 2, 2, 1, 1, S(\mu, \nu), 2] & = [2, S(\mu_1, \nu_1), 2, 2, 1, 1, S(\mu_1, \nu_1), 2, 2, 1, 1, S(\mu_2, \nu_2), 2]\\
& = [2, S(\mu, \nu), 1, 1, 2, 2, S(\mu_1, \nu_1), 2].
\end{align*}
It follows that the limit infinite continued fraction exists for any path down the Markoff tree, hence the assertions are shown.
\end{proof}

It is easy to see that the $\mathfrak{l}_p$ are contained in the closed interval $[1 + \sqrt{2} ,\frac{1}{2}(3 + \sqrt{5})]$
and we are going to show that they form a Cantor set. Before we can do this, we consider one more special case.

\begin{proposition}\label{firstoffprop}
\begin{enumerate}[(i)]
\item\label{firstoffpropi}
Let $p = L^n\bar{R}$, then
$$\mathfrak{l}_p = [\overline{2, 1_{2n}, 2}] = \frac{1}{2}\frac{3F_{2n + 3} - 2F_{2n + 1} + \sqrt{9 F_{2n + 3}^2 - 4}}{3F_{2n + 1} - F_{2n - 1}}.$$
\item\label{firstoffpropii}
Let $p = R^n\bar{L}$, then
$$\mathfrak{l}_p = 1 + [\overline{1, 2_{2n}, 1}] = \frac{1}{2}\frac{3P_{2n + 1} + 2P_{2n - 1} + \sqrt{9 P_{2n + 1}^2 - 4}}{3 P_{2n - 1} + P_{2n - 3}}.$$
\end{enumerate}
\end{proposition}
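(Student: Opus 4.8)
The plan is to treat the two statements in parallel, since they are mirror images under the symmetry swapping $L\leftrightarrow R$ and the Fibonacci and Pell branches; I will carry out (i) in detail and only indicate the changes for (ii). There are three parts: identify the ``base'' Markoff triple reached after the initial $L^n$ (resp.\ $R^n$); show that the continued fractions along the infinite tail $\bar R$ (resp.\ $\bar L$) are exactly the period truncations of one fixed purely periodic continued fraction; and finally evaluate that periodic fraction in closed form via the growth formulas of Lemma~\ref{growthlemma}.

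After the $n$ leading left mutations, $p=L^n\bar R$ sits at the Fibonacci-branch triple $(e,g,f)=(1,F_{2n+5},F_{2n+3})$ with weights $(r_e,r_g,r_f)=(0,F_{2n+3},F_{2n+1})$; it has Stern--Brocot index $(n+1)/1$, so Theorem~\ref{frobeniustheorem}(\ref{frobeniustheoremiv}) gives $g/r_g=[2,1_{2n},2]$. The remaining mutations in $\bar R$ are all right mutations; by the outer-branch description preceding Lemma~\ref{growthlemma} they preserve $f=F_{2n+3}$ and produce the maximal elements $E_i$ with $E_{i+1}=3F_{2n+3}E_i-E_{i-1}$, $E_{-1}=e=1$, $E_0=g=F_{2n+5}$, while $r_{E_i}$ satisfies the same recursion with $r_{E_{-1}}=0$, $r_{E_0}=F_{2n+3}$. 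For (ii) one lands instead on the Pell-branch triple $(P_{2n+1},P_{2n+3},2)$ with weights $(P_{2n},P_{2n+2},1)$ and quotient $[2,2_{2n},2]$, and the tail $\bar L$ consists of left mutations preserving $e=P_{2n+1}$ and generating $F_i$ with $F_{i+1}=3P_{2n+1}F_i-F_{i-1}$ (and likewise for $r_{F_i}$).

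For the periodic structure: along $L^nR^{k}$ the Stern--Brocot fraction is $\mu/\nu=\big((k+1)n+1\big)/(k+1)$ (an immediate induction from the $L/R$-action on Stern--Brocot triples), and for this $\mu/\nu$ one computes $\kappa(j)=\lfloor jn+j/(k+1)\rfloor-\lfloor (j-1)n+(j-1)/(k+1)\rfloor=n$ for all $1\le j<k+1$. Hence Theorem~\ref{frobeniustheorem}(\ref{frobeniustheoremv}) gives $S\big((k{+}1)n{+}1,\,k{+}1\big)=1_{2n},2,2,1_{2n},2,2,\dots,2,2,1_{2n}$ with $k+1$ blocks, so $[2,S,2]$ is precisely $[\overline{2,1_{2n},2}]$ truncated after $k+1$ full periods. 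By Proposition~\ref{limitsexist} these rationals are the convergents of $\mathfrak{l}_p$, so $\mathfrak{l}_p=[\overline{2,1_{2n},2}]$. For (ii) one either redoes this for $\mu/\nu=(k+1)/\big((k+1)n+1\big)$ or, faster, applies the complementation $x_i+y_i=3$ of~\ref{complement} to replace $S$ by its $1\leftrightarrow 2$ swap; either way $[2,S,2]$ is $[\overline{1,2_{2n},1}]$ truncated after a leading $2$, giving $\mathfrak{l}_p=1+[\overline{1,2_{2n},1}]$.

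For the closed form: by Lemma~\ref{growthlemma}(\ref{growthlemmai})--(\ref{growthlemmaii}) both $E_i$ and $r_{E_i}$ are of the form $\lambda_+P_+^i+\lambda_-P_-^i$ with $P_\pm=(3F_{2n+3}\pm\sqrt{\Delta_{F_{2n+3}}})/2$ and $P_+>P_->0$, so $\mathfrak{l}_p=\lim_{i\to\infty}E_i/r_{E_i}$ is the ratio of the dominant coefficients, namely
$$
\mathfrak{l}_p=\frac{g\sqrt{\Delta_f}+(E-e)}{r_g\sqrt{\Delta_f}+(r_E-r_e)}=\frac{F_{2n+5}\sqrt{\Delta_{F_{2n+3}}}+3F_{2n+3}F_{2n+5}-2}{F_{2n+3}\sqrt{\Delta_{F_{2n+3}}}+3F_{2n+3}^2},
$$
using $E=3fg-e$ and $r_E=3fr_g-r_e$ for the base triple. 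Factoring the common $\sqrt{\Delta_{F_{2n+3}}}+3F_{2n+3}$ out of numerator and denominator, using $(3F_{2n+3})^2-\Delta_{F_{2n+3}}=4$ to rationalize the leftover term, and then substituting the skip-one relations $F_{2n+5}=3F_{2n+3}-F_{2n+1}$ and $F_{2n+3}=3F_{2n+1}-F_{2n-1}$ yields the asserted expression; alternatively one can solve the periodic equation $\alpha=[2,1_{2n},2,\alpha]$ directly via the continuants of $[2,1_{2n},2]$ (Appendix~\ref{continuants}). Item (ii) proceeds identically with $\Delta_{P_{2n+1}}$ and the Pell recurrences. The one genuinely delicate point is this last simplification: verifying that the a priori unwieldy ratio of dominant coefficients collapses to the normal form $\tfrac12\,(a+\sqrt{9m^2-4})/b$ with $a,b$ short integer combinations of consecutive Fibonacci (resp.\ Pell) numbers is where the Markoff equation of the base triple and the precise index shifts must be tracked carefully — routine, but the easiest spot to make a sign or index error.
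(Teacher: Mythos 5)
Your overall strategy is sound and in outline matches the paper's: identify the base triple $(1,F_{2n+5},F_{2n+3})$ (resp.\ $(P_{2n+1},P_{2n+3},2)$), show the continued fractions along the tail are the period truncations of $[\overline{2,1_{2n},2}]$ (resp.\ $[2,\overline{2_{2n},1,1}]$), and evaluate the limit. You deviate in two places, both legitimately. For the periodicity you compute the Stern--Brocot index of $L^nR^k$ as $((k+1)n+1)/(k+1)$ and apply Theorem \ref{frobeniustheorem}~(\ref{frobeniustheoremv}) with $\kappa(j)=n$, where the paper runs an induction using the concatenation rule of Theorem \ref{frobeniustheorem}~(\ref{frobeniustheoremvi}); your version is non-inductive and makes the block structure explicit at the cost of a small Stern--Brocot computation. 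For the closed form you take the ratio of dominant coefficients from Lemma \ref{growthlemma}, whereas the paper solves the quadratic (\ref{makeperiodic}) satisfied by the purely periodic continued fraction (as in the proof of Proposition \ref{quadraticspectrum1}); again both work, and your route extracts the numerical value directly from the linear recurrence. Your simplification sketch for (i) is correct: the ratio is $F_{2n+5}/F_{2n+3}-2/\bigl(F_{2n+3}(3F_{2n+3}+\sqrt{\Delta_{F_{2n+3}}})\bigr)$, and rationalizing with $9F_{2n+3}^2-\Delta_{F_{2n+3}}=4$ plus the skip-one recursion gives exactly the displayed expression.

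The one real problem is your final sentence for (ii). Carrying your own computation through there gives
$$
\mathfrak{l}_p=\frac{P_{2n+3}\sqrt{\Delta_{P_{2n+1}}}+3P_{2n+1}P_{2n+3}-4}{P_{2n+2}\sqrt{\Delta_{P_{2n+1}}}+3P_{2n+1}P_{2n+2}-2}
=\frac{1}{2}\,\frac{3P_{2n+1}+2P_{2n}+\sqrt{9P_{2n+1}^2-4}}{3P_{2n}+P_{2n-1}},
$$
which is \emph{not} the displayed right-hand side of the statement: for $n=1$ this evaluates to $(19+\sqrt{221})/14\approx 2.419=1+[\overline{1,2,2,1}]$, while the printed formula gives $(17+\sqrt{221})/8\approx 3.98$. (The printed version transports the Fibonacci index pattern for weight and coweight, ``index $-2$, index $-4$'', to the Pell branch, where the weight and coweight of $P_{2n+1}$ are $P_{2n}$ and $P_{2n-1}$.) So ``item (ii) proceeds identically'' is exactly the step that cannot be left implicit: done honestly, your method produces a formula that disagrees with the one you are asked to prove — it exposes an index slip in the statement rather than verifying it. This is precisely the sign/index trap you flagged at the end; you need to carry the Pell computation through and record the corrected expression rather than assert agreement with the printed one.
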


\begin{proof}
(\ref{firstoffpropi})
The Markoff triple in position $L^n$ is of the form $(1, F_{2n + 5}, F_{2n + 3})$ and the continued fraction expansions
are $(-, [2, 1_{2n}, 2], [2, 1_{2n - 2}, 2])$. Using Theorem \ref{frobeniustheorem} (\ref{frobeniustheoremvi}), a straightforward
induction yields that the continued fractions at position $L^nR^m$ for $m > 0$ are
$$
([\langle 2, 1_{2n}, 2 \rangle_m],[\langle 2, 1_{2n}, 2 \rangle_{m + 1}],[2, 1_{2n - 2}, 2]),
$$
where $\langle \cdots \rangle_k$ denotes $k$-fold repetition. Hence
$$
\mathfrak{l}_p = \lim_{m \to \infty} [\langle 2, 1_{2n}, 2 \rangle_m] = [\overline{2, 1_{2n}, 2}].
$$

(\ref{firstoffpropii})
The Markoff triple in position $R^n$ is of the form $(P_{2n + 1}, P_{2n + 3}, 2)$ with continued fraction expansions
$([2_{2n}], [2_{2n + 2}], [2])$. Again, using Theorem \ref{frobeniustheorem} (\ref{frobeniustheoremvi}) and induction,
we get for the continued fractions at position $R^nL^m$ for $m > 0$:
$$
([2_{2n}],[2, \langle 2_{2n}, 1, 1 \rangle_m, 2_{2n + 1}], [2, \langle 2_{2n}, 1, 1 \rangle_{m - 1}, 2_{2n + 1}]),
$$
hence
$$
\mathfrak{l}_p = \lim_{m \to \infty} [2, \langle 2_{2n}, 1, 1 \rangle_m] = [2, \overline{2_{2n}, 1, 1}] = 1 + [\overline{1, 2_{2n}, 1}].
$$

In both cases, using \ref{makeperiodic}, the translation of the periodic continued fractions into quadratic irrational
numbers follows by straightforward computation (see also the proof of Proposition \ref{quadraticspectrum1} below).
\end{proof}

Given a triple $(m_{\mu_1/\nu_1}, m_{\mu/\nu}, m_{\mu_2/\nu_2})$ corresponding to a finite path $p$, then, as indicated in
Figure \ref{chains}, the set of triples below this triple wich contain $m_{\mu/\nu}$ are precisely those in the paths
$pL\bar{R}$ and $pR\bar{L}$.

\begin{proposition}\label{quadraticspectrum1}
Consider a Markoff triple $(e, g, f)$ with $e > 1$, $f > 2$ and denote $(\mu_1/\nu_1, \mu/\nu, \mu_2/\nu_2)$ and $p$ the corresponding Stern-Brocot
triple and finite path, respectively. Moreover, denote $p_1 = pL\bar{R}$, $p_2 = pR\bar{L}$. Then
\begin{align*}
\mathfrak{l}_{p_1} = [\overline{2, S(\mu, \nu), 1, 1, 2}] & = \frac{1}{2} \frac{3g - 2r_g + \sqrt{9g^2 - 4}}{3r_g - s_g}, \\
\mathfrak{l}_{p_2} = 1 + [\overline{1, S(\mu, \nu), 2, 2, 1}] & = \frac{1}{2} \frac{3g + 2r_g + \sqrt{9g^2 - 4}}{3r_g + s_g}.
\end{align*}
\end{proposition}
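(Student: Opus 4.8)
The plan is to establish the two pairs of equalities in two stages. First I determine the periodic continued fraction expansions $[\overline{2,S(\mu,\nu),1,1,2}]$ and $1+[\overline{1,S(\mu,\nu),2,2,1}]$ by tracking the continued fractions along the mutation paths $p_1$ and $p_2$, in the style of the proof of Proposition~\ref{firstoffprop}. Then I convert these periodic expansions into the stated quadratic irrationals by a short continuant computation.

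\emph{Stage 1.} Abbreviate $A:=S(\mu_1,\nu_1)$, $T:=S(\mu,\nu)$, $B:=S(\mu_2,\nu_2)$ (any of which may be empty, which does no harm), so that Theorem~\ref{frobeniustheorem}\,(\ref{frobeniustheoremvi}) records the two factorizations
\[
T \;=\; A,2,2,1,1,B \;=\; B,1,1,2,2,A.
\]
The Stern--Brocot triples along $p_1=pL\bar{R}$ are $(\beta_{m-1},\beta_m,\mu/\nu)$ with $\beta_m=(\mu_1+m\mu)/(\nu_1+m\nu)$ and $\beta_0=\mu_1/\nu_1$ (unwind the Stern--Brocot rule with the conventions of Figure~\ref{mutationfig}, as for $L^nR^m$ in the proof of Proposition~\ref{firstoffprop}). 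Applying Theorem~\ref{frobeniustheorem}\,(\ref{frobeniustheoremvi}) to each of these triples yields the recursion $S(\beta_m)=S(\beta_{m-1}),2,2,1,1,T$ with $S(\beta_0)=A$, hence $S(\beta_m)=A,(2,2,1,1,T)^m$, where $w^m$ denotes $m$-fold concatenation. By Proposition~\ref{limitsexist} the limit exists and equals the infinite continued fraction
\[
\mathfrak{l}_{p_1}\;=\;[2,\,A,\,(2,2,1,1,T)^\infty].
\]
Substituting $T=B,1,1,2,2,A$ in each period block and applying the word identity $X(YX)^\infty=(XY)^\infty$ rewrites this as $[2,(A,2,2,1,1,B,1,1,2,2)^\infty]$; since $A,2,2,1,1,B=T$ this is $[2,(T,1,1,2,2)^\infty]$, i.e. (reparenthesizing the period) $[\overline{2,S(\mu,\nu),1,1,2}]$. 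The path $p_2=pR\bar{L}$ is handled symmetrically: the triples are $(\mu/\nu,\gamma_m,\gamma_{m-1})$ with $\gamma_m=(\mu_2+m\mu)/(\nu_2+m\nu)$, $\gamma_0=\mu_2/\nu_2$, the recursion now reads $S(\gamma_m)=S(\gamma_{m-1}),1,1,2,2,T$ with $S(\gamma_0)=B$, so $\mathfrak{l}_{p_2}=[2,B,(1,1,2,2,T)^\infty]$, and the same rearrangements give $[2,(T,2,2,1,1)^\infty]=1+[\overline{1,S(\mu,\nu),2,2,1}]$.

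\emph{Stage 2.} For a word $b_1,\dots,b_k$ put $M(b_1,\dots,b_k)=\left(\begin{smallmatrix}b_1&1\\1&0\end{smallmatrix}\right)\cdots\left(\begin{smallmatrix}b_k&1\\1&0\end{smallmatrix}\right)$; its first column is $(\text{numerator},\text{denominator})$ of $[b_1,\dots,b_k]$ in lowest terms, transposition reverses the word, and $\det M=(-1)^k$. Since $[2,S(\mu,\nu),2]=g/r_g$ with $\gcd(g,r_g)=1$ is palindromic (Theorem~\ref{frobeniustheorem}\,(\ref{frobeniustheoremii})) of even length $2(\mu+\nu-1)$ (Corollary~\ref{frobeniuscorollary}), the matrix $M(2,S(\mu,\nu),2)$ is symmetric with first column $(g,r_g)^{t}$ and determinant $+1$; combined with $r_g^2+1=s_g g$ this forces $M(2,S(\mu,\nu),2)=\left(\begin{smallmatrix}g&r_g\\ r_g&s_g\end{smallmatrix}\right)$. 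Peeling the outer $2$'s off by $M(2)^{-1}=\left(\begin{smallmatrix}0&1\\1&-2\end{smallmatrix}\right)$ and re-attaching $M(1,1,2)=\left(\begin{smallmatrix}5&2\\3&1\end{smallmatrix}\right)$, resp.\ $M(1)$ and $M(2,2,1)=\left(\begin{smallmatrix}7&5\\3&2\end{smallmatrix}\right)$, a short matrix computation gives
\[
M(2,S(\mu,\nu),1,1,2)=\begin{pmatrix}3g-r_g&g\\ 3r_g-s_g&r_g\end{pmatrix},\qquad
M(1,S(\mu,\nu),2,2,1)=\begin{pmatrix}3g-2r_g-s_g&2g-r_g-s_g\\ 3r_g+s_g&2r_g+s_g\end{pmatrix},
\]
both of even word length, hence determinant $+1$, and of trace $3g$. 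A purely periodic value $[\overline w]$ is the attracting fixed point of $z\mapsto(az+b)/(cz+d)$ for $M(w)=\left(\begin{smallmatrix}a&b\\c&d\end{smallmatrix}\right)$, so $x=\mathfrak{l}_{p_1}$ and $u=\mathfrak{l}_{p_2}-1$ each satisfy $c\,\xi^2+(d-a)\xi-b=0$, whose discriminant is $(a+d)^2-4\det M=(3g)^2-4$. Taking in each case the unique positive root, and adding $1$ in the second case, gives $\mathfrak{l}_{p_1}=\tfrac12\frac{3g-2r_g+\sqrt{9g^2-4}}{3r_g-s_g}$ and $\mathfrak{l}_{p_2}=\tfrac12\frac{3g+2r_g+\sqrt{9g^2-4}}{3r_g+s_g}$.

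The genuinely delicate step is the word bookkeeping in Stage~1: the expansions are produced \emph{naturally} in the forms $[2,A,(2,2,1,1,T)^\infty]$ and $[2,B,(1,1,2,2,T)^\infty]$, and turning them into the symmetric periodic shapes claimed in the statement requires using \emph{both} factorizations of $T$ from Theorem~\ref{frobeniustheorem}\,(\ref{frobeniustheoremvi}), keeping the left/right mutation conventions of Figure~\ref{mutationfig} straight, and — when checking that the finite Markoff convergents really are convergents of the limit — the tail ambiguity $[\dots,2]=[\dots,1,1]$. Stage~2, by contrast, is the routine continuant computation already alluded to in the proof of Proposition~\ref{firstoffprop} via \ref{makeperiodic}.
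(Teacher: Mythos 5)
Your proof is correct and follows the same overall strategy as the paper: propagate the continued fraction expansions down the paths $pLR^n$ and $pRL^n$ via Theorem \ref{frobeniustheorem}\,(\ref{frobeniustheoremvi}), identify the purely periodic limit, and then solve the resulting quadratic. The two stages are executed differently in detail, in ways worth noting. In Stage~1 the paper converts $[2, S(\mu_1,\nu_1), 2, \overline{2,1,1,S(\mu,\nu),2}]$ into the purely periodic form by appealing to the palindromic symmetry of the finite convergents (reversing each finite expansion and passing to the limit), whereas you manipulate the infinite word directly, substituting the second factorization $T = B,1,1,2,2,A$ into each period and using $X(YX)^\infty = (XY)^\infty$; your version avoids the slightly delicate ``limit of reversals'' step and makes explicit that both factorizations in (\ref{frobeniustheoremvi}) are needed. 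In Stage~2 the paper computes the four relevant continuants individually (using Lemma \ref{negativecontfrac}, the relation $(3r_g-s_g)g = (3g-r_g)r_g - 1$, and, for $\mathfrak{l}_{p_2}$, a cyclic-shift argument through the auxiliary value $\sigma$) and then invokes (\ref{makeperiodic}); your observation that $M(2,S(\mu,\nu),2)$ is forced to equal $\left(\begin{smallmatrix} g & r_g \\ r_g & s_g\end{smallmatrix}\right)$ by symmetry, unimodularity and $r_g^2+1=s_gg$ packages all four continuants at once, and the fixed-point computation with discriminant $(\operatorname{tr} M)^2 - 4\det M = 9g^2-4$ handles both cases uniformly without the detour through $\sigma$. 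Both computations are equivalent (continuants are the entries of these matrix products), but yours is arguably the cleaner bookkeeping.
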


\begin{proof}
We denote $$[2, S(\mu_1, \nu_1), 2], [2, S(\mu, \nu), 2], [2, S(\mu_2, \nu_2), 2]$$ the continued fraction expansions of $e/r_e, g/r_g, f/r_f$.
Then it follows from Theorem \ref{frobeniustheorem} (\ref{frobeniustheoremvi}) that after one left- and one right mutation, we obtain
$$
[2, S(\mu_1, \nu_1), 2, 2, 1, 1, S(\mu, \nu), 2], [2, S(\mu_1, \nu_1), 2, 2, 1, 1, S(\mu, \nu), 2, 2, 1, 1, S(\mu, \nu), 2],
[2, S(\mu, \nu), 2]
$$
and by straightforward induction and Theorem \ref{frobeniustheorem} (\ref{frobeniustheoremvi}), we get at the position $pLR^n$ for $n > 0$:
$$
[2, S(\mu_1, \nu_1), 2, \langle 2, 1, 1,  S(\mu, \nu), 2\rangle_n],
[2, 1, 1, S(\mu_1, \nu_1), 2, \langle 2, 1, 1, S(\mu, \nu), 2 \rangle_{n + 1}], [2, S(\mu, \nu), 2].
$$
It follows that the limit of finite continued fractions over $pLR^n$ for $n \to \infty$ exists:
$$
\lim_{n \to \infty} [2, S(\mu_1, \nu_1), 2, \langle 2, 1, 1, S(\mu, \nu), 2 \rangle_{n + 1}] =
[2, S(\mu_1, \nu_1), 2, \overline{2, 1, 1, S(\mu, \nu), 2}].
$$
Using symmetry, we get moreover
\begin{gather*}
\mathfrak{l}_{p_1} = \lim_{n \to \infty} [2, S(\mu_1, \nu_1), 2, \langle 2, 1, 1, S(\mu, \nu), 2 \rangle_{n + 1}] \\
= \lim_{n \to \infty} [\langle 2, S(\mu, \nu), 1, 1, 2 \rangle_{n + 1}, 2, S(\mu_1, \nu_1), 2] \\
= [\overline{2, S(\mu, \nu), 1, 1, 2}].
\end{gather*}
In particular, $\mathfrak{l}_{p_1}$ is a quadratic number which we can determine using (\ref{makeperiodic}). For this, we first observe
$$
[2, 1, 1, S(\mu, \nu), 2] = 2 + \frac{1}{[1, 1, S(\mu, \nu), 2]} = \frac{K(2, S(\mu, \nu), 1, 1, 2)}{K(2, S(\mu, \nu), 1, 1)} = \frac{3g - r_g}{g},
$$
where we have used Lemma \ref{negativecontfrac} and the symmetry of continuants. Moreover, with
$$
(3r_g - s_g) g = 3g r_g - r_g^2 - 1 = (3g - r_g) r_g - 1
$$
and equation (\ref{reverseontfrac}) we get
$$
K(S(\mu, \nu), 1, 1, 2) = 3r_g - s_g.
$$
Finally, with $[2, S(\mu, \nu), 2] = g/r_g$ we get
$$
K(S(\mu, \nu), 1, 1) = K(S(\mu, \nu), 2) = r_g.
$$
Plugging these four continuants into (\ref{makeperiodic}), we compute
$$
\mathfrak{l}_{p_1} = \frac{1}{2} \frac{3g - 2r_g + \sqrt{9g^2 - 4}}{3r_g - s_g}.
$$

In a similar fashion, we obtain at the position $pRL^n$ for $n > 0$:
$$
[2, S(\mu, \nu), 2], 1 + [\langle 1, S(\mu, \nu), 2, 2, 1 \rangle_{n + 1}, 1, S(\mu_2, \nu_2), 2],
[\langle 1, S(\mu, \nu), 2, 2, 1 \rangle_n, 1, S(\mu_2, \nu_2), 2].
$$
Using symmetry again, we get:
$$
\mathfrak{l}_{p_2} = 1 + [\overline{1, S(\mu, \nu), 2, 2, 1}] =: 1 + \xi.
$$
In order to compute the purely periodic continued fraction, we use
$$
\sigma := [\overline{2, 1, 1, S(\mu, \nu), 2}] = [2, 1, \overline{1, S(\mu, \nu), 2, 2, 1}].
$$
By symmetry, we can reuse the continuants we have computed above and compute with \ref{makeperiodic}:
\begin{align*}
\sigma = \frac{1}{2g} (3g - 2r_g + \sqrt{9g^2 - 4}).
\end{align*}
Then, with $\sigma = 2 + 1/(1 + 1/\xi)$, we get:
$$
\xi = [\overline{1, S(\mu, \nu), 2, 2, 1}] = \frac{\sigma - 2}{3 - \sigma} = \frac{1}{2} \frac{3g - 4r_g -2s_g + \sqrt{9g^2 - 4}}{3r_g + s_g}
$$
(see also Formula (\ref{periodshift2})) and the assertion follows.
\end{proof}

\begin{sub}
\begin{figure}[ht]
\tikzstyle{level 1}=[level distance=1cm, sibling distance=8cm]
\tikzstyle{level 2}=[level distance=1cm, sibling distance=4cm]
\tikzstyle{level 3}=[level distance=3cm, sibling distance=2cm]
\tikzstyle{level 4}=[level distance=5cm, sibling distance=1cm]
\begin{tikzpicture}[grow=right, sloped]
  \node[align=left] {$\frac{1}{2}\frac{4+\sqrt{32}}{2}$, $2, 2$\\ $\frac{1}{2}(1+\sqrt{5})$, $1, 1$}
    child {node[align=left]{$\frac{1}{2}\frac{11+\sqrt{221}}{5}$\\ $2, 1, 1, 2$}
      child {node[align=left] {$\frac{1}{2}\frac{29+\sqrt{1517}}{13}$\\ $2, 1, 1, 1, 1, 2$}
        child {node[align=left] {$\frac{1}{2}\frac{68+\sqrt{10400}}{34}$\\ $2, 1, 1, 1, 1, 1, 1, 2$}
           child {node[align=left] {$\frac{1}{2}\frac{199+\sqrt{71285}}{89}$\\ $2, 1, 1, 1, 1, 1, 1, 1, 1, 2$}}
           child {node[align=left] {$\frac{1}{2}\frac{105+\sqrt{71285}}{115}$\\ $1, 1, 1, 1, 1, 1, 1, 2, 2, 1$}}
        }
        child {node[align=left] { $\frac{1}{2}\frac{40+\sqrt{10400}}{44}$\\ $1, 1, 1, 1, 1, 2, 2, 1$}
           child {node[align=left] {$\frac{1}{2}\frac{2961+\sqrt{15800621}}{1327}$\\ $2, 1, 1, 1, 1, 2, 2, 1, 1, 1, 1, 1, 1, 2$}}
           child {node[align=left] {$\frac{1}{2}\frac{1559+\sqrt{15800621}}{1715}$\\ $1, 1, 1, 1, 1, 2, 2, 1, 1, 1, 1, 2, 2, 1$}}
        }
      }
      child {node[align=left] {$\frac{1}{2}\frac{15+\sqrt{1517}}{17}$\\ $1, 1, 1, 2, 2, 1$}
        child {node[align=left] { $\frac{1}{2}\frac{432+\sqrt{338720}}{196}$\\ $2, 1, 1, 2, 2, 1, 1, 1, 1, 2$}
         child {node[align=left] { $\frac{1}{2}\frac{16837+\sqrt{514518485}}{15278}$\\ $2, 1, 1, 2, 2, 1, 1, 1, 1, 2, 2, 1, 1, 1, 1, 2$}}
         child {node[align=left] {$\frac{1}{2}\frac{8731+\sqrt{514518485}}{19798}$\\ $1, 1, 1, 2, 2, 1, 1, 1, 1, 2, 2, 1, 1, 2, 2, 1$}}
        }
        child {node[align=left] {$\frac{1}{2}\frac{224+\sqrt{338720}}{254}$\\ $1, 1, 1, 2, 2, 1, 1, 2, 2, 1$}
         child {node[align=left] { $\frac{1}{2}\frac{6451+\sqrt{75533477}}{2927}$\\ $2, 1, 1, 2, 2, 1, 1, 2, 2, 1, 1, 1, 1, 2$}}
         child {node[align=left] {$\frac{1}{2}\frac{3345+\sqrt{75533477}}{3793}$\\ $1, 1, 1, 2, 2, 1, 1, 2, 2, 1, 1, 2, 2, 1$}}
        }
      }
    }
    child {node[align=left] {$\frac{1}{2}\frac{5+\sqrt{221}}{7}$ \\ $1, 2, 2, 1$}
      child {node[align=left] {$\frac{1}{2}\frac{63+\sqrt{7565}}{41}$ \\ $2, 2, 2, 1, 1, 2$}
        child {node[align=left] { $\frac{1}{2}\frac{941+\sqrt{1687397}}{463}$\\ $2, 2, 2, 1, 1, 2, 2, 1, 1, 2$}
         child {node[align=left] { $\frac{1}{2}\frac{14052+\sqrt{376282400}}{6914}$\\ $2, 2, 2, 1, 1, 2, 2, 1, 1, 2, 2, 1, 1, 2$}}
         child {node[align=left] {$\frac{1}{2}\frac{6496+\sqrt{376282400}}{9124}$\\ $1, 2, 2, 1, 1, 2, 2, 1, 1, 2, 2, 2, 2, 1$}}
        }
        child {node[align=left] {$\frac{1}{2}\frac{435+\sqrt{1687397}}{611}$\\ $1, 2, 2, 1, 1, 2, 2, 2, 2, 1$}
         child {node[align=left] {$\frac{1}{2}\frac{81856+\sqrt{12768548000}}{40276}$\\ $2, 2, 2, 1, 1, 2, 2, 2, 2, 1, 1, 2, 2, 1, 1, 2$}}
         child {node[align=left] {$\frac{1}{2}\frac{37840+\sqrt{12768548000}}{53150}$\\ $1, 2, 2, 1, 1, 2, 2, 2, 2, 1, 1, 2, 2, 2, 2, 1$}}
        }
      }
      child {node[align=left] {$\frac{1}{2}\frac{29+\sqrt{7565}}{31}$ \\ $1, 2, 2, 2, 2, 1$}
        child {node[align=left] {$\frac{1}{2}\frac{367+\sqrt{257045}}{181}$\\ $2, 2, 2, 2, 2, 1, 1, 2$}
          child {node[align=left] {$\frac{1}{2}\frac{31925+\sqrt{1945074605}}{31490}$\\ $2, 2, 2, 2, 2, 1, 1, 2, 2, 2, 2, 1, 1, 2$}}
          child {node[align=left] { $\frac{1}{2}\frac{14703+\sqrt{1945074605}}{41578}$\\ $1, 2, 2, 2, 2, 1, 1, 2, 2, 2, 2, 2, 2, 1$}}
        }
        child {node[align=left] { $\frac{1}{2}\frac{169+\sqrt{257045}}{239}$\\ $1, 2, 2, 2, 2, 2, 2, 1$}
          child {node[align=left] {$\frac{1}{2}\frac{2139+\sqrt{8732021}}{1055}$\\ $2, 2, 2, 2, 2, 2, 2, 1, 1, 2$}}
          child {node[align=left] { $\frac{1}{2}\frac{985+\sqrt{8732021}}{1393}$\\ $1, 2, 2, 2, 2, 2, 2, 2, 2, 1$}}
        }
      }
    };
\end{tikzpicture}
\caption{The purely periodic spectrum.}\label{quadraticfig1}
\end{figure}
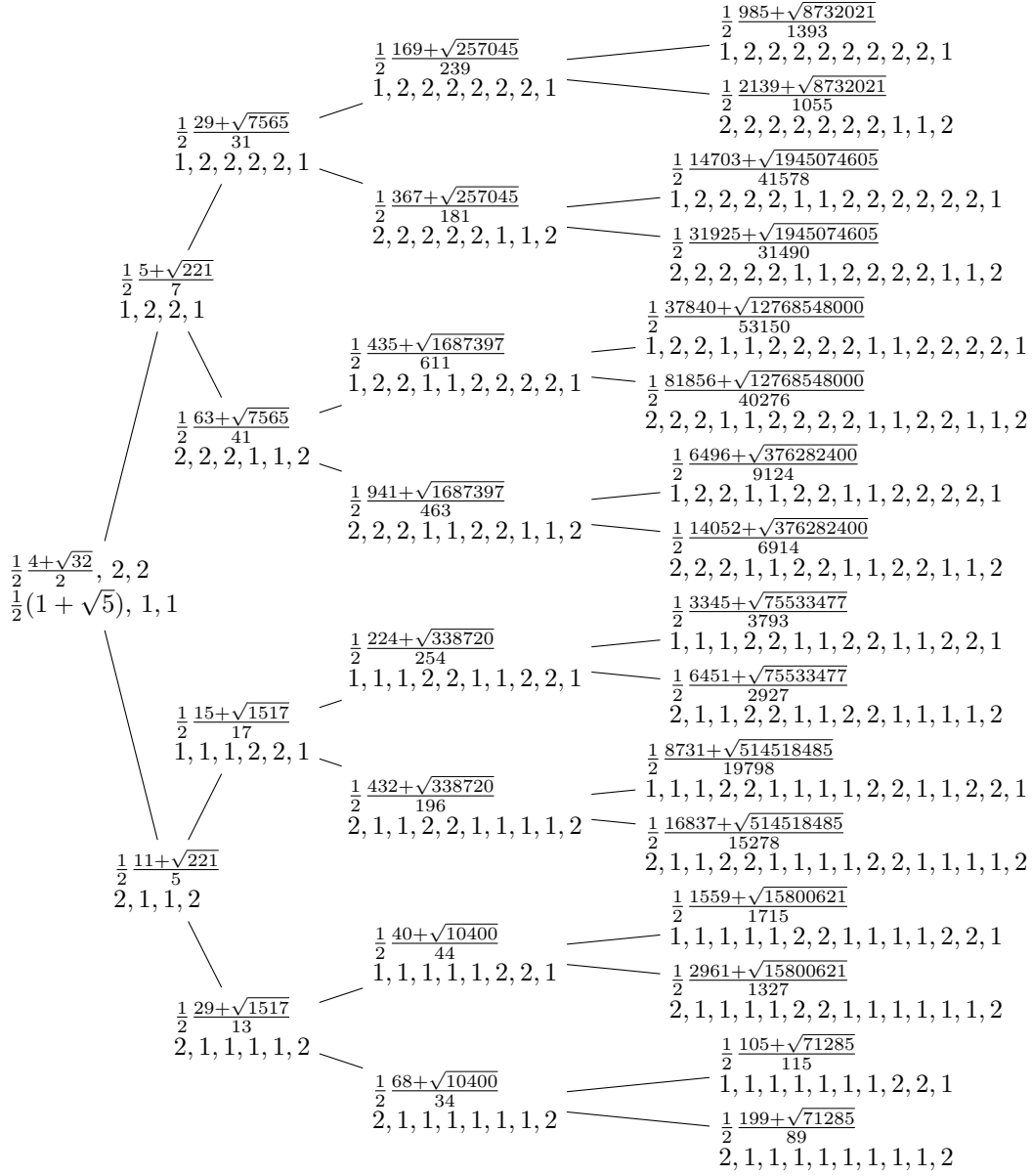

Figure \ref{quadraticfig1} shows the purely periodic values $\mathfrak{l}_{p_1}$ and
$\mathfrak{l}_{p_2} - 1 = \frac{1}{2} \frac{3g - 4r_g -2s_g + \sqrt{9g^2 - 4}}{3r_g + s_g}$, together with their periods,
for the first five levels of the Markoff tree. More precisely, the root shows $\mathfrak{l}_{p_2} - 1$ for $g = 1$ and $\mathfrak{l}_{p_1} - 1$ for $g = 2$,
and any pair of siblings corresponding to mutations $(e, F, g)$ and $(g, E, f)$, show $\mathfrak{l}_{p_1}$ and $\mathfrak{l}_{p_2} - 1$ corresponding to $g$,
respectively. As remarked in \ref{complement}, for any $\mu/\nu$, the sequences $S(\mu, \nu)$ from $S(\nu, \mu)$ are symmetric
to each other in the sense that they correspond to opposite locations in the Markoff tree and can be transformed into each other
by exchanging $1$'s and $2$'s. As can easily be seen from Proposition \ref{quadraticspectrum1}, in the representation of
Figure \ref{quadraticfig1} this symmetry extends to the purely periodic spectrum.
\end{sub}

In order to arrive at nicer representations, we pass now from the spectrum of quotients $m/r$ to the slopes $r/m$. By Formula
(\ref{cantorbound}), this spectrum is contained in the interval
$$
\frac{1}{2}(3 - \sqrt{5}) < \frac{r}{m} < \sqrt{2} - 1.
$$

\begin{definition}
Let $(e, g, f)$ be a regular Markoff triple located at a path $p$ and denote $p_1 = pL\bar{R}$, $p_2= pR\bar{L}$. Then we denote
$$
A_g := 1 / \mathfrak{l}_{p_1}, \qquad B_g := 1 / \mathfrak{l}_{p_2}.
$$
\end{definition}

We compute
$$
\left(\frac{1}{2} \frac{3g \pm 2 r_g + \sqrt{9g^2-4}}{3r_g \pm s_g}\right)^{-1} = \frac{r_g}{g} \pm \frac{1}{2}\left(3 - \sqrt{9 - \frac{4}{g^2}}\right),
$$
hence $A_g < B_g$ such that the closed interval $[A_g, B_g]$ is centered at $r_g/g$ and has length $3 - \sqrt{9 - 4/g^2}$.
The four branches of Figure \ref{chains}, from left to right, are given by $p\bar{L}$, $p_1$, $p_2$, and $p\bar{R}$,
respectively, and it follows that
$$
1 / \mathfrak{l}_{p\bar{L}} = B_e < A_g < B_g < A_f = 1 / \mathfrak{l}_{p\bar{R}}.
$$
Moreover, for any other infinite path of the form $pq$ it follows that either
$$
B_e < 1 / \mathfrak{l}_{pq} < A_g \quad \text{ or } \quad B_g < 1 / \mathfrak{l}_{pq} < A_f.
$$
For any regular Markoff triple $(e, g, f)$, we denote the intervals:
\begin{align*}
I_g & = [B_e, A_f], \\
J_g & = (A_g, B_g),
\end{align*}
and a short computation yields for their lengths:
\begin{align*}
| I_g | & = \frac{1}{ef}\left(\Delta_{e, f} - g\right), \\
| J_g | & = \frac{1}{g}(3g - \sqrt{\Delta_g}).
\end{align*}

With this, we can construct a Cantor set in the interval $[\frac{1}{2}(3 - \sqrt{5}), \sqrt{2} - 1]$ as follows. For $n \geq 0$,
in the $n$-th step we denote $(e^n_1, g^n_1, f^n_1), \dots, (e^n_{2^n}, g^n_{2^n}, f^n_{2^n})$ the Markoff triples on the $n$-th
level of the Markoff tree, enumerated from left to right. Then for any $n \geq 0$, the union $C_{n + 1} := \bigcup_{i = 1}^{2^{n + 1}}
I_{g_i^{n + 1}}$ coincides with
$\coprod_{i = 1}^{2^n} I_{g_i^n} \setminus J_{g^n_i}$
and, in particular, is disjoint. It follows that
$$
\bigcap_{n \geq 0} C_n = \bigcap_g I_g = [\frac{1}{2}(3 - \sqrt{5}), \sqrt{2} - 1] \setminus \bigcup_{g} J_g
$$
forms a Cantor set in the interval $[\frac{1}{2}(3 - \sqrt{5}), \sqrt{2} - 1]$, where $g$ runs over
the maximal elements of every regular Markoff triple.

\begin{definition}
We denote by $\mathfrak{M}$ the set of values $\{1 / \mathfrak{l}_{p}\}$, where $p$
runs over all infinite paths in the Markoff tree that start at the root
\end{definition}

\begin{theorem}\label{lebesguetheorem}
$\mathfrak{M}$ forms a Cantor
set of Lebesgue measure zero in the interval $[\frac{1}{2}(3 - \sqrt{5}), \sqrt{2} - 1]$.
\end{theorem}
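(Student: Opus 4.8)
The plan is to identify $\mathfrak{M}$ with the set $\bigcap_{n\ge 0}C_n$ constructed just above, and then to show that its Lebesgue measure, which equals $\lim_n\lambda(C_n)=\lim_n\sum_{i=1}^{2^n}|I_{g_i^n}|$, is $0$; the engine for the latter is the pointwise bound $|I_g|<1/g$ coming from Lemma~\ref{somewhatsharpbound}\,(\ref{somewhatsharpboundi}) together with a contraction estimate along the Markoff tree. For the identification: given an infinite path $p$ with truncation $p_n$ sitting at the level-$n$ triple $(e_n,g_n,f_n)$, the inequalities $B_{e_n}<1/\mathfrak{l}_{p_nq}<A_{g_n}$ or $B_{g_n}<1/\mathfrak{l}_{p_nq}<A_{f_n}$ valid for every continuation $q$ (recorded right before the theorem) place $1/\mathfrak{l}_p$ in $I_{g_n}$ for all $n$, so $1/\mathfrak{l}_p\in\bigcap_n C_n$. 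Conversely, any $x\in\bigcap_n C_n$ lies in a unique interval $I_{g_n}$ on each level (the $C_n$ are disjoint unions of such intervals), and these nest and trace an infinite path $p$; since the maximal element of a regular triple strictly increases under either mutation, $g_n\to\infty$ and hence $\operatorname{diam}\bigcap_n I_{g_n}\le |I_{g_n}|\le 1/g_n\to 0$, forcing $\bigcap_n I_{g_n}=\{1/\mathfrak{l}_p\}$ and thus $x=1/\mathfrak{l}_p$. So $\mathfrak{M}=\bigcap_n C_n$ is the Cantor set already constructed, and since the $C_n$ are nested compacts with $\lambda(C_0)<\infty$, it remains only to prove $S_n:=\sum_{i=1}^{2^n}|I_{g_i^n}|\to 0$.

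The pointwise input is $|I_g|=\tfrac1{ef}(\Delta_{e,f}-g)<\tfrac1g$, immediate from $\Delta_{e,f}-g<ef/g$ (the first inequality of Lemma~\ref{somewhatsharpbound}\,(\ref{somewhatsharpboundi})). Hence it suffices to show $U_n:=\sum_{(e,g,f)\text{ on level }n}\tfrac1g\to 0$. Passing from a triple $(e,g,f)$ to its children replaces $g$ by $F=3eg-f$ and $E=3fg-e$; since $e,f<g$ and $f\ge 2$ for every regular triple, one always has $E>5g$, and also $F>5g$ unless $e=1$. The only level-$n$ triple with $e=1$ is the Fibonacci one $(1,F_{2n+5},F_{2n+3})$, whose left child is again Fibonacci with new maximum $F_{2n+7}=3F_{2n+5}-F_{2n+3}$ and whose right child has maximum $E=3F_{2n+3}F_{2n+5}-1>5F_{2n+5}$. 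Writing $U_n=\tfrac1{F_{2n+5}}+W_n$, the two children of the Fibonacci triple contribute at most $\tfrac1{F_{2n+7}}+\tfrac1{5F_{2n+5}}$ to $U_{n+1}$ and each of the remaining level-$n$ triples contributes at most $\tfrac2{5g}$, so $W_{n+1}\le\tfrac1{5F_{2n+5}}+\tfrac25 W_n$. As $F_{2n+5}\to\infty$, taking $\limsup$ gives $\limsup W_n\le\tfrac25\limsup W_n$, hence $W_n\to 0$, hence $U_n\to 0$ and $S_n\le U_n\to 0$. Therefore $\lambda(\mathfrak{M})=0$; equivalently, $\sum_g|J_g|$ equals the length $\sqrt2-1-\tfrac12(3-\sqrt5)$ of the ambient interval, i.e. the open intervals $J_g$ exhaust $[\tfrac12(3-\sqrt5),\sqrt2-1]$ up to a null set.

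The genuinely delicate point is this final estimate. The naive hope that removing $J_g$ from $I_g$ always deletes a fixed proportion of the length is false: one has $|J_g|=\tfrac1g(3g-\sqrt{\Delta_g})$ of order $g^{-2}$ while $|I_g|$ can be close to $g^{-1}$, so the ratio $|J_g|/|I_g|$ is only of order $g^{-1}$ and is not bounded below. What is uniform instead is the factor $\tfrac25$ by which the level sum $\sum 1/g$ contracts away from the Fibonacci branch, the Fibonacci branch contributing only the vanishing single term $1/F_{2n+5}$; this is exactly what drives $U_n\to 0$. When writing this up one should verify the crude inequalities $E>5g$ (and $F>5g$ for $e\ge 2$) for every regular triple, including the Pell branch where $e\ge 5$, and check the small levels $n=0,1$ by hand.
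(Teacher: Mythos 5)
Your measure-zero argument is correct, but it takes a genuinely different route from the paper's, and your closing diagnosis of why the ``obvious'' route would fail is mistaken. The paper's proof \emph{is} the proportional-removal argument you dismiss: it shows $|I_g| < 3|J_g|$ for every regular triple, so each gap $J_g$ deletes more than a third of its parent interval and $\lambda(C_n)$ decays geometrically. This works because $|I_g|$ is in fact of order $g^{-2}$, not $g^{-1}$: combining the two parts of Lemma~\ref{somewhatsharpbound} gives $0 < \Delta_{e,f} - g < \tfrac{2}{9ef}$, hence $|I_g| = \tfrac{1}{ef}(\Delta_{e,f}-g) < \tfrac{2}{9e^2f^2} < \tfrac{2}{g^2}$, which is comparable to $|J_g| = \tfrac{4}{g(3g+\sqrt{\Delta_g})} > \tfrac{2}{3g^2}$; so the ratio $|J_g|/|I_g|$ is bounded below by $1/3$, not of order $g^{-1}$. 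Your estimate $|I_g|<1/g$ uses only part~(\ref{somewhatsharpboundi}) of that lemma and is too crude to see this; the sharpened inequality $\Delta_{e,f} < g + \tfrac{2}{9ef}$ of part~(\ref{somewhatsharpboundii}) is exactly what the paper feeds into $|I_g|<3|J_g|$. That said, your alternative is a complete and correct proof: bounding the level sum by $U_n=\sum 1/g$ and contracting it via the mutation growth $E,F>5g$ away from the Fibonacci branch (which contributes only the single vanishing term $1/F_{2n+5}$, since the triples with $e=1$ are exactly the Fibonacci ones) is sound, and your explicit verification that $\mathfrak{M}=\bigcap_n C_n$ --- which the paper asserts without argument --- is a welcome addition. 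What your route buys is independence from the delicate inequality of Lemma~\ref{somewhatsharpbound}~(\ref{somewhatsharpboundii}); what it costs is only the sharper structural information that a uniform fraction of each interval is removed at every step. Either way, Corollary~\ref{gapsum} follows, since it needs only that the measure is zero.
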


\begin{proof}
It only remains to show that our Cantor set has measure zero.
With above notation, it equals $\bigcap_{n \geq 0} C_n$, where every $C_n$ is a finite disjoint union of intervals in $\R$.
By a standard argument, it suffices to show that
$| I_g | < 3 | J_g |$ for every regular Markoff triple $(e, g, f)$. So we want to show that
$$
\frac{1}{ef} \Delta_{e, f} - \frac{g}{ef} < \frac{3}{g}(3g - \sqrt{\Delta_g}).
$$
With $(3g + \sqrt{\Delta_g})/2 < 3g$ we show the sharper bound:
$$
\frac{1}{ef}  \Delta_{e, f} < \frac{g}{ef} + \frac{1}{2g^2} (3g + \sqrt{\Delta_g})(3g - \sqrt{\Delta_g}) =
\frac{g}{ef} + \frac{2}{g^2}.
$$
Using $g < 3ef$, it suffices to show:
$$
\frac{1}{ef} \Delta_{e, f} < \frac{g}{ef} + \frac{2}{3efg},
$$
or, equivalently:
$$
\Delta_{e, f} < g + \frac{2}{3g}.
$$
This inequality has been shown in Lemma \ref{somewhatsharpbound} (\ref{somewhatsharpboundii}).
\end{proof}

With the Cantor set being of measure zero, it follows that the lengths of the complementary open intervals add
up to the length of the interval $I_5$:

\begin{corollary}[see also {\cite[\S 4.5]{LangTan07}}]\label{gapsum}
It follows that
$$
(3 - \sqrt{5}) + (3 - \sqrt{8}) + 2 \sum_{(e, g, f)} \left(3 - \sqrt{9 - \frac{4}{g^2}}\right) = 1,
$$
where the sum runs over the regular Markoff triples.
\end{corollary}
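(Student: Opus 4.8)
The plan is to read the identity off directly from the measure computation of Theorem \ref{lebesguetheorem}. Recall from the construction preceding that theorem that the ambient interval $[\tfrac12(3-\sqrt{5}),\sqrt{2}-1]$ is exactly $I_5$, and that $\mathfrak{M}=I_5\setminus\bigcup_{(e,g,f)}J_g$ with the union running over all regular Markoff triples. The open intervals $J_g$ are pairwise disjoint: this follows from the recursion $C_{n+1}=\coprod_i\bigl(I_{g_i^n}\setminus J_{g_i^n}\bigr)$, since the $J$'s removed at level $n$ lie in $C_n\setminus C_{n+1}$ while every $J$ coming from a deeper level lies in $C_{n+1}$, and within a single level the $I_{g_i^n}$ are already disjoint. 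So I would first invoke Theorem \ref{lebesguetheorem} — $\mathfrak{M}$ is Lebesgue-null — and apply countable additivity to obtain $\sum_{(e,g,f)}|J_g|=|I_5|$.

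Next I would evaluate both sides explicitly. The length of $I_5$ is $(\sqrt{2}-1)-\tfrac12(3-\sqrt{5})$, read off from its endpoints $\tfrac12(3-\sqrt{5})=1/\mathfrak{l}_{\bar L}$ and $\sqrt{2}-1=1/\mathfrak{l}_{\bar R}$ attached to the root triple $(1,5,2)$ (equivalently $|I_5|=\tfrac1{ef}(\Delta_{e,f}-g)=\tfrac12(\Delta_{1,2}-5)=\sqrt{2}+\tfrac{\sqrt{5}}{2}-\tfrac52$). For the summands the length formula gives $|J_g|=\tfrac1g(3g-\sqrt{\Delta_g})=3-\sqrt{9-4/g^2}$. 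Hence
$$
2\sum_{(e,g,f)}\left(3-\sqrt{9-\frac{4}{g^2}}\right)=2|I_5|=2(\sqrt{2}-1)-(3-\sqrt{5}),
$$
and adding $(3-\sqrt{5})+(3-\sqrt{8})=(3-\sqrt{5})+(3-2\sqrt{2})$ to both sides makes the terms $\pm(3-\sqrt{5})$ cancel, leaving $(3-2\sqrt{2})+2(\sqrt{2}-1)=1$ on the right. That is the asserted identity. (One may note that $3-\sqrt{5}$ and $3-\sqrt{8}$ are $3-\sqrt{9-4/g^2}$ evaluated at $g=1$ and $g=2$, i.e. the contributions one would formally attach to the singular solutions $(1,1,1)$ and $(1,2,1)$ lying above the root, and also the first two entries of the Markoff spectrum.)

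I do not expect a genuine obstacle here: all the analytic content is already packaged in Theorem \ref{lebesguetheorem} and in the earlier length computations for $I_g$ and $J_g$, so the corollary is essentially bookkeeping plus a one-line algebraic simplification. The only point worth stating with care is that the complement $I_5\setminus\mathfrak{M}$ is partitioned by the $J_g$ with each \emph{regular triple} contributing exactly one gap — a sum over triples, not over Markoff numbers, so that nothing is lost even if the uniqueness conjecture fails and some Markoff number occurs as the maximum of several triples.
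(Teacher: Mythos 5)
Your proposal is correct and is essentially the paper's own argument: the paper derives the corollary in one line from Theorem \ref{lebesguetheorem} (measure zero forces $\sum_{(e,g,f)}|J_g|=|I_5|$) followed by the same algebraic simplification, and your arithmetic checks out. The extra care you take about disjointness of the gaps and about summing over triples rather than Markoff numbers is sound but not something the paper spells out.
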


\begin{remark}
Theorem \ref{lebesguetheorem} and Corollary \ref{gapsum} are well-known to specialists, though \cite{LangTan07}
is the only prior reference we could find. According to \cite{LangTan07}, Corollary \ref{gapsum} can be derived from
McShane's identity (see \cite{Bowditch96}), whereas our proof is elementary.
\end{remark}

Next we are going to determine the Hausdorff dimension $\dim_H \mathfrak{M}$ of $\mathfrak{M}$
(see Appendix \ref{cantorappendix} for details and references). Consider any
regular Markoff triple $(e, g, f)$ corresponding to a path $W \in \mathfrak{P}_f$. Then we denote $I_W := I_g$
and we set $d_{WX} = |I_{WX}| / |I_W|$ for $X \in \{L, R\}$.
Now, for any $p \in \mathfrak{P}$ and its corresponding point $x_p \in \mathfrak{M}$, we define the {\em local Hausdorff measure}
$$
h^s(x) = \liminf_{k \rightarrow \infty} \prod_{i = 0}^k (d_{p|_i L}^s + d_{p|_i R}^s),
$$
where $p|_i$ denotes the prefix of length $i$ of $p$.
We define the {\em local Hausdorff dimension} of $\mathfrak{M}$ at $x$ as:
$$
h(x) = \inf_{s \geq 0}  \{ h^s(x) > 0 \}.
$$

\begin{theorem}\label{hausdorfftheorem}
$\dim_H \mathfrak{M} = 0$ and $h(x) = 0$ for any $x \in \mathfrak{M}$.
\end{theorem}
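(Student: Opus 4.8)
The plan is to estimate the ratios $d_{WX} = |I_{WX}|/|I_W|$ from above and show that, along every infinite path, the product $\prod_{i=0}^k (d_{p|_i L}^s + d_{p|_i R}^s)$ tends to $0$ for every fixed $s > 0$; this forces $h(x) = 0$ at every point, and combined with the standard mass-distribution / covering estimate that $\dim_H \mathfrak{M} \leq \sup_x h(x)$ (recalled in Appendix \ref{cantorappendix}), it yields $\dim_H \mathfrak{M} = 0$. Since $s$ is arbitrary, it is enough to prove that for each $s > 0$ there is a uniform bound $d_{WL}^s + d_{WR}^s \leq 1 - \delta(s)$ for some $\delta(s) > 0$, \emph{except} possibly along the two special infinite branches where the estimate degenerates and must be handled by hand using the explicit Fibonacci/Pell formulas of Proposition \ref{firstoffprop}.

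First I would write $|I_W| = |I_g| = \frac{1}{ef}(\Delta_{e,f} - g)$ and, for the two children of $(e,g,f)$, namely $(e, F, g)$ and $(g, E, f)$, express $|I_{WL}| = \frac{1}{eg}(\Delta_{e,g} - F)$ and $|I_{WR}| = \frac{1}{fg}(\Delta_{f,g} - E)$. Using Lemma \ref{somewhatsharpbound} to pin down $F = \lfloor \Delta_{e,g}\rfloor$, $E = \lfloor \Delta_{f,g}\rfloor$, $g = \lfloor \Delta_{e,f}\rfloor$ and the sharpened inequalities (\ref{somewhatsharpboundii}) and (\ref{somewhatsharpboundiii}), one gets two-sided control: $\Delta_{e,g} - F \asymp ef/g$ times a bounded factor, and similarly for the other. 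The key point is that $|I_{WL}| + |I_{WR}| + |J_g| = |I_W|$ with $|J_g| = \frac{1}{g}(3g - \sqrt{\Delta_g}) \approx \frac{2}{3g^2}$, so $|J_g|$ is genuinely of lower order than $|I_W| \approx \frac{ef}{g^2}\cdot(\text{const})$ only when $ef$ stays bounded — i.e. exactly on the Fibonacci and Pell branches. Away from those branches $\min(e,f) \to \infty$, which makes the ratio $|J_g|/|I_W| \to 0$ but, more usefully, makes \emph{both} children roughly comparable so that $d_{WL}, d_{WR}$ are bounded away from $1$; then for any $s>0$, $d_{WL}^s + d_{WR}^s \leq (d_{WL}+d_{WR})^s \cdot 2^{1-s}$ when $s<1$ (or directly $\leq d_{WL}^s+d_{WR}^s$ with each term $\leq \theta^s<1$ when $s\geq 1$) gives a factor uniformly below $1$.

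The main obstacle is the uniformity: I need $\max(d_{WL}, d_{WR}) \leq 1 - c$ for an \emph{absolute} constant $c>0$ along any path that is not eventually the Fibonacci or Pell branch — but on those two branches $d$ genuinely tends to $1$ (since there $ef$ is unbounded on one side only, and the interval nests down essentially maximally on one child). So the argument splits: (i) for a path $p$ that passes through infinitely many triples with $\min(e,f)\geq 2$ on both sides after a mutation — which is every path — one shows infinitely many factors $(d_{p|_iL}^s + d_{p|_iR}^s)$ are $\leq 1-\delta(s)$, forcing the liminf to $0$; the subtlety is that after a left mutation the new triple is $(e,F,g)$, whose smaller pair is $e$, which may equal $1$ forever (the Fibonacci branch). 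I would handle $p = q\bar L$ or $p=q\bar R$ separately: there Proposition \ref{firstoffprop} gives $x_p$ explicitly and the nested intervals $I_{g_n}$ satisfy $|I_{g_{n+1}}|/|I_{g_n}| \to 1$, but one computes directly from $|I_{F_{2n+1}}| = \frac{1}{F_{2n-1}}(\Delta_{1,F_{2n-1}} - F_{2n+1})$ (an $\ell^\infty$-like telescoping against $\varphi^{-2n}$) that $h^s(x_p)=0$ for all $s>0$ because the product still decays geometrically. For a path that is eventually neither, one verifies that it meets infinitely many triples with both $e,f\geq 2$, at each of which $d_{WL},d_{WR} \leq \frac{3}{4}$ say (from Lemma \ref{somewhatsharpbound}), and the product of infinitely many factors $\leq 2\cdot(3/4)^s$, which is $<1$ once we also use that at such nodes $d_{WL}+d_{WR} = 1 - |J_g|/|I_W| < 1$. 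Thus $h^s(x)=0$ for every $s>0$ and every $x$, whence $h(x)=0$ and $\dim_H\mathfrak{M} = \sup_x h(x) = 0$.
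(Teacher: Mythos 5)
Your overall architecture --- reduce to $h(x)=0$ at every point via Corollary \ref{hausdorffzerodim}, split into the eventually-constant tails $q\bar{L}$, $q\bar{R}$ versus the remaining paths, and control the ratios $d_{WX}$ through Lemma \ref{somewhatsharpbound} --- is the same as the paper's. But the quantitative core of your treatment of the generic paths has a genuine gap. You propose to find infinitely many nodes with $d_{WL},d_{WR}\le 3/4$ and conclude that the factors $d_{WL}^s+d_{WR}^s\le 2(3/4)^s$ are $<1$; this fails for every $s<\log 2/\log(4/3)\approx 2.41$, and the attempted repair via $d_{WL}+d_{WR}<1$ does not help, since for $s<1$ one has $d_{WL}^s+d_{WR}^s\ge d_{WL}+d_{WR}$, and your own power-mean bound $2^{1-s}(d_{WL}+d_{WR})^s$ exceeds $1$ as soon as $d_{WL}+d_{WR}>2^{1-1/s}$, which for small $s$ is essentially no constraint. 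Because $h(x)=0$ requires $h^s(x)=0$ for \emph{every} $s>0$, no bound of the form $\max(d_{WL},d_{WR})\le\theta$ with an absolute constant $\theta<1$ can suffice: you need at least one of the two ratios to tend to $0$ along the path. This is what the paper actually proves: from Lemma \ref{somewhatsharpbound}(\ref{somewhatsharpboundii}),(\ref{somewhatsharpboundiii}) one gets $d_{WL}$ of order $1/(9e^2)$ and $d_{WR}$ of order $1/(9f^2)$; on a path with infinitely many $L$'s and $R$'s both coordinates are updated infinitely often to the growing maximal element, so both ratios tend to $0$ and the factors $d_{WL}^s+d_{WR}^s$ themselves tend to $0$, while on the tails $q\bar{L}$, $q\bar{R}$ one ratio tends to $0$ and the other stays below a constant $\theta<1$, giving factors converging to $\theta^s<1$.

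Two supporting claims in your sketch are also backwards and would derail the write-up. First, $|J_g|/|I_W|$ does not tend to $0$ away from the special branches: it is bounded below by $1/3$ everywhere (this is exactly the inequality $|I_g|<3|J_g|$ underlying Theorem \ref{lebesguetheorem}, so that $d_{WL}+d_{WR}<2/3$ always) and it tends to $1$ on generic paths --- that is precisely why both children shrink. Second, on the Fibonacci and Pell tails the ratios do not tend to $1$; the surviving ratio tends to a constant strictly less than $1$ (for instance about $0.28$ on the Fibonacci branch, namely $\tfrac{2}{3e^2}\sqrt{\Delta_e}/(3e+\sqrt{\Delta_e})$ with $e=1$), and it is this, together with the other ratio tending to $0$, that makes the argument work there.
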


\begin{proof}
Using Corollary \ref{hausdorffzerodim}, $\dim_H \mathfrak{M} = 0$ is implied
by $h(x) = 0$ for every $x \in \mathfrak{M}$. Therefore, it suffices to show the latter. We consider three cases
of points $x_p$, depending on the structure of $p$.

{\em The case $p = p|_k\bar{L}$ for some $k$.}

In order to show that $h^s(x_p) = 0$ for any $s > 0$, we are going to show that $d^s_{p|_j} + d^s_{p|_j} < 1$
for sufficiently large $j \gg 0$ for any $s > 0$. For this, we show that both $d_{p|_jL}$ and $d_{p|_j}$ are
bounded from above by monotonously decreasing series, where for $d_{p|_jL}$ the bounding series has limit $< 1$
and for $d_{p|_jR}$ we obtain $\lim_{j \rightarrow \infty} d_{p|_jR} = 0$.

Using the same notation as in Lemma \ref{growthlemma}, assume that $p = p|_k \bar{L}$ and denote $(e, g, f)$ the
Markov triple corresponding to the path $p|_k$. Then we get:
$$
d_{p|_{k + i + 1}} = \frac{|I_{p|_{k + i + 1}}|}{|I_{p|_{k + i}}|} = \frac{F_{i - 1}}{F_i}
\frac{\Delta_{e, F_i} - F_{i + 1}}{\Delta_{e, F_{i - 1}} - F_i}
\quad \text{and} \quad
d_{p|_{k + i}R} = \frac{|I_{p|_{k + i}R}|}{|I_{p|_{k + i}}|} = \frac{e}{F_i}
\frac{\Delta_{F_i, F_{i - 1}} - H_{i + 1}}{\Delta_{e, F_{i - 1}} - F_i}
$$
for $i \geq 0$, where $H_i := 3F_iF_{i - 1} - e$.
Using first Lemma \ref{somewhatsharpbound} (\ref{somewhatsharpboundiii}), then
\ref{somewhatsharpbound} (\ref{somewhatsharpboundii}), then $\sqrt{\Delta_{F_{i - 1}}} < 3F_{i - 1}$, we get:
$$
d_{p|_{k + i + 1}} < \frac{\sqrt{\Delta_e} \sqrt{\Delta_{F_i}}}{2e} (\Delta_{e, F_{i + 1}} - F_{i + 2}) <
\frac{\sqrt{\Delta_e}}{9e^2} \frac{\sqrt{\Delta_{F_{i - 1}}}}{F_i} <
\frac{\sqrt{\Delta_e}}{3e^2} \frac{F_{i - 1}}{F_i}.
$$
From Lemma \ref{growthlemma} (\ref{growthlemmai}),
it is straightforward to see that $F_i / F_{i + 1} < F_{i - 1} / F_i$ for every $i$ and moreover,
$$
\lim_{i \rightarrow \infty} \frac{\sqrt{\Delta_e}}{3e^2} \frac{F_{i - 1}}{F_i} =
\frac{2}{3e^2} \frac{\sqrt{\Delta_e}}{3e + \sqrt{\Delta_e}} < 1.
$$
Similarly, we get
$$
d_{p|_{k + i}R} < \frac{\sqrt{\Delta_e}}{3 H_{i + 1}} \xrightarrow[\ i \rightarrow \infty \ ]{} 0.
$$
Note that these inequalities also show that $\mathfrak{M}$ satisfies the conditions for Corollary \ref{hausdorffzerodim}.

{\em The case $p = p|_k\bar{R}$ for some $k$.}

Follows analogously.

{\em The remaining paths.}

Let $p$ be any infinite path not in $\mathfrak{P}_f$. Then $p$ has infinitely many continuous segments
consisting of repetitions of $L$. Let denote $p|_{k_j}, \dots, p|_{k_j + t_j}$ the $j$-th such segment,
i.e. the last letter of every $p|_{k_j + i}, 0 \leq i \leq t_j$ is $L$. Denote $(e_j, g_j, f_j)$ the
root of this segment. Then we have seen above that $d_{p|_{k_j + i}}$ is bounded by values
(very close to) $1/3e_j^2$.
Therefore, for $j \rightarrow \infty$,  $d_{p|_n} \rightarrow 0$ for $k_j \leq n \leq k_j + t_j$.
Similarly, the $d_{p|_{n - 1}R}$ are bounded even stronger. We can argue analogously for the segments
consisting of $R$s. Therefore $\lim_{i \rightarrow \infty}  (d^s_{p|_iL} + d^s_{p|_iR}) = 0$
for every $s > 0$, hence $h(x_p) = 0$.
\end{proof}

\section{Markoff triples and T-singularities}\label{equationsection}

According to \cite{HackingProkhorov10}, given a Markoff triple $(e, g, f)$, 
the singular points of $\mathbb{P}(e^2, g^2, f^2)$ are cyclic quotient singularities of orders $e^2$, $g^2$,
and $f^2$, respectively. These singularities can locally be described as the diagonal action of, say, $\Z / g^2 \Z$
on $\C^2$ with weights $e^2$ and $f^2$ (shorthand: $\frac{1}{g^2}(e^2, f^2)$). By choosing an appropriate
$g^2$-th root of unity, this is equivalent to $\frac{1}{g^2}(1, e^{-2}f^2 \mod g^2)$ and, using the Markoff equation, to:
$$
\frac{1}{g^2}(1, g w_g - 1),
$$
where $w_g = 3 e^{-1}f \mod g$. This is a particular example of a class of surface singularities called {T-singularities}
which we will review in more detail in Section \ref{tsingsection}. Repeating this calculation for the singularities of
orders $e^2$ and $f^2$ leads to the following definition.

\begin{definition}\label{weightdefinition}
Denote $(e, g, f)$ be a regular Markoff triple. Then we set:
\begin{align*}
w_e & := 
\begin{cases}
-1 & \text{ if } e = 1, \\
 3f^{-1}g \mod e & \text{ else},
\end{cases}\\
w_f & := 3g^{-1}e \mod f, \\
w_g & := 3e^{-1} f \mod g.
\end{align*}
We call $(w_e, w_g, w_f)$ the {\em T-weights} of the triple $(e, g, f)$.
\end{definition}

The Markoff equation implies that $(e^{-1}f)^2 \equiv -1 \mod g$ and hence $w_g^2 \equiv -9 \mod g$.
Similarly, $w_e^2 \equiv -9 \mod e$ and $w_f^2 \equiv -9 \mod f$.

\begin{definition}
	We define the {\em T-coweights} $(v_e, v_g, v_f)$ of $(e, g, f)$ to be the
	unique integers such that the following equations are satisfied:
	\begin{align*}
		w_e^2 & = -9 + v_e e, \\
		w_f^2 & = -9 + v_f f, \\
		w_g^2 & = -9 + v_g g.
	\end{align*}
\end{definition}

As the reader has noticed, the $T$-weights and $T$-coweights are very similar to the weights and coweights
we have considered in Section \ref{frobenius}. The following Lemma is proved in the same way as Lemma
\ref{cofactorlemmaf}.

\begin{lemma}\label{tcofactorlemmaf}
Let $(e, g, f)$ be a regular Markoff triple.
\begin{enumerate}[(i)]
	\item\label{tcofactorlemmafi}
	The weights and coweights transform under mutation as follows:
	\begin{center}
		\tikzstyle{level 1}=[level distance = 2cm, sibling distance=4cm]
		\begin{tikzpicture}[]
			\node{\begin{tabular}{r@{\hskip 1pt}rrr@{\hskip 1pt}l}( & $e$, & $g$, & $f$ & )\\ (& $w_e$, & $w_g$, & $w_f$ & )\\ ( & $v_e$, & $v_g$, & $v_f$ & )\end{tabular}}
			child {node {\begin{tabular}{r@{\hskip 1pt}rrr@{\hskip 1pt}l}( & $e$, & $F$, & $g$ & )\\ (& $w_e$, & $w_F$, & $w_g$ & )\\ ( & $v_e$, & $v_F$, & $v_g$ & )\end{tabular}}}
			child {node {\begin{tabular}{r@{\hskip 1pt}rrr@{\hskip 1pt}l}( & $g$, & $E$, & $f$ & )\\ (& $w_g$, & $w_E$, & $w_f$ & )\\ ( & $v_g$, & $v_E$, & $v_f$ & )\end{tabular}}};
		\end{tikzpicture}
	\end{center}
	where
	$$
	w_E = 3f w_g - w_e, \quad v_E = 3f v_g - v_e, \quad w_F = 3e w_g - w_f, \ \ \text{ and } \ \ v_F = 3e v_g - v_f.
	$$
	\item\label{tcofactorlemmafii}
	The {\em slopes} of a regular Markoff triple are strictly increasing:
	$$
	\frac{w_e}{e} < \frac{w_g}{g} < \frac{w_f}{f}.
	$$
	\item\label{tcofactorlemmafiii}
	We have  $w_e < e - w_e$, $w_f < f - w_f$, and $w_g < g - w_g$ for all $e, g$ and all $f > 2$.
\end{enumerate}
\end{lemma}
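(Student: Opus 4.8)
The plan is to follow the proof of Lemma~\ref{cofactorlemmaf} essentially line for line, the only genuinely new ingredient being a clean relation between the $T$-weights and $T$-coweights and the weights and coweights of Section~\ref{frobenius}. First I would establish that every regular triple $(e,g,f)$ satisfies
$$
w_e = 3r_e - e,\qquad w_g = 3r_g - g,\qquad w_f = 3r_f - f
$$
(with $e = 1$ covered by the conventions $r_e = 0$, $w_e = -1$), together with $v_e = 9s_e - 6r_e + e$ and the analogous formulas for $v_g,v_f$, which follow by squaring the first line and using $r_x^2 = -1 + s_x x$. Modulo $x$ each weight identity is immediate from the definitions, since $w_g = 3e^{-1}f \equiv 3r_g \pmod g$ and similarly for $e$ and $f$; to pin down the exact equality it suffices to know $\tfrac13 < r_x/x < \tfrac12$, and for a maximal element $g$ this is exactly Formula~(\ref{cantorbound}) (lower bound) together with Lemma~\ref{cofactorlemmaf}(\ref{cofactorlemmafiii}) (upper bound). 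I would then run this as an induction over the Markoff tree: at $(1,5,2)$ one checks the identities directly; in the inductive step the inherited entries keep their $r$-weights (Lemma~\ref{cofactorlemmaf}(\ref{cofactorlemmafi})), hence their $T$-weights and $T$-coweights, while the freshly created maximal element $F$ (resp.\ $E$) is handled by the slope bound above.

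Granting these relations, part~(\ref{tcofactorlemmafi}) is a formal computation: Lemma~\ref{cofactorlemmaf}(\ref{cofactorlemmafi}) gives $r_F = 3er_g - r_f$, so
$$
w_F = 3r_F - F = 3(3er_g - r_f) - (3eg - f) = 3e(3r_g - g) - (3r_f - f) = 3e\,w_g - w_f,
$$
$v_F = 3e\,v_g - v_f$ drops out the same way using $s_F = 3es_g - s_f$, the right mutation is identical, and the invariance of $w_e,w_g,v_e,v_g$ under left mutation is inherited from that of $r_e,r_g,s_e,s_g$. Alternatively, staying closer to the proof of Lemma~\ref{cofactorlemmaf}, one can first record the $T$-analogue of~(\ref{cofactorequations1}),
$$
g w_f - f w_g = 3e,\qquad e w_g - g w_e = 3f,\qquad e w_f - f w_e = 3G,
$$
each obtained by checking the congruence modulo $e$, modulo $f$ and modulo $g$ with the help of~(\ref{cofactorequations1}) and the Markoff equation and then fixing the value by the ranges $0 \le w_x < x$ ($x \in \{e,f,g\}$), the convention $w_e = -1$, and crude size bounds such as $g > 2ef$; one then reruns the computation of $r_F$ in the proof of Lemma~\ref{cofactorlemmaf}(\ref{cofactorlemmafi}) with $3g$ in place of $g$, obtaining $v_F$ by direct evaluation of $w_F^2 = -9 + Fv_F$.

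For part~(\ref{tcofactorlemmafii}), since $w_x/x = 3(r_x/x) - 1$, the chain $\tfrac{w_e}{e} < \tfrac{w_g}{g} < \tfrac{w_f}{f}$ is the image of Lemma~\ref{cofactorlemmaf}(\ref{cofactorlemmafii}) under the strictly increasing affine map $t \mapsto 3t - 1$; equivalently, it follows from~(\ref{tcofactorlemmafi}) by induction over the tree, with base case $(1,5,2)$, where $(w_e,w_g,w_f) = (-1,1,1)$. Part~(\ref{tcofactorlemmafiii}) is then immediate: $w_g < g - w_g \iff 2w_g < g \iff 2(3r_g - g) < g \iff 2r_g < g \iff r_g < g - r_g$, which is Lemma~\ref{cofactorlemmaf}(\ref{cofactorlemmafiii}); the cases of $w_e$ and $w_f$ are the same, with $f = 2$ excluded exactly as there.

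I expect no real difficulty beyond the first step. Everything after it is formal, and the single point requiring genuine care is precisely the passage from the congruences $w_x \equiv 3r_x \pmod x$ to the exact identities $w_x = 3r_x - x$ — equivalently, on the self-contained route, the upgrade of the $T$-cofactor congruences to equalities. This rests on the slope bounds of Lemma~\ref{cofactorlemmaf} and Formula~(\ref{cantorbound}) plus a little bookkeeping around the anomalous value $w_e = -1$; it is elementary, but it is the one spot where the $T$-version is slightly more delicate than the original.
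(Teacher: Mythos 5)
Your proposal is correct, but your primary route is genuinely different from the paper's. The paper disposes of this lemma with the single sentence that it ``is proved in the same way as Lemma~\ref{cofactorlemmaf}'', i.e.\ one first establishes the T-cofactor identities $gw_f - fw_g = 3e$, $ew_g - gw_e = 3f$, $ew_f - fw_e = 3G$ (this is Lemma~\ref{cofactorlemma}~(\ref{cofactorlemmai}), whose proof via congruences modulo $ef$ and size estimates is the delicate part) and then reruns the comparison-of-terms argument to extract $w_F = 3ew_g - w_f$; your second, ``self-contained'' route is exactly this. Your first route instead derives the affine relation $w_x = 3r_x - x$ (and $v_x = 9s_x - 6r_x + x$) directly from the congruence $w_x \equiv 3r_x \pmod x$ together with the slope bounds $\tfrac13 < r_x/x < \tfrac12$ from Formula~(\ref{cantorbound}) and Lemma~\ref{cofactorlemmaf}~(\ref{cofactorlemmafiii}), after which all three parts collapse to formal consequences of Lemma~\ref{cofactorlemmaf}. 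This is cleaner: it bypasses the case analysis in the paper's proof of Lemma~\ref{cofactorlemma}~(\ref{cofactorlemmai}) entirely, and it inverts the paper's logical order, since the paper obtains Corollary~\ref{affinetransform} \emph{from} Lemma~\ref{cofactorlemma} rather than proving it first. Two small points to keep in view: you must not invoke Corollary~\ref{affinetransform} as stated (that would be circular in the paper's ordering), but your congruence-plus-bounds derivation is independent of it; and the bound $r_x/x > \tfrac13$ from~(\ref{cantorbound}) is stated for Markoff numbers $\ge 5$, so the anomalous entries $x = 1$ ($r = 0$, $w = -1$) and $x = 2$ ($r = w = 1$) must be, as you note, checked by hand. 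What the paper's route buys is that Lemma~\ref{cofactorlemma} is needed later anyway (in Proposition~\ref{juxtapose}); what yours buys is a shorter and more transparent proof of the present lemma.
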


Figure \ref{treefig2} shows the Markoff triples, their weights and coweights of
the first four levels of the Markoff tree.
\setlength{\tabcolsep}{1pt}
\begin{figure}[ht]
\footnotesize
\tikzstyle{level 1}=[level distance = 1cm, sibling distance=8cm]
\tikzstyle{level 2}=[sibling distance=4.2cm]
\tikzstyle{level 3}=[sibling distance=1.9cm, level distance = 2cm]
\begin{tikzpicture}[]
  \node {\begin{tabular}{r@{\hskip .5pt}rrr@{\hskip 1pt}l}( & 1, & 5, & 2 & )\\ (& -1, & 1, & 1 & )\\ ( & 10, & 2, & 5 & )\end{tabular}}
    child {node {\begin{tabular}{r@{\hskip .5pt}rrr@{\hskip 1pt}l}( & 1, & 13, & 5 & )\\ (& -1,& 2, & 1 & )\\ ( & 10, & 1, & 2 & )\end{tabular}}
      child {node {\begin{tabular}{r@{\hskip .5pt}rrr@{\hskip 1pt}l}( & 1, & 34, & 13 & )\\ (& -1, & 5, & 2 & )\\ ( & 10, & 1, & 1 & )\end{tabular}}
        child {node {\begin{tabular}{r@{\hskip .5pt}rrr@{\hskip 1pt}l}( & 1, & 89, & 34 & )\\ (& -1, & 13, & 5 & )\\ ( & 10, & 2, & 1 & )\end{tabular}}}
        child {node {\begin{tabular}{r@{\hskip .5pt}rrr@{\hskip 1pt}l}( & 34, & 1325, & 13 & )\\ (& 5, & 196, & 2 & )\\ ( & 1, & 29, & 1 & )\end{tabular}}}
      }
      child {node {\begin{tabular}{r@{\hskip .5pt}rrr@{\hskip 1pt}l}( & 13, & 194, & 5 & )\\ (& 2, & 31, & 1 & )\\ ( & 1, & 5, & 2 & )\end{tabular}}
        child {node {\begin{tabular}{r@{\hskip .5pt}rrr@{\hskip 1pt}l}( & 13, & 7561, & 194 & )\\ (& 2, & 1208, & 31 & )\\ ( & 1, & 193, & 5 & )\end{tabular}}}
        child {node {\begin{tabular}{r@{\hskip .5pt}rrr@{\hskip 1pt}l}( & 194, & 2897, & 5 & )\\ (& 31, & 463, & 1 & )\\ ( & 5, & 74, & 2  & )\end{tabular}}}
      }
    }
    child {node {\begin{tabular}{r@{\hskip .5pt}rrr@{\hskip 1pt}l}( & 5, & 29, & 2 & )\\ (& 1, & 7, & 1 & )\\ ( & 2, & 2, & 5 & )\end{tabular}}
      child {node {\begin{tabular}{r@{\hskip .5pt}rrr@{\hskip 1pt}l}( & 5, & 433, & 29 & )\\ (& 1, & 104, & 7 & )\\ ( & 2, & 25, & 2 & )\end{tabular}}
        child {node {\begin{tabular}{r@{\hskip .5pt}rrr@{\hskip 1pt}l}( & 5, & 6466, & 433 & )\\ (& 1, & 1553, & 104 & )\\ ( & 2, & 373, & 25 & )\end{tabular}}}
        child {node {\begin{tabular}{r@{\hskip .5pt}rrr@{\hskip 1pt}l}( & 433, & 37666, & 29 & )\\ (& 104, & 9047, & 7 & )\\ ( & 25, & 2173, & 2 & )\end{tabular}}}
      }
      child {node {\begin{tabular}{r@{\hskip .5pt}rrr@{\hskip 1pt}l}( & 29, & 169, & 2 & )\\ (& 7, & 41, & 1 & )\\ ( & 2, & 10, & 5 & )\end{tabular}}
        child {node {\begin{tabular}{r@{\hskip .5pt}rrr@{\hskip 1pt}l}( & 29, & 14701, & 169 & )\\ (& 7, & 3566, & 41 & )\\ ( & 2, & 865, & 10 & )\end{tabular}}}
        child {node {\begin{tabular}{r@{\hskip .5pt}rrr@{\hskip 1pt}l}( & 169, & 985, & 2 & )\\ (& 41, & 239, & 1 & )\\ ( & 10, & 58, & 5 & )\end{tabular}}}
      }
    };
\end{tikzpicture}
\caption{The first four levels of the Markoff tree with triples
$(e, g, f)$, $(w_e, w_g, w_f)$, $(v_e, v_g, v_f)$.}\label{treefig2}
\end{figure}
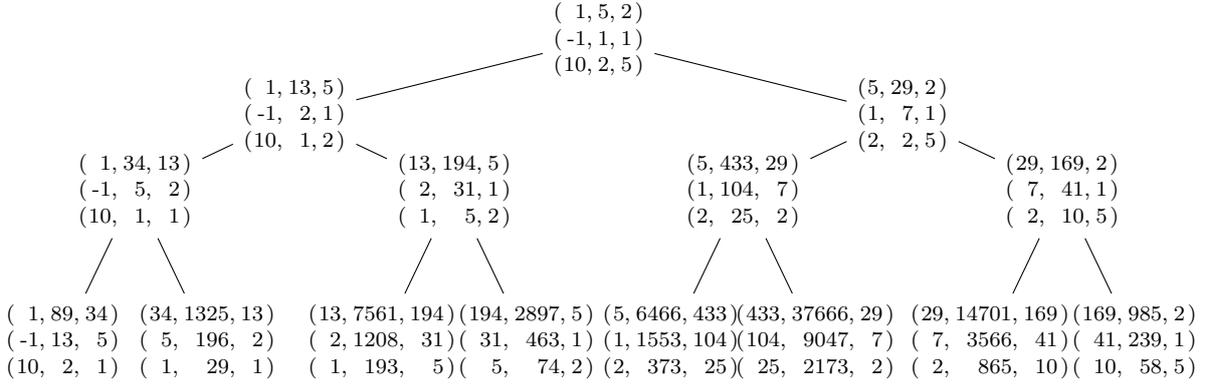

With Lemmas \ref{cofactorlemmaf} (\ref{cofactorlemmafiii}), \ref{cofactorlemma} (\ref{cofactorlemmai}), we get:

\begin{corollary}\label{affinetransform}
	For every regular Markoff triple, $w_e = 3 r_e - e$, $w_g = 3 r_g - g$, and $w_f = 3 r_f - f$.
\end{corollary}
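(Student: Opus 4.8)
The plan is to prove the three identities simultaneously by showing that the right-hand sides obey the same mutation recurrence as the left-hand sides and agree with them on the root triple, and then to induct along the Markoff tree. Concretely, for a regular Markoff triple $(e,g,f)$ I would introduce the auxiliary quantities $\tilde{w}_e := 3r_e - e$, $\tilde{w}_g := 3r_g - g$, $\tilde{w}_f := 3r_f - f$, and aim to show $\tilde{w}_x = w_x$ for $x \in \{e,g,f\}$. For the base case, a direct check on $(e,g,f)=(1,5,2)$, where $(r_e,r_g,r_f)=(0,2,1)$ with the convention $r_e = 0$ for $e=1$ and $(w_e,w_g,w_f)=(-1,1,1)$ with the convention $w_e = -1$ for $e=1$ (cf. Figures \ref{treefig0} and \ref{treefig2}), gives $(\tilde{w}_e,\tilde{w}_g,\tilde{w}_f) = (-1,1,1) = (w_e,w_g,w_f)$.

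Next I would verify that $\tilde{w}$ transforms under mutation exactly as the T-weights do in Lemma \ref{tcofactorlemmaf} (\ref{tcofactorlemmafi}). For the left mutation $(e,g,f)\mapsto(e,F,g)$ with $F = 3eg - f$, Lemma \ref{cofactorlemmaf} (\ref{cofactorlemmafi}) gives $r_F = 3e r_g - r_f$, whence
\[
\tilde{w}_F = 3r_F - F = 3(3e r_g - r_f) - (3eg - f) = 3e(3r_g - g) - (3r_f - f) = 3e\,\tilde{w}_g - \tilde{w}_f,
\]
while $\tilde{w}_e$ and $\tilde{w}_g$ are unchanged because $r_e, r_g, e, g$ are. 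The analogous computation for the right mutation $(e,g,f)\mapsto(g,E,f)$ with $E = 3fg - e$, using $r_E = 3f r_g - r_e$, yields $\tilde{w}_E = 3f\,\tilde{w}_g - \tilde{w}_e$. These are precisely the recurrences satisfied by $(w_e,w_g,w_f)$ in Lemma \ref{tcofactorlemmaf} (\ref{tcofactorlemmafi}). Since every regular Markoff triple is reached from $(1,5,2)$ by a finite sequence of mutations, and since both triples $(\tilde{w}_e,\tilde{w}_g,\tilde{w}_f)$ and $(w_e,w_g,w_f)$ are intrinsic to the Markoff triple, start equal at the root, and evolve by the same rule, they coincide everywhere, which is the assertion.

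I expect no real obstacle here: the argument is a routine induction over the tree, and the only point needing a moment's care is the bookkeeping of the degenerate conventions $r_e = 0$ and $w_e = -1$ at $e = 1$, which is exactly what makes the base case work. One could instead argue without induction, by observing $w_g \equiv 3r_g \pmod g$ and identifying the residue: $r_g < g - r_g$ from Lemma \ref{cofactorlemmaf} (\ref{cofactorlemmafiii}) gives $3r_g < 2g$, while the lower bound $r_g/g > \tfrac12(3-\sqrt{5}) > \tfrac13$ coming from Formula (\ref{cantorbound}) gives $3r_g > g$, so that $w_g = 3r_g - g$, with the analogous reasoning for $e>1$ and $f>2$ and the case $e=1$ handled by convention; but the inductive route is cleaner and more uniform, so I would present that one.
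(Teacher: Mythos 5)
Your proof is correct, and both of your suggested routes work; but your main argument is genuinely different from the one the paper intends. The paper gives no proof body at all: the corollary is introduced by the single line citing Lemma \ref{cofactorlemmaf} (\ref{cofactorlemmafiii}) and Lemma \ref{cofactorlemma} (\ref{cofactorlemmai}), so the intended argument is to multiply the relations (\ref{cofactorequations1}) by $3$ and compare with $g w_f - f w_g = 3e$, $e w_g - g w_e = 3f$, $e w_f - f w_e = 3G$, obtaining $e(3r_g - w_g) = g(3r_e - w_e)$ and its two companions; coprimality forces $3r_x - w_x$ to be a common integer multiple $\lambda x$ of the respective entries, and the bounds $0 \le w_g < g$ and $2r_g < g$ from Lemma \ref{cofactorlemmaf} (\ref{cofactorlemmafiii}) leave only $\lambda \in \{0,1\}$, with $\lambda = 1$ then pinned down (e.g.\ from the entry $e=1$ or $f=2$ propagated along the tree, or from the slope bound). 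Your main argument instead runs an induction over the Markoff tree using the parallel mutation recurrences $r_E = 3f r_g - r_e$ and $w_E = 3f w_g - w_e$ from Lemmas \ref{cofactorlemmaf} (\ref{cofactorlemmafi}) and \ref{tcofactorlemmaf} (\ref{tcofactorlemmafi}), together with the base case $(1,5,2)$. This is clean and complete; its one structural cost is that it leans on Lemma \ref{tcofactorlemmaf}, whose proof the paper only sketches by analogy and which itself ultimately rests on Lemma \ref{cofactorlemma} (\ref{cofactorlemmai}) in the role that (\ref{cofactorequations1}) plays for the ordinary weights --- so the dependence on that lemma is not removed, only pushed one level down. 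Your non-inductive alternative (the congruence $w_g \equiv 3r_g \bmod g$ from the definitions, $3r_g < 2g$ from Lemma \ref{cofactorlemmaf} (\ref{cofactorlemmafiii}), and $3r_g > g$ from (\ref{cantorbound})) is closest in spirit to the paper's citation and is also fine, since the bound (\ref{cantorbound}) is established in the earlier Section \ref{cantor}; just note it needs the separate handling of $e=1$ and $f=2$ exactly as you say.
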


For later use, we state the following formulas (compare \cite[\S II.3, Lemma 7]{Cassels57}).

\begin{lemma}\label{cofactorlemma}
Let $(e, g, f)$ be a regular Markoff triple. Then the following equations hold:
\begin{enumerate}[(i)]
\item\label{cofactorlemmai}
\begin{align*}
g w_f - f w_g & = 3e,\\
e w_g - g w_e & = 3f, \\
e w_f - f w_e & = 3G.
\end{align*}
\item\label{cofactorlemmaiii}
$$Gw_g = ew_e + fw_f.$$
\item\label{cofactorlemmav}
\begin{align*}
f v_e & = w_e w_f + 3(w_g - 3f w_e)\\
e v_f & = w_e w_f + 3(3e w_f - w_g).
\end{align*}
\end{enumerate}
\end{lemma}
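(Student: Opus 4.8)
The plan is to derive all three parts from the affine relations $w_e = 3r_e - e$, $w_g = 3r_g - g$, $w_f = 3r_f - f$ of Corollary~\ref{affinetransform}, the cofactor identities (\ref{cofactorequations1}), and the Markoff equation in the rearranged form $e^2 + f^2 = gG$ (obtained by substituting $G = 3ef - g$). For part (\ref{cofactorlemmai}) I would substitute directly: replacing each $w$ by $3r$ minus the corresponding Markoff number in $g w_f - f w_g$, the quadratic terms $-gf$ and $-fg$ cancel and one is left with $3(g r_f - f r_g) = 3e$ by the first line of (\ref{cofactorequations1}); the same computation gives $e w_g - g w_e = 3(e r_g - g r_e) = 3f$ and $e w_f - f w_e = 3(e r_f - f r_e) = 3G$ from the remaining two lines. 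In the degenerate case $e = 1$ one uses the conventions $r_e = 0$, $w_e = -1$, under which the relevant instances of (\ref{cofactorequations1}) still hold.

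For part (\ref{cofactorlemmaiii}) I would solve the two linear relations just obtained in the forms $g w_e = e w_g - 3f$ and $g w_f = f w_g + 3e$, so that
$$g(e w_e + f w_f) = e(e w_g - 3f) + f(f w_g + 3e) = (e^2 + f^2) w_g = g G w_g,$$
and then divide by $g$.

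For part (\ref{cofactorlemmav}) I would use the defining relations $e v_e = w_e^2 + 9$ and $f v_f = w_f^2 + 9$, which reduce the two claimed identities, after multiplying through by $e$ and $f$ respectively, to
$$f(w_e^2 + 9) = e w_e w_f + 3 e w_g - 9 ef w_e \qquad \text{and} \qquad e(w_f^2 + 9) = f w_e w_f + 9 ef w_f - 3 f w_g.$$
In the first, I would substitute $e w_f = f w_e + 3G$ (from part (\ref{cofactorlemmai})), then $G = 3ef - g$, then $e w_g = g w_e + 3f$ (again from part (\ref{cofactorlemmai})); after these replacements the right-hand side collapses to $f w_e^2 + 9f$. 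The second identity follows symmetrically, substituting $f w_e = e w_f - 3G$, then $G = 3ef - g$, then $g w_f = f w_g + 3e$.

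Since each step is an elementary substitution, I do not anticipate any genuine obstacle; the only points that need care are the sign bookkeeping, the consistency check in the edge case $e = 1$, and, in part (\ref{cofactorlemmav}), eliminating exactly the right products at each stage so that the terms quadratic in $w_e$ (resp.\ $w_f$) cancel as intended.
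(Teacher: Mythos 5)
Your parts (\ref{cofactorlemmaiii}) and (\ref{cofactorlemmav}) are correct. For (\ref{cofactorlemmav}) the paper performs essentially the same chain of substitutions (multiply by $e$, use $ew_f - fw_e = 3G$, then $G = 3ef - g$, then $ew_g - gw_e = 3f$); for (\ref{cofactorlemmaiii}) your route through $e^2 + f^2 = gG$ is a clean variant of the paper's, which instead multiplies the part-(\ref{cofactorlemmai}) identities by the $w$'s and compares.

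The problem is part (\ref{cofactorlemmai}). In the paper, Corollary \ref{affinetransform} ($w_e = 3r_e - e$, etc.) is \emph{deduced from} Lemma \ref{cofactorlemma} (\ref{cofactorlemmai}) together with Lemma \ref{cofactorlemmaf} (\ref{cofactorlemmafiii}) --- the sentence introducing the corollary says exactly this --- so invoking it to prove part (\ref{cofactorlemmai}) is circular as the paper is organized. The substantive point hiding behind the corollary is this: by definition $w_g \equiv 3r_g \pmod{g}$, and since $0 \leq w_g < g$ and $r_g < g/2$ one gets $w_g = 3r_g - kg$ with $k \in \{0,1\}$; to conclude $k = 1$ one must prove $r_g/g > 1/3$. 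That bound is true (it follows, e.g., from Corollary \ref{cofactorcorollary} (\ref{cofactorcorollaryi}) and the Fibonacci-branch limit $\tfrac{1}{2}(3 - \sqrt{5}) > 1/3$, as in (\ref{cantorbound})), but it is not free, and nothing in your proposal supplies it. The paper sidesteps the issue entirely and proves $ew_f - fw_e = 3G$ directly: it notes $ew_f - fw_e \equiv -3g \pmod{ef}$, writes $ew_f - fw_e = -3g + xef$, bounds $x < 10$, and pins down $x = 9$ by a case analysis on $e - w_e \in \{1,2\}$ using the Fibonacci and Pell branches. Either repair --- an independent proof of the lower slope bound before citing the affine relation, or a direct congruence argument as in the paper --- would close the gap; as written, your proof of (\ref{cofactorlemmai}) does not stand on its own, and since (\ref{cofactorlemmaiii}) and (\ref{cofactorlemmav}) rest on (\ref{cofactorlemmai}), the whole proposal inherits the gap.
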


\begin{proof}
(\ref{cofactorlemmai})
We will only prove the the last equation and leave the others for the reader.
First we observe $ew_f - fw_e \equiv -3g \mod ef$, hence $ew_f - fw_e = -3g + xef$
for some integer $x$. From $ew_f - fw_e + 3g < ew_f - fw_e + 3g + 3G = e w_f - fw_e + 9ef < 10ef$
we get $x < 10$. Moreover, we have $ew_f - fw_e + 3g = ew_f - fw_e + 9ef - 3G = ew_f + f(e - w_e) + 8ef -3G$.
The latter term is greater than $8ef$ (implying x = 9 and therefore the assertion) iff $ew_f + f(e - w_e) > 3G$.
Because $G < f$, this inequality will be satisfied if $3G < f(e - w_e)$ holds and for this it is enough to consider
the cases $e - w_e = 1, 2$. If $e - w_e = 1$ then $(e - w_e)^2 \equiv -9 \equiv 1 \mod e$, hence
$e \in \{1, 2, 5\}$, but for $e = 1$ we have $e - w_e = 2$ and for $e = 5$ we have $e - w_e = 4$, hence $e = 2$.
In that case, $(e, g, f) = (P_{2n - 1}, P_{2n + 1}, 2)$.
If $G = P_{2n - 3}$ then $3G = 3P_{2n-3} < P_{2n + 1}$ by the well-known properties of Pell numbers. Otherwise,
if $G < e$ then $G = 1$ and $(e, g, f) = (5, 29, 2)$, thus $3G < f$.

If $e - w_e = 2$ then $(e - w_e)^2 \equiv -9 \equiv 4 \mod e$, hence $e \in \{1, 13\}$, but for $e = 13$ we
have $e - w_e = 11$, hence $e = 1$. But then $(e, g, f) = (1, F_{2n + 1}, F_{2n - 1})$ and we conclude analogously
as for the Pell numbers.

(\ref{cofactorlemmaiii})
By (\ref{cofactorlemmai}) we have $ew_fw_g + fw_ew_g - gw_ew_f = ew_fw_g - 3ew_e = fw_ew_g + 3fw_f$
which, again by (\ref{cofactorlemmai}),  implies the assertion.

(\ref{cofactorlemmav})
We only show the first equality, the other follows analogously.
\begin{align*}
efv_e & = f(9 + w_e^2) = 9f + fw_e^2 - e w_e w_f + e w_e w_f\\
& = 9f - 3Gw_e + e w_e w_f \quad \text{ by (\ref{cofactorlemmai}) }\\
& = e w_e w_f + 3e(w_g - 3f w_e) \quad \text{ again by (\ref{cofactorlemmai}). }
\end{align*}
\end{proof}

The following observations regarding the Fibonacci and Pell branches may be of independent
interest:

\begin{lemma}\label{fibonaccipellweights}\
\begin{enumerate}[(i)]
\item
Let $(e, g, f)$ be a Markoff triple of the form $(1, F_{2n + 1}, F_{2n - 1})$ for $n \geq 2$. Then:
\begin{align*}
(w_e, w_g, w_f) & = (-1, F_{2n - 3}, F_{2n - 5}), \\
(e - w_e, g - w_g, f - f_w) & = (2, L_{2n - 1}, L_{2n - 3}), \\
(v_e, v_g, v_f) & = (10, F_{2n - 7}, F_{2n - 9}),
\end{align*}
where $L_i$ denotes the $i$-th Lucas number.
\item
Let $(e, g, f)$ be a Markoff triple of the form $(P_{2n - 1}, P_{2n + 1}, 2)$ for $n \geq 2$. Then:
\begin{align*}
(w_e, w_g, w_f) & = (S_{2n - 3}, S_{2n - 1}, 1), \\
(e - w_e, g - w_g, f - w_f) & = ( Q_{2n - 3}, Q_{2n - 1}, 1), \\
(v_e, v_g, v_f) & = (R_{2n - 5}, R_{2n - 3}, 5),
\end{align*}
where  $Q_i = 2Q_{i - 1} + Q_{i - 2}$, $R_i = 2R_{i - 1} + R_{i - 2}$, $S_i = 2S_{i - 1} + S_{i - 2}$
with initial values $Q_0 = 1$, $Q_1 = 4$, $R_1 = 2$, $R_2 = 4$, and $S_0 = 1$, $S_1 = 1$, respectively.
\end{enumerate}
\end{lemma}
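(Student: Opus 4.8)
The plan is to treat the Fibonacci and Pell branches separately, exploiting that descending the leftmost (resp. rightmost) branch amounts to iterating the left (resp. right) mutation with $e=1$ (resp. $f=2$) held fixed. By Lemma~\ref{tcofactorlemmaf}~(\ref{tcofactorlemmafi}) this forces the T-weight and T-coweight of the middle entry to obey the two-term linear recurrence $x_{n+1}=3x_n-x_{n-1}$ along the Fibonacci branch and $x_{n+1}=6x_n-x_{n-1}$ along the Pell branch (the outer entries satisfy the same recurrences up to a shift of index). Every sequence occurring on the right-hand side of the Lemma is, by construction, a Fibonacci/Lucas sequence or one of $Q_i,R_i,S_i$, and all of these satisfy a fixed two-term linear recurrence and hence also the corresponding ``step two'' recurrence ($F_{m+2}=3F_m-F_{m-2}$ resp. $P_{m+2}=6P_m-P_{m-2}$, and likewise for $Q,R,S$). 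Thus for each asserted identity it suffices to check that both sides satisfy the same recurrence in $n$ and to verify two base cases, e.g. $n=2$ and $n=3$, which can be read directly off Figures~\ref{treefig0} and~\ref{treefig2}.

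For the $w$'s there is an even quicker route. The values $(r_e,r_g,r_f)$ and $(s_e,s_g,s_f)$ on both branches were already recorded right after Figure~\ref{treefig0}, so Corollary~\ref{affinetransform} ($w_x=3r_x-x$) together with $F_{m+2}=3F_m-F_{m-2}$ gives at once $(w_e,w_g,w_f)=(-1,F_{2n-3},F_{2n-5})$ on the Fibonacci branch; on the Pell branch one notes that $i\mapsto 3P_{i+1}-P_{i+2}$ satisfies the Pell recurrence with initial values $1,1$, hence equals $S_i$, giving $(w_e,w_g,w_f)=(S_{2n-3},S_{2n-1},1)$. Subtracting yields the middle rows: $e-w_e=2$, $g-w_g=F_{2n+1}-F_{2n-3}=L_{2n-1}$, $f-w_f=F_{2n-1}-F_{2n-5}=L_{2n-3}$ via $L_m=F_{m+2}-F_{m-2}$; and $e-w_e=P_{2n-1}-S_{2n-3}$, $g-w_g=P_{2n+1}-S_{2n-1}$, $f-w_f=1$, where $i\mapsto P_{i+2}-S_i$ satisfies the Pell recurrence with initial values $1,4$, hence equals $Q_i$. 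For the T-coweights one uses $v_x\,x=w_x^2+9$: on the Fibonacci branch $v_e=10$ is immediate, while $v_g=(F_{2n-3}^2+9)/F_{2n+1}=F_{2n-7}$ and $v_f=(F_{2n-5}^2+9)/F_{2n-1}=F_{2n-9}$ follow from Catalan's identity $F_m^2-F_{m-4}F_{m+4}=(-1)^m F_4^2$ with $m=2n-3$ resp. $m=2n-5$; on the Pell branch $v_f=5$ is immediate, and $v_e=R_{2n-5}$, $v_g=R_{2n-3}$ are most cleanly obtained by the induction of the first paragraph via Lemma~\ref{tcofactorlemmaf} (alternatively, via $P_{k-1}P_{k+1}-P_k^2=(-1)^k$ which yields the required divisibility of $w_g^2+9$ by $g$ and then the quotient). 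As a cross-check, $v_e$ and $v_f$ on both branches can also be extracted directly from Lemma~\ref{cofactorlemma}~(\ref{cofactorlemmav}) once all the $w$'s are in hand.

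The statement involves no real obstacle; the only point requiring care is bookkeeping with indices. Because of the shifts $2n-5$, $2n-7$, $2n-9$, for the first one or two values of $n$ the formulas must be read using the standard extension of the Fibonacci numbers — and of the recurrences defining $Q_i,R_i,S_i$ — to nonpositive indices (for instance $F_{-1}=1$, $F_{-3}=2$, $F_{-5}=5$, $R_{-1}=2$); this is exactly why the inductive arguments above verify two base cases against the explicit Figures~\ref{treefig0} and~\ref{treefig2} rather than one. Everything else is a routine manipulation of linear recurrences and of Cassini/Catalan-type identities, and no input beyond Corollary~\ref{affinetransform} and Lemmas~\ref{cofactorlemma} and~\ref{tcofactorlemmaf} is needed.
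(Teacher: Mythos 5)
Your proposal is correct. Note that the paper states Lemma \ref{fibonaccipellweights} without any proof at all (it is offered as an observation ``of independent interest''), so there is no argument of the author's to compare against; your write-up supplies exactly the kind of routine verification that was presumably intended. The two routes you describe are both sound: the induction via Lemma \ref{tcofactorlemmaf}~(\ref{tcofactorlemmafi}), which on the two outermost branches collapses to the step-two recurrences $x_{n+1}=3x_n-x_{n-1}$ (Fibonacci) and $x_{n+1}=6x_n-x_{n-1}$ (Pell) satisfied by every sequence on the right-hand side; and the direct computation of the $w$'s from Corollary \ref{affinetransform} together with the explicit weights $(0,F_{2n-1},F_{2n-3})$ and $(P_{2n-2},P_{2n},1)$ recorded after Figure \ref{treefig0}. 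I checked the individual identities: $3F_{2n-1}-F_{2n+1}=F_{2n-3}$, $L_m=F_{m+2}-F_{m-2}$, the identification of $3P_{i+1}-P_{i+2}$ with $S_i$ and of $P_{i+2}-S_i$ with $Q_i$ from their initial values, and Catalan's identity with $r=4$ and odd $m$ giving $F_m^2+9=F_{m-4}F_{m+4}$, hence $v_g=F_{2n-7}$ and $v_f=F_{2n-9}$; all are correct, and the base cases (including $F_{-1}=1$, $F_{-3}=2$, $F_{-5}=5$, $R_{-1}=2$) agree with Figure \ref{treefig2}. Your remark about extending the sequences to nonpositive indices is the one genuinely delicate point, and you handle it properly by verifying two base cases per recurrence.
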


\begin{remark}
The sequence $2S_i$ are the Pell-Lucas numbers and the sequence $S_{2n+1}$ whose first few terms are
$1, 7, 41, 239, 1393, 8119, 47321, \dots$ is known as the  sequence of NSW numbers. Together with
some interesting interpretations it can be found in the Online Encyclopedia of Integer Sequences
under reference A002315 (see \cite{OEIS}). The $R_i$ can be found under A052542, and the odd entries of that
series under A075870. The $Q_i$ are referenced by A048654, their odd entries by A100525. The sequences
$(L_{2n - 1}^2 + 9) / F_{2n + 1}$ and $(Q_{2n - 1}^2 + 9)/P_{2n+1}$ are referenced by A106729 and
A254759, respectively.
\end{remark}

\section{T-singularities and continued fractions}\label{tsingsection}

In this section we consider T-singularities of type 
$$ \frac{1}{n^2}(1, nk - 1), $$
where $1 < n$ is not necessarily a Markoff number and $0 < k < n$ is coprime to $n$.
General T-singularities have been introduced by Wahl \cite{Wahl81} and play an important role in the
three-dimensional minimal model program.
Our main source is \cite[§ 3]{KollarShepherdbarron88}.

Given such a singularity, its minimal desingularization can be described with help of Hirzebruch-Jung
continued fractions, i.e. if we write
$$ \frac{n^2}{nk-1} = \llbracket a_1, a_2, \dots, a_t\rrbracket := a_1 - 1 / (a_2 - 1 / (a_3 - \cdots (a_{t - 1} - 1 / a_t) \cdots )), $$
then the intersection graph of the minimal resolution is a chain with $t$ vertices which are labelled by $-a_1,
\dots, -a_t$, where each $a_i \geq 2$. The continued fraction expansions corresponding to the T-singularities of above type have been
described in \cite[Proposition 3.11]{KollarShepherdbarron88} as follows:
\begin{itemize}
\item $\llbracket 4 \rrbracket$ corresponds to a T-singularity (which obviously is given by the pair $(n, k) = (2, 1)$),
\item If $\llbracket a_1, \dots, a_t\rrbracket$ corresponds to a T-singularity, then so do $\llbracket a_1 + 1, a_2, \dots, a_t, 2\rrbracket$
and $\llbracket 2, a_1, \dots, a_{t - 1},$ $a_t + 1\rrbracket$.
\end{itemize}

This classification suggests a more efficient representation of a T-singularity as follows. 
\begin{definition*}
The {\em length encoding} (LE) of a T-singularity of type $\frac{1}{n^2}(1, nk - 1)$ is defined as:
\begin{enumerate}[(i)]
\item The empty sequence $\emptyset$ for $(n, k) = (2,1)$.
\item If a sequence $c_1, \dots, c_m$ encodes a T-singularity corresponding to $\llbracket a_1, \dots, a_t\rrbracket$ with
$a_1 \neq 2$, then for any $c_{m + 1} > 0$, $c_1, \dots, c_{m + 1}$ encodes the T-singularity corresponding to
$\llbracket 2, \dots, 2, a_1, \dots, a_{t - 1}, a_t + c_{m + 1}\rrbracket$.
\item Similarly, if a sequence $c_1, \dots, c_m$ encodes a T-singularity corresponding to $\llbracket a_1, \dots, a_t\rrbracket$ with
$a_t \neq 2$, then for any $c_{m + 1} > 0$, $c_1, \dots, c_{m + 1}$ encodes the T-singularity corresponding to
$\llbracket a_1 + c_{m + 1}, a_2, \dots,$ $a_t, 2, \dots, 2\rrbracket$.
\end{enumerate}
\end{definition*}
Note that a continued fraction $\llbracket a_1, \dots, a_t\rrbracket$ corresponds to a T-singularity iff $\llbracket a_t, \dots, a_1\rrbracket$
does. More precisely, if  $\llbracket a_1, \dots, a_t\rrbracket$ corresponds to $ \frac{1}{n^2}(1, nk - 1)$ then
$\llbracket a_t, \dots, a_1\rrbracket$ corresponds to $\frac{1}{n^2}(1, n(n - k) - 1)$. This reflects the fact that
$n(n - k) - 1 \equiv (nk - 1)^{-1} \mod n^2$ and that, from a geometric
point of view, there is no natural preference whether to read the terms in our minimal resolutions from left to right or from right to left.
The LE coincides for both representations.

\begin{lemma}\label{lelemma}\ 
\begin{enumerate}[(i)]
\item\label{lelemmai} A positive integer $c$ is the LE of the pair $(c + 2, 1)$.
\item\label{lelemmaii} 
For $m > 1$ let $c_1, \dots, c_m$ be a sequence of positive integers. If $c_1, \dots, c_{m - 1}$
is the LE  for the
pair $(n, k)$, where $k < n - k$, then $c_1, \dots, c_m$ is the LE of the pair $(n + c_m (n -k), n - k)$.
\end{enumerate}
\end{lemma}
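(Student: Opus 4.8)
The plan is to prove, by induction on $m\ge 1$, a statement slightly stronger than what is asked. Given positive integers $c_1,\dots,c_m$, put $(n_0,k_0):=(2,1)$ and, for $1\le j\le m$,
$$(n_j,k_j):=\bigl(n_{j-1}+c_j(n_{j-1}-k_{j-1}),\ n_{j-1}-k_{j-1}\bigr).$$
I claim that then: (a) the sequence $c_1,\dots,c_m$ encodes a unique T-singularity, namely $\frac{1}{n_m^2}(1,n_mk_m-1)$; (b) $0<k_m<n_m-k_m$ and $\gcd(n_m,k_m)=1$; and (c) the Hirzebruch--Jung expansion $\frac{n_m^2}{n_mk_m-1}=\llbracket a_1,\dots,a_t\rrbracket$ satisfies $a_1\ge 3$ and $a_t=2$. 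Granting this, Lemma \ref{lelemma}(\ref{lelemmai}) is the case $m=1$, where $(n_1,k_1)=(c_1+2,1)$; and Lemma \ref{lelemma}(\ref{lelemmaii}) follows because, by (a) and (b), $c_1,\dots,c_{m-1}$ encodes the unique T-singularity whose representative with $k<n-k$ is $(n_{m-1},k_{m-1})$, so the hypothesis forces $(n,k)=(n_{m-1},k_{m-1})$ and hence $(n+c_m(n-k),\,n-k)=(n_m,k_m)$, which by (a) is encoded by $c_1,\dots,c_m$.

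The only computational input is the following identity, which I would establish first: if $N>q\ge 1$, $\gcd(N,q)=1$ and $\frac{N^2}{Nq-1}=\llbracket b_1,\dots,b_s\rrbracket$, then for every $c\ge 1$
$$\llbracket\, b_1+c,\ b_2,\dots,b_s,\ \underbrace{2,\dots,2}_{c}\,\rrbracket=\frac{(N+cq)^2}{(N+cq)q-1}.$$
For $c=1$ this is a short matrix computation. With $M(a)=\left(\begin{smallmatrix} a & -1\\ 1 & 0\end{smallmatrix}\right)$, one has $M(b_1)\cdots M(b_s)=\left(\begin{smallmatrix} N^2 & -(N(N-q)-1)\\ Nq-1 & -(q(N-q)-1)\end{smallmatrix}\right)$: the first column records the numerator and sub-numerator of $\llbracket b_1,\dots,b_s\rrbracket$, while the second column is forced by the reversal identity $\llbracket b_s,\dots,b_1\rrbracket=\frac{N^2}{N(N-q)-1}$ (recalled above, see also Appendix \ref{continuants}) and $\det M(a)=1$. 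Since $M(b_1+1)=\left(\begin{smallmatrix} 1 & 1\\ 0 & 1\end{smallmatrix}\right)M(b_1)$ and $M(2)=\left(\begin{smallmatrix} 2 & -1\\ 1 & 0\end{smallmatrix}\right)$, the matrix $\left(\begin{smallmatrix} 1 & 1\\ 0 & 1\end{smallmatrix}\right)M(b_1)\cdots M(b_s)M(2)$ of $\llbracket b_1+1,b_2,\dots,b_s,2\rrbracket$ has first column $\left(\begin{smallmatrix} (N+q)^2\\ (N+q)q-1\end{smallmatrix}\right)$, and this fraction is already reduced, so $N'=N+q$ exactly; this settles $c=1$. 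The output has the same shape $\frac{(N')^2}{N'q-1}$ with $\gcd(N',q)=1$ and $N'>q\ge 1$, so iterating $c$ times — which literally produces $\llbracket b_1+c,b_2,\dots,b_s,2_{c}\rrbracket$ — gives the general statement. The only point needing care is the degenerate case $s=1$ (the singularity $\llbracket 4\rrbracket$, where $(N,q)=(2,1)$): there the matrix identity still holds provided the ``length $-1$'' continuant is read as $0$, so no separate treatment is needed.

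The induction itself is then bookkeeping. For $m=1$: the empty sequence encodes $\frac{1}{4}(1,1)=\llbracket 4\rrbracket$, i.e. the pair $(2,1)$, so clause (iii) of the definition of the length encoding together with the identity above (with $N=2$, $q=1$, $c=c_1$) shows that $c_1$ encodes $\llbracket 4+c_1,2_{c_1}\rrbracket=\frac{(c_1+2)^2}{c_1+1}$, i.e. the pair $(c_1+2,1)$; clause (ii) produces the reversed continued fraction, hence the same singularity, so the encoding is unique, and (b), (c) are immediate. For the step ($m\ge 2$), by (a)--(c) at level $m-1$ the singularity encoded by $c_1,\dots,c_{m-1}$ has exactly the two Hirzebruch--Jung representatives $\llbracket a_1,\dots,a_t\rrbracket$ (with $a_1\ge 3$, $a_t=2$) and its reverse $\llbracket a_t,\dots,a_1\rrbracket$; the only applicable extensions by $c_m$ are clause (ii) on the former, yielding $\llbracket 2_{c_m},a_1,\dots,a_{t-1},a_t+c_m\rrbracket$, and clause (iii) on the latter, yielding $\llbracket a_t+c_m,a_{t-1},\dots,a_1,2_{c_m}\rrbracket$, and these are reverses of each other, so $c_1,\dots,c_m$ again encodes a unique singularity. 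Since $\llbracket a_t,\dots,a_1\rrbracket=\frac{n_{m-1}^2}{n_{m-1}(n_{m-1}-k_{m-1})-1}$ is the continued fraction of the pair $(n_{m-1},\,n_{m-1}-k_{m-1})$, the identity above identifies $\llbracket a_t+c_m,a_{t-1},\dots,a_1,2_{c_m}\rrbracket=\frac{n_m^2}{n_mk_m-1}$, which is (a); its first entry is $a_t+c_m=2+c_m\ge 3$ and its last entry is $2$, which is (c); and (b) follows from $k_m=n_{m-1}-k_{m-1}>0$, $n_m-k_m=n_{m-1}+(c_m-1)(n_{m-1}-k_{m-1})\ge n_{m-1}>k_m$, and $\gcd(n_m,k_m)=\gcd(n_{m-1},k_{m-1})=1$. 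The step that I expect to be the main obstacle is the displayed Hirzebruch--Jung identity: it is elementary but requires pinning down the exact matrix of a T-singularity continued fraction and verifying that the output fraction is reduced, so that one genuinely reads off $N'=N+q$ rather than a proper divisor.
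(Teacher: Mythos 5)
Your proof is correct and rests on the same key identity as the paper's --- namely that the extension step turns $\llbracket a_1,\dots,a_t\rrbracket = n^2/(nk-1)$ into $(n+k)^2/((n+k)k-1)$ --- which the paper derives by computing $\llbracket 2, a_t,\dots,a_1\rrbracket$ and inverting modulo the numerator, and which you derive equivalently via transfer matrices (your determinant condition is exactly the paper's modular-inverse verification). The surrounding induction (yours on $m$, carrying explicit normalization and uniqueness claims; the paper's on $c_m$) is organized slightly differently but is the same argument in substance.
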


\begin{proof}
(\ref{lelemmai})
Consider first the continued fraction $\llbracket 2, \dots, 2\rrbracket$ (with $c$ entries). Then it is easy to check that
$\llbracket 2, \dots, 2\rrbracket = \frac{c + 1}{c}$. The LE $c$ represents $\llbracket c + 4, 2, \dots, 2\rrbracket
= c + 4 - 1 / \llbracket 2, \dots, 2\rrbracket = c + 4 - c/(c + 1) = (c + 2)^2/(c + 1)$.

(\ref{lelemmaii})
We first consider a pair $(n, k)$ with $n^2/(nk - 1) = \llbracket a_1, \dots, a_t\rrbracket$, with $a_1 \neq 2$. Then
$\llbracket 2, a_t, \dots, a_1\rrbracket = 2 - 1 / \llbracket a_t, \dots, a_1\rrbracket = 2 - 1 / ( n^2 / (n(n - k) - 1))) = (n^2 + nk + 1) / n^2.$
It is easy to verify that $k^2 + nk - 1$ is inverse to $n^2$ modulo $n^2 + nk + 1$. Hence, we get
$\llbracket a_1, \dots, a_t, 2\rrbracket = (n^2 + nk + 1) / (k^2 + nk - 1) = ((n + k) n + 1) / ((n + k) k - 1)$. Then it follows
that $\llbracket a_1 + 1, a_2, \dots, a_t, 2\rrbracket = 1 + \llbracket a_1, \dots, a_t, 2\rrbracket = (n + k)^2 / ((n + k) k - 1)$.

We apply this result to an induction on $c_m$. For simplicity, we will allow the redundant case $c_m = 0$
for which the statement is obviously true. Now assume that $c_m > 0$ and $c_1, \dots, c_{m - 1}, c_m - 1$
encodes the pair $(n + (c_m - 1)(n - k), n - k)$. Then the statement follows immediately from above computation.
\end{proof}

Conversely, by the lemma we see that if we are given a pair $(n, k)$ then we can recover its LE by means of
the Euclidean algorithm. Namely, we set $n_0 := n - k$, $n_1 := k$ and inductively, for every $i > 1$ we define
$n_i$ via division with remainder $n_{i - 2} = p_{i - 1} n_{i - 1} + n_i$ such that $0 \leq n_i < n_{i - 1}$.
Then from the resulting sequence $p_m, \dots, p_1$ we get the LE $c_1, \dots, c_m$ for $(n, k)$, where
$c_i = p_{m + 1 - i}$ for $2 \leq i \leq m$ and $c_1= n_{m - 1} - 1$. Moreover, for each $0 \leq i < m$ the partial
sequence $c_1, \dots, c_{m - i}$ is the LE of $(n_i + n_{i + 1}, n_{i + 1})$. As a consequence, we can recover
the length encoding of a T-singularity by means of an ordinary continued fraction:

\begin{corollary}\label{tcontfrac}
Let $m > 1$ and $c_1, \dots, c_m$ be the LE of the pair $(n, k)$, where $k < n - k$. Then $\frac{n}{k} =
[ c_m + 1,, c_{m- 1}, \dots, c_2,$ $c_1 + 1] =
(c_m + 1) + 1/(c_{m - 1} + \cdots (c_{2} + 1/(c_1+ 1)) \cdots ))$.
\end{corollary}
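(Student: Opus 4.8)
The plan is to read off the regular continued fraction of $n/k$ directly from the Euclidean-algorithm description of the length encoding given in the paragraph immediately preceding the statement. Recall that there the sequence $c_1, \dots, c_m$ is produced from $(n, k)$ by setting $n_0 := n - k$, $n_1 := k$, running division with remainder $n_{i-2} = p_{i-1} n_{i-1} + n_i$ with $0 \le n_i < n_{i-1}$ for $i \ge 2$, and declaring $c_i = p_{m+1-i}$ for $2 \le i \le m$ and $c_1 = n_{m-1} - 1$. Since $k < n - k$ we have $n_0 > n_1$, so the remainders are strictly decreasing and every $p_i$ is an honest positive partial quotient.

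First I would pin down the terminal step. The partial-sequence remark from that paragraph, applied with index $m-1$, says that the one-term sequence $c_1$ is the LE of $(n_{m-1} + n_m, n_m)$; combined with Lemma \ref{lelemma}(\ref{lelemmai}) (a positive integer $c$ is the LE of $(c+2,1)$, and this pair is determined by its LE) this forces $n_m = 1$ and $n_{m-1} = c_1 + 1$, so in particular $c_1 \ge 1$ as it must be. Unwinding the divisions in the usual way then gives the finite regular continued fraction
\[
\frac{n_0}{n_1} = [\,p_1, p_2, \dots, p_{m-1}, n_{m-1}\,],
\]
the last term being $n_{m-1}$ because $n_{m-2} = p_{m-1} n_{m-1} + n_m = p_{m-1} n_{m-1} + 1$ forces $n_{m-2}/n_{m-1} = p_{m-1} + 1/n_{m-1}$. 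Substituting $p_j = c_{m+1-j}$ and $n_{m-1} = c_1 + 1$ rewrites this as $n_0/n_1 = [\,c_m, c_{m-1}, \dots, c_2, c_1 + 1\,]$. Finally, since $n = k + (n-k) = n_1 + n_0$ we have $n/k = 1 + n_0/n_1$, and adding $1$ simply increments the leading entry, so
\[
\frac{n}{k} = [\,c_m + 1, c_{m-1}, \dots, c_2, c_1 + 1\,],
\]
which is the claim.

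The only delicate point — the step I would write out most carefully — is this terminal behaviour: that the algorithm runs for exactly $m-1$ division steps and stops with remainder $n_m = 1$, so that the final "partial quotient" is $n_{m-1} = c_1 + 1$ rather than splitting further as $[\,c_1, 1\,]$. This is precisely where the hypothesis $m > 1$ enters (for $m = 1$ the pair is $(c_1 + 2, 1)$ and $n/k = c_1 + 2 \ne [\,c_1 + 1\,]$), and it is handled cleanly by the $i = m-1$ instance of the partial-sequence property together with Lemma \ref{lelemma}(\ref{lelemmai}). If one prefers to avoid the Euclidean-algorithm remarks entirely, the same statement follows by induction on $m$: the base case $m = 2$ is the computation $\big((c_1 + 2) + c_2(c_1 + 1)\big)/(c_1 + 1) = (c_2 + 1) + 1/(c_1 + 1)$, and the inductive step combines Lemma \ref{lelemma}(\ref{lelemmaii}) with the identity $\alpha/(\alpha - 1) = 1 + 1/(\alpha - 1)$ applied to $\alpha = n'/k' = [\,c_{m-1} + 1, c_{m-2}, \dots, c_1 + 1\,]$, where $(n', k')$ is the pair encoded by $c_1, \dots, c_{m-1}$.
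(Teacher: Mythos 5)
Your proposal is correct and follows exactly the route the paper intends: the corollary is presented there as an immediate consequence of the preceding Euclidean-algorithm description of the LE (the paper offers no further proof), and your argument simply unwinds that description into the regular continued fraction, with the terminal step $n_m = 1$, $n_{m-1} = c_1 + 1$ correctly pinned down via the partial-sequence remark and Lemma \ref{lelemma}(\ref{lelemmai}). The alternative induction you sketch via Lemma \ref{lelemma}(\ref{lelemmaii}) is also sound, but the main argument is essentially the paper's own.
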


Intuitively, one could consider Corollary \ref{tcontfrac} as a version of
$$
\frac{n^2}{nk - 1} \approx \frac{n}{k}.
$$
A strengthening of this sentiment is the observation below that the continued fraction expansion of $n^2/(nk - 1)$ is
almost symmetric and we can obtain the $c_i$ above from either half. For the proof of this claim, we start by reminding
the reader that $[a_1, \dots, a_t, 1] = [a_1, \dots, a_{t - 1}, a_t + 1]$. So, if we refer to the length of a continued
fraction, we have to make explicit whether the last element is $> 1$.

\begin{lemma}\label{contfracevenodd}
Consider a pair $(n, k)$ with $k < n - k$. Then:
\begin{enumerate}[(i)]
\item\label{contfracevenoddi}
$\frac{n^2}{nk - 1}$ has a continued fraction expansion of even length
$\frac{n^2}{nk - 1} = [a_1, \dots, a_{2s}]$ for some $s \geq 1$ such that $a_{2s} > 1$;
\item\label{contfracevenoddii}
$\frac{n^2}{nk + 1} = [a_{2s}, \dots, a_1]$;
\item\label{contfracevenoddiv}
$\frac{n^2}{n(n - k) + 1} = [1, a_1 - 1, a_2, \dots, a_{2s}]$;
\item\label{contfracevenoddiii}
$\frac{n^2}{n(n - k) - 1} = [1, a_{2s} - 1, a_{2s - 1}, \dots, a_1]$.
\end{enumerate}
\end{lemma}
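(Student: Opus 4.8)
The plan is to observe first that parts (ii), (iii), (iv) are formal consequences of (i), so that the real content is the parity statement in (i). Indeed, (ii) is precisely the reversal of (i): if $\frac{n^2}{nk-1}=[a_1,\dots,a_{2s}]$, then by the standard continuant/Cassini identities of Appendix \ref{continuants} the reversed continued fraction satisfies $[a_{2s},\dots,a_1]=\frac{n^2}{P'}$ for the unique $P'$ with $0<P'<n^2$ and $P'(nk-1)\equiv(-1)^{2s+1}\equiv-1\pmod{n^2}$, i.e. $P'\equiv-(nk-1)^{-1}\pmod{n^2}$; since $(nk-1)(n(n-k)-1)\equiv1\pmod{n^2}$ this reads $P'\equiv-(n(n-k)-1)\equiv nk+1\pmod{n^2}$, so $P'=nk+1$. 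For (iii) and (iv) I would use the elementary identity $[1,b_1-1,b_2,\dots,b_t]=\frac{P}{P-Q}$, valid whenever $[b_1,\dots,b_t]=\frac{P}{Q}>1$ and $b_1\ge2$ (just expand: $[b_1-1,b_2,\dots,b_t]=\frac{P}{Q}-1=\frac{P-Q}{Q}$, hence $[1,b_1-1,\dots,b_t]=1+\frac{Q}{P-Q}$). Applying this with $[b_1,\dots,b_t]=[a_1,\dots,a_{2s}]=\frac{n^2}{nk-1}$ and $n^2-(nk-1)=n(n-k)+1$ gives (iii); applying it with $[b_1,\dots,b_t]=[a_{2s},\dots,a_1]=\frac{n^2}{nk+1}$ (from (ii)) and $n^2-(nk+1)=n(n-k)-1$ gives (iv). The hypothesis $k<n-k$ enters only to guarantee $\frac{n^2}{nk-1}>2$, equivalently $n(n-2k)+2>0$, so that $a_1\ge2$ and the identity is legitimate; and $a_{2s}\ge2$ is forced once we take the continued fraction with last term $>1$ as in (i), which is exactly what (iv) needs.

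So everything reduces to (i). Here I would argue by reversal and contradiction. Write $\frac{n^2}{nk-1}=[a_1,\dots,a_t]$ with $a_t>1$; this representation is unique, and $t\ge2$ since $\frac{n^2}{nk-1}$ is not an integer ($\gcd(n^2,nk-1)=1$ and $nk-1\ge2$ for a pair with $k<n-k$, which also forces $n\ge3$). By the continuant identities, reversing gives $[a_t,\dots,a_1]=\frac{n^2}{P'}$ with $0<P'<n^2$ and $P'(nk-1)\equiv(-1)^{t+1}\pmod{n^2}$, i.e. $P'\equiv(-1)^{t+1}(nk-1)^{-1}\equiv(-1)^{t+1}(n(n-k)-1)\pmod{n^2}$. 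If $t$ were odd we would get $P'=n(n-k)-1$, hence $[a_t,\dots,a_1]=\frac{n^2}{n(n-k)-1}$. But the continued fraction $[a_t,\dots,a_1]$ has first partial quotient $a_t\ge2$, so its value is $\ge2$, while $\frac{n^2}{n(n-k)-1}$ lies strictly between $1$ and $2$: indeed $k<n-k$ together with $n\ge3$ forces $n(n-2k)>2$, which is exactly the inequality $\frac{n^2}{n(n-k)-1}<2$ (and $>1$ is clear). This contradiction shows $t$ is even, say $t=2s$ with $s\ge1$, which is (i); then the computation above of $P'$ for even $t$ gives $P'=nk+1$, i.e. (ii), and (iii), (iv) follow as described.

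The only delicate point is bookkeeping in (i): keeping the sign $(-1)^{t+1}$ straight and pairing it with the correct representative in $(0,n^2)$ — note that $nk+1$ and $n(n-k)-1$ are the two representatives there which are negatives of each other modulo $n^2$, as are $nk-1$ and $n(n-k)+1$ — together with the two elementary inequalities $\frac{n^2}{nk-1}>2>\frac{n^2}{n(n-k)-1}$ that make the first-partial-quotient comparison bite. I expect that to be the whole of the difficulty; the rest is routine, and worth sanity-checking on, say, $(n,k)=(5,2)$, where $\frac{25}{9}=[2,1,3,2]$, and indeed $[2,3,1,2]=\frac{25}{11}=\frac{n^2}{nk+1}$, $[1,1,1,3,2]=\frac{25}{16}=\frac{n^2}{n(n-k)+1}$, and $[1,1,3,1,2]=\frac{25}{14}=\frac{n^2}{n(n-k)-1}$.
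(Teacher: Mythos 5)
Your proof is correct and follows essentially the same route as the paper's: reverse the expansion, use the continuant identity to pin down the denominator of $[a_t,\dots,a_1]$ up to the sign $(-1)^{t+1}$, rule out the odd case via $a_t\ge 2$ versus $\frac{n^2}{n(n-k)-1}<2$, and then deduce (iii) and (iv) from the $[1,b_1-1,b_2,\dots]$ identity, which is exactly Lemma \ref{negativecontfrac}. The only difference is that you spell out the inequality $n(n-2k)>2$ that the paper leaves implicit in the phrase ``$\lfloor n^2/X\rfloor>1$''.
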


\begin{proof}
If we write $\frac{n^2}{nk - 1} = [a_1, \dots, a_t]$ with $a_t > 1$ then by equation (\ref{reverseontfrac}), we have
$[a_t, \dots, a_1] = \frac{n^2}{X}$
for some $0 < X < g^2$ such that $(nk - 1)X \equiv (-1)^{t + 1} \mod n^2$. As remarked earlier, we have $(nk - 1)(n(n - k) - 1) \equiv
1 \mod n^2$, so either $X = n(n - k) - 1$ and $t$ is odd, or $X = nk + 1$ and $t$ is even. Because $a_t > 1$, we necessarily have
$\lfloor n^2 / X \rfloor > 1$,
hence $X = nk + 1$ and we conclude $(\ref{contfracevenoddi})$ and $(\ref{contfracevenoddii})$.

$(\ref{contfracevenoddiv})$ and $(\ref{contfracevenoddiii})$ then follow from $(\ref{contfracevenoddi})$, $(\ref{contfracevenoddii})$,
and Lemma \ref{negativecontfrac}.
\end{proof}

The following statement proves the even-ness of the continued fraction expansion of $n^2/(nk - 1)$ and it also
shows that these expansions are almost symmetric.

\begin{proposition}\label{palisplit}
Consider a pair $(n, k)$ with $k < n - k$ and assume that $n/k = [c_1, \dots, c_s]$ with $c_1, c_s > 1$. Then:
\begin{align*}
\frac{n^2}{nk - 1} =
\begin{cases}
[c_1, \dots, c_{s - 1}, c_s - 1, c_s + 1, c_{s - 1}, \dots, c_1] & \text{ if $s$ is even}, \\
[c_1, \dots, c_{s - 1}, c_s + 1, c_s - 1, c_{s - 1}, \dots, c_1] & \text{ if $s$ is odd}.
\end{cases}
\end{align*}
\end{proposition}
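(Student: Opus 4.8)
The plan is to verify the claimed equality directly at the level of continuants, using the identities collected in Appendix~\ref{continuants}: the three-term recursion, the reversal symmetry $K(x_1,\dots,x_m)=K(x_m,\dots,x_1)$, Euler's splitting identity $K(x_1,\dots,x_m)=K(x_1,\dots,x_j)\,K(x_{j+1},\dots,x_m)+K(x_1,\dots,x_{j-1})\,K(x_{j+2},\dots,x_m)$, and the Cassini-type identity $K(x_1,\dots,x_m)K(x_2,\dots,x_{m-1})-K(x_1,\dots,x_{m-1})K(x_2,\dots,x_m)=(-1)^m$. Write $n=K(c_1,\dots,c_s)$ and $k=K(c_2,\dots,c_s)$, and introduce the auxiliary continuants $P':=K(c_1,\dots,c_{s-1})$ and $Q':=K(c_2,\dots,c_{s-1})$; note that the hypothesis $2k<n$ already forces $c_1\ge 2$, and $c_s>1$ gives $c_s-1\ge 1$. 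Let $(a,b):=(c_s-1,\,c_s+1)$ if $s$ is even and $(a,b):=(c_s+1,\,c_s-1)$ if $s$ is odd, so that in either case $\{a-c_s,\,b-c_s\}=\{-1,+1\}$. Since the value of a finite continued fraction $[x_1,\dots,x_m]$ equals $K(x_1,\dots,x_m)/K(x_2,\dots,x_m)$, and the word $W:=(c_1,\dots,c_{s-1},a,b,c_{s-1},\dots,c_1)$ has $(c_2,\dots,c_{s-1},a,b,c_{s-1},\dots,c_1)$ as its tail, it is enough to prove that $K(W)=n^2$ and $K(c_2,\dots,c_{s-1},a,b,c_{s-1},\dots,c_1)=nk-1$.

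For the numerator I would apply Euler's identity to $W$, splitting between the two middle entries $a$ and $b$; invoking the reversal symmetry on the two blocks lying to the right of the cut turns $K(W)$ into $K(c_1,\dots,c_{s-1},a)\,K(c_1,\dots,c_{s-1},b)+(P')^2$. The three-term recursion gives $K(c_1,\dots,c_{s-1},t)=tP'+K(c_1,\dots,c_{s-2})=n+(t-c_s)P'$ (using $n=c_sP'+K(c_1,\dots,c_{s-2})$), whence $K(W)=\bigl(n+(a-c_s)P'\bigr)\bigl(n+(b-c_s)P'\bigr)+(P')^2=(n-P')(n+P')+(P')^2=n^2$, with no dependence on the parity of $s$. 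For the denominator the very same cut applied to $(c_2,\dots,c_{s-1},a,b,c_{s-1},\dots,c_1)$ only changes the left block, which now contributes $K(c_2,\dots,c_{s-1},a)=k+(a-c_s)Q'$; the continuant therefore equals $\bigl(k+(a-c_s)Q'\bigr)\bigl(n+(b-c_s)P'\bigr)+Q'P'$. Expanding and cancelling $Q'P'$ leaves $kn-(nQ'-kP')$ when $s$ is even and $kn+(nQ'-kP')$ when $s$ is odd, and in both cases the Cassini identity $nQ'-kP'=(-1)^s$ collapses this to $kn-1$. Together with the numerator computation this yields $[W]=n^2/(nk-1)$, which is the assertion.

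Two minor points remain for the write-up. First, when $s\in\{1,2\}$ some continuants above run over empty or ``length $-1$'' index ranges; with the usual conventions $K(\,)=1$ and $K$ of a length-$(-1)$ sequence $=0$ all identities used stay valid, so no separate base case is needed (alternatively one checks $s=1$ by hand: $[c_1+1,c_1-1]=c_1^2/(c_1-1)$). Second, $W$ has length $2s$ and ends in $c_1\ge 2$, so this computation simultaneously reproves the even-length statement of Lemma~\ref{contfracevenodd} and confirms that the right-hand side really is a (reduced) regular continued fraction. I do not expect any genuine obstacle here: the proof is essentially bookkeeping, and the only delicate steps are choosing the correct cut point for Euler's identity and keeping track of the sign $(-1)^s$ in the Cassini step — which is precisely where the parity case distinction in the statement originates.
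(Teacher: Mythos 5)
Your proposal is correct and follows essentially the same route as the paper's proof: both split the length-$2s$ word at the midpoint between the two perturbed entries, apply the continuant factorization/concatenation rule, and invoke Cassini's identity to produce the $(-1)^s$ responsible for the parity case split (your $P'$, $Q'$ are exactly the paper's $k' = K(c_1,\dots,c_{s-1})$ and $u = K(c_2,\dots,c_{s-1})$). The only difference is cosmetic bookkeeping, namely your use of $K(c_1,\dots,c_{s-1},t) = n + (t-c_s)P'$ in place of the paper's explicit evaluation of the two half-words via the reversed fraction $n/k'$.
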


\begin{proof}
We have $n / k = [c_1, \dots, c_s]$ and $n / k' = [c_s, \dots, c_1]$, where $k k' = (-1)^{1 + s} + un$ by equation (\ref{reverseontfrac}).
Then $K(c_1, \dots, c_s) = n$, $K(c_2, \dots, c_s) = k$, $K(c_1, \dots, c_{s - 1}) = k'$, and $K(c_2, \dots, c_{s - 1}) = u$
by Cassini's identity (\ref{Cassini}). We compute
\begin{align*}
[c_1, \dots, c_{s - 1}, c_s - (-1)^s] & = \frac{n - (-1)^s k'}{k - (-1)^s u}, \\
[c_s + (-1)^s, c_{s - 1}, \dots, c_1] & = \frac{n + (-1)^s k'}{k'}.
\end{align*}
Then:
\begin{gather*}
K(c_1, \dots, c_{s - 1}, c_{s} + (-1)^s) K(c_{s} - (-1)^s, c_{s - 1}, \dots, c_1) + K(c_1, \dots, c_{s - 1})^2 \\
= (n -(-1)^s k')(n + (-1)^s k') + (k')^2 = n^2
\end{gather*}
and
\begin{align*}
K(c_2, \dots, c_{s - 1}, c_{s} + (-1)^s) & K(c_{s} - (-1)^s, c_{s - 1}, \dots, c_1) + K(c_2, \dots, c_{s - 1}) K(c_1, \dots, c_{s - 1}) \\
& = (k - (-1)^s u)(n + (-1)^s k') + u k' \\
& = nk + (-1)^s(k k' - un) \\
& = nk - 1.
\end{align*}
By the concatenation rule \ref{concatenationlaw}, we conclude:
$$
\frac{n^2}{nk - 1} = [c_1, \dots, c_{s - 1}, c_s - (-1)^s, c_s + (-1)^s, c_{s - 1}, \dots, c_1],
$$
hence the assertion.
\end{proof}

\section{Continued fractions and the Markoff tree}\label{finalchapter}

In this section we are going to derive mutation formulas for the LEs corresponding to T-singularities
arising from the Markoff equation. Unfortunately, it seems not to be possible to do this directly via the
fractions $g / w_g$, as would be suggested by Corollory \ref{tcontfrac}. Instead, we will show that
the continued fraction expansions of $g^2 / (g w_g - 1)$ will exhibit mutation rules analogous to those
described in Section \ref{frobenius}. The LEs then can be derived from these continued fractions with
Proposition \ref{palisplit} and Corollary \ref{tcontfrac}.

Using the mutations for $g^2 / (g w_g - 1)$, we will see that the tree of fractions $w_g / g$ has limits
in a Cantor set, very similar to the Cantor set of Section \ref{cantor}, which emerges from the
fractions $r_g / g$. Indeed, we will see that these Cantor sets coincide up to an affine transformation.

\begin{proposition}\label{append8}
Consider a Markoff number $g$ with T-weight $w$ and T-coweight $v$ corresponding to some Markoff triple containing $g$. We
denote $W := g - w$, $V := v + g - 2w$ such that $w < W$ and $g^2/(gw - 1) = [a_1, \dots, a_{2s}]$ with $a_1, a_{2s} > 1$.
Then the following equations hold:
\begin{enumerate}[(i)]
\item\label{append8i}
$[a_1, \dots, a_{2s}, 8] = \frac{9g^2 - gW + 1}{8gw + gv - 17}$,
\item\label{append8ii}
$[a_1, \dots, a_{2s - 1}, a_{2s} - 1, 1, 8] = \frac{9g^2 - gw - 1}{9gw - gv}$,
\item\label{append8iii}
$[1, a_1 - 1, a_2 \dots, a_{2s}, 8] = \frac{9g^2 - gW + 1}{9gW - gV + 18}$,
\item\label{append8iv}
$[1, a_1 - 1, a_2, \dots, a_{2s - 1}, a_{2s} - 1, 1, 8] = \frac{9g^2 - gw - 1}{8gW + gV - 1}$,
\item\label{append8v}
$[a_{2s}, \dots, a_1, 8] = \frac{9g^2 - gW - 1}{8gw + gv - 1}$,
\item\label{append8vi}
$[a_{2s}, \dots, a_2, a_1 - 1, 1, 8] = \frac{9g^2 - gw + 1}{9gw - gv + 18}$,
\item\label{append8vii}
$[1, a_{2s} - 1, a_{2s - 1}, \dots, a_1, 8] = \frac{9g^2 - gW - 1}{9gW-gV}$,
\item\label{append8viii}
$[1, a_{2s} - 1, a_{2s - 1}, \dots, a_2, a_1 - 1, 1, 8] = \frac{9g^2 - gw + 1}{8gW + gV - 17}$.
\end{enumerate}
\end{proposition}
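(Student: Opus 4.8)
The plan is to evaluate all eight left‑hand sides with continuants and read off the right‑hand sides after a single substitution. Write $K(\cdots)$ for the continuant, so $[c_1,\dots,c_m]=K(c_1,\dots,c_m)/K(c_2,\dots,c_m)$, and introduce the four \emph{base continuants}
$$
A=K(a_1,\dots,a_{2s}),\quad B=K(a_2,\dots,a_{2s}),\quad B'=K(a_1,\dots,a_{2s-1}),\quad C=K(a_2,\dots,a_{2s-1}).
$$
The first step is to pin these down. By hypothesis $A=g^2$ and $B=gw-1$. Since $2s$ is even, Lemma \ref{contfracevenodd} (\ref{contfracevenoddii}) gives $[a_{2s},\dots,a_1]=g^2/(gw+1)$ (the congruence behind it is $(gw-1)(gw+1)=g^2w^2-1\equiv -1\pmod{g^2}$), so by symmetry of continuants $g^2/B'=g^2/(gw+1)$, i.e.\ $B'=gw+1$. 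Finally Cassini's identity (\ref{Cassini}) reads $AC-B'B=(-1)^{2s}=1$, whence $g^2C=(gw-1)(gw+1)+1=g^2w^2$ and $C=w^2=vg-9$. I also record that $W=g-w$, $V=v+g-2w$ satisfy $W^2=gV-9$, which is exactly what is needed to bring the ``$W,V$''-expressions in (\ref{append8iii}), (\ref{append8iv}), (\ref{append8vii}), (\ref{append8viii}) into the stated form.

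The second step is a purely mechanical reduction. Using only the continuant recursions $K(c_1,\dots,c_m,x)=x\,K(c_1,\dots,c_m)+K(c_1,\dots,c_{m-1})$ and $K(x,c_1,\dots,c_m)=x\,K(c_1,\dots,c_m)+K(c_2,\dots,c_m)$ (the latter with $x=1$ absorbs the leading $1$'s in (\ref{append8iii})--(\ref{append8iv}) and (\ref{append8vii})--(\ref{append8viii})), together with symmetry of continuants, every numerator and denominator on the left of (\ref{append8i})--(\ref{append8viii}) becomes a short $\Z$-combination of $A,B,B',C$. For the basic case (\ref{append8i}),
\begin{align*}
K(a_1,\dots,a_{2s},8)&=8A+B'=8g^2+gw+1=9g^2-gW+1,\\
K(a_2,\dots,a_{2s},8)&=8B+C=8(gw-1)+(vg-9)=8gw+gv-17,
\end{align*}
which is (\ref{append8i}). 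For the ``$\dots,a_{2s}-1,1,8$'' tails in (\ref{append8ii}) and (\ref{append8vi}) one first records $K(a_1,\dots,a_{2s-1},a_{2s}-1)=A-B'$, $K(a_1,\dots,a_{2s-1},a_{2s}-1,1)=A$, and likewise $K(a_2,\dots,a_{2s}-1)=B-C$, $K(a_2,\dots,a_{2s}-1,1)=B$; appending $8$ then yields (\ref{append8ii}). The leading block $1,a_1-1$ in (\ref{append8iii})--(\ref{append8iv}) is handled just as in Lemma \ref{contfracevenodd} (\ref{contfracevenoddiv}), the relevant base continuants becoming $A-B$, $A-B-B'+C$, and so on. Finally (\ref{append8v})--(\ref{append8viii}) follow from (\ref{append8i})--(\ref{append8iv}) by running the identical computation for the reversed sequence $(a_{2s},\dots,a_1)$, which by Lemma \ref{contfracevenodd} (\ref{contfracevenoddii}) has continued fraction $g^2/(gw+1)$ and which, at the level of base continuants, merely interchanges $B$ and $B'$ while fixing $A$ and $C$.

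The only genuine work is bookkeeping: keeping the parity signs in (\ref{reverseontfrac}) and (\ref{Cassini}) straight (harmless here since $2s$ is even), and performing the substitutions $w^2=vg-9$, $W=g-w$, $V=v+g-2w$ in each of the eight cases so that each side reduces to the printed expression; there is no conceptual obstacle. In the final write‑up I would therefore present (1) the four base values above, (2) the handful of auxiliary continuant evaluations just listed, and (3) the eight one‑line verifications.
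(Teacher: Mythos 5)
Your proposal is correct; every continuant evaluation checks out (in particular $B'=gw+1$ from Lemma \ref{contfracevenodd}~(\ref{contfracevenoddii}), $C=w^2=vg-9$ from Cassini, and the identities $9g^2-gW+1=8g^2+gw+1$, $9gW-gV+18=8g^2-7gw-gv+18$, etc.). The overall engine is the same as the paper's — elementary continuant bookkeeping resting on Lemma \ref{contfracevenodd} and the relation $w^2=vg-9$ — but the organization differs in two respects. First, the paper derives (\ref{append8iii}), (\ref{append8iv}), (\ref{append8vii}), (\ref{append8viii}) from (\ref{append8i}), (\ref{append8ii}), (\ref{append8v}), (\ref{append8vi}) via Lemma \ref{negativecontfrac}, whereas you derive (\ref{append8v})--(\ref{append8viii}) from (\ref{append8i})--(\ref{append8iv}) by reversal (swapping $B\leftrightarrow B'$); both reductions are valid. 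Second, for the remaining cases the paper computes the value of the \emph{prepended} fraction, e.g.\ $[8,1,a_{2s}-1,\dots,a_1]=8+(gW-1)/g^2=(9g^2-gw-1)/g^2$, and then identifies the denominator of the reversed expansion via the congruence in (\ref{reverseontfrac}), which requires guessing the denominator and verifying one product identity such as $g^2(9gw-gv)=-1+(gw-1)(9g^2-gw-1)$; your route instead pins down all four base continuants $A,B,B',C$ up front and reads off every numerator and denominator as a $\Z$-linear combination, which is more uniform and avoids the guess-and-verify step at the cost of one extra application of Cassini. Either write-up is acceptable.
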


\begin{proof}
We first remark that (\ref{append8iii}), (\ref{append8iv}), (\ref{append8vii}), and (\ref{append8viii}) follow from
(\ref{append8i}), (\ref{append8ii}), (\ref{append8v}), and (\ref{append8vi}), respectively, by Lemma \ref{negativecontfrac}.

(\ref{append8ii})
By Lemma \ref{contfracevenodd}, we have
$$
[8, 1, a_{2s} - 1, a_{2s - 1}, \dots, a_1] = 8 + \cfrac{1}{\frac{g^2}{gW - 1}} = \frac{9g^2 - g w - 1}{g^2}.
$$
Moreover, we compute:
$$
g^2(9gw-gv) = -1 + (gw - 1)(9g^2 - gw - 1),
$$
hence $$
[a_1, \dots, a_{2s - 1}, a_{2s} - 1, 1, 8] = \frac{9g^2 - gw - 1}{9gw - gv}
$$
by equation \ref{reverseontfrac}. Cases (\ref{append8i}), (\ref{append8v}), (\ref{append8vi}) follow from similar computations.
\end{proof}

\begin{proposition}\label{juxtapose}
Let $(e, g, f)$ be a regular Markoff triple with $e > 1$ and $f > 2$.
and denote $e^2/(ew_e - 1) = [a_1, \dots, a_{2s}]$, $f^2/(f w_f - 1) = [b_1, \dots, b_{2t}]$
with $a_1, a_{2s}, b_1, b_{2t} > 1$ then:
$$
\frac{g^2}{g w_g - 1} = [a_{2s}, \dots, a_1, 8, 1, b_{2t} - 1, b_{2t - 1}, \dots, b_1].
$$
\end{proposition}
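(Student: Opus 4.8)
The plan is to evaluate the finite continued fraction on the right-hand side directly, by breaking it at the central entry $8$ and computing the two resulting pieces with the continuant concatenation law \ref{concatenationlaw} together with Proposition \ref{append8} and Lemma \ref{contfracevenodd}. Throughout I write $W_e := e - w_e$ and $W_f := f - w_f$; note that $w_e < W_e$ and $w_f < W_f$ by Lemma \ref{tcofactorlemmaf} (\ref{tcofactorlemmafiii}), which (together with $e > 1$, $f > 2$) is exactly what lets us invoke the reversal clauses of Lemma \ref{contfracevenodd} for $(e, w_e)$ and $(f, w_f)$. I shall use the palindrome invariance $K(x_1, \dots, x_n) = K(x_n, \dots, x_1)$ of continuants, the recursion, and the concatenation identity $K(x_1, \dots, x_{m+n}) = K(x_1, \dots, x_m) K(x_{m+1}, \dots, x_{m+n}) + K(x_1, \dots, x_{m-1}) K(x_{m+2}, \dots, x_{m+n})$.

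First I collect the relevant continuant values. On the $e$-side, Lemma \ref{contfracevenodd} (\ref{contfracevenoddii}) applied to $(e, w_e)$ gives $[a_{2s}, \dots, a_1] = e^2/(ew_e + 1)$, hence $K(a_{2s}, \dots, a_1) = e^2$ and $K(a_{2s-1}, \dots, a_1) = ew_e + 1$; and Proposition \ref{append8} (\ref{append8v}) applied to $e$ gives $[a_{2s}, \dots, a_1, 8] = (9e^2 - eW_e - 1)/(8ew_e + ev_e - 1)$, hence $K(a_{2s}, \dots, a_1, 8) = 8e^2 + ew_e - 1$ and $K(a_{2s-1}, \dots, a_1, 8) = 8ew_e + ev_e - 1$. (Alternatively, these last two follow from the continuant recursion together with $K(a_2, \dots, a_{2s}) = ew_e - 1$ and $K(a_2, \dots, a_{2s-1}) = w_e^2 = ev_e - 9$, the latter from Cassini's identity (\ref{Cassini}) and the $T$-coweight relation.) On the $f$-side, Lemma \ref{contfracevenodd} (\ref{contfracevenoddiii}) applied to $(f, w_f)$ gives $[1, b_{2t} - 1, b_{2t - 1}, \dots, b_1] = f^2/(fW_f - 1)$, hence $K(1, b_{2t} - 1, \dots, b_1) = f^2$ and $K(b_{2t} - 1, b_{2t - 1}, \dots, b_1) = fW_f - 1$.

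Now apply the concatenation law to the sequence $(a_{2s}, \dots, a_1, 8, 1, b_{2t} - 1, \dots, b_1)$, split just after the entry $8$ — once for the whole sequence and once for it with the first entry removed — to obtain $[a_{2s}, \dots, a_1, 8, 1, b_{2t} - 1, \dots, b_1] = N/D$ with
\begin{align*}
N &= (8e^2 + ew_e - 1) f^2 + e^2 (fW_f - 1), \\
D &= (8ew_e + ev_e - 1) f^2 + (ew_e + 1)(fW_f - 1).
\end{align*}
Expanding $N$, using $W_f = f - w_f$ and then Lemma \ref{cofactorlemma} (\ref{cofactorlemmai}) in the form $fw_e - ew_f = -3G$, gives $N = 9e^2 f^2 - 3efG - e^2 - f^2$, which collapses to $N = g^2$ after substituting $G = 3ef - g$ and $e^2 + f^2 = 3efg - g^2$. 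Expanding $D$ gives $D = 9ef^2 w_e + ef^2 v_e - ef w_e w_f - ew_e - fw_f - 1$; replacing $ef^2 v_e$ by $ef w_e w_f + 3ef w_g - 9ef^2 w_e$ via Lemma \ref{cofactorlemma} (\ref{cofactorlemmav}) leaves $D = 3ef w_g - ew_e - fw_f - 1$, and then $3ef w_g = (G + g) w_g = G w_g + g w_g = ew_e + fw_f + g w_g$ by Lemma \ref{cofactorlemma} (\ref{cofactorlemmaiii}) yields $D = g w_g - 1$. Since $\gcd(g^2, g w_g - 1) = 1$, the fraction $N/D$ is already reduced, so $[a_{2s}, \dots, a_1, 8, 1, b_{2t} - 1, \dots, b_1] = g^2/(g w_g - 1)$.

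The computation is routine once the continuants are assembled; the one genuinely delicate point is the simplification of the denominator, where the Markoff-specific identity Lemma \ref{cofactorlemma} (\ref{cofactorlemmav}) expressing $v_e$ through $w_e, w_f, w_g$ is precisely what eliminates the $v_e$-term, after which Lemma \ref{cofactorlemma} (\ref{cofactorlemmaiii}) and $G = 3ef - g$ close the argument — arranging these identities (together with the Markoff equation on the numerator side) to fire in the right order is the crux. A secondary bookkeeping issue is recognizing which reversal/complement of $e^2/(ew_e - 1)$ and $f^2/(fw_f - 1)$ appears in the target expansion — namely $[a_{2s}, \dots, a_1, 8]$ on the $e$-side and $[1, b_{2t} - 1, \dots, b_1]$ on the $f$-side — since that is what dictates exactly which clauses of Lemma \ref{contfracevenodd} and Proposition \ref{append8} to use.
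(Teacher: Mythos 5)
Your proof is correct and follows essentially the same route as the paper: reduce the claim to two continuant identities via the concatenation rule, identify the four continuants through Lemma \ref{contfracevenodd} and Proposition \ref{append8} (\ref{append8v}), and then verify the resulting polynomial identities for $g^2$ and $gw_g-1$ using Lemma \ref{cofactorlemma} and the Markoff equation. The only cosmetic difference is that you split the sequence immediately after the central $8$ while the paper writes the equivalent split using $K(8,a_1,\dots,a_{2s})$ and $K(b_1,\dots,b_{2t})$; the continuant values and the closing algebra are identical.
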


\begin{proof}
By the concatenation rule \ref{concatenationlaw}, it suffices to show that
\begin{align*}
g^2 & = K(8, a_1, \dots, a_{2s}) K(b_1, \dots, b_{2t}) + K(a_1, \dots, a_{2s}) K(b_1, \dots, b_{2t - 1}, b_{2t} - 1), \\
g w_g - 1 & = K(8, a_1, \dots, a_{2s - 1}) K(b_1, \dots, b_{2t}) + K(a_1, \dots, a_{2s - 1}) K(b_1, \dots, b_{2t - 1}, b_{2t} - 1).
\end{align*}
By Lemma \ref{contfracevenodd} and Proposition \ref{append8} (\ref{append8v}) these are equivalent to
\begin{align*}
g^2 & = (9e^2 - e(e - w_e) - 1)f^2 + e^2(f(f - w_f) - 1) \\
g w_g - 1 & = (8ew_e + ev_e - 1)f^2 + (e w_e + 1) (f(f - w_f) - 1)
\end{align*}
The following computation proves the first equality:
\begin{align*}
(9e^2 - e(e - w_e) - 1)f^2 + e^2(f(f - w_f) - 1) & = 9e^2f^2 - e^2 - f^2 - ef(e w_f - f w_e) \\
& = 9 e^2 f^2 - e^2 - f^2 - 3efG \quad \text{ by Lemma \ref{cofactorlemma} (\ref{cofactorlemmai})} \\
& = g^2 \quad \text{ by the Markoff equation}.
\end{align*}
For the second equality:
\begin{align*}
(8ew_e + ev_e - 1)f^2 & + (e w_e  + 1)(f^2-fw_f - 1) \\
&= 9ef^2w_e + ef^2v_e - ef w_e w_f - e w_e - f w_f - 1 \\
& = gw_g - 1 + ef (fv_e + 3(3fw_e - w_g) - w_ew_f) \quad \text{ by Lemma \ref{cofactorlemma} (\ref{cofactorlemmaiii})} \\
& = gw_g - 1 \quad \text{ by Lemma \ref{cofactorlemma} (\ref{cofactorlemmav})}.
\end{align*}
\end{proof}

By Proposition \ref{juxtapose}, the mutations $(e, F, g) \longleftarrow (e, g, f) \longrightarrow (g, E, f)$
for $e > 1$ and $f > 2$ behave as follows:
\begin{center}
	\footnotesize
	\tikzstyle{level 1}=[level distance = 2cm, sibling distance = 8cm]
	\begin{tikzpicture}[]
		\node[align=center] {$[a_1, \dots, a_{2s}]$, $[c_1, \dots, c_{2(s + t + 1)}]$, $[b_1, \dots, b_{2t}]$}
		child {node[align=center] at (0.5, 0.5) {$[a_1, \dots, a_{2s}]$\\
				$[a_{2s}, \dots, a_1, 8, 1, c_{2(r + s + 1)} - 1, c_{2(r + s) + 1}, \dots, c_1]$ \\
				$[c_1, \dots, c_{2(s + t + 1)}]$}}
		child {node[align=center]  at (-0.5, 0.5) {$[c_1, \dots, c_{2(s + t + 1)}]$ \\
				$[c_{2(r + s + 1)}, \dots, c_1, 8, 1, b_{2t} - 1, b_{2t - 1}, \dots, b_1]$ \\
				$[b_1, \dots, b_{2t}]$}};
	\end{tikzpicture}
\end{center}

Now it remains to establish the initial data. For this and later use, we introduce
the following notation.

\begin{definition}
Let $a_0, \dots, a_{m - 1}$ be a sequence of integers.
\begin{enumerate}[(i)]
\item 
For $n > 0$ we denote $(a_0, \dots, a_{m - 1})_n$ the sequence $b_0, \dots, b_{n - 1}$ such that $b_k = a_{k \bmod m}$.
\item For any integer $x$, we denote $x \overset{k}{\curvearrowright} a_0, \dots, a_{m - 1} = a_0, \dots, a_{k - 2}, x, a_{k - 1},
\dots, a_{m - 1}$.
\end{enumerate}
\end{definition}

\begin{example}
The expression $[3 \overset{5}{\curvearrowright} 6, (1,5)_5,6]$ denotes the continued fraction
$[6, 1, 5, 1, 3, 5, 1, 6] = 89^2/(89 \cdot 13 - 1)$. Similarly, we write $[6 \overset{4}{\curvearrowright} (4, 8)_7]$
for $[4,8,4,6,8,4,8,4] = 985^2/(985 \cdot 239 - 1)$.
\end{example}

\begin{proposition}\label{fibpellcontfrac}\ 
\begin{enumerate}[(i)]
\item\label{fibpellcontfraci}
Consider a Markoff triple of the form $(1, F_{2n + 1}, F_{2n - 1})$ for $n \geq 3$ with T-weights $(-1, F_{2n - 3}, F_{2n - 5})$, then
$$
\frac{F_{2n + 1}^2}{F_{2n + 1}F_{2n - 3} - 1} = [3 \overset{n}{\curvearrowright} 6, (1,5)_{2n - 5},6]
= [3 \overset{n}{\curvearrowright} 6, (1,5)_{2n - 3}].
$$
\item\label{fibpellcontfracii}
Consider a Markoff triple of the form $(P_{2n - 1}, P_{2n + 1}, 2)$ for $n \geq 1$ with T-weights $(S_{2n - 3}, S_{2n - 1}, 1)$, then
$$
\frac{P_{2n + 1}^2}{P_{2n + 1}S_{2n - 1} - 1} = [6 \overset{n}{\curvearrowright} (4, 8)_{2n - 1}].
$$
\end{enumerate}
\end{proposition}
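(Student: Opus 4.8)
The plan is to prove (i) and (ii) by the same two-step argument. Proposition~\ref{juxtapose} cannot be applied directly here, because on the Fibonacci branch the smallest entry of the triple is $e=1$ and on the Pell branch it is $f=2$, so the continued fractions $e^2/(ew_e-1)$, resp.\ $f^2/(fw_f-1)$, that it feeds on do not exist; these two branches therefore have to be seeded by hand from Proposition~\ref{palisplit}.

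\textbf{Step 1: the ordinary continued fraction of $g/w$.} For (i), with $g=F_{2n+1}$ and $w=w_g=F_{2n-3}$, I would run the Euclidean algorithm on $(F_{2n+1},F_{2n-3})$. Using $F_m-F_{m-4}=L_{m-2}$ and $L_k=F_{k-1}+F_{k+1}$ one checks that the successive remainders are $F_{2n-3},\,L_{2n-5},\,F_{2n-7},\,L_{2n-9},\dots$ and the quotients are $6,1,5,1,5,\dots$, ending in a modified tail term $t_n$ equal to $F_3=2$ when $n$ is odd and to $L_3=4$ when $n$ is even, so that
$$
\frac{F_{2n+1}}{F_{2n-3}}=[\,6,(1,5)_{n-3},t_n\,],\qquad c_1=6>1,\ \ c_s=t_n>1,\ \ s=n-1 .
$$
For (ii), with $g=P_{2n+1}$ and $w=w_g=S_{2n-1}$, the same computation using the Pell/NSW-type recursions from Lemma~\ref{fibonaccipellweights}(ii) gives remainders $S_{2n-1},\,P_{2n-3},\,S_{2n-5},\dots$, quotients $4,8,4,8,\dots$, and tail term $u_n\in\{5,7\}$, whence $P_{2n+1}/S_{2n-1}=[\,(4,8)_{n-1},u_n\,]$ with $s=n$. (Both statements can equally be obtained by verifying the corresponding continuant identities by induction on $n$.)

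\textbf{Step 2: apply Proposition~\ref{palisplit} and reorganize.} The hypothesis $k<n-k$ of Proposition~\ref{palisplit} holds in both cases: for (i), $2F_{2n-3}<F_{2n+1}$; for (ii), $w_g=S_{2n-1}<Q_{2n-1}=g-w_g$ by Lemma~\ref{fibonaccipellweights}(ii). Proposition~\ref{palisplit} then yields
$$
\frac{g^2}{gw-1}=[\,c_1,\dots,c_{s-1},\,c_s-(-1)^s,\,c_s+(-1)^s,\,c_{s-1},\dots,c_1\,].
$$
Substituting Step~1 and using that the period-two block $(1,5)$ reverses to $(5,1)$ and that $\{c_s-(-1)^s,\,c_s+(-1)^s\}$ equals $\{1,3\}$ or $\{3,5\}$ — in either case placing a single $3$ at the centre — one reads off exactly $[\,3\overset{n}{\curvearrowright}6,(1,5)_{2n-5},6\,]$, after separating the two parities of $n$; the $n$-th slot of the $\curvearrowright$-notation is precisely where that central $3$ lands. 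The second form $[\,3\overset{n}{\curvearrowright}6,(1,5)_{2n-3}\,]$ then follows from the tail identity $[\dots,5,1]=[\dots,6]$. For (ii) the identical reorganization, with $(4,8)$ reversing to $(8,4)$ and central pair $\{6,4\}$ or $\{6,8\}$, produces $[\,6\overset{n}{\curvearrowright}(4,8)_{2n-1}\,]$.

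The main obstacle is the bookkeeping in Step~2: checking, uniformly in the parity of $n$, that the near-palindrome produced by Proposition~\ref{palisplit} is \emph{literally} the sequence encoded by the $\overset{n}{\curvearrowright}$-notation, and in particular handling the small cases $n=3$ in (i) and $n=1,2$ in (ii), where the $(1,5)$- (resp.\ $(4,8)$-) block is short or empty and the tail identity $[\dots,5,1]=[\dots,6]$ must absorb an insertion that falls near the boundary. Step~1 is routine, but one must be careful to pin down the tail term $t_n$ (resp.\ $u_n$) and the exact length of the periodic block.
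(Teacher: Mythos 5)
Your proposal is correct, and it takes a genuinely different route from the paper. The paper proves both parts by induction \emph{along the branch}: it checks the first triple by hand and then shows that each mutation step transforms the previous expansion of $F_{2n-1}^2/(F_{2n-1}F_{2n-5}-1)$ (resp.\ $P_{2n-1}^2/(P_{2n-1}S_{2n-3}-1)$) into the next one by reversal and prepending of $6,1$ (resp.\ by pure reversal, after passing to the inverse residue $Q_{2n-1}$ of $S_{2n-1}$ modulo $P_{2n+1}^2$); each inductive step is verified with the concatenation rule (Proposition \ref{concatenationlaw}) and reduces to a pair of standard Fibonacci or Pell continuant identities. You instead compute the short continued fraction of $g/w_g$ directly and then invoke Proposition \ref{palisplit} to double it into the expansion of $g^2/(gw_g-1)$ in one shot. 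I checked your Step 1 data ($F_{2n+1}/F_{2n-3}=[6,(1,5)_{n-3},t_n]$ with $t_n=2$ or $4$ and remainders $F_{2n-3},L_{2n-5},F_{2n-7},\dots$; $P_{2n+1}/S_{2n-1}=[(4,8)_{n-1},u_n]$ with $u_n\in\{5,7\}$) against several values of $n$, as well as the hypotheses of Proposition \ref{palisplit} ($c_1,c_s>1$ and $2w_g<g$) and the placement of the central $3$ (resp.\ $6$) at slot $n$; all of it is consistent, including the degenerate case $n=1$ of (ii), where $5/1=[5]$ and the split collapses to $[6,4]$. Your approach buys a structural explanation of \emph{why} the answer is a near-palindrome with a single odd entry at the centre, and it works with words of length about $n$ rather than $2n$; its cost is the parity bookkeeping you flag. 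The paper's induction is heavier on identities but produces the expansions in exactly the "extend-by-mutation" form that Section \ref{finalchapter} and Corollary \ref{fibpellmutcontfrac} subsequently exploit.
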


\begin{proof}
Recall that the T-weights and T-coweights for the Fibonacci and Pell branches have been described
in Lemma \ref{fibonaccipellweights}.

(\ref{fibpellcontfraci})
For the first triple $(1, 13, 5)$ with T-weights $(-1, 2, 1)$ we compute:
$$
\frac{5^2}{5 \cdot 1 - 1} = [6, 4], \quad \frac{13^2}{13 \cdot 2 - 1} = [6,1,3,6] = [3\overset{3}{\curvearrowright} 6, (1,5)_1, 6].
$$
Now assume for $n > 3$ that $F_{2n - 1}^2/(F_{2n - 1}F_{2n - 5} - 1) = [a_1, \dots, a_{2s}]
= [3\overset{n - 1}{\curvearrowright} 6, (1,5)_{2n - 7}, 6]$. We claim that
$$
\frac{F_{2n + 1}^2}{F_{2n + 1}F_{2n - 3} - 1} = [6, 1, a_{2s} - 1, a_{2s - 1}, \dots, a_1] = [3 \overset{n}{\curvearrowright} 6, (1,5)_{2n - 5},6].
$$
By the concatenation rule Proposition \ref{concatenationlaw}, it suffices to show
\begin{align*}
F_{2n + 1}^2 & = K(6)K(1, a_{2s} - 1, a_{2s - 1}, \dots, a_1) + K(a_{2s} - 1, a_{2s - 1}, \dots, a_1) \\
F_{2n + 1}F_{2n - 3} - 1 & = K(1, a_{2s} - 1, a_{2s - 1}, \dots, a_1)
\end{align*}
for $n > 3$. By Lemma \ref{contfracevenodd} these are equivalent to
\begin{align*}
F_{2n + 1}^2 &= 6F_{2n - 1}^2 + F_{2n - 1}(F_{2n - 1} - F_{2n - 5}) \\
F_{2n + 1}F_{2n - 3} - 1 & = F_{2n - 1}^2,
\end{align*}
both of which follow from standard properties of the Fibonacci numbers.

(\ref{fibpellcontfracii})
For the first triple $(5, 29, 2)$ with T-weights $(1, 7, 1)$ we compute::
$$
\frac{5^2}{5 \cdot 1 - 1} = [6, 4] = [6 \overset{1}{\curvearrowright} (4, 8)_1], \quad
\frac{29^2}{29 \cdot 7 - 1} = [4, 6, 8, 4] = [6 \overset{2}{\curvearrowright} (4, 8)_3].
$$
We assume for $n > 2$ that $P_{2n - 1}^2/(P_{2n - 1}S_{2n - 3} - 1) = [a_1, \dots, a_{2n - 2}]
= [6 \overset{n - 1}{\curvearrowright} (4, 8)_{2n - 3}]$.
We claim that
$$
\frac{P_{2n + 1}^2}{P_{2n + 1}S_{2n - 1} - 1} = [a_{2n - 2}, \dots, a_1] = [6 \overset{n}{\curvearrowright} (4, 8)_{2n - 1}].
$$
Note that $P_{2n + 1}Q_{2n - 1} - 1$ and $P_{2n + 1}S_{2n - 1} - 1$ are inverse to each other modulo $P_{2n + 1}^2$.
It therefore is sufficient to show that
$$
\frac{P_{2n + 1}^2}{P_{2n + 1}Q_{2n - 1} - 1} = [1, 3, 8, a_1, \dots, a_{2n - 2}] = [6 \overset{n + 2}{\curvearrowright} 1, 3, 8, (4, 8)_{2n - 3}].
$$

Invoking again the concatenation rule, it suffices to show that
\begin{align*}
P_{2n + 1}^2 = K(1, 3, 8) K(a_1, \dots, a_{2n - 2}) + K(1, 3)K(a_1, \dots, a_{2n - 3})
= 33 P_{2n - 1}^2 + 4 (P_{2n - 1} S_{2n- 3} - 1), \\
P_{2n + 1}Q_{2n - 1} - 1 = K(3, 8) K(a_1, \dots, a_{2n - 2}) + K(3) K(a_2, \dots, a_{2n - 2})
= 25 P_{2n - 1}^2 + 3(P_{2n - 1} S_{2n - 3} - 1)
\end{align*}
for $n > 2$. These equations follow from successive application of well-known relations among the Pell and Pell-Lucas
numbers, plenty of which can be found, e.g., in \cite{koshy14}, which we leave as an exercise for the reader.
\end{proof}

\begin{corollary}\label{fibpellmutcontfrac}
\begin{enumerate}[(i)]
\item
Consider a Markoff triple of the form $(F_{2n + 1}, g, F_{2n - 1})$ (with $g =$ $3F_{2n - 1}F_{2n + 1} - 1$), then
$$
\frac{g^2}{gw_g - 1} = [3 \overset{n - 1}{\curvearrowright} 6, (1,5)_{2n - 5}, 6, 8, [3 \overset{n - 1}{\curvearrowright} (1, 5)_{2n - 5}, 6]] 
= [3 \overset{3n - 2}{\curvearrowright} 3 \overset{n - 1}{\curvearrowright} 6, (1,5)_{2n - 5}, 6, 8, (1, 5)_{2n - 5}, 6].
$$
\item
Consider a Markoff triple of the form $(P_{2n - 1}, g, P_{2n + 1})$ (with $g = 3P_{2n - 1}P_{2n + 1} - 2$), then
\begin{equation*}
\frac{g^2}{gw_g - 1} = [6 \overset{n}{\curvearrowright} (4, 8)_{2n - 2}, [6 \overset{n + 2}{\curvearrowright} 1, 3, (8, 4)_{2n - 2}]]
= [6 \overset{3n + 1}{\curvearrowright} 6 \overset{n}{\curvearrowright} (4,8)_{2n - 2}, 1, 3, (8, 4)_{2n - 2}].
\end{equation*}
\end{enumerate}
\end{corollary}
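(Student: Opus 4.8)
The plan is to derive both identities from the single juxtaposition rule of Proposition~\ref{juxtapose}, fed with the Fibonacci-- and Pell--branch continued fractions of Proposition~\ref{fibpellcontfrac}, and then to rewrite the resulting ``flat'' words into the compact $\curvearrowright$--notation. Implicitly $n\ge 3$ in part~(i) and $n\ge 1$ in part~(ii); for $n=1$ the triple in~(ii) is $(1,13,5)$ and the claim is already contained in Proposition~\ref{fibpellcontfrac}(i).

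First I would identify the triple as a mutation of an outer branch. For part~(i), $(F_{2n+1},g,F_{2n-1})$ with $g=3F_{2n-1}F_{2n+1}-1$ is the right mutation of the Fibonacci--branch triple $(1,F_{2n+1},F_{2n-1})$, which for $n\ge 3$ has first entry $>1$ and last entry $>2$, so Proposition~\ref{juxtapose} applies. By Lemma~\ref{fibonaccipellweights} the Fibonacci triple has T--weights $(-1,F_{2n-3},F_{2n-5})$, so by Lemma~\ref{tcofactorlemmaf} in the mutated triple $F_{2n+1}$ carries T--weight $F_{2n-3}$ and $F_{2n-1}$ carries T--weight $F_{2n-5}$; hence the two ingredient fractions required by Proposition~\ref{juxtapose} are $F_{2n+1}^2/(F_{2n+1}F_{2n-3}-1)$ and $F_{2n-1}^2/(F_{2n-1}F_{2n-5}-1)$, both furnished by Proposition~\ref{fibpellcontfrac}(i) (the second for $n\ge 4$; for $n=3$ it reads $25/4=[6,4]$, which is the $n=1$ instance of Proposition~\ref{fibpellcontfrac}(ii)). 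Substituting into Proposition~\ref{juxtapose} gives $g^2/(gw_g-1)$ as an explicit word in $\{1,3,5,6,8\}$. Part~(ii) runs in parallel: $(P_{2n-1},g,P_{2n+1})$ with $g=3P_{2n-1}P_{2n+1}-2$ is the left mutation of the Pell--branch triple $(P_{2n-1},P_{2n+1},2)$, the hypotheses of Proposition~\ref{juxtapose} hold for $n\ge 2$, Lemmas~\ref{fibonaccipellweights} and~\ref{tcofactorlemmaf} give T--weights $S_{2n-3}$ of $P_{2n-1}$ and $S_{2n-1}$ of $P_{2n+1}$, and the ingredients $[6\overset{n-1}{\curvearrowright}(4,8)_{2n-3}]$ and $[6\overset{n}{\curvearrowright}(4,8)_{2n-1}]$ come from Proposition~\ref{fibpellcontfrac}(ii).

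The remaining task is to recognise the word produced by Proposition~\ref{juxtapose} as the stated closed form, using only two elementary manipulations. First, the blocks $6,(1,5)_{2n-5},6$, $6,(1,5)_{2n-7},6$ and the odd--length $(4,8)$--blocks are palindromes, so reversing such a block with one extra entry $x$ inserted at position $k$ returns the same block with $x$ inserted at the mirror position $L-k+2$; applied to the reversed head $[a_{2s},\dots,a_1]$ of Proposition~\ref{juxtapose} this produces exactly the advertised $3\overset{n-1}{\curvearrowright}\cdots$ (resp.\ $6\overset{n}{\curvearrowright}\cdots$) word, and applied to the reversed tail and then stripping its leading entry it produces the inner bracket. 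Second, the identity $[\dots,a,1]=[\dots,a+1]$ together with the string equalities $1,5,(1,5)_{2n-7}=(1,5)_{2n-5}$ and $(4,8)_{2n-3},8=(4,8)_{2n-2}$ lets me absorb the ``$1,b_{2t}-1$'' junction into the tail block and, in the Pell case, absorb the juxtaposition entry $8$ into the periodic $4,8$--pattern (in the Fibonacci case it survives as the literal ``$6,8$''). Once head, connector and tail are matched, the two displayed forms differ only in the order in which the two inserted $3$'s (resp.\ $6$'s) are recorded, so passing between them is pure position--counting; the boundary cases $n=3$ in~(i) and $n=1$ in~(ii) I would simply verify by hand, where the generic formula of Proposition~\ref{fibpellcontfrac} degenerates.

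I do not expect a conceptual obstacle: once Propositions~\ref{juxtapose} and~\ref{fibpellcontfrac} are in hand, everything is forced. The real work --- and the only place errors can enter --- is the combinatorial bookkeeping: tracking where an inserted entry lands after reversal of a palindromic block, correctly simplifying the $\dots,a,1,\dots$ and $\dots,6,8,1,\dots$ junctions, and the low values of $n$. I would organise the write--up by first displaying the flat word coming out of Proposition~\ref{juxtapose} and then peeling off and matching its head, middle and tail against the three corresponding pieces of the claimed $\curvearrowright$--expression.
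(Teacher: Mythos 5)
Your proposal is correct and follows exactly the route the paper intends: the paper's proof is the one-line ``Follows from Proposition \ref{fibpellcontfrac} and direct computation,'' and your plan (recognize the triple as a mutation of an outer branch, feed the branch data of Proposition \ref{fibpellcontfrac} and the mutated T-weights from Lemmas \ref{fibonaccipellweights} and \ref{tcofactorlemmaf} into Proposition \ref{juxtapose}, then match the resulting word to the $\curvearrowright$-notation via palindrome reversal and the $[\dots,a,1]=[\dots,a+1]$ absorption) is precisely that direct computation, with the boundary cases $n=3$ in (i) and $n=1$ in (ii) correctly flagged for separate verification.
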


\begin{proof}
Follows from Proposition \ref{fibpellcontfrac} and direct computation.
\end{proof}

Here some small observations on the structure of the continued fractions:

\begin{corollary}
Let $(e, g, f)$ be a Markoff triple such that $g^2/(gw - 1) = [a_1, \dots, a_{2s}]$ with $a_1, a_{2s} > 1$. Then
\begin{enumerate}[(i)]
\item
$a_i \in \{1, 3, 4, 5, 6, 8\}$ for every $i$.
\item
$a_i \neq a_{i + 1}$ for every $i$.
\item
A triple $(e, g, f)$ with $g > 13$ is a descendant of the Fibonacci branch iff $a_i = 5$ for some $i$ iff
$a_i \in \{1, 3, 5, 6, 8\}$ for every $i$.
\item
A triple $(e, g, f)$ is a descendant of the Pell branch iff $a_i = 4$ for some $i$ iff
$a_i \in \{1, 3, 4, 6, 8\}$ for every $i$.
\end{enumerate}
\end{corollary}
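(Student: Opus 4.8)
The plan is to establish all four claims simultaneously by induction along the Markoff tree, with the inductive step supplied by the concatenation formula of Proposition \ref{juxtapose}. The base of the induction consists of the Fibonacci branch ($e=1$), the Pell branch ($f=2$), the two families of first offshoots $(F_{2n+1},g,F_{2n-1})$ and $(P_{2n-1},g,P_{2n+1})$, and the two small triples $(1,5,2)$, $(1,13,5)$: for all of these the continued fraction of $g^2/(gw_g-1)$ is written down explicitly in Proposition \ref{fibpellcontfrac} and Corollary \ref{fibpellmutcontfrac} (together with the trivial computation $5^2/(5\cdot 1-1)=[6,4]$), and one simply reads off that its terms lie in $\{1,3,5,6,8\}$ on the Fibonacci side and in $\{1,3,4,6,8\}$ on the Pell side, that no two consecutive terms agree, and that the first and last terms are $6$ on the Fibonacci side and $4$ on the Pell side. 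Every remaining regular triple $(e,g,f)$ has $e>1$ and $f>2$, so Proposition \ref{juxtapose} gives
$$
\frac{g^2}{g w_g-1}=[a_{2s},\dots,a_1,8,1,b_{2t}-1,b_{2t-1},\dots,b_1],
$$
where $[a_1,\dots,a_{2s}]=e^2/(ew_e-1)$ and $[b_1,\dots,b_{2t}]=f^2/(fw_f-1)$, and these two shorter expansions are governed by the inductive hypothesis.

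To close the induction I would strengthen the hypothesis so that it also records, for every regular triple, that the first and last terms of $g^2/(gw_g-1)$ lie in $\{4,6\}$, that the second term is not equal to the first term minus one (and symmetrically at the other end), and which of the two values $4,6$ occurs at each end. Granting this, (i) and (ii) for $g$ are pure bookkeeping on the displayed formula. The reversed blocks $a_{2s},\dots,a_1$ and $b_{2t-1},\dots,b_1$ retain admissibility and the no-repetition property under reversal; at the junction one has $a_1\in\{4,6\}$ and, since $b_{2t}\in\{4,6\}$, $b_{2t}-1\in\{3,5\}$, so all junction terms are admissible and the consecutive pairs $(a_1,8),(8,1),(1,b_{2t}-1)$ are unequal, while $(b_{2t}-1,b_{2t-1})$ is unequal by the extra clause applied to $f$; and the new end terms $a_{2s}$ and $b_1$ again lie in $\{4,6\}$, the new second-from-the-end terms $a_{2s-1}$ and $b_2$ being controlled by the corresponding clauses for $e$ and $f$. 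Finally, the left end of $g^2/(gw_g-1)$ is the last term of $e$'s expansion and the right end is the first term of $f$'s expansion; tracing this through the recursion one sees that ``both ends equal $6$'' propagates along all triples not descending from the Pell branch and ``both ends equal $4$'' along all Pell descendants, matching the Fibonacci and Pell base cases.

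For (iii) and (iv) I would follow the occurrences of $4$ and of $5$ through the same recursion. A $4$ in $g^2/(gw_g-1)$ can only come from a $4$ among $a_1,\dots,a_{2s}$ (every term of $e$'s expansion reappears, reversed, as a contiguous block) or from a $4$ among $b_1,\dots,b_{2t-1}$ (a $4$ occurring as the last term $b_{2t}$ of $f$'s expansion is turned into a $3$); no $4$ is created at the junction, since the junction terms are $8$, $1$, and $b_{2t}-1\in\{3,5\}$. Unwinding this down to the base cases, a $4$ appears if and only if $(e,g,f)$ descends from the Pell branch, which together with (i) yields both equivalences in (iv). The analysis of $5$ is identical except for one additional source: $b_{2t}-1=5$ precisely when $f$'s expansion ends in $6$, i.e.\ (by the strengthened hypothesis) precisely when $f$ does not descend from the Pell branch. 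Hence a $5$ occurs in $g^2/(gw_g-1)$ if and only if a $5$ already occurs in $e$'s or $f$'s expansion or $f$ does not descend from the Pell branch, and one checks that this is equivalent to $(e,g,f)$ descending from the Fibonacci branch; the triple $(1,13,5)$, with expansion $[6,1,3,6]$ containing neither a $4$ nor a $5$, is the unique Fibonacci-branch triple with $g\le 13$ for which this fails, explaining the hypothesis $g>13$.

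The main obstacle, as always with such statements, is getting the strengthened induction hypothesis exactly right: one must verify the short finite list of admissible pairs of end terms closely enough that the $8,1$ junction never creates a repetition or an out-of-alphabet digit, and one must match the ``ends in $4$'' versus ``ends in $6$'' alternative, and the derived statement ``$b_{2t}-1=5$'', against the combinatorial description of the Pell and Fibonacci descendants. Beyond that the proof is the same kind of continuant manipulation already carried out in Propositions \ref{juxtapose} and \ref{fibpellcontfrac}, together with the bookkeeping of the handful of genuinely exceptional small triples.
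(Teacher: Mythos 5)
Your inductive framework --- concatenation via Proposition \ref{juxtapose}, base cases from Proposition \ref{fibpellcontfrac} and Corollary \ref{fibpellmutcontfrac}, and a strengthened hypothesis recording that both end terms lie in $\{4,6\}$ and that the term adjacent to each end never equals that end term minus one --- is the natural (indeed essentially forced) approach here; the paper states this corollary without proof, and your argument does carry (i), (ii), and the first equivalence of (iv).

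The gap is in the last step of your treatment of (iii) (and of the second equivalence in (iv)): the claim that ``one checks'' the occurrence of a $5$ is equivalent to descending from the Fibonacci branch, with $(1,13,5)$ as the unique exception, is false. The culprit is the number $5=F_5=P_3$, which lies on \emph{both} branches and whose expansion $[6,4]$ ends in $4$; this breaks your dichotomy ``$f$'s expansion ends in $6$ unless $f$ is Pell-descended.'' Along the entire infinite path $(1,13,5),(13,194,5),(194,2897,5),\dots$ on which $5$ persists as the third coordinate, the junction term $b_{2t}-1$ equals $3$, so no digit $5$ is ever created, and no digit $4$ is inherited either (the only $4$ in $[6,4]$ sits in the last position of the $f$-block, precisely where it gets decremented). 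Concretely, $194^2/(194\cdot 31-1)=[6,3,1,6,8,1,3,6]$ and $2897^2/(2897\cdot 463-1)=[6,3,1,8,6,1,3,6,8,1,3,6]$ --- both visible in the paper's own Figure \ref{quadraticcfes} --- contain neither a $4$ nor a $5$, although $g>13$ and these triples sit below $(1,13,5)$. Your propagation analysis is correct and in fact pinpoints exactly these exceptions; what is missing is the realization that they form an infinite family, so that the stated equivalences in (iii) and the second equivalence in (iv) fail under any reading of ``descendant of the Fibonacci branch'' that includes the subtree below $(1,13,5)$. A correct write-up must either restrict to triples not containing $5$ as an entry (equivalently, off the path $LR^n$) or record these counterexamples; as literally stated, the corollary itself appears to be in error, so the literal statement cannot be proved.
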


Figure \ref{quadraticcfes} shows the continued fraction expansions of $g / (gw_g - 1)$ for the first five
levels of the Markoff tree.

\begin{figure}[ht]
	\tikzstyle{level 1}=[level distance=1cm, sibling distance=6cm]
	\tikzstyle{level 2}=[level distance=1cm, sibling distance=3cm]
	\tikzstyle{level 3}=[level distance=3cm, sibling distance=1.5cm]
	\tikzstyle{level 4}=[level distance=6cm, sibling distance=1cm]
	\begin{tikzpicture}[grow=right, sloped]
		\node[align=left] {$[6, 4]$}
		child {node[align=left]{$[6, 1, 3, 6]$}
			child {node[align=left] {$[6, 1, 5, 3, 1, 6]$}
				child {node[align=left] {$[6, 1, 5, 1, 3, 5, 1, 6]$}
					child {node[align=left] {$[6, 1, 5, 1, 5, 3, 1, 5, 1, 6]$}}
					child {node[align=left] {$[6, 1, 5, 3, 1, 5, 1, 6, 8, 1, 5, 1, 3, 5, 1, 6]$}}
				}
				child {node[align=left] {$[6, 1, 3, 5, 1, 6, 8, 1, 5, 3, 1, 6]$}
					child {node[align=left] {$[6, 1, 3, 5, 1, 6, 8, 1, 5, 1, 3, 5, 1, 8, 6, 1, 5, 3, 1, 6]$}}
					child {node[align=left] {$[6, 1, 3, 5, 1, 8, 6, 1, 5, 3, 1, 6, 8, 1, 5, 3, 1, 6]$}}
				}
			}
			child {node[align=left] {$[6, 3, 1, 6, 8, 1, 3, 6]$}
				child {node[align=left] {$[6, 3, 1, 6, 8, 1, 5, 3, 1, 8, 6, 1, 3, 6]$}
					child {node[align=left] {$[6, 3, 1, 6, 8, 1, 5, 3, 1, 6, 8, 1, 3, 5, 1, 8, 6, 1, 3, 6]$}}
					child {node[align=left] {$[6, 3, 1, 6, 8, 1, 3, 5, 1, 8, 6, 1, 3, 6, 8, 1, 5, 3, 1, 8, 6, 1, 3, 6]$}}
				}
				child {node[align=left] {$[6, 3, 1, 8, 6, 1, 3, 6, 8, 1, 3, 6]$}
					child {node[align=left] {$[6, 3, 1, 8, 6, 1, 3, 6, 8, 1, 5, 3, 1, 8, 6, 3, 1, 6, 8, 1, 3, 6]$}}
					child {node[align=left] {$[6, 3, 1, 8, 6, 3, 1, 6, 8, 1, 3, 6, 8, 1, 3, 6]$}}
				}
			}
		}
		child {node[align=left] {$[4, 6, 8, 4]$}
			child {node[align=left] {$[4, 6, 8, 1, 3, 8, 6, 4]$}
				child {node[align=left] {$[4, 6, 8, 1, 3, 6, 8, 3, 1, 8, 6, 4]$}
					child {node[align=left] {$[4, 6, 8, 1, 3, 6, 8, 1, 3, 8, 6, 3, 1, 8, 6, 4]$}}
					child {node[align=left] {$[4, 6, 8, 1, 3, 8, 6, 3, 1, 8, 6, 4, 8, 1, 3, 6, 8, 3, 1, 8, 6, 4]$}}
				}
				child {node[align=left] {$[4, 6, 8, 3, 1, 8, 6, 4, 8, 1, 3, 8, 6, 4]$}
					child {node[align=left] {$[4, 6, 8, 3, 1, 8, 6, 4, 8, 1, 3, 6, 8, 3, 1, 8, 4, 6, 8, 1, 3, 8, 6, 4]$}}
					child {node[align=left] {$[4, 6, 8, 3, 1, 8, 4, 6, 8, 1, 3, 8, 6, 4, 8, 1, 3, 8, 6, 4]$}}
				}
			}
			child {node[align=left] {$[4, 8, 6, 4, 8, 4]$}
				child {node[align=left] {$[4, 8, 6, 4, 8, 1, 3, 8, 4, 6, 8, 4]$}
					child {node[align=left] {$[4, 8, 6, 4, 8, 1, 3, 8, 6, 4, 8, 3, 1, 8, 4, 6, 8, 4]$}}
					child {node[align=left] {$[4, 8, 6, 4, 8, 3, 1, 8, 4, 6, 8, 4, 8, 1, 3, 8, 4, 6, 8, 4]$}}
				}
				child {node[align=left] {$[4, 8, 4, 6, 8, 4, 8, 4]$}
					child {node[align=left] {$[4, 8, 4, 6, 8, 4, 8, 1, 3, 8, 4, 8, 6, 4, 8, 4]$}}
					child {node[align=left] {$[4, 8, 4, 8, 6, 4, 8, 4, 8, 4]$}}
				}
			}
		};
	\end{tikzpicture}
	\caption{Continued fraction expansions of $g^2 / (gw_g - 1)$ for the first four levels of the Markoff tree.}\label{quadraticcfes}
\end{figure}
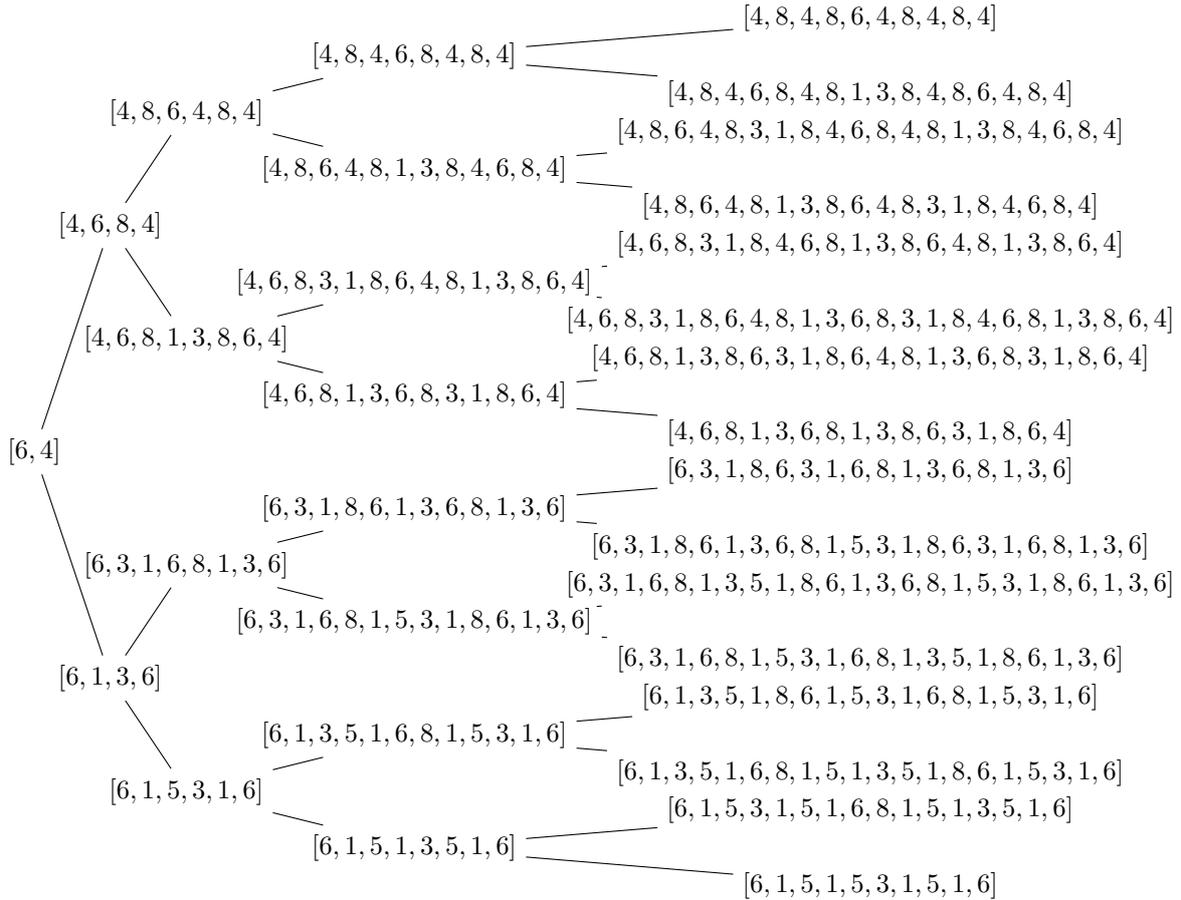

\begin{figure}[ht]
	\tikzstyle{level 1}=[level distance=1cm, sibling distance=4cm]
	\tikzstyle{level 2}=[level distance=1cm, sibling distance=2cm]
	\tikzstyle{level 3}=[level distance=3cm, sibling distance=1cm]
	\tikzstyle{level 4}=[level distance=6cm, sibling distance=.5cm]
	\begin{tikzpicture}[grow=right, sloped]
		\node[align=left] {$[3]$}
		child {node[align=left]{$[1, 5]$}
			child {node[align=left] {$[3, 1, 5]$}
				child {node[align=left] {$[1, 5, 1, 5]$}
					child {node[align=left] {$[3, 1, 5, 1, 5]$}}
					child {node[align=left] {$[6, 1, 5, 1, 3, 5, 1, 5]$}}
				}
				child {node[align=left] {$[6, 1, 5, 3, 1, 5]$}
					child {node[align=left] {$[1, 5, 1, 8, 6, 1, 5, 3, 1, 5]$}}
					child {node[align=left] {$[3, 1, 6, 8, 1, 5, 3, 1, 5]$}}
				}
			}
			child {node[align=left] {$[6, 1, 3, 5]$}
				child {node[align=left] {$[3, 1, 8, 6, 1, 3, 5]$}
					child {node[align=left] {$[6, 1, 3, 5, 1, 8, 6, 1, 3, 5]$}}
					child {node[align=left] {$[1, 6, 8, 1, 5, 3, 1, 8, 6, 1, 3, 5]$}}
				}
				child {node[align=left] {$[1, 6, 8, 1, 3, 5]$}
					child {node[align=left] {$[3, 1, 8, 6, 3, 1, 6, 8, 1, 3, 5]$}}
					child {node[align=left] {$[6, 1, 3, 6, 8, 1, 3, 5]$}}
				}
			}
		}
		child {node[align=left] {$[6, 3]$}
			child {node[align=left] {$[1, 8, 6, 3]$}
				child {node[align=left] {$[6, 3, 1, 8, 6, 3]$}
					child {node[align=left] {$[1,8,6,3,1,8,6,3]$}}
					child {node[align=left] {$[4, 8, 1, 3, 6, 8, 3, 1, 8, 6, 3]$}}
				}
				child {node[align=left] {$[4, 8, 1, 3, 8, 6, 3]$}
					child {node[align=left] {$[6, 3, 1, 8, 4, 6, 8, 1, 3, 8, 6, 3]$}}
					child {node[align=left] {$[1,8,6,4,8,1,3,8,6,3]$}}
				}
			}
			child {node[align=left] {$[4, 8, 3]$}
				child {node[align=left] {$[1, 8, 4, 6, 8, 3]$}
					child {node[align=left] {$[4, 8, 3, 1, 8, 4, 6, 8, 3]$}}
					child {node[align=left] {$[6, 4, 8, 1, 3, 8, 4, 6, 8, 3]$}}
				}
				child {node[align=left] {$[6, 4, 8, 3]$}
					child {node[align=left] {$[1, 8, 4, 8, 6, 4, 8, 3]$}}
					child {node[align=left] {$[4, 8, 4, 8, 3]$}}
				}
			}
		};
	\end{tikzpicture}
	\caption{The LEs for minimal resolutions of $\frac{1}{g^2}(1, g w_g - 1)$ for the first four levels of the Markoff tree.}\label{les}
\end{figure}
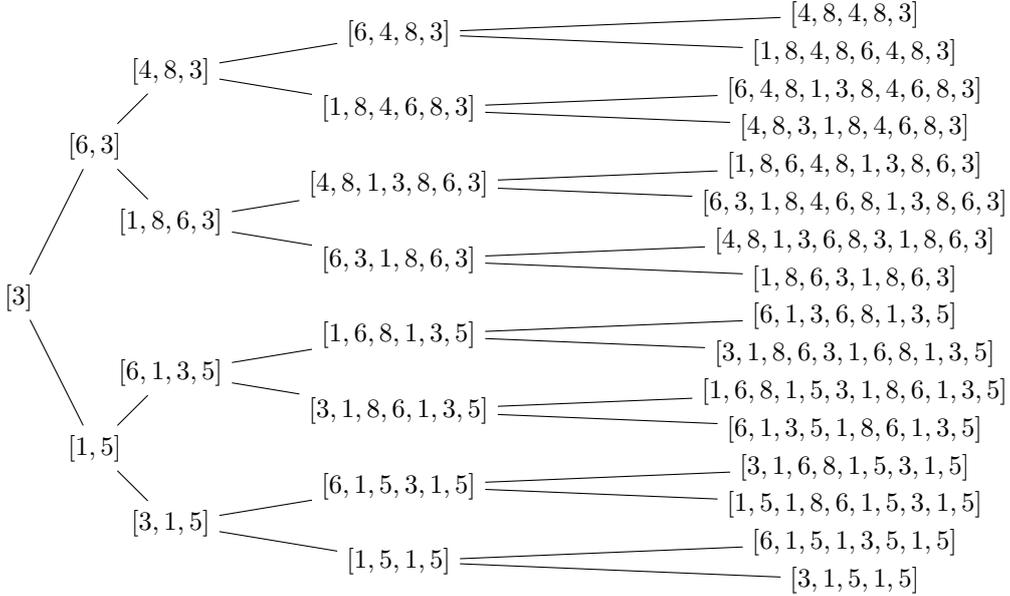

We can now give a complete description of the LEs correponding to Markoff triples.

\begin{theorem}\label{LEProposition}
	Let $(e, g, f)$ be a regular Markoff triple. Then the LE of the minimal resolution of the corresponding T-singularity
	$\frac{1}{g^2}(1, g w_g - 1)$ is given by
	\begin{align*}
		& a_s - 2, a_{s - 1}, \dots, a_2, a_1 - 1 = a_{s + 1}, \dots, a_{2s - 1}, a_{2s} - 1 & \text{ if $s$ is odd}, \\
		& a_s, \dots, a_2, a_1 - 1 = a_{s + 1} + 2, a_{s + 2}, \dots, a_{2s - 1}, a_{2s} - 1 & \text{ if $s$ is even}.
	\end{align*}
	where $[a_1, \dots, a_{2s}] = g^2/(gw_g - 1)$ with $a_1, a_{2s} > 1$
\end{theorem}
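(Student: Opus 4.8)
The plan is to read the almost-symmetric continued fraction of $g^2/(gw_g-1)$ off from its two halves, by combining Proposition \ref{palisplit}, Corollary \ref{tcontfrac} and Corollary \ref{affinetransform}. Recall first that, by the definition of the length encoding and the discussion following Lemma \ref{lelemma}, the LE of the T-singularity $\frac1{g^2}(1,gw_g-1)$ is exactly the LE of the pair $(g,w_g)$. Moreover this pair satisfies the standing hypothesis $0<w_g<g-w_g$ of Proposition \ref{palisplit} and Corollary \ref{tcontfrac}: by Corollary \ref{affinetransform} we have $w_g=3r_g-g$, and $r_g/g<\tfrac12$ (proof of Lemma \ref{cofactorlemmaf}(\ref{cofactorlemmafiii}), equivalently Lemma \ref{tcofactorlemmaf}(\ref{tcofactorlemmafiii})), whence $g-w_g=2g-3r_g>g/2>w_g$; also $\gcd(g,w_g)=1$ because $3\nmid g$ and $w_g^2\equiv-9\pmod g$.

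First I would write $g/w_g=[b_1,\dots,b_s]$ in the unique form with $b_1,b_s>1$, which is possible since $g/w_g>2$. Applying Proposition \ref{palisplit} with $(n,k)=(g,w_g)$ produces a length-$2s$ continued fraction for $g^2/(gw_g-1)$ whose last entry is $b_1>1$. After checking that this expression does not collapse --- so that it is genuinely the reduced continued fraction $[a_1,\dots,a_{2s}]$ of the statement and the ``$2s$'' there really is twice the length of $g/w_g$ --- one reads off
$$a_i=b_i\ (1\le i\le s-1),\qquad a_{2s+1-i}=b_i\ (2\le i\le s),$$
while the central pair $(a_s,a_{s+1})$ equals $(b_s-1,\,b_s+1)$ if $s$ is even and $(b_s+1,\,b_s-1)$ if $s$ is odd. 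Thus the left half $a_1,\dots,a_s$ and the right half $a_{s+1},\dots,a_{2s}$ each recover the $b_j$, differing only by a single shift by $1$ at the split.

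Next I would invoke Corollary \ref{tcontfrac} (valid for $s\ge2$; the case $s=1$ happens only for $g=5$, by a one-line computation from $r_g^2\equiv-1\pmod g$, and there Lemma \ref{lelemma}(\ref{lelemmai}) gives the LE directly). It states that the LE $\ell_1,\dots,\ell_s$ of $(g,w_g)$ is characterized by $g/w_g=[\ell_s+1,\ell_{s-1},\dots,\ell_2,\ell_1+1]$; comparison with $g/w_g=[b_1,\dots,b_s]$ gives $\ell_1=b_s-1$, $\ell_j=b_{s+1-j}$ for $2\le j\le s-1$, and $\ell_s=b_1-1$. Substituting the expressions for the $b_j$ from the previous paragraph --- once through the left half, once through the right half --- and separating the two parities of $s$ yields precisely the two claimed descriptions of the LE: the decrement of $a_1$ and of $a_{2s}$ by $1$ is the ``$+1$'' of Corollary \ref{tcontfrac} read backwards, and the shift by $2$ occurring at the split in one of the two readings is the composite of the shift by $1$ at the centre of Proposition \ref{palisplit}'s output with the ``$-1$'' in the identity $\ell_1=b_s-1$.

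The only real work is the combinatorial bookkeeping of the last step: keeping the index arithmetic straight across the parity split --- which half carries the shift-by-$2$ correction at position $s$ or $s+1$ and which carries none --- together with the verification that Proposition \ref{palisplit}'s output is already reduced. Everything else is formal. A slightly longer alternative would be to induct along the Markoff tree, matching the LE mutation rules coming from Lemma \ref{lelemma} against the continued-fraction mutation rules that follow from Proposition \ref{juxtapose}; the direct argument above avoids this.
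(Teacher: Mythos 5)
Your proposal is correct and follows exactly the paper's route: the paper's entire proof is the single line that the theorem is a direct consequence of Corollary \ref{tcontfrac} and Proposition \ref{palisplit}, and your write-up merely supplies the hypothesis check $w_g < g - w_g$ and the index bookkeeping that the paper leaves implicit. One small caveat: carrying out that bookkeeping for $s$ even actually yields $a_{s+1} - 2$ as the first entry of the right-hand reading (test $g=13$, where $[a_1,\dots,a_4]=[6,1,3,6]$ and the LE is $1,5$, so one needs $a_3-2=1$, not $a_3+2=5$), so the ``$a_{s+1}+2$'' in the statement is a sign typo in the paper rather than something your argument should reproduce.
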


\begin{proof}
This is a direct consequence of Corollary \ref{tcontfrac} and Proposition \ref{palisplit}.
\end{proof}

For the Fibonacci and Pell branches, we obtain the following direct descriptions:

\begin{lemma}
\begin{enumerate}[(i)]
\item Consider a Markoff triple of the form $(1, F_{2n + 1}, F_{2n - 1})$ for $n \geq 3$ with weights $(-1, F_{2n - 3}, F_{2n - 5})$. Then
the LE of $F_{2n + 1}^2/(F_{2n + 1}F_{2n - 3} - 1)$ is given by
\begin{align*}
& 3, (1, 5)_{n - 2} & \text{ if $n$ is even}, \\
& (1, 5)_{n - 1} & \text{ if $n$ is odd}.
\end{align*}
\item Consider a Markoff triple of the form $(P_{2n - 1}, P_{2n + 1}, 2)$ for $n \geq 2$ with weights $(Q_{2n - 3}, Q_{2n - 1}, 1)$. Then
the LE of $P_{2n + 1}^2/(P_{2n + 1}Q_{2n - 1} - 1)$ is given by
\begin{align*}
& 6, (4, 8)_{n - 2}, 3 & \text{ if $n$ is even}, \\
& (4, 8)_{n - 1}, 3 & \text{ if $n$ is odd}.
\end{align*}
\end{enumerate}
\end{lemma}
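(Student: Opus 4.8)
The plan is to obtain each length encoding by feeding the explicit continued fraction expansions of Proposition \ref{fibpellcontfrac} into the extraction rule of Theorem \ref{LEProposition}; no further ingredient is needed. Recall that Theorem \ref{LEProposition} states: if $g^2/(gw_g-1)=[a_1,\dots,a_{2s}]$ with $a_1,a_{2s}>1$, then the LE is $a_s,a_{s-1},\dots,a_2,a_1-1$ for $s$ even and $a_s-2,a_{s-1},\dots,a_2,a_1-1$ for $s$ odd. So for each branch I only have to unfold the $\overset{k}{\curvearrowright}$-notation into an explicit word $a_1,\dots,a_{2s}$, identify the middle entry $a_s$ and the parity of $s$, and simplify. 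The alternation pattern $1,5,1,5,\dots$ (resp.\ $4,8,4,8,\dots$) makes this transparent once the word is written out.

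For (i): I would substitute Proposition \ref{fibpellcontfrac}(i), namely $\tfrac{F_{2n+1}^2}{F_{2n+1}F_{2n-3}-1}=[3\overset{n}{\curvearrowright}6,(1,5)_{2n-5},6]$ (this is legitimate since $w_g=F_{2n-3}<g-w_g$ by Lemma \ref{tcofactorlemmaf}(iii), so Theorem \ref{LEProposition} applies). Unfolding the insertion at position $n$ into the length-$(2n-3)$ word $6,(1,5)_{2n-5},6$ produces a word of length $2s=2n-2$, so $s=n-1$, with $a_1=a_{2s}=6$, with $a_{s+1}=a_n=3$, and with $a_j$ alternating between $1$ and $5$ for $2\le j\le s$ (equal to $1$ exactly when $j$ is even); in particular $a_s=1$ when $s$ is even and $a_s=5$ when $s$ is odd. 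Theorem \ref{LEProposition} then yields, for $n$ odd (hence $s$ even), $a_s,\dots,a_2,a_1-1=1,5,\dots,1,5=(1,5)_{n-1}$, and for $n$ even (hence $s$ odd), $a_s-2,a_{s-1},\dots,a_2,a_1-1=3,1,5,\dots,1,5=3,(1,5)_{n-2}$. The short cases $n=3$ ($[6,1,3,6]\mapsto 1,5$) and $n=4$ ($[6,1,5,3,1,6]\mapsto 3,1,5$) I would check directly.

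For (ii): Since the Lemma is phrased in terms of $Q_{2n-1}=g-w_g$ while Proposition \ref{fibpellcontfrac}(ii) is phrased in terms of $S_{2n-1}=w_g$ (Lemma \ref{fibonaccipellweights}(ii)), I would first invoke the remark following the definition of the LE: the minimal resolutions of $\tfrac1{g^2}(1,gw_g-1)$ and $\tfrac1{g^2}(1,g(g-w_g)-1)$ are mirror-image chains and so have the same LE, whence the LE of $P_{2n+1}^2/(P_{2n+1}Q_{2n-1}-1)$ equals the LE of $P_{2n+1}^2/(P_{2n+1}S_{2n-1}-1)$. By Proposition \ref{fibpellcontfrac}(ii) the latter is $[6\overset{n}{\curvearrowright}(4,8)_{2n-1}]$; unfolding the insertion at position $n$ into the length-$(2n-1)$ word $(4,8)_{2n-1}$ gives a word of length $2s=2n$, so $s=n$, with $a_1=a_{2s}=4$, with $a_j$ alternating between $4$ and $8$ away from the middle, and with $a_s=a_n=6$. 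Theorem \ref{LEProposition} then yields, for $n$ even, $a_s,\dots,a_2,a_1-1=6,4,8,\dots,4,8,3=6,(4,8)_{n-2},3$, and for $n$ odd, $a_s-2,a_{s-1},\dots,a_2,a_1-1=4,8,\dots,4,8,3=(4,8)_{n-1},3$; the cases $n=2$ ($[4,6,8,4]\mapsto 6,3$) and $n=3$ ($[4,8,6,4,8,4]\mapsto 4,8,3$) are again immediate.

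The only delicate point — and hence what I regard as the (mild) main obstacle — is the index bookkeeping: one must check that the insertion point $k=n$ really lands at position $s+1$ of the resulting word in case (i) and at position $s$ in case (ii), that the alternating blocks on either side of the inserted entry line up as stated, and one must treat separately the degenerate short cases where $(1,5)_{n-2}$ or $(4,8)_{n-2}$ is the empty word; in part (ii) one also needs the palindrome-invariance of the LE in order to pass between $Q_{2n-1}$ and $S_{2n-1}$. Beyond this there is no content not already contained in Proposition \ref{fibpellcontfrac} and Theorem \ref{LEProposition}.
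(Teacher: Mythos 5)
Your proof is correct and follows exactly the route the paper intends (the paper states this lemma without proof, as an immediate consequence of Proposition \ref{fibpellcontfrac} and Theorem \ref{LEProposition}): you unfold the $\overset{k}{\curvearrowright}$-words, apply the extraction rule according to the parity of $s$, and correctly use the palindrome-invariance of the LE to pass between $Q_{2n-1}=g-w_g$ and $S_{2n-1}=w_g$ in the Pell case. The index bookkeeping you carry out ($s=n-1$ with $a_{s+1}=3$ in the Fibonacci branch, $s=n$ with $a_s=6$ in the Pell branch) checks out against Figures \ref{quadraticcfes} and \ref{les}.
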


Figure \ref{les} shows the LEs for the first four levels of the Markov trees. The following Corollary shows that the
LEs at least restore some of the symmetry that we have observed for Frobenius' description of continued fractions
in Section \ref{frobenius}.

\begin{corollary}
Similar as in \ref{complement}, we denote $(g_{\mu/\nu}, w_{g_{\mu/\nu}})$ and $(g_{\nu/\mu}, w_{g_{\nu/\mu}})$
two Markoff numbers $> 5$ that sit at opposite positions in the Markoff tree together with their T-weights, and let
$[a_1, \dots, a_s]$ and $[b_1, \dots, b_t]$ be their corresponding LEs. Then:
$$
s = t \quad \text{ and } \quad [a_1 + b_1, \dots a_s + b_s] = [7, 9, \dots, 9, 8].
$$
\end{corollary}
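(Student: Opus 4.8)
The plan is to reduce everything to a single relation between the continued fraction expansions of $g^2/(gw_g-1)$ and $g^{*2}/(g^*w_{g^*}-1)$, where $g=g_{\mu/\nu}$, $g^*=g_{\nu/\mu}$, and to prove that relation by induction over the Markoff tree using Proposition \ref{juxtapose}. Write $g^2/(gw_g-1)=[a_1,\dots,a_{2s}]$ with $a_1,a_{2s}>1$. Since $w_g<g-w_g$ by Lemma \ref{tcofactorlemmafiii} and $g>5$ forces $s\geq2$, Corollary \ref{tcontfrac} identifies the LE of $g$ with the datum $g/w_g=[a_s+1,a_{s-1},\dots,a_2,a_1+1]$, and Proposition \ref{palisplit} says $[a_1,\dots,a_{2s}]$ is the almost-palindrome obtained by doubling $g/w_g=[c_1,\dots,c_s]$, so $a_i=a_{2s+1-i}$ for $i\neq s,s+1$. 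Unwinding Theorem \ref{LEProposition} in both parities of $s$ one finds that the LE of $g$ is $[c_s-1,\,c_{s-1},\dots,c_2,\,c_1-1]$. Thus the statement will follow from the \textbf{Claim}: $g^{*2}/(g^*w_{g^*}-1)=[\,10-a_{2s},\,9-a_{2s-1},\,\dots,\,9-a_2,\,10-a_1\,]$, equivalently $g^*/w_{g^*}=[\,10-c_1,\,9-c_2,\dots,9-c_{s-1},\,9-c_s\,]$. Granting the Claim, both expansions have length $2s$ (so $s=t$), and plugging $g^*/w_{g^*}$ into the LE formula $[c^*_s-1,\dots,c^*_1-1]$ gives $b_1=8-c_s$, $b_i=9-c_{s+1-i}$ for $1<i<s$, $b_s=9-c_1$, whence $a_1+b_1=(c_s-1)+(8-c_s)=7$, $a_i+b_i=c_{s+1-i}+(9-c_{s+1-i})=9$ for $1<i<s$, and $a_s+b_s=(c_1-1)+(9-c_1)=8$, i.e. $[a_1+b_1,\dots,a_s+b_s]=[7,9,\dots,9,8]$.

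I would prove the Claim by induction on the depth in the Stern--Brocot tree; it is symmetric in $(g,g^*)$ because reversing and replacing each entry $x$ by $9-x$ (resp. $10-x$ at the two ends) is an involution. The mirror reflection sends the triple $(e,g,f)$ to $(f^*,g^*,e^*)$, swapping the roles of $e$ and $f$, and when $e>1,f>2$ one has $e^*,f^*\geq5$, so Proposition \ref{juxtapose} applies to both triples. It gives $g^2/(gw_g-1)=[a'_{2s'},\dots,a'_1,8,1,b'_{2t'}-1,b'_{2t'-1},\dots,b'_1]$ with $[a'_i]=e^2/(ew_e-1)$, $[b'_i]=f^2/(fw_f-1)$, and $g^{*2}/(g^*w_{g^*}-1)=[\beta_{2t'},\dots,\beta_1,8,1,\alpha_{2s'}-1,\alpha_{2s'-1},\dots,\alpha_1]$ with $[\alpha_i]=e^{*2}/(e^*w_{e^*}-1)$, $[\beta_i]=f^{*2}/(f^*w_{f^*}-1)$; substituting the inductive identities $[\alpha_i]=[10-a'_{2s'},9-a'_{2s'-1},\dots,10-a'_1]$ and $[\beta_i]=[10-b'_{2t'},9-b'_{2t'-1},\dots,10-b'_1]$ and comparing with the reverse-and-complement of the expression for $g$ matches it entry by entry. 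The mechanism making this close is that $g^2/(gw_g-1)$ carries the expansion of $f$ reversed (with one end decremented), which lines up exactly with the expansion of $f^*$ appearing reversed at the front of $g^{*2}/(g^*w_{g^*}-1)$: the two reversals cancel and the $-1$ is absorbed into the endpoint shift $9\mapsto10$. The base of the induction is the union of the Fibonacci and Pell branches (the triples with $e=1$ or $f=2$, which are precisely each other's mirror images): there the expansions are written out in Proposition \ref{fibpellcontfrac} and Corollary \ref{fibpellmutcontfrac}, and the Claim is checked directly from those formulas (e.g. $6=10-4$, $5=9-4$, $1=9-8$, $3=9-6$, with a shift of indices between the two branches).

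The conceptual content is thus modest; the real work is bookkeeping. One has to pin down the mirror correspondence between Fibonacci-branch and Pell-branch triples by tree position (the indices do not match naively), track the parity of $s$ — which selects the case of Theorem \ref{LEProposition} and hence how the ends of the LE relate to the $a_i$ — and handle the almost-palindromic middle of $g^2/(gw_g-1)$ carefully, since the reversal in the Claim transposes the two central entries, which must be reconciled with the $\mp1$ of Proposition \ref{palisplit}. I expect this last point to be the only genuine pitfall. The equality $s=t$ by itself is cheaper: the length recursion $m_{g_{\mu/\nu}}=m_{g_{\mu_1/\nu_1}}+m_{g_{\mu_2/\nu_2}}+1$ coming from Proposition \ref{juxtapose} and Theorem \ref{LEProposition} yields $m_{g_{\mu/\nu}}=\mu+\nu-1$, which is manifestly symmetric under $\mu/\nu\leftrightarrow\nu/\mu$.
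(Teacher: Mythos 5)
The paper states this corollary without proof, so there is no official argument to compare against; your proposal supplies a correct one, and it is the argument the paper's machinery is evidently designed to deliver. The reduction --- via Corollary \ref{tcontfrac}, Proposition \ref{palisplit} and Theorem \ref{LEProposition} --- to the single Claim that $g^{*2}/(g^*w_{g^*}-1)$ is the reverse of $g^{2}/(gw_g-1)$ with each entry replaced by $9$ minus itself ($10$ minus itself at the two ends) is exactly the ``periods pointwise add up to $9$'' symmetry the paper observes after Figure \ref{quadraticfig2}, and your inductive step does close: applying Proposition \ref{juxtapose} to $(e,g,f)$ and to its mirror $(f^*,g^*,e^*)$, the reverse-and-complement of $[a'_{2s'},\dots,a'_1,8,1,b'_{2t'}-1,\dots,b'_1]$ equals $[10-b'_1,9-b'_2,\dots,9-b'_{2t'-1},10-b'_{2t'},8,1,9-a'_1,\dots,9-a'_{2s'-1},10-a'_{2s'}]$, which matches $[\beta_{2t'},\dots,\beta_1,8,1,\alpha_{2s'}-1,\dots,\alpha_1]$ term by term under the inductive hypothesis (the $-1$ on $b'_{2t'}$ converts a $9-$ into a $10-$, and the $-1$ on $\alpha_{2s'}$ converts $10-a'_1$ into $9-a'_1$, as you indicate). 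The pitfall you flag about the two central entries also resolves itself: reversal and complementation turn the pair $c_s\mp1,\,c_s\pm1$ into $(9-c_s)\mp1,\,(9-c_s)\pm1$, which is again of the shape Proposition \ref{palisplit} prescribes for the same parity of $s$, so $c^*_s=9-c_s$ and the LE sums come out as $7,9,\dots,9,8$. What remains is only the base-case bookkeeping you already identify (position $L^k$ mirrors to $R^k$, so the Fibonacci index $n=k+2$ must be matched against the Pell index $n=k+1$ in Proposition \ref{fibpellcontfrac}), together with the observation that $e=5$ is self-mirror with $[6,4]$ fixed by reverse-complement; with that, the proof is complete.
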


To conclude, we have now a look at fractions $w_g / g$ and their associated Cantor set. As in Section \ref{cantor}
we denote $p$ a finite path and $(e, g, f)$ the Markoff triple corresponding to that path. Denote $[a_1, \dots, a_{2s}]$
the continued fraction expansion of $g^2 / (gw_g - 1)$. Then by the mutation rules and induction we get for the
paths $p_1 = pL\bar{R}$ and $p_2 = pR\bar{L}$:
\begin{align*}
\mathfrak{l}_{p_1} & = [\overline{a_1, \dots, a_{2s - 1}, a_{2s} - 1, 1, 8}], \\
\mathfrak{l}_{p_2} & = 1 + [\overline{a_{2s} - 1, a_{2s - 1}, \dots, a_1, 8, 1}],
\end{align*} 
where the $\mathfrak{l}_i$ denote the limits of the fractions $g / w_g$ along the paths $p_i$.
By direct computation we obtain:
\begin{align*}
	\mathfrak{l}_{p_1} & = \frac{1}{2} \frac{9g-2w_g+3\sqrt{9g^2-4}}{9w_g-v_g}, \\
	\mathfrak{l}_{p_2} & = \frac{1}{2} \frac{9g + 2 w_g + 3 \sqrt{9g^2-4}}{9w_g + v_g}.
\end{align*} 
Figure \ref{quadraticfig2} shows the purely periodic values $\mathfrak{l}_{p_1}$ and
$\mathfrak{l}_{p_2} - 1$ together with their periods for the first four levels of the Markoff
tree. We may compare this spectrum that of Section \ref{cantor}, in particular Figure \ref{quadraticfig1}
and observe the symmetry between opposite positions of the tree such that the periods pointwise add up to 9.

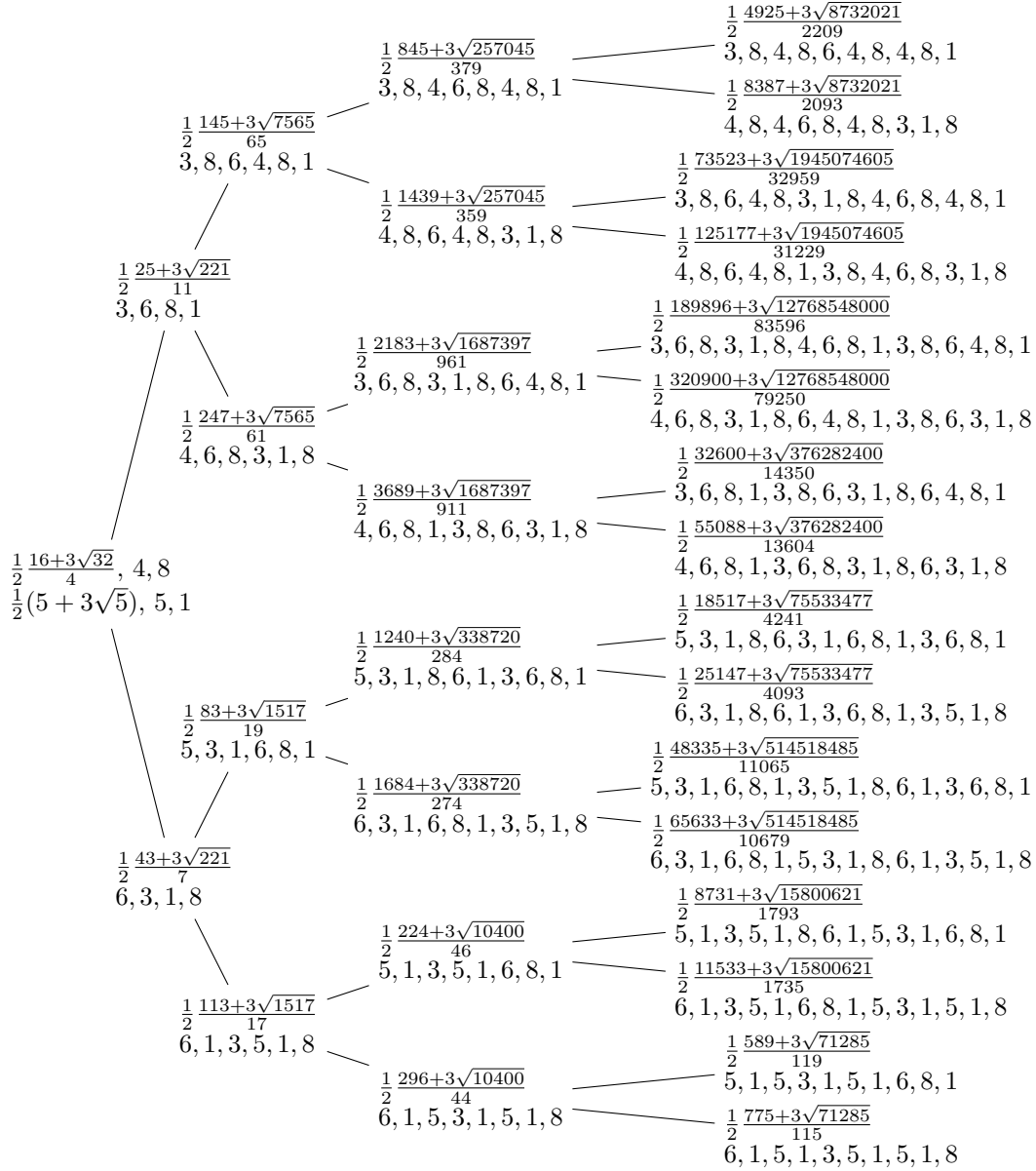
\begin{figure}[ht]
\tikzstyle{level 1}=[level distance=1cm, sibling distance=8cm]
\tikzstyle{level 2}=[level distance=1cm, sibling distance=4cm]
\tikzstyle{level 3}=[level distance=3cm, sibling distance=2cm]
\tikzstyle{level 4}=[level distance=5cm, sibling distance=1cm]
\begin{tikzpicture}[grow=right, sloped]
  \node[align=left] {$\frac{1}{2}\frac{16+3\sqrt{32}}{4}$, $4, 8$\\ $\frac{1}{2}(5+3\sqrt{5})$, $5, 1$}
    child {node[align=left]{$\frac{1}{2}\frac{43+3\sqrt{221}}{7}$\\ $6, 3, 1, 8$}
      child {node[align=left] {$\frac{1}{2}\frac{113+3\sqrt{1517}}{17}$\\ $6, 1, 3, 5, 1, 8$}
        child {node[align=left] {$\frac{1}{2}\frac{296+3\sqrt{10400}}{44}$\\ $6, 1, 5, 3, 1, 5, 1, 8$}
           child {node[align=left] {$\frac{1}{2}\frac{775+3\sqrt{71285}}{115}$\\ $6, 1, 5, 1, 3, 5, 1, 5, 1, 8$}}
           child {node[align=left] {$\frac{1}{2}\frac{589+3\sqrt{71285}}{119}$\\ $5, 1, 5, 3, 1, 5, 1, 6, 8, 1$}}
        }
        child {node[align=left] { $\frac{1}{2}\frac{224+3\sqrt{10400}}{46}$\\ $5, 1, 3, 5, 1, 6, 8, 1$}
           child {node[align=left] {$\frac{1}{2}\frac{11533+3\sqrt{15800621}}{1735}$\\ $6, 1, 3, 5, 1, 6, 8, 1, 5, 3, 1, 5, 1, 8$}}
           child {node[align=left] {$\frac{1}{2}\frac{8731+3\sqrt{15800621}}{1793}$\\ $5, 1, 3, 5, 1, 8, 6, 1, 5, 3, 1, 6, 8, 1$}}
        }
      }
      child {node[align=left] {$\frac{1}{2}\frac{83+3\sqrt{1517}}{19}$\\ $5, 3, 1, 6, 8, 1$}
        child {node[align=left] { $\frac{1}{2}\frac{1684+3\sqrt{338720}}{274}$\\ $6, 3, 1, 6, 8, 1, 3, 5, 1, 8$}
         child {node[align=left] { $\frac{1}{2}\frac{65633+3\sqrt{514518485}}{10679}$\\ $6, 3, 1, 6, 8, 1, 5, 3, 1, 8, 6, 1, 3, 5, 1, 8$}}
         child {node[align=left] {$\frac{1}{2}\frac{48335+3\sqrt{514518485}}{11065}$\\ $5, 3, 1, 6, 8, 1, 3, 5, 1, 8, 6, 1, 3, 6, 8, 1$}}
        }
        child {node[align=left] {$\frac{1}{2}\frac{1240+3\sqrt{338720}}{284}$\\ $5, 3, 1, 8, 6, 1, 3, 6, 8, 1$}
         child {node[align=left] { $\frac{1}{2}\frac{25147+3\sqrt{75533477}}{4093}$\\ $6, 3, 1, 8, 6, 1, 3, 6, 8, 1, 3, 5, 1, 8$}}
         child {node[align=left] {$\frac{1}{2}\frac{18517+3\sqrt{75533477}}{4241}$\\ $5, 3, 1, 8, 6, 3, 1, 6, 8, 1, 3, 6, 8, 1$}}
        }
      }
    }
    child {node[align=left] {$\frac{1}{2}\frac{25+3\sqrt{221}}{11}$ \\ $3, 6, 8, 1$}
      child {node[align=left] {$\frac{1}{2}\frac{247+3\sqrt{7565}}{61}$ \\ $4, 6, 8, 3, 1, 8$}
        child {node[align=left] { $\frac{1}{2}\frac{3689+3\sqrt{1687397}}{911}$\\ $4, 6, 8, 1, 3, 8, 6, 3, 1, 8$}
         child {node[align=left] { $\frac{1}{2}\frac{55088+3\sqrt{376282400}}{13604}$\\ $4, 6, 8, 1, 3, 6, 8, 3, 1, 8, 6, 3, 1, 8$}}
         child {node[align=left] {$\frac{1}{2}\frac{32600+3\sqrt{376282400}}{14350}$\\ $3, 6, 8, 1, 3, 8, 6, 3, 1, 8, 6, 4, 8, 1$}}
        }
        child {node[align=left] {$\frac{1}{2}\frac{2183+3\sqrt{1687397}}{961}$\\ $3, 6, 8, 3, 1, 8, 6, 4, 8, 1$}
         child {node[align=left] { $\frac{1}{2}\frac{320900+3\sqrt{12768548000}}{79250}$\\ $4, 6, 8, 3, 1, 8, 6, 4, 8, 1, 3, 8, 6, 3, 1, 8$}}
         child {node[align=left] {$\frac{1}{2}\frac{189896+3\sqrt{12768548000}}{83596}$\\ $3, 6, 8, 3, 1, 8, 4, 6, 8, 1, 3, 8, 6, 4, 8, 1$}}
        }
      }
      child {node[align=left] {$\frac{1}{2}\frac{145+3\sqrt{7565}}{65}$ \\ $3, 8, 6, 4, 8, 1$}
        child {node[align=left] {$\frac{1}{2}\frac{1439+3\sqrt{257045}}{359}$\\ $4, 8, 6, 4, 8, 3, 1, 8$}
          child {node[align=left] {$\frac{1}{2}\frac{125177+3\sqrt{1945074605}}{31229}$\\ $4, 8, 6, 4, 8, 1, 3, 8, 4, 6, 8, 3, 1, 8$}}
          child {node[align=left] { $\frac{1}{2}\frac{73523+3\sqrt{1945074605}}{32959}$\\ $3, 8, 6, 4, 8, 3, 1, 8, 4, 6, 8, 4, 8, 1$}}
        }
        child {node[align=left] { $\frac{1}{2}\frac{845+3\sqrt{257045}}{379}$\\ $3, 8, 4, 6, 8, 4, 8, 1$}
          child {node[align=left] {$\frac{1}{2}\frac{8387+3\sqrt{8732021}}{2093}$\\ $4, 8, 4, 6, 8, 4, 8, 3, 1, 8$}}
          child {node[align=left] { $\frac{1}{2}\frac{4925+3\sqrt{8732021}}{2209}$\\ $3, 8, 4, 8, 6, 4, 8, 4, 8, 1$}}
        }
      }
    };
\end{tikzpicture}
\caption{The quadratic T-spectrum}\label{quadraticfig2}
\end{figure}

Proceeding as in Section 4, we set
$$
A_g := 1 /  \mathfrak{l}_{p_1}, \quad B_g := 1 / \mathfrak{l}_{p_2}, \quad I_g = [B_e, A_f], \quad J_g := (A_g, B_g).
$$
This way, obtain a Cantor set $\mathfrak{T}$ by using the intervals $I_g$, $J_g$, where
\begin{align*}
	|I_g| & = \frac{3}{ef}(\Delta_{e, f} - g), \\
	|J_g| & = \frac{3}{g}(3g - \sqrt{\Delta_g}).
\end{align*}

\begin{theorem}\label{tcantor}
$\mathfrak{T}$ is a Cantor set which is given by the complement of the open intervals
$$
\left(\frac{w_g}{g} - \frac{3}{2}\left(3 - \sqrt{9 - \frac{4}{g^2}}\right), \frac{w_g}{g} + \frac{3}{2}\left(3 - \sqrt{9 - \frac{4}{g^2}}\right) \right)
$$
within the closed interval $[(7 - 3\sqrt{5})/2, 3\sqrt{2} - 4] \approx [0.145898, 0.242641]$, where $g$ runs over the
maxi\-mal elements of all regular Markoff triples. Lebesgue measure and Hausdorff dimension of $\mathfrak{T}$ both are zero.
\end{theorem}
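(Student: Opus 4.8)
The plan is to reduce the whole statement to Theorems \ref{lebesguetheorem} and \ref{hausdorfftheorem} by exhibiting $\mathfrak{T}$ as an affine image of the Cantor set $\mathfrak{M}$ of Section \ref{cantor}. First I would rationalize the two displayed formulas for $\mathfrak{l}_{p_1}$ and $\mathfrak{l}_{p_2}$: using the defining relation $w_g^2+9=v_gg$ of the $T$-coweight one gets $(9g\mp 2w_g)^2-9(9g^2-4)=\mp\,4g(9w_g\mp v_g)$, and hence
\[
A_g=\frac{1}{\mathfrak{l}_{p_1}}=\frac{w_g}{g}-\frac{3}{2}\Bigl(3-\sqrt{9-\tfrac{4}{g^2}}\Bigr),
\qquad
B_g=\frac{1}{\mathfrak{l}_{p_2}}=\frac{w_g}{g}+\frac{3}{2}\Bigl(3-\sqrt{9-\tfrac{4}{g^2}}\Bigr).
\]
In particular $A_g<B_g$, so $J_g=(A_g,B_g)$ is exactly the open interval in the statement, centered at $w_g/g$ and of length $\tfrac{3}{g}(3g-\sqrt{\Delta_g})$; and the inequalities $B_e<A_g<B_g<A_f$ follow exactly as in Section \ref{cantor} from Lemma \ref{tcofactorlemmaf}\,(\ref{tcofactorlemmafii}). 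Consequently the finite disjoint unions $C_n:=\bigcup_i I_{g_i^n}$ decrease with $C_{n+1}=\coprod_i\bigl(I_{g_i^n}\setminus J_{g_i^n}\bigr)$, and $\mathfrak{T}=\bigcap_n C_n$ is a Cantor set contained in $I_5=[B_1,A_2]$.

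Next I would identify $\mathfrak{T}$ with $\phi(\mathfrak{M})$, where $\phi\colon\R\to\R$ is the increasing affine map $\phi(x)=3x-1$. For any regular Markoff triple with maximal element $g$, Corollary \ref{affinetransform} gives $w_g/g=3r_g/g-1=\phi(r_g/g)$. Comparing the display above with the formulas of Section \ref{cantor}, where $1/\mathfrak{l}_{p_1}=r_g/g-\tfrac12(3-\sqrt{9-4/g^2})$ and $1/\mathfrak{l}_{p_2}=r_g/g+\tfrac12(3-\sqrt{9-4/g^2})$, this shows $A_g=\phi(A_g^{\mathfrak{M}})$, $B_g=\phi(B_g^{\mathfrak{M}})$, and (as $\phi$ is increasing) $I_g=\phi(I_g^{\mathfrak{M}})$, $J_g=\phi(J_g^{\mathfrak{M}})$, where the superscript $\mathfrak{M}$ denotes the corresponding object of Section \ref{cantor}. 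Thus $\phi$ carries the construction of Section \ref{cantor} termwise onto the present one and, being a homeomorphism, commutes with $\bigcap_n$; hence $\mathfrak{T}=\phi(\mathfrak{M})$. Equivalently, along any infinite path $p$ with maximal Markoff numbers $g_n$ on level $n$ one has $w_{g_n}/g_n=\phi(r_{g_n}/g_n)$ for every $n$, and since $r_{g_n}/g_n$ converges by Proposition \ref{limitsexist} and $\phi$ is continuous, the limit of $w_{g_n}/g_n$ exists and equals $\phi$ of the limit of $r_{g_n}/g_n$; the near-root and Fibonacci/Pell branch cases are covered the same way, $w_g=3r_g-g$ holding for every maximal element $g\geq 5$.

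Finally I would conclude. As $\phi$ is an increasing affine homeomorphism of $\R$, it sends $\mathfrak{M}=[\tfrac12(3-\sqrt5),\sqrt2-1]\setminus\bigcup_g J_g^{\mathfrak{M}}$ of Theorem \ref{lebesguetheorem} onto $\mathfrak{T}=\phi(\mathfrak{M})=[\tfrac12(7-3\sqrt5),\,3\sqrt2-4]\setminus\bigcup_g J_g$, which proves the asserted description of $\mathfrak{T}$; moreover $\phi$ multiplies Lebesgue measure by $3$, so $\mathfrak{T}$ has Lebesgue measure zero by Theorem \ref{lebesguetheorem}, and $\phi$ is bi-Lipschitz, so $\dim_H\mathfrak{T}=\dim_H\mathfrak{M}=0$ by Theorem \ref{hausdorfftheorem}. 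The only computational point is the rationalization at the start, and there is no genuine obstacle: the content is simply that Corollary \ref{affinetransform} upgrades from a pointwise identity to an affine identification of the two Cantor sets. Alternatively one could re-run the proofs of Theorems \ref{lebesguetheorem} and \ref{hausdorfftheorem} verbatim from the formulas $|I_g|=\tfrac{3}{ef}(\Delta_{e,f}-g)$ and $|J_g|=\tfrac{3}{g}(3g-\sqrt{\Delta_g})$, the required inequality $|I_g|<3|J_g|$ being once more Lemma \ref{somewhatsharpbound}\,(\ref{somewhatsharpboundii}).
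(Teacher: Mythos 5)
Your proposal is correct and follows essentially the same route as the paper: the paper's proof likewise reduces everything to Theorems \ref{lebesguetheorem} and \ref{hausdorfftheorem} by observing, via Corollary \ref{affinetransform}, that $\mathfrak{T}=3\mathfrak{M}-1$. You simply spell out in more detail the rationalization of the formulas for $1/\mathfrak{l}_{p_1}$, $1/\mathfrak{l}_{p_2}$ and the invariance of measure-zero and Hausdorff dimension under an affine homeomorphism, all of which checks out.
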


\begin{proof}
It remains to show the assertions about measure and dimension. For this, we just observe that by Corollary \ref{affinetransform},
$\mathfrak{T}$ is an affine transformation
of $\mathfrak{M}$, given by $\mathfrak{T} = 3 \mathfrak{M} - 1$. Then we conclude with
Theorems \ref{lebesguetheorem} and \ref{hausdorfftheorem}.
\end{proof}

\numberwithin{equation}{section}
\renewcommand{\theequation}{\thesection-\arabic{equation}}

\appendix

\section{Continuants}\label{continuants}

In this appendix we state some general facts about continuants and continued fractions. For detailed expositions
we refer to \cite[\S 6.7]{GrahamKnuthPatashnik} and \cite[Ch I --- III]{Perron1}. Any sequence of real numbers
$a_1, \dots, a_n$ such that the continued fraction $[a_1, \dots, a_t] =
a_1 + 1/(a_2 + \cdots (a_{n - 1} + 1 / a_n) \cdots )$ exists can be written as a quotient
$$
\frac{K_n(a_1, \dots, a_n)}{K_{n - 1}(a_2, \dots, a_n)}
$$
where for any $t \geq 0$ and indeterminates $x_1, \dots, x_t$, $K_t(x_1, \dots, x_t)$ is the $t$-th {\em continuant} polynomial.
These can inductively be defined as follows:
\begin{align*}
K_0 & = 1, \\
K_1(x_1) & = x_1, \\
K_i(x_1, \dots, x_i) & = x_i K_{i - 1}(x_1, \dots, x_{i - 1}) + K_{i - 2}(x_1, \dots, x_{i - 2}) \text{ for } i > 1.
\end{align*}
For $t \leq 5$, we therefore get:
\begin{align*}
K_2(x_1, x_2) & = x_1 x_2 + 1, \\
K_3(x_1, x_2, x_3) & = x_1 x_2 x_3 + x_1 + x_3, \\
K_4(x_1, x_2, x_3, x_4) & = x_1 x_2 x_3 x_4 + x_1 x_2 + x_1 x_4 + x_3 x_4 + 1, \\
K_5(x_1, x_2, x_3, x_4, x_5) & = x_1 x_2 x_3 x_4 x_5 + x_1 x_2 x_3 + x_1 x_2 x_5 + x_1 x_4 x_5 + x_3 x_4 x_5 + x_1 + x_3 + x_5.
\end{align*}
Euler's rule states that in order to determine the monomial terms of $K_t(x_1, \dots, x_t)$, one starts with
$x_1 \cdots x_t$ and adds all possible monomials obtained from successively dividing $x_1 \cdots x_t$ by factors of
the form $x_i x_{i + 1}$. This description immediately implies that
continuants are invariant under reversing the order of their arguments:
$$
K_t(x_1, \dots, x_t) = K_t(x_t, \dots, x_1).
$$
For $s, t \geq 0$ the following factorization rule holds:
\begin{gather}
K_{s + t}(x_1, \dots, x_s, x_{s + 1}, \dots, x_{s + t})
= K_s(x_1, \dots, x_s) K_t(x_{s + 1}, \dots, x_{s + t}) \nonumber \\
+ K_{s - 1}(x_1, \dots, x_{s - 1})K_{t - 1}(x_{s + 2}, \dots, x_{s + t})\label{continuantfactorization}.
\end{gather}
For any $1 \leq k \leq t$ we moreover have Cassini's identity:
\begin{gather}
K_{s + t}(x_1, \dots, x_{s + t}) K_k(x_{s + 1}, \dots, x_{s + k})
= K_{s + k}(x_1, \dots, x_{s + k}) K_t(x_{s + 1}, \dots, x_{s + t}) \nonumber \\
+ (-1)^k K_{s - 1}(x_1, \dots, x_{s - 1})K_{t - k - 1}(x_{s + k + 2}, \dots, x_{s + t}).\label{Cassini}
\end{gather}
For the particular case $s = 1$, $k = t - 1$, Cassini's identity yields:
\begin{equation}
K_{t + 1}(x_1, \dots, x_{t + 1})K_{t - 1}(x_2, \dots, x_t) = K_t(x_1, \dots, x_t) K_t(x_2, \dots, x_{t + 1}) + (-1)^{t - 1}.\label{Cassini2}
\end{equation}

Any rational number admits a finite continued fraction expansion $[a_1, \dots, a_n]$ with integers
$a_1, \dots, a_n$, where $a_i > 0$ for $i > 1$. This representation is almost unique with the exception
that we can absorb the last element if it is equal to $1$, i.e. $[a_1, \dots, a_n, 1] = [a_1, \dots, a_{n - 1}, a_n + 1]$.
In the case $a_1 > 0$ we can use Cassini's identity
to describe the relation between reverse continued fractions: let $[a_1, \dots, a_n] = K_n(a_1, \dots, a_n)/
K_{n - 1}(a_2, \dots, a_n) = p/q$, then $p > q$ and:
\begin{equation}\label{reverseontfrac}
[a_n, \dots, a_1] = \frac{K(a_n, \dots, a_1)}{K(a_{n - 1}, \dots, a_1)} = \frac{K(a_1, \dots, a_n)}{K(a_1, \dots, a_{n - 1})}
= \frac{p}{q'},
\end{equation}
where $0 < q' < p$ such that $q q' \equiv (-1)^{n - 1} \mod p$ (see \cite[\S 11]{Perron1}). In particular, if $n$ is odd then
$q' \equiv q^{-1} \mod p$. For lighter notation we will drop the indices of the $K$'s from now on.
The following lemma gives us some control over the term $(-1)^{n - 1}$.

\begin{lemma}\label{negativecontfrac}
Let $0 < q < p$ be coprime integers. If $q > q - p$ and $p / (p - q) = [a_1, \dots, a_n]$
then $p / q = [1, a_1 - 1, a_2, \dots, a_n]$. Conversely, if $p / q = [1, a_1, \dots, a_n]$ then $q > p - q$ and
$p / (p - q) = [a_1 + 1, a_2, \dots, a_n]$.
\end{lemma}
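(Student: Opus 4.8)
The plan is to reduce both implications to the defining recursion $[c_1, c_2, \ldots, c_m] = c_1 + 1/[c_2, \ldots, c_m]$ of (regular) continued fractions, which yields at once the identities
\[
[1, c_1, c_2, \ldots, c_m] = 1 + \frac{1}{[c_1, c_2, \ldots, c_m]}, \qquad [c_1 + 1, c_2, \ldots, c_m] = [c_1, c_2, \ldots, c_m] + 1,
\]
and, whenever $c_1 \geq 2$, also $[c_1 - 1, c_2, \ldots, c_m] = [c_1, c_2, \ldots, c_m] - 1$ as an identity between \emph{legitimate} (positive-integer-entry) continued fractions. Together with the splittings $p/q = 1 + (p-q)/q$ and $p/(p-q) = 1 + q/(p-q)$, the whole statement becomes bookkeeping; the one point requiring care is to stay inside the class of legitimate continued fractions, and this is exactly what the hypothesis $q > p - q$ secures.

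First I would treat the forward direction. The hypothesis $q > p - q$ is equivalent to $q/(p-q) > 1$, hence $p/(p-q) = 1 + q/(p-q) > 2$, so in the expansion $p/(p-q) = [a_1, \ldots, a_n]$ the leading entry satisfies $a_1 \geq 2$. Therefore $[a_1 - 1, a_2, \ldots, a_n]$ is legitimate and equals $p/(p-q) - 1 = q/(p-q)$, whence
\[
[1, a_1 - 1, a_2, \ldots, a_n] = 1 + \frac{1}{[a_1 - 1, a_2, \ldots, a_n]} = 1 + \frac{p - q}{q} = \frac{p}{q}.
\]

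For the converse, write $p/q = [1, a_1, \ldots, a_n] = 1 + 1/[a_1, \ldots, a_n]$; this forces $[a_1, \ldots, a_n] = q/(p-q)$ (in particular $p - q > 0$, so the right-hand side is a genuine positive rational with genuine expansion). Excluding only the degenerate one-term case $(a_1, \ldots, a_n) = (1)$ — the unique positive-entry continued fraction with value $1$, which corresponds to $p/q = [1,1] = 2$ — one has $q/(p-q) = [a_1, \ldots, a_n] > 1$, i.e. $q > p - q$, and then
\[
\frac{p}{p-q} = 1 + \frac{q}{p-q} = 1 + [a_1, \ldots, a_n] = [a_1 + 1, a_2, \ldots, a_n].
\]
I expect the only mild obstacle to be the normalization discussion just indicated — making sure the outputs $[1, a_1 - 1, a_2, \ldots, a_n]$ and $[a_1 + 1, a_2, \ldots, a_n]$ really are admissible expansions (all entries $\geq 1$, trailing-$1$ convention respected) and that the degenerate value $1$ is handled — rather than any computation. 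As an alternative route one can run the same argument with continuants, using the reversal invariance of $K$ and the relation $K(1, a_1, \ldots, a_n) = K(a_1, \ldots, a_n) + K(a_2, \ldots, a_n)$ together with (\ref{reverseontfrac}), which sidesteps reasoning about the nested fractions directly.
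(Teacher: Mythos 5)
Your proposal is correct and follows essentially the same route as the paper's (very terse) proof, which likewise rests on the decomposition $p/q = 1 + 1/(q/(p-q))$ together with the observation that passing from $p/(p-q)$ to $q/(p-q)$ just lowers the leading partial quotient by one. Your extra care about legitimacy of the entries (that $q > p-q$ forces $a_1 \geq 2$) and the degenerate case $p/q = [1,1]$ is a welcome refinement of what the paper leaves implicit.
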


\begin{proof}
The assertions follow from
$$
\frac{p}{q} = 1 + \cfrac{1}{\frac{q}{p - q}} \quad \text{ and } \quad \left\lfloor \frac{q}{p - q}
\right\rfloor = \left\lfloor \frac{p}{p - q} \right\rfloor - 1
$$
for $q > p - q$.
\end{proof}

Assume now that we have integers $a_1, \dots, a_m$, $b_1, \dots, b_n$ corresponding to continued fractions
$$
\frac{P}{Q} = [a_1, \dots, a_m], \quad \frac{U}{V} = [b_1, \dots, b_n].
$$
By the factorization rule (\ref{continuantfactorization}), we have
\begin{align*}
K(a_1, \dots, a_m, b_1, \dots, b_n) & = K(a_1, \dots, a_m) K(b_1, \dots, b_n) + K(a_1, \dots, a_{m - 1}) K(b_2, \dots, b_n), \\
K(a_2, \dots, a_m, b_1, \dots, b_n) & = K(a_2, \dots, a_m) K(b_1, \dots, b_n) + K(a_2, \dots, a_{m - 1}) K(b_2, \dots, b_n).
\end{align*}
By the following criterion it is enough to show that these factorizations are sufficient in order to construct
the concatenation of continued fractions.

\begin{proposition}[Concatenation Rule]\label{concatenationlaw}
Let $P, Q, U, V, a_i, b_j$ be as before and denote
$$
X = P U + Q' V, \quad Y = Q U + Q'' V, 
$$
where $Q' = K(a_1, \dots, a_{m - 1})$ and $Q'' = K(a_2, \dots, a_{m - 1})$. Then:
$$
\frac{X}{Y} = [a_1, \dots, a_m, b_1, \dots, b_n].
$$
\end{proposition}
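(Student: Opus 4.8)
The plan is to observe that, once the two displayed identities in the statement are read correctly, the proposition is nothing more than the factorization rule (\ref{continuantfactorization}) combined with the definition of a continued fraction as a ratio of consecutive continuants. First I would record the identifications forced by the hypotheses: $P = K(a_1,\dots,a_m)$, $Q = K(a_2,\dots,a_m)$, $U = K(b_1,\dots,b_n)$, $V = K(b_2,\dots,b_n)$, while $Q' = K(a_1,\dots,a_{m-1})$ and $Q'' = K(a_2,\dots,a_{m-1})$ are as named in the statement. Then I would apply (\ref{continuantfactorization}) with $(s,t)=(m,n)$ to the argument blocks $x_1,\dots,x_m = a_1,\dots,a_m$ and $x_{m+1},\dots,x_{m+n} = b_1,\dots,b_n$ to get
$$
K(a_1,\dots,a_m,b_1,\dots,b_n) = K(a_1,\dots,a_m)K(b_1,\dots,b_n) + K(a_1,\dots,a_{m-1})K(b_2,\dots,b_n) = PU + Q'V = X,
$$
and apply it a second time after deleting the leading argument $a_1$, i.e. to the blocks $a_2,\dots,a_m$ and $b_1,\dots,b_n$, to get
$$
K(a_2,\dots,a_m,b_1,\dots,b_n) = K(a_2,\dots,a_m)K(b_1,\dots,b_n) + K(a_2,\dots,a_{m-1})K(b_2,\dots,b_n) = QU + Q''V = Y.
$$

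Next I would invoke the definition recalled at the start of Appendix \ref{continuants}: whenever the continued fraction of a sequence exists, $[c_1,\dots,c_N] = K_N(c_1,\dots,c_N)/K_{N-1}(c_2,\dots,c_N)$. Taking $N = m+n$ and $(c_1,\dots,c_{m+n}) = (a_1,\dots,a_m,b_1,\dots,b_n)$ and substituting the two continuant identities above gives
$$
[a_1,\dots,a_m,b_1,\dots,b_n] = \frac{K(a_1,\dots,a_m,b_1,\dots,b_n)}{K(a_2,\dots,a_m,b_1,\dots,b_n)} = \frac{X}{Y},
$$
which is exactly the assertion.

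Two remarks are worth including but require no work. First, the continued fraction on the right is read as the formal expression $a_1 + 1/(a_2 + \cdots + 1/b_n)$; the identity $X/Y$ holds as an equality of values (assuming no intermediate division by zero) and does not claim that $X$ and $Y$ are coprime — in the applications one verifies $X$ and $Y$ separately and non-cancellation is not needed. Second, the only place a slip could occur is matching the "$K_{s-1}$" and "$K_{t-1}$" terms of (\ref{continuantfactorization}) after deleting $a_1$; since deleting the first argument turns $K(a_1,\dots,a_{m-1})$ into $K(a_2,\dots,a_{m-1})$, these are precisely $Q'$ and $Q''$, so there is no real obstacle — the proof is a one-line consequence of (\ref{continuantfactorization}).
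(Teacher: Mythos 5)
Your proof is correct and essentially matches the paper's: the two factorization identities you derive from (\ref{continuantfactorization}) are exactly the ones the paper displays immediately before stating the proposition, and the paper's own proof obtains the same numerator and denominator by substituting $U/V$ as a formal last entry and clearing denominators, which is just a cosmetic variant of your computation. Your closing remark that only the ratio $X/Y$ (not coprimality of $X$ and $Y$) is claimed is also consistent with how the rule is used in the paper.
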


\begin{proof}
We write
$$
[a_1, \dots, a_m, b_1, \dots, b_n] = [a_1, \dots, a_m, \frac{U}{V}] = \frac{K(a_1, \dots, a_m, \frac{U}{V})}{K(a_2, \dots, a_m, \frac{U}{V})}
= \frac{\frac{U}{V}K(a_1, \dots, a_m) + K(a_1, \dots, a_{m - 1})}{\frac{U}{V}K(a_2, \dots, a_m) + K(a_2, \dots, a_{m - 1})}.
$$
Rearranging and comparing terms then yields the assertion.
\end{proof}

By a theorem of Lagrange, an irrational number $\xi$ is a quadratic irrational if and only if its corresponding infinite continued
fraction expansion becomes periodic, i.e. we have $\xi = a + b\sqrt{D} \in \Q[\sqrt{D}]$ for some non-square integer
$D$ if and only if
$$
\xi = [a_1, \dots, a_k, \overline{a_{k + 1}, \dots, a_n}]
$$
for integers $a_1, \dots, a_n$. A quadratic irrational $\xi$ has a {\em purely periodic} continued fraction expansion
$\xi = [\overline{a_1, \dots, a_n}]$ if and only if it is {\em reduced}, i.e. if $\xi > 1$ and $-1 < \bar{\xi} < 0$. Here,
$\bar{\xi} = a - b \sqrt{D}$ is the conjugate of $\xi$. From now we assume that $\xi = [\overline{a_1, \dots, a_n}]$ is a
reduced quadratic irrational number. Then, following \cite[\S 22]{Perron1}, we abbreviate for $1 \leq i \leq n$:
\begin{align*}
A_i = K(a_1, \dots, a_i), \quad B_i = K(a_2, \dots, a_i)
\end{align*}
and get:
\begin{align*}
\xi = [a_1, \dots, a_n, \xi] = \frac{K(a_1, \dots, a_n, \xi)}{K(a_2, \dots, a_n, \xi)} =
\frac{\xi A_n + A_{n - 1}}{\xi B_n + B_{n - 1}}.
\end{align*}
Solving the resulting quadratic equation $B_n \xi^2 + (B_{n - 1} - A_n) \xi - A_{n - 1} = 0$ yields:
\begin{equation}\label{makeperiodic}
\xi = \frac{1}{2B_n} \left(A_n - B_{n -1} + \sqrt{(A_n - B_{n - 1})^2 + 4 A_{n - 1} B_n}\right).
\end{equation}

Consider now a cyclic shift of $\xi$, i.e. let
$$
\sigma = [\overline{a_{k + 1}, \dots, a_n, a_1, \dots, a_k}].
$$
Then by the relation $\xi = [a_1, \dots, a_k, \sigma]$ and the defining relations for the continuants we get:
$$
\xi = \frac{K(a_1, \dots, a_k, \sigma)}{K(a_2, \dots, a_k, \sigma)} = \frac{\sigma A_k + A_{k - 1}}{\sigma B_k  + B_{k - 1}}
$$
and by resolving for $\sigma$ we obtain:
\begin{equation}\label{periodshift1}
\sigma = [\overline{a_{k + 1}, \dots, a_n, a_1, \dots, a_k}] = \frac{A_{k - 1} - \xi B_{k - 1}}{\xi B_k - A_k}.
\end{equation}
Alternatively, we can write $\sigma = [a_{k + 1}, \dots, a_n, \xi]$ and therefore obtain
\begin{equation}\label{periodshift2}
\sigma = [\overline{a_{k + 1}, \dots, a_n, a_1, \dots, a_k}] = \frac{K(a_{k + 1}, \dots, a_n, \xi)}{K(a_{k + 2}, \dots, a_n, \xi)}.
\end{equation}

The reverse period of $\xi$ corresponds to $\eta := -1/\bar{\xi} = [\overline{a_n, \dots, a_1}]$ (see \cite[\S 23]{Perron1}).
Then for $\tau = [\overline{a_k, \dots, a_1, a_n, \dots, a_{k + 1}}]$ we obtain with (\ref{periodshift1}):
\begin{equation}\label{reverseperiodshift}
\tau = [\overline{a_k, \dots, a_1, a_n, \dots, a_{k + 1}}] = \frac{K(a_{k + 2}, \dots, a_n) -
\eta K(a_{k + 2}, \dots, a_{n - 1})}{\eta K(a_{k + 1}, \dots a_{n - 1}) - K(a_{k + 1}, \dots, a_n)} =
-\frac{K(a_{k + 2}, \dots, a_n, \bar{\xi})}{K(a_{k + 1}, \dots, a_n, \bar{\xi})}.
\end{equation}

\section{T-continuants}\label{tcontinuants}

We introduce a variant of continuants that are related to Hirzebruch-Jung continued fractions of
the type of T-singularities of Section \ref{tsingsection}. We start by defining the following sets
\begin{align*}
J_0 & = \{\emptyset\}, \\
J_1 & = \{\emptyset\}, \\
J_m & = J_{m - 2} \cup \{X \cup \{m - 1\} \mid X \in J_{m - 1}\} \text{ for } m > 1, \\
I_m & = J_m \sqcup J_{m + 1} \text{ for } m \geq 0.
\end{align*}
By construction, the cardinality of $J_m$ equals
the ${m + 1}$-st Fibonacci number $F_{m + 1}$ and the cardinality of $I_m$ is $F_{m + 3}$.
Note that $J_m$ and $J_{m + 1}$ are are almost disjoint sets, having only the empty set in common;
the disjoint union $I_m = J_m \sqcup J_{m + 1}$ therefore contains $\emptyset$ twice.

The sets $J_m$ and $I_m$ can also be nicely described in a non-inductive way. For this, we consider
$1 < \cdots < m$ as an ordered set and a subsequence
$i_1 < i_2 < \cdots < i_k$ of $1 < \cdots < m$ is called {\em even-odd} if its elements are alternatingly even
or odd (i.e. $0$ or $1 \mod$ 2; the smallest element can be either even or odd --- we do not mean
to distinguish between
even-odd and odd-even sequences). More precisely, $i_j \equiv i_{j + 2} \mod 2$ for every $1 \leq j < k - 1$
and, if $k >1$, then $i_1 \not \equiv i_2 \mod 2$.

\begin{lemma}\label{evenoddlemma}
The non-empty sets in $J_{m + 1}$ and $I_m$ are even-odd subsequences of $1, \dots, m$, where:
\begin{enumerate}[(i)]
\item\label{evenoddlemmai}
$i_1 < \cdots < i_k$ is in $J_{m + 1}$ iff $i_1 \equiv k + m + 1 \mod 2$.
\item\label{evenoddlemmaii} 
$I_m$ contains all even-odd subsequences of $\{1, \dots, m\}$.
\end{enumerate}
\end{lemma}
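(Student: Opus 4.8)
The plan is to prove (i) by strong induction on $m$ using the recursion $J_{m+1} = J_{m-1} \cup \{X \cup \{m\} \mid X \in J_m\}$, and then to read off (ii) from (i) together with a parity observation. The single combinatorial fact I will lean on repeatedly is \emph{parity propagation}: in an even-odd subsequence $i_1 < \cdots < i_k$ one has $i_j \equiv i_1 + (j-1) \pmod 2$, hence in particular $i_k \equiv i_1 + k - 1 \pmod 2$. I will also use at the outset the evident structural facts (immediate from the recursion) that every element of $J_{m+1}$ is a subset of $\{1,\dots,m\}$, that $J_{m-1}$ consists of subsets of $\{1,\dots,m-2\}$, and that the sets $X\cup\{m\}$ all contain $m$.

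For the inductive step of (i), I would split both inclusions according to whether the top element of the set equals $m$. In the forward direction, a nonempty element of $J_{m-1}$ is by induction an even-odd subsequence of $\{1,\dots,m-2\}\subseteq\{1,\dots,m\}$ with $i_1 \equiv k + (m-2) + 1 \equiv k + m + 1 \pmod 2$, as wanted; and for an element $X\cup\{m\}$ with $X=(j_1<\cdots<j_l)\in J_m$ nonempty, induction gives $j_1 \equiv l + (m-1) + 1 = l + m \pmod 2$, so parity propagation forces $j_l \equiv m + 1 \pmod 2$, whence appending $m$ preserves the even-odd property, and $S = X\cup\{m\}$ has $k = l+1$ terms with $i_1 = j_1 \equiv l + m \equiv (l+1) + m + 1 = k + m + 1 \pmod 2$ (the case $X=\emptyset$, giving $S=\{m\}$, is checked directly). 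In the backward direction, given an even-odd $S=(i_1<\cdots<i_k)\subseteq\{1,\dots,m\}$ with $i_1 \equiv k + m + 1 \pmod 2$, parity propagation yields $i_k \equiv m \pmod 2$; thus either $i_k \le m-2$, in which case $S$ is an even-odd subsequence of $\{1,\dots,m-2\}$ with the right congruence and so $S\in J_{m-1}\subseteq J_{m+1}$ by induction, or $i_k = m$, in which case $S\setminus\{m\}$ (if nonempty) lies in $J_m$ by induction and $S=(S\setminus\{m\})\cup\{m\}\in J_{m+1}$. The base cases $m=0$ ($J_1=\{\emptyset\}$) and $m=1$ ($J_2=\{\emptyset,\{1\}\}$) are checked by inspection.

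For (ii), by (i) applied with $m$ and with $m-1$, every element of $I_m = J_m \sqcup J_{m+1}$ is an even-odd subsequence of $\{1,\dots,m\}$, so it remains to show every such subsequence occurs in $I_m$. Given a nonempty even-odd $S=(i_1<\cdots<i_k)$ of $\{1,\dots,m\}$, the residue $i_1 \bmod 2$ equals either $k+m+1$ or $k+m$: in the first case (i) puts $S$ in $J_{m+1}$, and in the second case parity propagation gives $i_k \equiv i_1 + k - 1 \equiv m+1 \pmod 2$, so $m\notin S$ and $S$ is a subsequence of $\{1,\dots,m-1\}$ with $i_1 \equiv k + (m-1) + 1 \pmod 2$, whence $S\in J_m$ by (i) applied with $m-1$. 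Either way $S\in I_m$.

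The place I would be most careful is the step $i_k \equiv m \pmod 2$ in the backward inclusion of (i) (and its mirror in (ii)): it is exactly what rules out a set containing $m-1$ but not $m$, which could never belong to $J_{m+1}$ since $J_{m-1}$ only sees $\{1,\dots,m-2\}$ while the other half of $J_{m+1}$ always contains $m$. Everything else — the two base cases and the congruence bookkeeping ($-2\equiv 0$, $k+m-1\equiv k+m+1$, etc.) — is routine.
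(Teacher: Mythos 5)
Your proof is correct and follows essentially the same route as the paper's: induction on $m$ via the recursion $J_{m+1} = J_{m-1} \cup \{X \cup \{m\} \mid X \in J_m\}$, with the parity-propagation identity $i_j \equiv i_1 + j - 1 \pmod 2$ doing the work in both directions, and part (ii) read off from the two possible residues of $i_1 \bmod 2$. If anything, your handling of the converse inclusion in (i) --- where $i_k \equiv m \pmod 2$ rules out $i_k = m-1$ and forces $i_k = m$ once the set is not contained in $\{1, \dots, m-2\}$ --- is spelled out more carefully than the paper's rather terse version of the same step.
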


\begin{proof}
(\ref{evenoddlemmai})
We do induction by $m$. The statement is obviously true for $m = 0$. For $m > 0$, we have by definition
$J_{m + 1} = J_{m - 1} \cup \{X \cup \{m\} \mid X \in J_m)\}$. Consider any even-odd subsequence
$i_1 < \cdots < i_k$ of $\{1, \dots, m\}$. If the sequence is contained in $\{1, \dots, m - 2\}$ then
it is in $J_{m - 1}$ (and therefore in $J_{m + 1}$) iff $i_1 \equiv k + m - 1 \equiv k + m + 1 \mod 2$.
If the sequence is contained in $J_m$, then it follows immediately from the induction hypothesis that
$i_1 \equiv k + m \mod 2$ and $i_k \not \equiv m \mod 2$, hence $i_1 < \cdots < i_k < m$ is even-odd
and $i_1 \equiv (k + 1) + m + 1 \mod 2$. Hence, any subset of $J_{m + 1}$ is an even-odd sequence and satisfies
$i_1 \equiv k + m + 1 \mod 2$. Now, if $i_1 < \cdots < i_k$ has these properties and is not contained in
$J_{m - 1}$ then, again, we use $i_k \not \equiv m \mod 2$ and therefore $i_k = m$. Invoking again the
induction hypothesis, we get that $i_1 < \cdots < i_{k - 1}$ is contained in $J_m$ and the assertion follows
for $m$.

(\ref{evenoddlemmaii})
Follows immediately from (\ref{evenoddlemmai}).
\end{proof}

\begin{definition}
Let $m \geq 0$ and $x_1, \dots, x_m$ indeterminates. Then we define the $m$-th {\em T-continuant} $T_m$ and the $m$-th
{\em T-semicontinuant} $S_m$ as:
$$
T_m = T_m(x_1, \dots, x_m) = \sum_{I \in I_m} x_I, \quad S_m = S_m(x_1, \dots, x_m) = \sum_{J \in J_m} x_J,
$$
where for any subset $I$ of $\{1, \dots, m\}$ we write $x_I$ for the monomial $\prod_{i \in I} x_i$ and in particular,
$x_\emptyset = 1$.
\end{definition}

Here are the first few polynomials $T_m$ and $S_m$:
\begin{align*}
S_0 & = 1, & T_0 & = 2, \\
S_1 & = 1, & T_1 & = 2 + x_1, \\
S_2 & = 1 + x_1, & T_2 & = 2 + x_1 + x_2 + x_1x_2, \\
S_3 & = 1 + x_2 + x_1x_2, & T_3 & = 2 + x_1 + x_2 + x_3 + x_1x_2 + x_2x_3 + x_1x_2x_3,
\end{align*}
\begin{align*}
S_4 = & \ 1 + x_1 + x_3 + x_2x_3 + x_1x_2x_3, \\
T_4 = & \ 2 + x_1 + x_2 + x_3 + x_4 + x_1x_2 + x_1x_4 + x_2x_3 + x_3x_4 + x_1x_2x_3 + x_2x_3x_4 + x_1x_2x_3x_4, \\
S_5 = & \ 1 + x_2 + x_4 + x_1x_2 + x_1x_4 + x_3x_4 + x_2x_3x_4 + x_1x_2x_3x_4, \\
T_5 = & \ 2 + x_1 + x_2 + x_3 + x_4 + x_5 + x_1x_2 + x_1x_4 + x_2x_3 + x_2x_5 + x_3x_4 + x_4x_5, \\
& + x_1x_2x_3 + x_1x_2x_5 + x_1x_4x_5 + x_2x_3x_4 
+ x_3x_4x_5 + x_1x_2x_3x_4 + x_2x_3x_4x_5 + x_1x_2x_3x_4x_5.
\end{align*}

We have seen in Corollary \ref{tcontfrac} that given the LE $c_1, \dots, c_m$ of a pair $(e, k)$, we have:
$$
\frac{e}{k} = [c_m + 1, c_{m - 1}, \dots, c_2, c_1 + 1] = \frac{K_m(c_m + 1, c_{m - 1}, \dots, c_2, c_1 + 1)}{K_{m - 1}(c_{m- 1},
\dots, c_2, c_1 + 1)}.
$$
In the following Lemma we will give a similar representation in terms of T-(semi-)continuants and explain the
relation between continuants and T-continuants.

\begin{lemma}\label{polynomiallemma}
Let $c_1, \dots, c_m$ be the LE of the pair $(e, k)$ with $k \leq  e - k$. Then:
\begin{enumerate}[(i)]
\item\label{polynomiallemmai}
$k = S_m(c_1, \dots, c_{m - 1})$ and $e = T_m(c_1, \dots, c_m) = S_m(c_1, \dots, c_{m - 1}) + S_{m + 1}(c_1, \dots,
c_m)$.
\item\label{polynomiallemmaiv}
For $m > 1$ it follows that $T_m(c_1, \dots, c_m) = K_m(c_m + 1, c_{m - 1}, \dots, c_2, c_1 + 1)$ and
$S_m =$ $K_{m - 1}(c_{m - 1},$ $\dots, c_2, c_1 + 1)$.
\item\label{polynomiallemmaii}
$T_m(c_1, \dots, c_m) = T_m(c_m, \dots, c_1)$.
\item\label{polynomiallemmaiii}
$c_m, \dots, c_1$ is the LE of $(e, k')$, where $k' = S_m(c_m, \dots, c_2)$ and moreover
$k k' \equiv (-1)^{m + 1} \mod e$.
\end{enumerate}
\end{lemma}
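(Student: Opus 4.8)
The plan is to prove the four parts in order, each building on the previous ones together with the length-encoding recursion of Lemma~\ref{lelemma} and the continuant identities of Appendix~\ref{continuants}. The starting point is to read off from the set-theoretic definitions the polynomial recursions $S_m = S_{m-2} + x_{m-1}S_{m-1}$ (so $S_m$ really only involves $x_1,\dots,x_{m-1}$) and, since $I_m = J_m \sqcup J_{m+1}$, $T_m = S_m + S_{m+1} = (1+x_m)S_m + S_{m-1}$, hence $T_m = T_{m-1} + x_m S_m$.

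For (\ref{polynomiallemmai}) I would induct on $m$. The base $m=1$, i.e.\ $(e,k) = (c_1+2,1)$, is a direct check: $S_1 = 1 = k$ and $T_1(c_1) = c_1+2 = e$. For the step, assume $c_1,\dots,c_{m-1}$ is the LE of $(n,k)$ with $k<n-k$, $k = S_{m-1}(c_1,\dots,c_{m-2})$ and $n = T_{m-1}(c_1,\dots,c_{m-1})$; since $T_{m-1} = S_{m-1}+S_m$ this gives $n-k = S_m(c_1,\dots,c_{m-1})$. By Lemma~\ref{lelemma}(\ref{lelemmaii}) the pair encoded by $c_1,\dots,c_m$ is $(n + c_m(n-k),\,n-k)$; its smaller entry is $n-k = S_m(c_1,\dots,c_{m-1})$ as claimed, and its larger entry is $T_{m-1}(c_1,\dots,c_{m-1}) + c_m S_m(c_1,\dots,c_{m-1}) = T_m(c_1,\dots,c_m)$ by the last recursion above. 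Along this induction one also checks that the two entries are coprime and that $k < n-k$ whenever $m\ge 1$ (as $\gcd=1$ and $e\ge3$ force $k<e/2$); these facts are needed to apply Lemma~\ref{lelemma}(\ref{lelemmaii}) and, below, Corollary~\ref{tcontfrac}.

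Part (\ref{polynomiallemmaiv}) then follows quickly: using the reversal symmetry of continuants one sees that $Q_m := K_{m-1}(c_{m-1},\dots,c_2,c_1+1)$ obeys $Q_m = c_{m-1}Q_{m-1}+Q_{m-2}$ with $Q_1 = 1$, $Q_2 = c_1+1$, which is exactly the recursion and initial data of $S_m$, so $Q_m = S_m$; and $K_m(c_m+1,c_{m-1},\dots,c_1+1) = (c_m+1)Q_m + Q_{m-1} = (1+c_m)S_m + S_{m-1} = T_m$. (Equivalently, combine (\ref{polynomiallemmai}) with Corollary~\ref{tcontfrac}: $e/k = [c_m+1,\dots,c_1+1] = K_m/K_{m-1}$ with all four of $e,k,K_m,K_{m-1}$ coprime positive integers, forcing $e = K_m$, $k = K_{m-1}$.) For (\ref{polynomiallemmaii}), invoke Lemma~\ref{evenoddlemma}(\ref{evenoddlemmaii}): $T_m$ is the sum of $x_I$ over all even--odd subsequences $I$ of $\{1,\dots,m\}$, and the order-reversing involution $i \mapsto m+1-i$ carries even--odd subsequences to even--odd subsequences (it leaves each consecutive difference unchanged, still odd), hence permutes the index set of $T_m$; reindexing the sum gives $T_m(x_1,\dots,x_m) = T_m(x_m,\dots,x_1)$, the cases $m\le1$ being trivial. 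Finally, for (\ref{polynomiallemmaiii}), apply (\ref{polynomiallemmai}) to the reversed sequence $c_m,\dots,c_1$: it is the LE of the pair with smaller entry $S_m(c_m,\dots,c_2) =: k'$ and larger entry $T_m(c_m,\dots,c_1)$, which equals $T_m(c_1,\dots,c_m) = e$ by (\ref{polynomiallemmaii}). For the congruence, write $e/k = [c_m+1,c_{m-1},\dots,c_2,c_1+1]$, a continued fraction of length $m$; formula (\ref{reverseontfrac}) gives $[c_1+1,c_2,\dots,c_{m-1},c_m+1] = e/q'$ with $kq' \equiv (-1)^{m-1} \pmod e$, while Corollary~\ref{tcontfrac} identifies the left-hand side with $e/k'$. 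Since $e$ is coprime to both $q'$ and $k'$ we get $q' = k'$, hence $kk' \equiv (-1)^{m-1} = (-1)^{m+1} \pmod e$; the cases $m \le 1$ are immediate.

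The only genuinely new ingredient is the parity-reversing involution used for (\ref{polynomiallemmaii}); everything else is a transcription of Lemma~\ref{lelemma}, Corollary~\ref{tcontfrac} and the reversal formula (\ref{reverseontfrac}). The chief hazard is bookkeeping: $S_m$ uses $m-1$ variables, $T_m$ uses $m$, the LE has $m$ entries and the associated continued fraction has $m$ terms, so I would fix the recursions with explicit argument lists at the outset and reuse them verbatim throughout.
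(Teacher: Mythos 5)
Your proof is correct and follows essentially the same route as the paper: induction via Lemma \ref{lelemma} for (i), identification with continuants via Corollary \ref{tcontfrac} (or the matching recursions) for (ii), reversal symmetry for (iii), and the continuant reversal formula (\ref{reverseontfrac}) for (iv). Your argument for (iii) via the order-reversing involution on even--odd subsequences is precisely the alternative the paper itself offers, so nothing is genuinely different.
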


\begin{proof}
(\ref{polynomiallemmai})
Starting with $c_1 + 2 = T_1(c_1)$ and $1 = S_1$, the assertion follows from Lemma \ref{lelemma} by induction.

(\ref{polynomiallemmaiv})
Follows from (\ref{polynomiallemmai}) and the Corollary \ref{tcontfrac}.

(\ref{polynomiallemmaii})
Follows from  (\ref{polynomiallemmaiv}) and the corresponding symmetry of continuants. Alternatively,
one can conclude form Lemma \ref{evenoddlemma} (\ref{evenoddlemmaii}) that the sets $I_m$ are invariant when
reversing the ordering of the set $\{1, \dots, m\}$.

(\ref{polynomiallemmaiii})
Follows from  (\ref{polynomiallemmaiv}) and the corresponding property of continuants.
\end{proof}

By Lemma \ref{polynomiallemma}, the standard identities of continuants such as Cassini's carry over in a natural
way to T-continuants. The following Lemma adds to this some additional relations between T-continuants and T-semicontinuants.

\begin{lemma}\label{productformulas}
\begin{enumerate}[(i)]
\item\label{productformulasi}
Let $m, n > 0$, $c_1, \dots, c_{m + n} > 0$, then
$$T_{m + n}(c_1, \dots, c_{m + n}) = S_{m + 1}(c_1, \dots, c_m) S_{n + 1}(c_{m + n}, \dots, c_{m + 1})
+ S_m(c_1, \dots, c_{m - 1}) S_n(c_{m + n}, \dots, c_{m + 2}).$$
\item\label{productformulasii}
Let $m > 0$, $c_1, \dots, c_m > 0$, $1 \leq i \leq m$, and $c_i + d > 0$, then:
$$ T_m(c_1, \dots, c_{i - i}, c_i + d, c_{i + 1}, \dots, c_m) = T_m(c_1, \dots, c_m) +
d S_i(c_1, \dots, c_{i - 1}) S_{m - i + 1}(c_m, \dots, c_{i + 1}), $$
in particular:
\begin{align*}
T_m(c_1, \dots, c_{m - 1}, c_m + d) & = T_m(c_1, \dots, c_m) + d S_m(c_1, \dots, c_{m - 1})\\
T_m(c_1 + d, c_2, \dots, c_m) & = T_m(c_1, \dots, c_m) + d S_m(c_m, \dots, c_2).
\end{align*}
\item\label{productformulasiii}
\begin{align*}
T_m(c_1, \dots, c_{i - 1}, c_i + 2, c_{i + 1}, \dots, c_m) = & \ S_{m - i + 1}(c_m, \dots, c_{i + 1} )T_i(c_1, \dots, c_i)\\
& + S_i(c_1, \dots, c_{i - 1}) T_{m - i}(c_{i + 1}, \dots, c_m).
\end{align*}
\item\label{productformulasiv}
Let $m \geq  i > 0$, $c_1, \dots, c_{m - 1} > 0$. Then
$$
S_m(c_1, \dots, c_{m - 1}) + (-1)^i S_{m - i + 1}(c_1, \dots, c_{m - i}) =
\sum_{j = 1}^{i - 1} (-1)^j T_{m - j}(c_1, \dots, c_{m - j})
$$
and in particular, for $i = m$:
$$
S_m(c_1, \dots, c_{m - 1}) = \sum_{j = 1}^{m - 1} (-1)^j T_{m - j}(c_1, \dots, c_{m - j}) - (-1)^m.
$$
\item\label{productformulasv}
$$
T(c_1, \dots, c_m) - T(c_1, \dots, c_i) = \sum_{j = i + 1}^m c_j S_j(c_1, \dots, c_{j - 1}).
$$
\end{enumerate}
\end{lemma}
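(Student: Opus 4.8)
The plan is to regard all five formulas as identities of polynomials in the $c_i$, and to prove each of them in one of two interchangeable ways. First, since by Lemma~\ref{lelemma} every tuple of positive integers is the LE of a pair $(e,k)$ with $k<e-k$, and since a polynomial vanishing on all of $\Z_{>0}^m$ is the zero polynomial, it suffices to prove each identity for such LEs; for those Lemma~\ref{polynomiallemma}~(\ref{polynomiallemmaiv}) furnishes the dictionary $T_m(c_1,\dots,c_m)=K(c_m+1,c_{m-1},\dots,c_2,c_1+1)$ and $S_m(c_1,\dots,c_{m-1})=K(c_{m-1},\dots,c_2,c_1+1)$ (valid for $m>1$), which lets me invoke the classical continuant identities of Appendix~\ref{continuants}. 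Second, the set recursions defining the $J_m$ translate directly into the polynomial recursions $S_{m+1}=S_{m-1}+x_mS_m$ and hence $T_m=T_{m-1}+x_mS_m$, which give a purely inductive proof and also cover the small cases $m\le 1$ not reached by the dictionary.

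For~(\ref{productformulasi}) I would write $T_{m+n}(c_1,\dots,c_{m+n})$ as the continuant $K(c_{m+n}+1,c_{m+n-1},\dots,c_{m+1},c_m,\dots,c_2,c_1+1)$ and apply the factorization rule~(\ref{continuantfactorization}) at the cut between $c_{m+1}$ and $c_m$ (i.e.\ with $s=n$, $t=m$). Each of the four resulting continuants, after using the reversal symmetry $K(y_1,\dots,y_t)=K(y_t,\dots,y_1)$, is exactly one of $S_{m+1}(c_1,\dots,c_m)$, $S_{n+1}(c_{m+n},\dots,c_{m+1})$, $S_m(c_1,\dots,c_{m-1})$, $S_n(c_{m+n},\dots,c_{m+2})$, which is~(\ref{productformulasi}). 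For~(\ref{productformulasii}) I would use that an ordinary continuant is affine-linear in each argument with $\partial_{x_i}K(x_1,\dots,x_m)=K(x_1,\dots,x_{i-1})K(x_{i+1},\dots,x_m)$; translating the entry $c_i$ (which sits in position $m-i+1$ of the continuant) through the dictionary turns this coefficient into $S_i(c_1,\dots,c_{i-1})S_{m-i+1}(c_m,\dots,c_{i+1})$, giving the shift formula and, by specializing $i=1$ and $i=m$, its two displayed consequences. Then~(\ref{productformulasiii}) is the case $d=2$ of~(\ref{productformulasii}) combined with~(\ref{productformulasi}): expanding $T_i(c_1,\dots,c_i)=S_i+S_{i+1}$ and $T_{m-i}(c_{i+1},\dots,c_m)=S_{m-i}+S_{m-i+1}$ on the right-hand side (using the symmetry of $T$, Lemma~\ref{polynomiallemma}~(\ref{polynomiallemmaii}), to reverse the second block) and subtracting $2S_i S_{m-i+1}$ collapses it, via~(\ref{productformulasi}), to $T_m(c_1,\dots,c_m)$.

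The last two identities are pure telescoping. For~(\ref{productformulasv}) I would iterate $T_j(c_1,\dots,c_j)=T_{j-1}(c_1,\dots,c_{j-1})+c_jS_j(c_1,\dots,c_{j-1})$ for $j=i+1,\dots,m$; summing gives $T_m-T_i=\sum_{j=i+1}^{m}c_jS_j(c_1,\dots,c_{j-1})$, which is the assertion. For~(\ref{productformulasiv}) I would substitute $T_{m-j}=S_{m-j}+S_{m-j+1}$ into $\sum_{j=1}^{i-1}(-1)^jT_{m-j}$, re-index, and note that all interior semicontinuants $S_k$ with $m-i+2\le k\le m-1$ cancel in pairs, leaving only the two boundary terms $S_m$ and $S_{m-i+1}$ (with the appropriate powers of $-1$); the ``in particular'' line is the case $i=m$ together with $S_1=1$.

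None of this is conceptually deep; the only real work is index bookkeeping. The step I expect to be most error-prone is~(\ref{productformulasi}): one must keep careful track of which argument of each sub-continuant carries the ``$+1$'' and in which order the arguments run after each reversal, so that the four pieces match the $S$'s in the stated order; the companion point is tracking the signs in the alternating telescoping of~(\ref{productformulasiv}). Anyone uneasy with the continuant dictionary can instead prove all five identities by a single induction on $m$ using $T_m=T_{m-1}+x_mS_m$ and $S_{m+1}=S_{m-1}+x_mS_m$, trading the bookkeeping for somewhat longer algebra.
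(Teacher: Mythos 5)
Your proposal is correct in substance but, for parts (\ref{productformulasi}) and (\ref{productformulasii}), takes a genuinely different route from the paper. The paper proves (\ref{productformulasi}) by direct induction from the recursions $S_{i} = S_{i-2} + c_{i-1}S_{i-1}$ and $T_i = S_i + S_{i+1}$, and obtains (\ref{productformulasii}) from (\ref{productformulasi}) together with the observation $S_{m+1}(c_1,\dots,c_{m-1},c_m+d) = S_{m+1}(c_1,\dots,c_m) + dS_m(c_1,\dots,c_{m-1})$; you instead transport everything through the dictionary of Lemma \ref{polynomiallemma} (\ref{polynomiallemmaiv}) and invoke the factorization rule (\ref{continuantfactorization}) and the affine-linearity of continuants in each argument. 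Your reduction to positive integer tuples via Lemma \ref{lelemma} and density is sound, and the bookkeeping in (\ref{productformulasi}) checks out: the cut with $s=n$, $t=m$ produces four continuants that reverse exactly to the four stated semicontinuants. What your route buys is that (\ref{productformulasi}) and (\ref{productformulasii}) become instances of classical continuant identities rather than fresh inductions; what the paper's route buys is uniformity in $m$ (including the small cases not reached by the dictionary), which is precisely why you keep the recursions as a fallback. For (\ref{productformulasiii}), (\ref{productformulasiv}), (\ref{productformulasv}) your arguments coincide with the paper's in spirit; note the paper does not even write out (\ref{productformulasv}), which your telescoping of $T_j = T_{j-1}+c_jS_j$ supplies.

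One point you must not leave to ``the appropriate powers of $-1$'': carrying out your telescoping in (\ref{productformulasiv}) actually gives
$$
\sum_{j=1}^{i-1}(-1)^j T_{m-j}(c_1,\dots,c_{m-j}) = -S_m(c_1,\dots,c_{m-1}) - (-1)^i S_{m-i+1}(c_1,\dots,c_{m-i}),
$$
the \emph{negative} of the right-hand side as printed. (Test $m=i=2$: the stated left side is $S_2(c_1)+S_1 = 2+c_1$, while the stated right side is $-T_1(c_1)=-(2+c_1)$.) So the identity should carry $(-1)^{j+1}$ in the sum, and the ``in particular'' line must be adjusted accordingly; the paper's own induction from $S_m = T_{m-1}-S_{m-1}$ yields the same corrected sign. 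Your method is right, but as written your proof asserts a false statement: pin the sign down and record the correction.
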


\begin{proof}
(\ref{productformulasi})
Follows by induction with the formulas $S_{i} = S_{i - 2} + c_{i - 1} S_{i - 1}$ and
$T_i = S_i + S_{i + 1}$ for any $i$.

(\ref{productformulasii})
It follows immediately from the definition that
$S_{m + 1}(c_1, \dots, c_{m - 1}, c_m + d) = S_{m + 1}(c_1, \dots, c_m) + d S_m(c_1, \dots, c_{m - 1})$.
With this the assertion follows by direct computation from (\ref{productformulasi}).

(\ref{productformulasiii})
Direct computation using (\ref{productformulasii}) and \ref{polynomiallemma}  (\ref{polynomiallemmaii}).

(\ref{productformulasiv})
Induction using $S_m = T_{m - 1} - S_{m -1}$.
\end{proof}

\section{Local Hausdorff dimension of Cantor sets}\label{cantorappendix}

In this appendix we collect some material on Hausdorff dimension of Cantor sets that will be needed
in Section \ref{cantor} (for general reference on Hausdorff dimension, we refer to \cite{Falconer97},
but any source on the subject will suffice). In particular, we will introduce the {\em local} Hausdorff
dimension, where we follow some ideas of Baek (see \cite{Baek01}, \cite{Baek04}). For our purposes, we
will present a modified version of arguments in \cite{Baek04}, in order to allow for more convenient boundedness
conditions.

Recall that for any $s \geq 0$ the {\em Hausdorff measure} of a set $\mathfrak{S} \subseteq \R^n$ is defined as
$$
\mathcal{H}^s(\mathfrak{S}) :=
\underset{\delta \rightarrow 0}{\lim} \left(\inf \{\sum_{i = 1}^\infty |B_i|^s \mid
\{B_i\}_{i = 0}^\infty \text{ is a $\delta$-cover}\} \right),
$$
where a $\delta$-cover is a countable collection of sets $B_i$ with diameters $0 < |B_i| \leq \delta$
that cover $\mathfrak{S}$. The {\em Hausdorff dimension} of $\mathfrak{S}$ is defined as
$$
\dim_H \mathfrak{S} :=  \underset{s \geq 0}{\inf} \{\mathcal{H}^s(\mathfrak{S}) = 0\}.
$$

Now let $\mathfrak{C} \subset \R$ be any Cantor set, where our only condition is that $\mathfrak{C}$
is totally disconnected, i.e. $\mathfrak{C}$ does not contain a proper interval of $\R$.

As in Section \ref{cantor}, we denote $\mathfrak{P}_f$, $\mathfrak{P}$ the sets of finite and infinite
paths, respectively in a binary tree, i.e. finite and infinite words built from the letters $L$ and $R$.
Then $\mathfrak{C}$ is constructed iteratively by starting with an initial closed interval $I_\emptyset$,
where we denote $\emptyset$ the empty word in $\mathfrak{P}_f$. Then, inductively, for any word $W \in \mathfrak{P}_f$,
a middle open subinterval of $I_W$ is removed, leaving two new intervals $I_{WL}$ and $I_{WR}$.
Then there is a one-to-one correspondence between elements $p \in \mathfrak{P}$ and points
$x_p = \bigcap_{i \geq 0} I_{p |_i}$, where $p |_i \in \mathfrak{P}_f$ denotes the prefix of
length $i$ of $p$.

Now, following \cite{Baek01}, \cite{Baek04}, for any $W \in \mathfrak{P}_f$ we set
$d_{WL} = |I_{WL}| / |I_W|$ and $d_{WR} = |I_{WR}| / |I_W|$. With this, we define the {\em local Hausdorff measure}
$$
h^s(x_p) = \liminf_{k \rightarrow \infty} \prod_{i = 0}^k (d_{p|_i L}^s + d_{p|_i R}^s)
$$
and the {\em local Hausdorff dimension}
$$
h(x_p) = \inf_{s \geq 0}\{h^s(x_p) = 0\}
$$
for any $p \in \mathfrak{P}$.

\begin{lemma}\label{localdimlemma}
Let $\mu$ be a finite Borel measure with support on $\mathfrak{C}$ and $x_p = \bigcap_{i \geq 0} I_{p|_i} \in \mathfrak{C}$.
Assume that there exists a subsequence $\{p|_{i_k} \mid k \geq 0\}$ such that
$\lim_{k \rightarrow \infty} \log \mu(I_{p|_{i_k}}) / \log |I_{p|_{i_k}}| =$
$\liminf_{i \rightarrow \infty}$ $\log \mu(I_{p|_i}) / \log |I_{p|_i}|$ and
$2|I_{p|_{i_k}}| + |I_{p|_{{i_k} - 1}X_{i_k}}| \leq |I_{p|_{{i_k} - 1}}|$ for all $k$ (where $X_{i_k} = L$
if the last letter in $p|_i$is $R$, and vice versa). Then
$$
\liminf_{\epsilon \rightarrow 0} \frac{\log \mu(B_\epsilon(x_p))}{\log \epsilon} \leq
\liminf_{i \rightarrow \infty} \frac{\log \mu(I_{p|_i})}{\log |I_{p|_i}|},
$$
where $B_\epsilon(x)$ denotes the open $n$-ball of radius $\epsilon$ with center $x$.
\end{lemma}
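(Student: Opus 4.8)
The plan is to estimate the lower local dimension at $x_p$ from above by evaluating the quotient $\log\mu(B_\epsilon(x_p))/\log\epsilon$ along a sequence of radii $\epsilon_k\downarrow 0$ attached to the intervals $I_{p|_{i_k}}$. Set $D:=\liminf_{i\to\infty}\log\mu(I_{p|_i})/\log|I_{p|_i}|$. First I would clear away the trivial cases: if $\mu(I_{p|_i})=0$ for some $i$, then by nestedness $\mu(I_{p|_j})=0$ for all $j\ge i$, so $D=+\infty$ and there is nothing to prove; thus I may assume $\mu(I_{p|_i})>0$ for all large $i$, and (replacing $\mu$ by $\mu/\mu(\mathfrak{C})$) that $\mu$ is a probability measure, so that $\log\mu(I_{p|_i})\le 0$ and $D\ge 0$. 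Let $\{i_k\}$ be the subsequence supplied by the hypothesis; then $i_k\to\infty$, $|I_{p|_{i_k}}|\to 0$, and $\log\mu(I_{p|_{i_k}})/\log|I_{p|_{i_k}}|\to D$.

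The geometric core is the choice of $\epsilon_k$. Since $x_p\in I_{p|_{i_k}}$ and this interval has length $|I_{p|_{i_k}}|$, the two distances from $x_p$ to its endpoints sum to $|I_{p|_{i_k}}|$, so their maximum $\rho_k$ satisfies $\tfrac12|I_{p|_{i_k}}|\le\rho_k\le|I_{p|_{i_k}}|$; I would take $\epsilon_k$ to be $\rho_k$ enlarged by an arbitrarily small amount (to make the open ball swallow the closed interval, and to cover the borderline case in which $x_p$ is an endpoint of $I_{p|_{i_k}}$), so that $I_{p|_{i_k}}\subseteq B_{\epsilon_k}(x_p)$ while still $\tfrac12|I_{p|_{i_k}}|\le\epsilon_k\le 2|I_{p|_{i_k}}|$. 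This is where the hypothesis $2|I_{p|_{i_k}}|+|I_{p|_{i_k-1}X_{i_k}}|\le|I_{p|_{i_k-1}}|$ enters: rearranged, it states that the open gap removed from $I_{p|_{i_k-1}}$ in passing to its two children has length at least $|I_{p|_{i_k}}|$, which guarantees that a ball of the stated radius about $x_p$ does not reach across that gap to the sibling interval $I_{p|_{i_k-1}X_{i_k}}$, so that $\epsilon_k$ of this size is an admissible choice meeting $\mathfrak{C}$ on the gap side only inside $I_{p|_{i_k}}$.

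With these radii, monotonicity of $\mu$ gives $\mu(B_{\epsilon_k}(x_p))\ge\mu(I_{p|_{i_k}})>0$, so, taking logarithms and dividing by $\log\epsilon_k<0$ (which reverses the inequality),
\[
\frac{\log\mu(B_{\epsilon_k}(x_p))}{\log\epsilon_k}\ \le\ \frac{\log\mu(I_{p|_{i_k}})}{\log\epsilon_k}\ =\ \frac{\log\mu(I_{p|_{i_k}})}{\log|I_{p|_{i_k}}|}\cdot\frac{1}{1+r_k/\log|I_{p|_{i_k}}|},
\]
where $r_k:=\log\epsilon_k-\log|I_{p|_{i_k}}|$ satisfies $|r_k|\le\log 2$. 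Since $\log|I_{p|_{i_k}}|\to-\infty$ the last factor tends to $1$, and the first factor tends to $D$; if $D=\infty$ there is nothing to prove, and if $D<\infty$ the product tends to $D$. Hence
\[
\liminf_{\epsilon\to 0}\frac{\log\mu(B_\epsilon(x_p))}{\log\epsilon}\ \le\ \liminf_{k\to\infty}\frac{\log\mu(B_{\epsilon_k}(x_p))}{\log\epsilon_k}\ \le\ D,
\]
which is the assertion.

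The step I expect to be most delicate is the second paragraph: pinning down $\epsilon_k$ comparable to $|I_{p|_{i_k}}|$ from both sides while still containing $I_{p|_{i_k}}$, and reading off from the hypothesis exactly the room this requires; the degenerate subcases — $x_p$ an endpoint of some $I_{p|_{i_k}}$ (which forces $p$ to be eventually constant) and $\mu$ having an atom at $x_p$ — must then be checked separately, but are routine.
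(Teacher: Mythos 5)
Your proof is correct, and it follows the same basic strategy as the paper's: evaluate the quotient $\log\mu(B_\epsilon(x_p))/\log\epsilon$ along balls whose radii are matched to the nested intervals $I_{p|_{i_k}}$. The execution, however, differs in a way worth noting. The paper's proof asserts the set equality $B_{\epsilon_k}(x_p)\cap\mathfrak{C}=I_{p|_{i_k}}\cap\mathfrak{C}$, hence $\mu(B_{\epsilon_k})=\mu(I_{p|_{i_k}})$, and then deduces the pointwise inequality $\log\mu(B_{\epsilon_k})/\log\epsilon_k\le\log\mu(I_{p|_{i_k}})/\log|I_{p|_{i_k}}|$ from $\epsilon_k\ge|I_{p|_{i_k}}|$ alone; but with equal numerators and $0>\log\epsilon_k\ge\log|I_{p|_{i_k}}|$ that deduction actually runs the other way, so the paper's one-line conclusion needs an upper bound on $\epsilon_k$ as well. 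You avoid this entirely: you use only the containment $B_{\epsilon_k}(x_p)\supseteq I_{p|_{i_k}}$, which gives the one-sided bound $\mu(B_{\epsilon_k})\ge\mu(I_{p|_{i_k}})$ in the direction that is actually needed, and you control the denominator by the two-sided comparison $\tfrac12|I_{p|_{i_k}}|\le\epsilon_k\le 2|I_{p|_{i_k}}|$, so that $\log\epsilon_k/\log|I_{p|_{i_k}}|\to 1$ and the desired inequality emerges in the limit rather than pointwise. This is both cleaner and more robust (the delicate question of whether the ball reaches past the \emph{outer} endpoint of $I_{p|_{i_k}}$ into $\mathfrak{C}$ across a possibly small coarser-level gap, which the paper's set equality quietly requires, becomes irrelevant). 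The one cosmetic point: as you half-observe yourself, your argument never actually uses the gap hypothesis $2|I_{p|_{i_k}}|+|I_{p|_{i_k-1}X_{i_k}}|\le|I_{p|_{i_k-1}}|$, nor the passage to the special subsequence --- the containment $B_{\epsilon_k}\supseteq I_{p|_{i_k}}$ alone suffices for the claimed $\le$. That is not a defect (you prove something slightly stronger); the extra hypotheses are what one would need for the reverse comparison, which this lemma does not claim.
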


\begin{proof}
The conditions imply that for every $k \geq 0$, we can find a ball $B_{\epsilon_k}(x_p)$ such that
$B_{\epsilon_k}(x_p) \cap \mathfrak{C} = I_{p_k} \cap \mathfrak{C}$. Hence, for any measure $\mu$ that
is supported on $\mathfrak{C}$ we have $\mu(B_{\epsilon_k}) = \mu(I_{p|_{i_k}})$. Moreover,
$\epsilon_k \geq |I_{p|_{i_k}}|$ for all $k$, hence $\log \mu(B_{\epsilon_k}(x_p)) / \log \epsilon_k \leq
\log \mu(I_{p|_{i_k}}) / \log |I_{p|_{i_k}}|$ for $k \gg 0$, and the statement follows.
\end{proof}

\begin{theorem}\label{hausdorfflocalglobal}
Let $\mathfrak{C}$ be a Cantor set that satisfies the conditions of Lemma \ref{localdimlemma} for every $x \in \mathfrak{C}$.
Then $\dim_H \mathfrak{C} \leq \sup_{x \in \mathfrak{C}} h(x)$.
\end{theorem}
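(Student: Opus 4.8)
The plan is to show $\dim_H\mathfrak{C}\le s$ for every $s>\sup_{x\in\mathfrak{C}}h(x)$ and then let $s\downarrow\sup_x h(x)$. Fix such an $s$. Since $I_{WL}$ and $I_{WR}$ are disjoint subintervals of $I_W$ we have $0<d_{WX}<1$ for all $W\in\mathfrak{P}_f$, $X\in\{L,R\}$, so each factor $d_{WL}^{s'}+d_{WR}^{s'}$ is strictly decreasing in $s'$; hence $s'\mapsto h^{s'}(x)$ is non-increasing and $\{s':h^{s'}(x)=0\}$ is an up-set in $\R_{\ge0}$. In particular $h^s(x)=0$ for every point $x=x_p\in\mathfrak{C}$, that is,
$$
\liminf_{k\to\infty}\;\prod_{i=0}^{k}\bigl(d_{p|_i L}^{s}+d_{p|_i R}^{s}\bigr)=0 .
$$
(Note also that, $\mathfrak{C}$ being totally disconnected, $|I_{p|_k}|\to0$ for every $p$, since otherwise $\bigcap_k I_{p|_k}$ would be a nondegenerate interval inside $\mathfrak{C}$.)

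Next I would equip $\mathfrak{C}$ with the natural $s$-weight measure: set $\mu(I_\emptyset)=1$ and recursively
$$
\mu(I_{WX})=\frac{d_{WX}^{s}}{\,d_{WL}^{s}+d_{WR}^{s}\,}\,\mu(I_W).
$$
Then $\mu(I_{WL})+\mu(I_{WR})=\mu(I_W)$, so this extends (Carath\'eodory/Kolmogorov) to a Borel probability measure supported on $\mathfrak{C}$. Dividing the recursion by $|I_{WX}|^{s}$ and telescoping gives, for every $p\in\mathfrak{P}$,
$$
\frac{\mu(I_{p|_k})}{|I_{p|_k}|^{s}}=\frac{1}{\,|I_\emptyset|^{s}\,\prod_{i=0}^{k-1}\bigl(d_{p|_i L}^{s}+d_{p|_i R}^{s}\bigr)\,}.
$$

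Because the product in the denominator has $\liminf$ equal to $0$, there is a subsequence along which $\mu(I_{p|_k})/|I_{p|_k}|^{s}\to\infty$; taking logarithms, dividing by $\log|I_{p|_k}|\;(\to-\infty)$ and isolating the term $s$, one reads off that $\log\mu(I_{p|_k})/\log|I_{p|_k}|<s$ along that subsequence, hence $\liminf_{i\to\infty}\log\mu(I_{p|_i})/\log|I_{p|_i}|\le s$ for every $x_p\in\mathfrak{C}$. Now Lemma \ref{localdimlemma}, applied to $\mu$ at each $x=x_p$ — its geometric separation hypothesis being part of the standing assumption on $\mathfrak{C}$ — yields
$$
\liminf_{\epsilon\to0}\frac{\log\mu(B_\epsilon(x))}{\log\epsilon}\le s\qquad\text{for all }x\in\mathfrak{C}.
$$
For any $t>s$ this estimate forces $\limsup_{\epsilon\to0}\mu(B_\epsilon(x))/\epsilon^{t}=+\infty$ for every $x\in\mathfrak{C}$, so by the mass distribution principle (see \cite{Falconer97}) applied to the finite measure $\mu$ we get $\mathcal{H}^{t}(\mathfrak{C})=0$, i.e. $\dim_H\mathfrak{C}\le t$. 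Letting $t\downarrow s$ and then $s\downarrow\sup_x h(x)$ finishes the proof.

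The main obstacle is purely a matter of matching up subsequences: Lemma \ref{localdimlemma} needs a subsequence $p|_{i_k}$ that simultaneously realizes $\liminf_i\log\mu(I_{p|_i})/\log|I_{p|_i}|$ and satisfies $2|I_{p|_{i_k}}|+|I_{p|_{i_k-1}X_{i_k}}|\le|I_{p|_{i_k-1}}|$. I would arrange this by first extracting a convergent subsequence realizing the $\liminf$ (whose limiting value is then pinned down) and only afterwards thinning it further so that the separation inequality holds along it — which is exactly what "satisfying the conditions of Lemma \ref{localdimlemma}" provides. Everything else — that $\mu$ is a well-defined measure, the telescoping identity, and the elementary logarithmic estimate turning $\liminf\prod=0$ into $\liminf\log\mu(I_{p|_i})/\log|I_{p|_i}|\le s$ — is routine.
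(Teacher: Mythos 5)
Your proposal is correct and follows essentially the same route as the paper: you build the same $s$-weight measure (the paper's $\mu_s$, up to normalization), convert $h^s(x_p)=0$ into $\liminf_i \log\mu(I_{p|_i})/\log|I_{p|_i}|\le s$ via the telescoping identity, transfer this to balls with Lemma \ref{localdimlemma}, and conclude with the mass distribution principle, which is exactly the content of the paper's citation of \cite[Proposition 10.1]{Falconer97}. The only differences are cosmetic — fixing $s>\sup_{x}h(x)$ up front (using monotonicity of $h^{s'}$ in $s'$) rather than working with each $s$ in $\{s:h^s(x)=0\}$, and normalizing $\mu$ to a probability measure.
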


\begin{proof}
For any $s \geq 0$ and any $W \in \mathfrak{P}_f$ of length $n$, we define
$$
\mu_s(I_W \cap \mathfrak{C}) = \frac{|I_W|^s}{\prod_{k = 0}^n (d^s_{W|_k L} + d^s_{W|_k R})},
$$
which in a natural way extends to a finite Borel measure on $\mathfrak{C}$ (see \cite{Baek01}, \cite{Baek04}).
If $h^s(x_p) = 0$ for some $x_p \in \mathfrak{C}$, we get by Lemma \ref{localdimlemma}:
$$
\liminf_{\epsilon \rightarrow 0} \frac{\log \mu(B_\epsilon(x_p))}{\log \epsilon} \leq
s - \limsup_{i \rightarrow \infty} \frac{\log \prod_{k = 0}^{i - 1}(d^s_{p|_k L} + d^s_{p|_k R})}{\log |I_{p|_i}|}.
$$
Now, $\log|I_{p|_i}| < 0$ for $i \gg 0$, and from $h^s(x_p) = 0$ it follows that
$\log \prod_{k = 0}^{i - 1}(d^s_{p|_k L} + d^s_{p|_k R}) < 0$ for infinitely many $i$.
Hence $\liminf_{\epsilon \rightarrow 0} \log \mu(B_\epsilon(x_p)) / \log \epsilon \leq s$.

Moreover, it follows that 
$\liminf_{\epsilon \rightarrow 0} \log \mu(B_\epsilon(x_p)) / \log \epsilon \leq h(x)$.
Using \cite[Proposition 10.1]{Falconer97}, we conclude the proof.
\end{proof}

The following statement is needed in Section \ref{cantor}:

\begin{corollary}\label{hausdorffzerodim}
Let $\mathfrak{C}$ be a Cantor set that satisfies the conditions of Lemma \ref{localdimlemma} for every $x \in \mathfrak{C}$.
If $h(x) = 0$ for every $x \in \mathfrak{C}$, then $\dim_H \mathfrak{C} = 0$.
\end{corollary}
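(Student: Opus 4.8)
The plan is to deduce this immediately from Theorem \ref{hausdorfflocalglobal}. Under exactly the hypothesis imposed here — that $\mathfrak{C}$ satisfies the conditions of Lemma \ref{localdimlemma} at every $x \in \mathfrak{C}$ — that theorem already gives the bound $\dim_H \mathfrak{C} \leq \sup_{x \in \mathfrak{C}} h(x)$. So the first step is simply to invoke it.

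The second step is to observe that the assumption $h(x) = 0$ for every $x \in \mathfrak{C}$ makes the right-hand side trivial: the supremum of the constant zero function over the nonempty set $\mathfrak{C}$ is $0$, whence $\dim_H \mathfrak{C} \leq 0$. Note that no attainment of the supremum is needed, since the inequality of Theorem \ref{hausdorfflocalglobal} is already an inequality with a supremum. Finally, the Hausdorff dimension of any subset of $\R$ is nonnegative — directly from the definition $\dim_H \mathfrak{C} = \inf_{s \geq 0}\{\mathcal{H}^s(\mathfrak{C}) = 0\}$, since $\mathcal{H}^0$ is counting measure and hence positive on the nonempty set $\mathfrak{C}$, so that $s = 0$ is not in the set over which the infimum is taken. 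Combining, $\dim_H \mathfrak{C} = 0$.

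There is essentially no obstacle at this stage: all of the analytic work sits upstream, in Lemma \ref{localdimlemma} and Theorem \ref{hausdorfflocalglobal} (the construction of the mass distributions $\mu_s$ with $\mu_s(I_W \cap \mathfrak{C}) = |I_W|^s / \prod_{k = 0}^n (d^s_{W|_k L} + d^s_{W|_k R})$, the comparison of ball measures with interval measures afforded by the separation condition $2|I_{p|_{i_k}}| + |I_{p|_{i_k - 1} X_{i_k}}| \leq |I_{p|_{i_k - 1}}|$, and the local-to-global principle of \cite[Proposition 10.1]{Falconer97}). The corollary is merely the specialization of that machinery to the degenerate case in which every local Hausdorff dimension vanishes, and the only thing worth stating explicitly is the elementary lower bound $\dim_H \mathfrak{C} \geq 0$ that upgrades the inequality to an equality.
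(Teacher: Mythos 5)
Your proposal is correct and matches the paper's (implicit) argument exactly: the corollary is stated immediately after Theorem \ref{hausdorfflocalglobal} precisely because it follows by taking the supremum of the identically zero function and using nonnegativity of Hausdorff dimension. Nothing further is needed.
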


In \cite{juergensenstaiger95}, the local Hausdorff dimension at a point $x \in \mathfrak{C}$ was introduced as
$$
l^{\dim_H}_\mathfrak{C}(x) = \underset{\varepsilon \rightarrow 0}{\lim} \dim_H(B_\varepsilon(x) \cap \mathfrak{C}),
$$
and it is a natural question how $h(x)$ relates to $l^{\dim_H}_\mathfrak{C}$. The following corollary provides a partial answer.

\begin{corollary}
Let $x \in \mathfrak{C}$. If $h(x) = \lim_{\epsilon \rightarrow 0 }\sup_{y \in B_\epsilon(x) \cap \mathfrak{C}} h(y)$, then
$$
l^{\dim_H}_\mathfrak{C}(x) \leq h(x).
$$
\end{corollary}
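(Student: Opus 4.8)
The plan is to localise Theorem \ref{hausdorfflocalglobal}: apply it not to all of $\mathfrak C$ but to the sub-Cantor sets of $\mathfrak C$ that sit inside a small ball around $x$, and then let the radius shrink. The hypothesis that $h$ is ``upper semicontinuous at $x$ along $\mathfrak C$'' will do the rest.

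First I would fix $x = x_p \in \mathfrak C$ and $\varepsilon > 0$, and consider the family of $W \in \mathfrak P_f$ with $I_W \subseteq B_\varepsilon(x)$; call such a $W$ \emph{minimal} if the parent of $W$ does not already have this property. The minimal $W$'s form a countable, pairwise incomparable family $\mathcal W_\varepsilon$, and I claim $B_\varepsilon(x) \cap \mathfrak C = \bigcup_{W \in \mathcal W_\varepsilon} (I_W \cap \mathfrak C)$. Indeed, any $y = x_q \in B_\varepsilon(x) \cap \mathfrak C$ satisfies $I_{q|_i} \to \{y\}$ with $y$ in the \emph{open} ball, so $I_{q|_i} \subseteq B_\varepsilon(x)$ for $i \gg 0$; hence some prefix of $q$ lies in $\mathcal W_\varepsilon$ and carries $y$. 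The reverse inclusion is immediate. Since $\dim_H$ is monotone and countably stable, $\dim_H(B_\varepsilon(x)\cap\mathfrak C) = \sup_{W \in \mathcal W_\varepsilon} \dim_H(I_W \cap \mathfrak C)$.

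Next I would observe that each $I_W \cap \mathfrak C$ is itself a totally disconnected Cantor set, built by the same middle-interval construction but on the subtree rooted at $W$ instead of at $\emptyset$. Two things then need checking. (a) It still satisfies the hypotheses of Lemma \ref{localdimlemma} at each of its points: those conditions only constrain tails $\{p|_{i_k}\}$ with $i_k \to \infty$, so once $i_k > |W|$ they coincide verbatim with the conditions for $\mathfrak C$. (b) The local Hausdorff dimension $h_W$ computed in the subtree rooted at $W$ agrees with $h$ on $I_W \cap \mathfrak C$: for $x_p$ with $W$ a prefix of $p$ and $n = |W|$, one has $h^s(x_p) = c_n(s)\, h_W^s(x_p)$ where $c_n(s) = \prod_{i=0}^{n-1}(d_{p|_iL}^s + d_{p|_iR}^s) > 0$ is a fixed positive constant, so $h^s(x_p) = 0 \iff h_W^s(x_p) = 0$ and therefore $h(x_p) = h_W(x_p)$. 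Granting (a) and (b), Theorem \ref{hausdorfflocalglobal} applied to $I_W \cap \mathfrak C$ gives $\dim_H(I_W \cap \mathfrak C) \le \sup_{y \in I_W \cap \mathfrak C} h_W(y) = \sup_{y \in I_W \cap \mathfrak C} h(y) \le \sup_{y \in B_\varepsilon(x) \cap \mathfrak C} h(y)$, using $I_W \subseteq B_\varepsilon(x)$ in the last step.

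Combining the two displays, $\dim_H(B_\varepsilon(x) \cap \mathfrak C) \le \sup_{y \in B_\varepsilon(x) \cap \mathfrak C} h(y)$ for every $\varepsilon > 0$. The left-hand side is non-increasing in $\varepsilon$, so letting $\varepsilon \to 0$ yields $l^{\dim_H}_{\mathfrak C}(x) = \lim_{\varepsilon \to 0} \dim_H(B_\varepsilon(x) \cap \mathfrak C) \le \lim_{\varepsilon \to 0} \sup_{y \in B_\varepsilon(x) \cap \mathfrak C} h(y) = h(x)$, the final equality being exactly the standing hypothesis. The only genuinely delicate point is (b), the re-rooting invariance of the local Hausdorff dimension, together with confirming that the side conditions of Lemma \ref{localdimlemma} really do descend to the sub-Cantor sets $I_W \cap \mathfrak C$; everything else is countable stability of Hausdorff dimension and a monotone limit.
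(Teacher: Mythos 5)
The paper states this corollary without proof, so there is no argument of the author's to compare yours against; judged on its own terms, your localization of Theorem \ref{hausdorfflocalglobal} is correct and does supply one. The decomposition of $B_\varepsilon(x)\cap\mathfrak{C}$ into the countably many sub-Cantor sets $I_W\cap\mathfrak{C}$ over minimal $W$ (which exists because total disconnectedness forces $|I_{q|_i}|\to 0$, so every point of the open ball eventually sits in a cylinder contained in the ball), countable stability of Hausdorff dimension, and the re-rooting identity $h^s(x_q)=c_{|W|}(s)\,h^s_W(x_q)$ with $c_{|W|}(s)=\prod_{i=0}^{|W|-1}\bigl(d^s_{q|_iL}+d^s_{q|_iR}\bigr)\in(0,\infty)$ are all sound, and the final passage to the limit invokes the hypothesis exactly where it is needed. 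One point in (a) deserves to be made explicit rather than dismissed as ``verbatim'': the hypotheses of Lemma \ref{localdimlemma} refer to a measure, and in the proof of Theorem \ref{hausdorfflocalglobal} that measure is the $\mu_s$ built from the root, so on the subtree rooted at $W$ one works with a different measure $\mu_s'$. Since $\mu_s'$ and $\mu_s$ agree on the cylinders below $W$ up to the fixed positive factor $c_{|W|}(s)$, the quotients $\log\mu_s'(I_{q|_i})/\log|I_{q|_i}|$ and $\log\mu_s(I_{q|_i})/\log|I_{q|_i}|$ differ by a term tending to $0$ as $|I_{q|_i}|\to 0$, so the same subsequences witness both sets of hypotheses, and the purely geometric separation condition $2|I_{p|_{i_k}}|+|I_{p|_{i_k-1}X_{i_k}}|\leq|I_{p|_{i_k-1}}|$ is unaffected by re-rooting. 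With that observation inserted, and with the standing assumption (implicit in the corollary as in the rest of the appendix) that $\mathfrak{C}$ satisfies the conditions of Lemma \ref{localdimlemma}, your argument is complete.
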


\newpage

\section{The first few levels of the Markoff Tree}\label{firstfewlevels}

\begin{figure}[ht]
\tikzstyle{level 1}=[level distance=1cm, sibling distance=8cm]
\tikzstyle{level 2}=[level distance=1cm, sibling distance=4cm]
\tikzstyle{level 3}=[level distance=1cm, sibling distance=2cm]
\tikzstyle{level 4}=[level distance=3cm, sibling distance=1cm]
\tikzstyle{level 5}=[level distance=5cm, sibling distance=0.5cm]
\begin{tikzpicture}[grow=right, sloped]
  \node[align=left] {$1, 5, 2$}
    child {node[align=left]{$1, 13, 5$}
      child {node[align=left] {$1, 34, 13$}
        child {node[align=left] {$1, 89, 34$}
           child {node[align=left] {$1, 233, 89$}
               child {node[align=left] {$1, 610, 233$}}
               child {node[align=left] {$233, 62210, 89$}}
           }
           child {node[align=left] {$89, 9077, 34$}
               child {node[align=left] {$89, 2423525, 9077$}}
               child {node[align=left] {$9077, 925765, 34$}}
           }
        }
        child {node[align=left] {$34, 1325, 13$}
           child {node[align=left] {$34, 135137, 1325$}
               child {node[align=left] {$34, 13782649, 135137$}}
               child {node[align=left] {$135137, 537169541, 1325$}}
           }
           child {node[align=left] {$1325, 51641, 13$}
               child {node[align=left] {$1325, 205272962, 51641$}}
               child {node[align=left] {$51641, 2012674, 13$}}
           }
        }
      }
      child {node[align=left] {$13, 194, 5$}
        child {node[align=left] {$13, 7561, 194$}
          child {node[align=left] {$13, 294685, 7561$}
            child {node[align=left] {$13, 11485154, 294685$}}
            child {node[align=left] {$294685, 6684339842, 7561$}}
          }
          child {node[align=left] {$7561, 4400489, 194$}
            child {node[align=left] {$7561, 99816291793, 4400489$}}
            child {node[align=left] {$4400489, 2561077037, 194$}}
          }
        }
        child {node[align=left] {$194, 2897, 5$}
          child {node[align=left] {$194, 1686049, 2897$}
            child {node[align=left] {$194, 981277621, 1686049$}}
            child {node[align=left] {$1686049, 14653451665, 2897$}}
          }
          child {node[align=left] {$2897, 43261, 5$}
            child {node[align=left] {$2897, 375981346, 43261$}}
            child {node[align=left] {$43261, 646018, 5$}}
          }
        }
      }
    }
    child {node[align=left] {$5, 29, 2$}
      child {node[align=left] {$5, 433, 29$}
        child {node[align=left] {$5, 6466, 433$}
         child {node[align=left] {$5, 96557, 6466$}
           child {node[align=left] {$5, 1441889, 96557$}}
           child {node[align=left] {$96557, 1873012681, 6466$}}
         }
         child {node[align=left] {$6466, 8399329, 433$}
           child {node[align=left] {$6466, 162930183509, 8399329$}}
           child {node[align=left] {$8399329, 10910721905, 433$}}
         }
        }
        child {node[align=left] {$433, 37666, 29$}
         child {node[align=left] {$433, 48928105, 37666$}
           child {node[align=left] {$433, 63557570729, 48928105$}}
           child {node[align=left] {$48928105, 5528778008357, 37666$}}
         }
         child {node[align=left] {$37666, 3276509, 29$}
           child {node[align=left] {$37666, 370238963953, 3276509$}}
           child {node[align=left] {$3276509, 285018617, 29$}}
         }
        }
      }
      child {node[align=left] {$29, 169, 2$}
        child {node[align=left] {$29, 14701, 169$}
          child {node[align=left] {$29, 1278818, 14701$}
           child {node[align=left] {$29, 111242465, 1278818$}}
           child {node[align=left] {$1278818, 56399710225, 14701$}}
          }
          child {node[align=left] {$14701, 7453378, 169$}
           child {node[align=left] {$14701, 328716329765, 7453378$}}
           child {node[align=left] {$7453378, 3778847945, 169$}}
          }
        }
        child {node[align=left] {$169, 985, 2$}
          child {node[align=left] {$169, 499393, 985$}
           child {node[align=left] {$169, 253191266, 499393$}}
           child {node[align=left] {$499393, 1475706146, 985$}}
          }
          child {node[align=left] {$985, 5741, 2$}
           child {node[align=left] {$985, 16964653, 5741$}}
           child {node[align=left] {$5741, 33461, 2$}}
          }
        }
      }
    };
\end{tikzpicture}
\end{figure}

\begin{figure}[ht]
\small
\tikzstyle{level 1}=[level distance=1cm, sibling distance=11cm]
\tikzstyle{level 2}=[level distance=1cm, sibling distance=5.5cm]
\tikzstyle{level 3}=[level distance=1cm, sibling distance=2.75cm]
\tikzstyle{level 4}=[level distance=1.5cm, sibling distance=1.35cm]
\tikzstyle{level 5}=[level distance=3cm, sibling distance=0.7cm]
\tikzstyle{level 6}=[level distance=4cm, sibling distance=0.35cm]
\begin{tikzpicture}[grow=right, sloped]
  \node[align=left] {$5$}
    child {node[align=left]{$13$}
      child {node[align=left] {$34$}
        child {node[align=left] {$89$}
           child {node[align=left] {$233$}
             child {node[align=left] {$610$}
               child {node[align=left] {$1597$}}
               child {node[align=left] {$426389$}}
             }
             child {node[align=left] {$62210$}
               child {node[align=left] {$43484701$}}
               child {node[align=left] {$16609837$}}
             }
           }
           child {node[align=left] {$9077$}
             child {node[align=left] {$2423525$}
               child {node[align=left] {$647072098$}}
               child {node[align=left] {$65995009186$}}
             }
             child {node[align=left] {$925765$}
               child {node[align=left] {$25209506681$}}
               child {node[align=left] {$94418953$}}
             }
           }
        }
        child {node[align=left] {$1325$}
           child {node[align=left] {$135137$}
             child {node[align=left] {$13782649$}
               child {node[align=left] {$1405695061$}}
               child {node[align=left] {$5587637513705$}}
             }
             child {node[align=left] {$537169541$}
               child {node[align=left] {$217774440785026$}}
               child {node[align=left] {$2135248790338$}}
             }
           }
           child {node[align=left] {$51641$}
             child {node[align=left] {$205272962$}
               child {node[align=left] {$815959972309$}}
               child {node[align=left] {$31801503090601$}}
             }
             child {node[align=left] {$2012674$}
               child {node[align=left] {$311809494089$}}
               child {node[align=left] {$78442645$}}
             }
           }
        }
      }
      child {node[align=left] {$194$}
        child {node[align=left] {$7561$}
          child {node[align=left] {$294685$}
            child {node[align=left] {$11485154$}
              child {node[align=left] {$447626321$}}
              child {node[align=left] {$10153507819457$}}
           }
           child {node[align=left] {$6684339842$}
             child {node[align=left] {$5909324059011749$}}
             child {node[align=left] {$151620880341401$}}
           }
         }
         child {node[align=left] {$4400489$}
           child {node[align=left] {$99816291793$}
             child {node[align=left] {$2264132942340130$}}
             child {node[align=left] {$1317721482167652770$}}
           }
           child {node[align=left] {$2561077037$}
             child {node[align=left] {$33809973988413085$}}
             child {node[align=left] {$1490542435045$}}
           }
         }
        }
        child {node[align=left] {$2897$}
          child {node[align=left] {$1686049$}
            child {node[align=left] {$981277621$}
              child {node[align=left] {$571101889373$}}
              child {node[align=left] {$4963446454828093$}}
           }
           child {node[align=left] {$14653451665$}
             child {node[align=left] {$74119312578961858$}}
             child {node[align=left] {$127353146734466$}}
           }
          }
          child {node[align=left] {$43261$}
            child {node[align=left] {$375981346$}
              child {node[align=left] {$3267653834825$}}
              child {node[align=left] {$48795987025021$}}
            }
            child {node[align=left] {$646018$}
              child {node[align=left] {$83842154089$}}
              child {node[align=left] {$9647009$}}
            }
          }
        }
      }
    }
    child {node[align=left] {$29$}
      child {node[align=left] {$433$}
        child {node[align=left] {$6466$}
          child {node[align=left] {$96557$}
            child {node[align=left] {$1441889$}
              child {node[align=left] {$21531778$}}
              child {node[align=left] {$417673428514$}}
            }
            child {node[align=left] {$1873012681$}
              child {node[align=left] {$542557456311485$}}
              child {node[align=left] {$36332699889481$}}
            }
          }
          child {node[align=left] {$8399329$}
            child {node[align=left] {$162930183509$}
              child {node[align=left] {$3160519691308253$}}
              child {node[align=left] {$4105512645967389917$}}
            }
            child {node[align=left] {$10910721905$}
              child {node[align=left] {$274928228722804802$}}
              child {node[align=left] {$14173019355266$}}
            }
          }
        }
        child {node[align=left] {$37666$}
          child {node[align=left] {$48928105$}
            child {node[align=left] {$63557570729$}
              child {node[align=left] {$82561235448866$}}
              child {node[align=left] {$9329254482520315202$}}
            }
            child {node[align=left] {$5528778008357$}
              child {node[align=left] {$811537892743746482789$}}
              child {node[align=left] {$624740857339396181$}}
            }
          }
          child {node[align=left] {$3276509$}
            child {node[align=left] {$370238963953$}
              child {node[align=left] {$41836262445484585$}}
              child {node[align=left] {$3639273892628002565$}}
            }
            child {node[align=left] {$285018617$}
              child {node[align=left] {$2801598191304130$}}
              child {node[align=left] {$24793343170$}}
            }
          }
        }
      }
      child {node[align=left] {$169$}
        child {node[align=left] {$14701$}
          child {node[align=left] {$1278818$}
            child {node[align=left] {$111242465$}
              child {node[align=left] {$9676815637$}}
              child {node[align=left] {$426776599819081$}}
            }
            child {node[align=left] {$56399710225$}
              child {node[align=left] {$216374893891527449$}}
              child {node[align=left] {$2487396418774357$}}
            }
          }
          child {node[align=left] {$7453378$}
            child {node[align=left] {$328716329765$}
              child {node[align=left] {$14497376284172417$}}
              child {node[align=left] {$7350141181533573809$}}
            }
            child {node[align=left] {$3778847945$}
              child {node[align=left] {$84495546415824461$}}
              child {node[align=left] {$1915868454737$}}
            }
          }
        }
        child {node[align=left] {$985$}
          child {node[align=left] {$499393$}
            child {node[align=left] {$253191266$}
              child {node[align=left] {$128367472469$}}
              child {node[align=left] {$379325837704445$}}
            }
            child {node[align=left] {$1475706146$}
              child {node[align=left] {$2210871958107149$}}
              child {node[align=left] {$4360711162037$}}
            }
          }
          child {node[align=left] {$5741$}
            child {node[align=left] {$16964653$}
              child {node[align=left] {$50130543874$}}
              child {node[align=left] {$292182217634$}}
            }
            child {node[align=left] {$33461$}
              child {node[align=left] {$576298801$}}
              child {node[align=left] {$195025$}}
            }
          }
        }
      }
    };
\end{tikzpicture}
\end{figure}

\newpage

\ 

\newpage

\section{The 300 Smallest Markoff Numbers}\label{threehundred}

We present a computer generated list of the smallest 300 Markoff numbers together with their weights and coweights.

\setlength{\tabcolsep}{1mm}

{\footnotesize

\begin{longtable}{l | p{.17\textwidth} | p{.17\textwidth} | p{.17\textwidth} | p{.17\textwidth} | p{.17\textwidth}}
\caption{The 300 smallest Markoff numbers and their weights}\\
$n$ & $m_n$ & $r_{m_n}$ & $s_{m_n}$ & $w_{m_n}$ & $v_{m_n}$ \\ \hline \hline \endhead
1 & 1 & 0 & 1 & -1 & 10 \\
 & 2 & 1 & 1 & 1 & 5 \\
 & 5 & 2 & 1 & 1 & 2 \\
 & 13 & 5 & 2 & 2 & 1 \\
 & 29 & 12 & 5 & 7 & 2 \\
 & 34 & 13 & 5 & 5 & 1 \\
 & 89 & 34 & 13 & 13 & 2 \\
 & 169 & 70 & 29 & 41 & 10 \\
 & 194 & 75 & 29 & 31 & 5 \\
10 & 233 & 89 & 34 & 34 & 5 \\
 & 433 & 179 & 74 & 104 & 25 \\
 & 610 & 233 & 89 & 89 & 13 \\
 & 985 & 408 & 169 & 239 & 58 \\
 & 1325 & 507 & 194 & 196 & 29 \\
 & 1597 & 610 & 233 & 233 & 34 \\
 & 2897 & 1120 & 433 & 463 & 74 \\
 & 4181 & 1597 & 610 & 610 & 89 \\
 & 5741 & 2378 & 985 & 1393 & 338 \\
 & 6466 & 2673 & 1105 & 1553 & 373 \\
20 & 7561 & 2923 & 1130 & 1208 & 193 \\
 & 9077 & 3468 & 1325 & 1327 & 194 \\
 & 10946 & 4181 & 1597 & 1597 & 233 \\
 & 14701 & 6089 & 2522 & 3566 & 865 \\
 & 28657 & 10946 & 4181 & 4181 & 610 \\
 & 33461 & 13860 & 5741 & 8119 & 1970 \\
 & 37666 & 15571 & 6437 & 9047 & 2173 \\
 & 43261 & 16725 & 6466 & 6914 & 1105 \\
 & 51641 & 19760 & 7561 & 7639 & 1130 \\
 & 62210 & 23763 & 9077 & 9079 & 1325 \\
30 & 75025 & 28657 & 10946 & 10946 & 1597 \\
 & 96557 & 39916 & 16501 & 23191 & 5570 \\
 & 135137 & 51709 & 19786 & 19990 & 2957 \\
 & 195025 & 80782 & 33461 & 47321 & 11482 \\
 & 196418 & 75025 & 28657 & 28657 & 4181 \\
 & 294685 & 113922 & 44041 & 47081 & 7522 \\
 & 426389 & 162867 & 62210 & 62212 & 9077 \\
 & 499393 & 206855 & 85682 & 121172 & 29401 \\
 & 514229 & 196418 & 75025 & 75025 & 10946 \\
 & 646018 & 249755 & 96557 & 103247 & 16501 \\
40 & 925765 & 353702 & 135137 & 135341 & 19786 \\
 & 1136689 & 470832 & 195025 & 275807 & 66922 \\
 & 1278818 & 529673 & 219385 & 310201 & 75245 \\
 & 1346269 & 514229 & 196418 & 196418 & 28657 \\
 & 1441889 & 596067 & 246410 & 346312 & 83177 \\
 & 1686049 & 651838 & 252005 & 269465 & 43066 \\
 & 2012674 & 770133 & 294685 & 297725 & 44041 \\
 & 2423525 & 925943 & 353770 & 354304 & 51797 \\
 & 2922509 & 1116300 & 426389 & 426391 & 62210 \\
 & 3276509 & 1354498 & 559945 & 786985 & 189026 \\
50 & 3524578 & 1346269 & 514229 & 514229 & 75025 \\
 & 4400489 & 1701181 & 657658 & 703054 & 112325 \\
 & 6625109 & 2744210 & 1136689 & 1607521 & 390050 \\
 & 7453378 & 3087111 & 1278649 & 1807955 & 438553 \\
 & 8399329 & 3472225 & 1435394 & 2017346 & 484525 \\
 & 9227465 & 3524578 & 1346269 & 1346269 & 196418 \\
 & 9647009 & 3729600 & 1441889 & 1541791 & 246410 \\
 & 11485154 & 4440035 & 1716469 & 1834951 & 293165 \\
 & 13782649 & 5273811 & 2017978 & 2038784 & 301585 \\
 & 16609837 & 6344632 & 2423525 & 2424059 & 353770 \\
60 & 16964653 & 7026989 & 2910674 & 4116314 & 998785 \\
 & 20031170 & 7651227 & 2922509 & 2922511 & 426389 \\
 & 21531778 & 8901089 & 3679649 & 5171489 & 1242085 \\
 & 24157817 & 9227465 & 3524578 & 3524578 & 514229 \\
 & 38613965 & 15994428 & 6625109 & 9369319 & 2273378 \\
 & 43484701 & 16610303 & 6344810 & 6346208 & 926173 \\
 & 48928105 & 20226717 & 8361658 & 11752046 & 2822725 \\
 & 63245986 & 24157817 & 9227465 & 9227465 & 1346269 \\
 & 78442645 & 30015427 & 11485154 & 11603636 & 1716469 \\
 & 94418953 & 36074136 & 13782649 & 13803455 & 2017978 \\
70 & 111242465 & 46075462 & 19083973 & 26983921 & 6545450 \\
 & 137295677 & 52442283 & 20031170 & 20031172 & 2922509 \\
 & 144059117 & 55694245 & 21531778 & 23023618 & 3679649 \\
 & 165580141 & 63245986 & 24157817 & 24157817 & 3524578 \\
 & 205272962 & 78545995 & 30054973 & 30365023 & 4491749 \\
 & 225058681 & 93222358 & 38613965 & 54608393 & 13250218 \\
 & 253191266 & 104875077 & 43440605 & 61433965 & 14906249 \\
 & 285018617 & 117825755 & 48708778 & 68458648 & 16443089 \\
 & 298045301 & 113843800 & 43484701 & 43486099 & 6344810 \\
 & 321534781 & 132920268 & 54948325 & 77226023 & 18548098 \\
80 & 375981346 & 145356973 & 56196005 & 60089573 & 9603553 \\
 & 433494437 & 165580141 & 63245986 & 63245986 & 9227465 \\
 & 447626321 & 173047443 & 66898250 & 71516008 & 11425913 \\
 & 537169541 & 205543262 & 78649345 & 79460245 & 11754074 \\
 & 576298801 & 238710779 & 98877242 & 139833536 & 33929305 \\
 & 647072098 & 247223313 & 94455265 & 94597841 & 13829605 \\
 & 780291637 & 298046521 & 113844266 & 113847926 & 16610905 \\
 & 941038565 & 359444748 & 137295677 & 137295679 & 20031170 \\
 & 981277621 & 379368596 & 146666477 & 156828167 & 25064338 \\
 & 1134903170 & 433494437 & 165580141 & 165580141 & 24157817 \\
90 & 1311738121 & 543339720 & 225058681 & 318281039 & 77227930 \\
 & 1405695061 & 537877013 & 205813970 & 207935978 & 30758713 \\
 & 1475706146 & 611256455 & 253190281 & 358063219 & 86879945 \\
 & 1873012681 & 774290566 & 320086397 & 449859017 & 108046858 \\
 & 2151239746 & 831684075 & 321534781 & 343812479 & 54948325 \\
 & 2561077037 & 990084419 & 382755826 & 409176220 & 65372957 \\
 & 2971215073 & 1134903170 & 433494437 & 433494437 & 63245986 \\
 & 3057250481 & 1169831520 & 447626321 & 452244079 & 66898250 \\
 & 3778847945 & 1565159188 & 648272521 & 916629619 & 222345506 \\
 & 4434764269 & 1693992981 & 647072098 & 647214674 & 94455265 \\
100 & 4801489937 & 1984902931 & 820545226 & 1153218856 & 276979385 \\
 & 5348189873 & 2042828390 & 780291637 & 780295297 & 113844266 \\
 & 6449974274 & 2463670947 & 941038565 & 941038567 & 137295677 \\
 & 6684339842 & 2584092721 & 998982001 & 1067938321 & 170621525 \\
 & 7645370045 & 3166815962 & 1311738121 & 1855077841 & 450117362 \\
 & 7778742049 & 2971215073 & 1134903170 & 1134903170 & 165580141 \\
 & 9629807441 & 3679208170 & 1405695061 & 1407817069 & 205813970 \\
 & 9676815637 & 4008035521 & 1660086266 & 2347290926 & 569378905 \\
 & 10910721905 & 4510417602 & 1864575701 & 2620530901 & 629397602 \\
 & 14001740009 & 5348193067 & 2042829610 & 2042839192 & 298048097 \\
110 & 14653451665 & 5665123983 & 2190175426 & 2341920284 & 374286601 \\
 & 17445941365 & 6744410242 & 2607315281 & 2787289361 & 445317442 \\
 & 19577194573 & 8109139505 & 3358915562 & 4750223942 & 1152597601 \\
 & 20365011074 & 7778742049 & 2971215073 & 2971215073 & 433494437 \\
 & 24793343170 & 10249486187 & 4237103741 & 5955115391 & 1430359717 \\
 & 25209506681 & 9631659149 & 3679915642 & 3685470766 & 538792565 \\
 & 30395743789 & 11610578034 & 4435013113 & 4435990313 & 647393602 \\
 & 32124537073 & 12419566880 & 4801489937 & 5134163567 & 820545226 \\
 & 44208781349 & 16886251875 & 6449974274 & 6449974276 & 941038565 \\
 & 44560482149 & 18457556052 & 7645370045 & 10812186007 & 2623476242 \\
120 & 50130543874 & 20764750117 & 8601040685 & 12163706477 & 2951409337 \\
 & 53316291173 & 20365011074 & 7778742049 & 7778742049 & 1134903170 \\
 & 56399710225 & 23360168307 & 9675536650 & 13680794696 & 3318530233 \\
 & 63557570729 & 26274489812 & 10861787305 & 15265898707 & 3666717602 \\
 & 65995009186 & 25214353799 & 9633510857 & 9648052211 & 1410484105 \\
 & 71700814274 & 29640623697 & 12253230065 & 17221056817 & 4136142677 \\
 & 83842154089 & 32413953163 & 12531457130 & 13399705400 & 2141549281 \\
 & 95969331289 & 36657026960 & 14001740009 & 14001749591 & 2042829610 \\
 & 99816291793 & 38587888548 & 14917656385 & 15947373851 & 2547867970 \\
 & 119154326114 & 45593413853 & 17445941365 & 17625915445 & 2607315281 \\
130 & 128367472469 & 53171457184 & 22024301053 & 31146899083 & 7557438842 \\
 & 139583862445 & 53316291173 & 20365011074 & 20365011074 & 2971215073 \\
 & 143367113573 & 54858181515 & 20991006962 & 21207430972 & 3137087141 \\
 & 162930183509 & 67354220371 & 27843772738 & 39132477604 & 9398815925 \\
 & 172765826641 & 66007698628 & 25219201985 & 25257269243 & 3692452738 \\
 & 208333239010 & 79576653333 & 30395743789 & 30396720989 & 4435013113 \\
 & 251250963713 & 95969339651 & 36657030154 & 36657055240 & 5348197193 \\
 & 259717522849 & 107578520350 & 44560482149 & 63018038201 & 15290740090 \\
 & 292182217634 & 121025831139 & 50130538133 & 70895275783 & 17202073997 \\
 & 303011495165 & 115740092172 & 44208781349 & 44208781351 & 6449974274 \\
140 & 311809494089 & 119311314754 & 45653484253 & 46124450173 & 6822963842 \\
 & 328716329765 & 136150856363 & 56392256818 & 79736239324 & 19341502949 \\
 & 365435296162 & 139583862445 & 53316291173 & 53316291173 & 7778742049 \\
 & 370238963953 & 153055564992 & 63272665105 & 88927731023 & 21359559946 \\
 & 417673428514 & 172663323955 & 71377831109 & 100316543351 & 24093964765 \\
 & 479716816349 & 185461819125 & 71700814274 & 76668641026 & 12253230065 \\
 & 571101889373 & 220791871034 & 85359637609 & 91273723729 & 14587401650 \\
 & 665048316673 & 275472032399 & 114104251874 & 161367780524 & 39154389145 \\
 & 679944086914 & 262858951995 & 101618397709 & 108632769071 & 17355954325 \\
 & 815959972309 & 312220310365 & 119468510114 & 120700958786 & 17854701145 \\
150 & 841771717954 & 348653014865 & 144408421169 & 204187326641 & 49529419285 \\
 & 956722026041 & 365435296162 & 139583862445 & 139583862445 & 20365011074 \\
 & 982145940029 & 375243159204 & 143367113573 & 143583537583 & 20991006962 \\
 & 1070710724173 & 442624452524 & 182977905749 & 257162633399 & 61765160770 \\
 & 1184065449986 & 452289781295 & 172765826641 & 172803893899 & 25219201985 \\
 & 1427933269321 & 545424970563 & 208334944570 & 208341642368 & 30397947073 \\
 & 1490542435045 & 576227430677 & 222763233074 & 238139856986 & 38046948649 \\
 & 1513744654945 & 627013566048 & 259717522849 & 367296043199 & 89120964298 \\
 & 1722099665665 & 657783551488 & 251250963713 & 251250988799 & 36657030154 \\
 & 1915868454737 & 793532621205 & 328672889498 & 464729408878 & 112728732989 \\
160 & 2076871684802 & 793294393323 & 303011495165 & 303011495167 & 44208781349 \\
 & 2135248790338 & 817034414741 & 312631126589 & 315854453885 & 46722441193 \\
 & 2156735837173 & 891587472514 & 368579316689 & 518026580369 & 124424852290 \\
 & 2504730781961 & 956722026041 & 365435296162 & 365435296162 & 53316291173 \\
 & 3099893879221 & 1184098670126 & 452302470737 & 452402131157 & 66024095098 \\
 & 3267653834825 & 1263297435618 & 488399472989 & 522238472029 & 83464478018 \\
 & 4360711162037 & 1806262617670 & 748177194673 & 1058076690973 & 256730208074 \\
 & 4508515437145 & 1722099687557 & 657783559850 & 657783625526 & 95969350453 \\
 & 4643961467965 & 1776973308747 & 679944086914 & 686958458276 & 101618397709 \\
 & 5528778008357 & 2285578567387 & 944850630610 & 1327957693804 & 318962279525 \\
170 & 5587637513705 & 2138060991308 & 818110478953 & 826545460219 & 122265876434 \\
 & 6557470319842 & 2504730781961 & 956722026041 & 956722026041 & 139583862445 \\
 & 7163627708162 & 2769507719995 & 1070710724173 & 1144895451823 & 182977905749 \\
 & 8115549747397 & 3099980848801 & 1184131890266 & 1184392799006 & 172851666985 \\
 & 8822750406821 & 3654502875938 & 1513744654945 & 2140758220993 & 519435045698 \\
 & 9787184545081 & 3738374837400 & 1427933269321 & 1427939967119 & 208334944570 \\
 & 9925594216162 & 4111315732857 & 1702962733225 & 2408352982409 & 584364418045 \\
 & 10153507819457 & 3925235141920 & 1517453001793 & 1622197606303 & 259173984074 \\
 & 14173019355266 & 5859028992773 & 2422082400205 & 3404067623053 & 817587000473 \\
 & 14235090298445 & 5437320661083 & 2076871684802 & 2076871684804 & 303011495165 \\
180 & 14622039889385 & 5594996637517 & 2140876896154 & 2162950023166 & 319952129669 \\
 & 15988960048321 & 6609726164163 & 2732415356170 & 3840218444168 & 922341268873 \\
 & 17167680177565 & 6557470319842 & 2504730781961 & 2504730781961 & 365435296162 \\
 & 18696424380481 & 7228166198398 & 2794458744005 & 2988074214713 & 477555886138 \\
 & 21246581423810 & 8115777435463 & 3100067821177 & 3100750882579 & 452527201625 \\
 & 22592065572301 & 9357939962069 & 3876185648162 & 5481754313906 & 1330096633345 \\
 & 26500373448281 & 10244754717563 & 3960510195370 & 4233890704408 & 676436901233 \\
 & 30901824368813 & 11803446623878 & 4508515437145 & 4508515502821 & 657783559850 \\
 & 31801503090601 & 12168581182878 & 4656206581885 & 4704240458033 & 695875230298 \\
 & 36332699889481 & 15019688359352 & 6209035912505 & 8726365188575 & 2095892945914 \\
190 & 44945570212853 & 17167680177565 & 6557470319842 & 6557470319842 & 956722026041 \\
 & 46127828641049 & 17623808310363 & 6733432474730 & 6743596290040 & 985871051441 \\
 & 48795987025021 & 18864864025739 & 7293286116482 & 7798605052196 & 1246377918925 \\
 & 51422757785981 & 21300003689580 & 8822750406821 & 12477253282759 & 3027489309890 \\
 & 57850602535042 & 23962504125979 & 9925594182701 & 14036909842895 & 3405925423477 \\
 & 65082055350517 & 26957823917211 & 11166277193266 & 15791416401116 & 3831606586645 \\
 & 67082333460610 & 25623191867697 & 9787196235121 & 9787242142481 & 1427948370517 \\
 & 73224462646361 & 30328804257734 & 12561872555437 & 17761950126841 & 4308490098890 \\
 & 80902026460669 & 30901824426127 & 11803446645770 & 11803446817712 & 1722099715837 \\
 & 82561235448866 & 34130542039071 & 14109453347537 & 19830390668347 & 4763063342273 \\
200 & 93139301545921 & 38503146951361 & 15916936250882 & 22370139308162 & 5372846095693 \\
 & 97568760404309 & 37267950234252 & 14235090298445 & 14235090298447 & 2076871684802 \\
 & 100169256075517 & 38271123030638 & 14622039889385 & 14644113016397 & 2140876896154 \\
 & 106974698806081 & 41357153980800 & 15988960048321 & 17096763136319 & 2732415356170 \\
 & 117669030460994 & 44945570212853 & 17167680177565 & 17167680177565 & 2504730781961 \\
 & 127353146734466 & 49235591884415 & 19034814375361 & 20353628918779 & 3252924806225 \\
 & 145624636022689 & 55623966835967 & 21246581423810 & 21247264485212 & 3100067821177 \\
 & 148135740183017 & 61359829568746 & 25416072313501 & 35943748523221 & 8721413592050 \\
 & 151620880341401 & 58614975076521 & 22659908684642 & 24224044888162 & 3870208044053 \\
 & 180995342924521 & 69256365627280 & 26500373448281 & 26773753957319 & 3960510195370 \\
210 & 187611224490881 & 77557860622531 & 32062163448202 & 45062357376712 & 10823531789513 \\
 & 217774440785026 & 83329499390175 & 31885309605601 & 32214057385499 & 4765230894385 \\
 & 238763690000642 & 98703268009921 & 40803252436801 & 57346114029121 & 13773353872325 \\
 & 262229286072101 & 100188517542107 & 38278482154450 & 38336266554220 & 5604520209509 \\
 & 299713796309065 & 124145519261542 & 51422757785981 & 72722761475561 & 17645500813642 \\
 & 308061521170129 & 117669030460994 & 44945570212853 & 44945570212853 & 6557470319842 \\
 & 316141040381993 & 120759677612784 & 46127828641049 & 46137992456359 & 6733432474730 \\
 & 332380318337465 & 128500489573192 & 49679162421961 & 53121150382111 & 8489842695962 \\
 & 379325837704445 & 157121637984713 & 65081802158266 & 92039076249694 & 22332229220561 \\
 & 381249713544034 & 145625232114511 & 55624194524033 & 55625982799499 & 8116071573265 \\
220 & 426776599819081 & 176766390491736 & 73214784551737 & 103522571656127 & 25111317834298 \\
 & 459789046174429 & 175623808514133 & 67082333460610 & 67082379367970 & 9787196235121 \\
 & 473638416248177 & 181233808565389 & 69347612525386 & 70063009447990 & 10364077584317 \\
 & 542557456311485 & 224289522541113 & 92719746704282 & 130311111311854 & 31298041403345 \\
 & 554510738235029 & 211804254955880 & 80902026460669 & 80902026632611 & 11803446645770 \\
 & 668746232531714 & 255438330978675 & 97568760404309 & 97568760404311 & 14235090298445 \\
 & 686480075504546 & 262279709932717 & 100207782712165 & 100359054293605 & 14671860317729 \\
 & 767465181141553 & 317894486677955 & 131676207785642 & 186218278892312 & 45184131144601 \\
 & 806515533049393 & 308061521170129 & 117669030460994 & 117669030460994 & 17167680177565 \\
 & 867493136119153 & 335363374569595 & 129647818893242 & 138596987589632 & 22143258740761 \\
230 & 971341527703714 & 402319473791747 & 166636506702965 & 235616893671527 & 57153245279917 \\
 & 998123312425769 & 381251274127774 & 145625828206333 & 145630509957553 & 21248121516122 \\
 & 1032834620396045 & 399282575032962 & 154358279221721 & 165013104702841 & 26363683193762 \\
 & 1451727959691893 & 554510738385079 & 211804255013194 & 211804255463344 & 30901824500165 \\
 & 1491304701603697 & 570634798845219 & 218348452400746 & 220599694931960 & 32631980139097 \\
 & 1597456854383053 & 617587801992005 & 238763690000642 & 255306551592962 & 40803252436801 \\
 & 1746860020068409 & 723573111879672 & 299713796309065 & 423859315570607 & 102845515571962 \\
 & 1797110092720441 & 686612067374626 & 262330133793197 & 262726109403437 & 38408892611458 \\
 & 1965217522626434 & 814019750849633 & 337178020826785 & 476841729922465 & 115701204969701 \\
 & 2111485077978050 & 806515533049393 & 308061521170129 & 308061521170129 & 44945570212853 \\
240 & 2210871958107149 & 915771584495037 & 379324361998130 & 536442795377962 & 130161709120097 \\
 & 2264132942340130 & 875289074233103 & 338377199123297 & 361734280359179 & 57793289051185 \\
 & 2487396418774357 & 1030253502313948 & 426720192655565 & 603364088167487 & 146357138790754 \\
 & 2613117102565793 & 998127398083769 & 381252834718834 & 381265091685514 & 55628226532685 \\
 & 2801598191304130 & 1158171440067873 & 478784248488001 & 672916128899489 & 161627787288901 \\
 & 3151440817820989 & 1203743419591266 & 459789126299113 & 459789440952809 & 67082436965410 \\
 & 3160519691308253 & 1306537163284433 & 540113502136330 & 759091798545046 & 182318230828625 \\
 & 3243440684655313 & 1241075655154880 & 474887297648177 & 479786280809327 & 70972432559626 \\
 & 3565466389961309 & 1473939293984652 & 609316371195845 & 856351491992647 & 205677966816002 \\
 & 4169218794693154 & 1611848648289613 & 623151768456005 & 666327150175685 & 106492821059521 \\
250 & 4583654867317685 & 1750800366616467 & 668746232531714 & 668746232531716 & 97568760404309 \\
 & 4704586218916261 & 1797455638914941 & 686744088327362 & 687780697828562 & 100549180372873 \\
 & 4963446454828093 & 1918902125751537 & 741860600638090 & 793259922426518 & 126779106061681 \\
 & 5032254317345729 & 2084427882680008 & 863398255353985 & 1221029330694295 & 296271319451546 \\
 & 5527939700884757 & 2111485077978050 & 806515533049393 & 806515533049393 & 117669030460994 \\
 & 5909324059011749 & 2284480090460732 & 883155032892925 & 944116212370447 & 150838812283682 \\
 & 6369686478515453 & 2638257317407993 & 1092738503901850 & 1545085473708526 & 374789109184145 \\
 & 7054174412588354 & 2699221286155173 & 1032834620396045 & 1043489445877165 & 154358279221721 \\
 & 8487613404424009 & 3247711593052213 & 1242708649541930 & 1255521374732630 & 185721691988101 \\
 & 9950291461929757 & 3800673140690600 & 1451727959691893 & 1451727960142043 & 211804255013194 \\
260 & 10181446324101389 & 4217293152016490 & 1746860020068409 & 2470433131948081 & 599427592618130 \\
 & 10216281973762705 & 3903279305965872 & 1491304701603697 & 1493555944134911 & 218348452400746 \\
 & 10881286283486669 & 4206780083103874 & 1626370100606233 & 1739053965824953 & 277936690319522 \\
 & 11454127114764514 & 4744454795874015 & 1965217522431409 & 2779237272857531 & 674356041403105 \\
 & 12315957481333442 & 4705490811168293 & 1797801251550925 & 1800514952171437 & 263223878282009 \\
 & 12885900008113189 & 5337505423958395 & 2210863357068434 & 3126616263761996 & 758637677978725 \\
 & 14472334024676221 & 5527939700884757 & 2111485077978050 & 2111485077978050 & 308061521170129 \\
 & 14497376284172417 & 6004661215090278 & 2487067701165605 & 3516607361098417 & 853018304121194 \\
 & 14851330019499401 & 5672916816810603 & 2166942971989610 & 2167420430932408 & 316315866542273 \\
 & 15423171693415289 & 5962429596483661 & 2305010110741498 & 2464117096035694 & 393685111186805 \\
270 & 16320019794869474 & 6746642286687683 & 2789039640676885 & 3919907065193575 & 941522840835341 \\
 & 17910542369308357 & 6841223909613610 & 2613117102565793 & 2613129359532473 & 381252834718834 \\
 & 18410741231768629 & 7610874151194525 & 3146283173290594 & 4421881221814946 & 1062044884216825 \\
 & 20769646571311849 & 8586029106829510 & 3549405406115549 & 4988440749176681 & 1198120585374730 \\
 & 21600295969140418 & 8250579033871701 & 3151440817820989 & 3151441132474685 & 459789126299113 \\
 & 23854878116939714 & 9222459875899275 & 3565466389961309 & 3812501510758111 & 609316371195845 \\
 & 26050201244948621 & 9950291462322593 & 3800673140840650 & 3800673142019158 & 554510738578913 \\
 & 26071224093240493 & 10799054607088409 & 4473114879063674 & 6325939728024734 & 1534930362283105 \\
 & 28399179102482729 & 10979317867599065 & 4244679763551394 & 4538774500314466 & 725389768850885 \\
 & 31416837838692077 & 12000164235336588 & 4583654867317685 & 4583654867317687 & 668746232531714 \\
280 & 32996473695239650 & 13667563554568793 & 5661280512684809 & 8006216968466729 & 1942616981990173 \\
 & 33809973988413085 & 13070566784642598 & 5052938405996713 & 5401726365514709 & 863018934527914 \\
 & 37889062373143906 & 14472334024676221 & 5527939700884757 & 5527939700884757 & 806515533049393 \\
 & 40254049821997474 & 15561775671567955 & 6016012379451749 & 6431277192706391 & 1027507207655485 \\
 & 41836262445484585 & 17294972731611518 & 7149684610974845 & 10048655749349969 & 2413587554589082 \\
 & 46890399112653461 & 17910553065691297 & 6841227995271610 & 6841260084420430 & 998132675950169 \\
 & 48306462250737473 & 18484066814863809 & 7072774740632834 & 7145738193853954 & 1057034027250125 \\
 & 53243232159418993 & 22010386141759859 & 9098942315500874 & 12787926265860584 & 3071396148367705 \\
 & 58122604880389733 & 22240110226042298 & 8509985123419585 & 8597725797737161 & 1271809634912210 \\
 & 59341817924539925 & 24580185800219268 & 10181446324101389 & 14398739476117879 & 3493720040136818 \\
290 & 70014236857598818 & 26749958796218487 & 10220211312947065 & 10235639531056643 & 1496385896811481 \\
 & 74119312578961858 & 28655029879238381 & 11078229260495189 & 11845777058753285 & 1893196647988273 \\
 & 84408473716542145 & 32242381632832033 & 12315957481333442 & 12318671181953954 & 1797801251550925 \\
 & 84495546415824461 & 34997169175011122 & 14495460438077785 & 20495961109208905 & 4971675308457794 \\
 & 99194853094755497 & 37889062373143906 & 14472334024676221 & 14472334024676221 & 2111485077978050 \\
 & 101791412246620601 & 38881073241687600 & 14851330019499401 & 14851807478442199 & 2166942971989610 \\
 & 107246981290506205 & 44335547834263417 & 18328169036663258 & 25759662212284046 & 6187215614895025 \\
 & 120987879709637005 & 50015557712772738 & 20676087714928729 & 29058793428681209 & 6979322867359138 \\
 & 122760633575886146 & 46890427116134699 & 17910563762074237 & 17910647772517951 & 2613144737746085 \\
 & 148050629787671833 & 56550309487554483 & 21600296518323130 & 21600298674991616 & 3151441527253105 \\
300 & 152098457523687709 & 58199154485574821 & 22269401267979938 & 22499005933036754 & 3328142022058225 \\
\end{longtable}
}

\newpage

\section{Some numerical experimentation}\label{experiments}

For funsies we have done some low-effort numerical investigation of the uniqueness conjecture
and the growth of Markoff numbers. The highest published numerically confirmed bound that we
are aware of is uniqueness of Markoff numbers up to $10^{140}$ \cite{Baragar96}. We have checked
uniqueness for all $215596025$ Markoff numbers up to $10^{15000}$. The computation was done by
a straightforward implementation of the mutation algorithm in Python and took about 29 hours on a laptop.
Following an argument of Button \cite [p. 84]{Button01}, we further conclude that the uniqueness
conjecture is true for all Markoff numbers of the form $N p^n$, where $p$ is a prime number,
$n \geq 0$, and $N \leq 10^{3750}$.

Zagier estimated the number $M(n)$ of Markoff numbers $\leq n$ with
$C \cdot (\log n)^2 +$ $O(\log n (\log \log n)^2)$ with
$C \approx 0.180717104711507$ (\cite{Zagier82}, see also \cite{BaragarUmeda}).
Figure \ref{zagierdev1} shows the deviation of $M(n)$ from $C \log(n)^2$. In the chosen scale
the deviation appears to follow a linear growth with slope close to $1/log(3) \approx 0.910239$
(only every 100-th value is taken into account).
The gray line represents $-2.038389 + 0.914755 \log_{10} n$ obtained from quadratic regression.

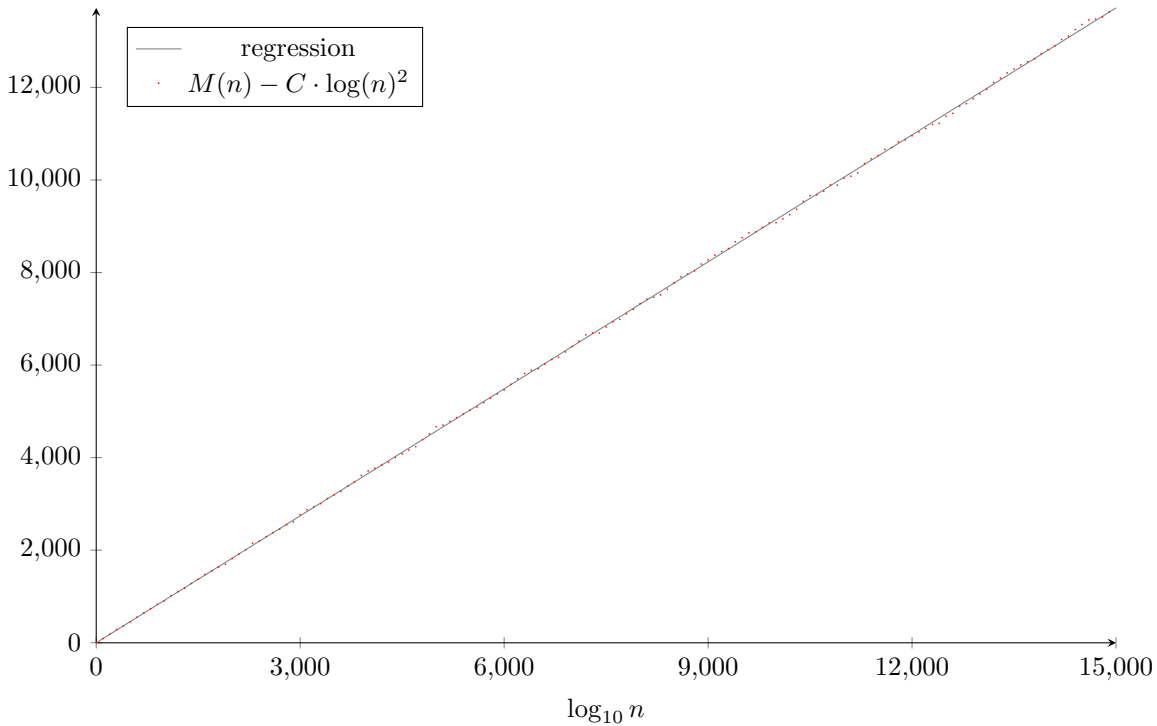
\begin{figure}
\begin{tikzpicture}
\begin{axis}[
axis lines = left,
xlabel = $\log_{10} n$,
scaled ticks=false,
xtick distance=3000,
legend pos = north west,
width=15cm,
height=10cm
]

\addplot [
domain=0:15000, 
samples=120, 
color=gray
]
{0.9147551564680976 * x - 2.038389099852793};

\addplot[mark=none, mark size=0.1, only marks, color=red] table[x = n, y = diff] {
n diff
0 1.0
100 88.56323998934204
200 186.25295995736815
300 285.0691599040583
400 362.0118398294726
500 448.0809997334436
600 552.2766396162333
700 646.598759477667
800 735.0473593178904
900 827.6224391363794
1000 903.3239989337744
1100 1018.1520387104247
1200 1103.1065584649332
1300 1176.1875581985805
1400 1279.3950379106682
1500 1374.7289976011962
1600 1475.1894372715615
1700 1553.7763569196686
1800 1631.4897565455176
1900 1694.3296361514367
2000 1817.2959957350977
2100 1914.3888352988288
2200 2011.6081548416987
2300 2147.953954361379
2400 2196.4262338597327
2500 2296.024993338622
2600 2377.750232794322
2700 2456.601952230558
2800 2550.5801516426727
2900 2611.684831033461
3000 2767.9159904047847
3100 2871.273629758507
3200 2939.757749086246
3300 3004.3683483917266
3400 3116.1054276786745
3500 3194.9689869415015
3600 3270.9590261820704
3700 3386.075545405969
3800 3472.318544605747
3900 3614.688023785129
4000 3710.1839829403907
4100 3770.8064220827073
4200 3840.555341195315
4300 3903.43074028939
4400 4004.432619366795
4500 4080.5609784163535
4600 4162.815817445517
4700 4237.197136454284
4800 4387.704935438931
4900 4515.339214403182
5000 4668.099973354489
5100 4701.987212274224
5200 4785.000931177288
5300 4864.1411300599575
5400 4946.407808922231
5500 5024.800967756659
5600 5092.320606570691
5700 5189.966725364327
5800 5283.739324133843
5900 5372.638402897865
6000 5461.663961619139
6100 5584.8160003349185
6200 5704.094519034028
6300 5817.499517686665
6400 5889.030996344984
6500 5922.68895496428
6600 6021.473393566906
6700 6118.384312145412
6800 6168.421710714698
6900 6283.585589244962
7000 6400.875947766006
7100 6518.29278627038
7200 6653.8361047282815
7300 6694.505903199315
7400 6690.302181623876
7500 6823.2249400392175
7600 6934.2741784229875
7700 6989.449896812439
7800 7105.752095140517
7900 7203.180773489177
8000 7327.735931761563
8100 7425.41757003963
8200 7465.225688330829
8300 7518.160286538303
8400 7635.2213647812605
8500 7777.408922985196
8600 7907.72296115756
8700 7969.163479298353
8800 8033.730477467179
8900 8183.423955544829
9000 8273.243913665414
9100 8375.190351739526
9200 8453.263269782066
9300 8518.462667807937
9400 8665.788545817137
9500 8752.240903794765
9600 8857.819741755724
9700 8879.525059714913
9800 8976.356857612729
9900 9071.315135538578
10000 9078.399893417954
10100 9160.61113126576
10200 9249.948849096894
10300 9370.413046911359
10400 9539.003724709153
10500 9661.720882475376
10600 9680.56452023983
10700 9755.53463794291
10800 9893.631235688925
10900 9887.854313358665
11000 10038.203871026635
11100 10081.679908663034
11200 10150.282426282763
11300 10354.011423930526
11400 10458.86690145731
11500 10527.848859012127
11600 10660.957296535373
11700 10699.192214086652
11800 10822.553611591458
11900 10869.041489064693
12000 10954.655846476555
12100 11035.39668393135
12200 11112.264001339674
12300 11199.257798731327
12400 11223.378076076508
12500 11379.62483343482
12600 11438.99807074666
12700 11593.497788071632
12800 11649.123985379934
12900 11755.876662611961
13000 11857.75581985712
13100 11961.761457055807
13200 12107.893574267626
13300 12203.152171432972
13400 12313.537248581648
13500 12398.048805713654
13600 12485.686842799187
13700 12556.451359957457
13800 12619.34235700965
13900 12732.359834045172
14000 12815.503791064024
14100 12898.774228066206
14200 13036.171145051718
14300 13112.69454202056
14400 13252.344419002533
14500 13359.12077587843
14600 13457.02361279726
14700 13479.052929669619
14800 13522.208726495504
14900 13629.491003334522
15000 13697.89976015687
};
\legend{regression, $M(n) - C \cdot \log(n)^2$}
\end{axis}
\end{tikzpicture}
\caption{Deviation of $M(n)$ from Zagier's estimate.}\label{zagierdev1}
\end{figure}

Zagier also conjectured an improved formula, $M(n) = C \cdot (\log 3n)^2 + o(\log n)$.
Figure \ref{zagierdev2} shows the differences $M(n) - C \cdot (\log 3n)^2$
for every $n = 10^k$, $0 \leq k \leq 15000$.

The counts are attached to this document here:
\attachfile[color=0 0 1, icon=Paperclip, subject=Counts of Markoff numbers]{counts1.txt}

\begin{figure}
\includegraphics{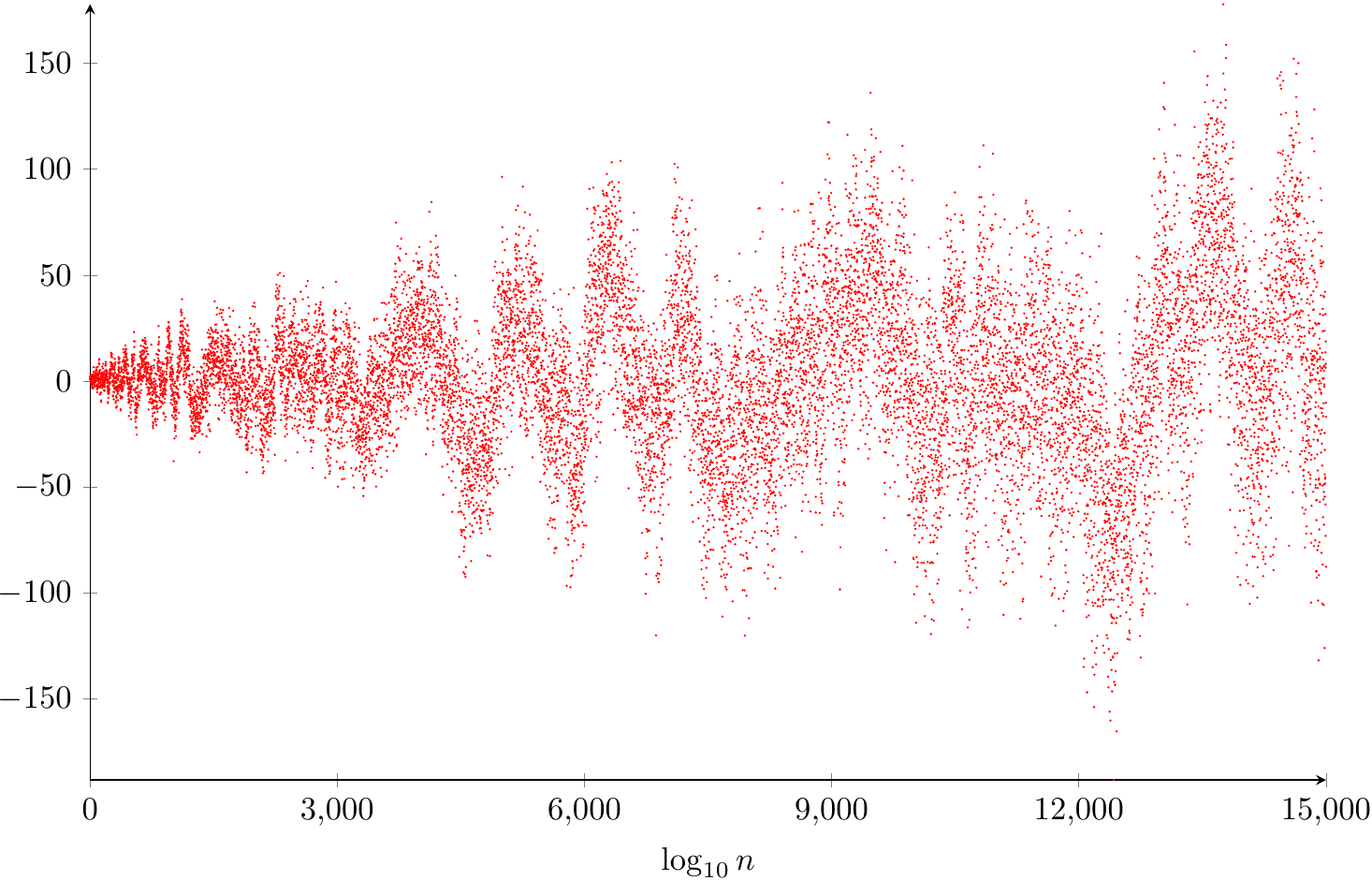}
\caption{Deviation of $M(n)$ from $C \cdot (\log 3n)^2$.}\label{zagierdev2}
\end{figure}


\begin{thebibliography}{GKP94}
	
\bibitem[Aig13]{Aigner13}
M.~Aigner.
\newblock {\em {Markov's theorem and 100 years of the uniqueness conjecture}}.
\newblock Springer, 2013.

\bibitem[Bae01]{Baek01}
S.~I. Baek.
\newblock {Weak local dimension on deranged Cantor sets}.
\newblock {\em Real Anal. Exch.}, 26(2):553--558, 2001.

\bibitem[Bae04]{Baek04}
S.~I. Baek.
\newblock {Spectra of deranged Cantor set by weak local dimensions}.
\newblock {\em J. Math. Kyoto Univ.}, 44(3):493--500, 2004.

\bibitem[Bar96]{Baragar96}
A.~Baragar.
\newblock {On the unicity conjecture for Markoff numbers}.
\newblock {\em {Canad. Math. Bull.}}, 39(1):3--9, 1996.

\bibitem[Bow96]{Bowditch96}
B.~H. Bowditch.
\newblock {A proof of McShane's identity via Markoff triples}.
\newblock {\em Bull. London Math. Soc.}, 28(1):73--78, 1996.

\bibitem[BU04]{BaragarUmeda}
A.~Baragar and K.~Umeda.
\newblock {The asymptotic growth of integer solutions to the Rosenberger
	equations}.
\newblock {\em Bull. Austral. Math. Fr.}, 69(3):481--497, 2004.

\bibitem[But01]{Button01}
J.~O. Button.
\newblock {Markoff Numbers, Principal Ideals and Continued Fraction
	Expansions}.
\newblock {\em J. Number Theory}, 87(1):77--95, 2001.

\bibitem[Cas57]{Cassels57}
J.~W.~S. Cassels.
\newblock {\em {An introduction to Diophantine approximation}}.
\newblock Number~45 in {Cambridge Tracts in Mathematics and Mathematical
	Physics}. {Cambridge University Press}, 1957.

\bibitem[CF89]{CusickFlahive}
T.~W. Cusick and M.~E. Flahive.
\newblock {\em {The Markoff and Lagrange spectra}}.
\newblock Number~30 in {Mathematical Surveys and Monographs}. {American
	Mathematical Society}, 1989.

\bibitem[Coh55]{Cohn55}
H.~Cohn.
\newblock {Approach to Markoff's minimal forms through modular functions}.
\newblock {\em {Annals of Mathematics}}, 61(1):1--12, 1955.

\bibitem[DL85]{DrezetLePotier}
J.~M. Drezet and J.~{Le Potier}.
\newblock {Fibr\'es stables et fibr\'es exceptionnels sur $\mathbb{P}_2$}.
\newblock {\em {Ann. scient. \'Ec. Norm. Sup. $4^e$ s\'erie}}, 18:193--244, 1985.

\bibitem[Fal97]{Falconer97}
K.~Falconer.
\newblock {\em {Techniques in fractal geometry.}}
\newblock John Wiley \& Sons, 1997.
	
\bibitem[Fro68]{Frobenius}
G.~Frobenius.
\newblock {\"Uber die Markoffschen Zahlen}.
\newblock In {\em {Gesammelte Abhandlungen, Band III}}, pages 598--627.
Springer, 1968.

\bibitem[GKP94]{GrahamKnuthPatashnik}
R.~Graham, D.~Knuth, and O.~Patashnik.
\newblock {\em Concrete mathematics}.
\newblock Addison-Wesley, 2nd edition, 1994.

\bibitem[Hac13]{Hacking13}
P.~Hacking.
\newblock {Exceptional bundles associated to degenerations of surfaces}.
\newblock {\em Duke Math. J.}, 162(6):1171--1202, 2013.

\bibitem[HP08]{HillePerling08}
L.~Hille and M.~Perling.
\newblock {Exceptional sequences of invertible sheaves on rational surfaces}.
\newblock {\em arXiv:0810.1936}, 2008.

\bibitem[HP10]{HackingProkhorov10}
P.~Hacking and Y.~Prokhorov.
\newblock {Smoothable del Pezzo surfaces with quotient singularities}.
\newblock {\em Compos. Math.}, 146(1):169--192, 2010.

\bibitem[JS95]{juergensenstaiger95}
H.~J\"urgensen and L.~Staiger.
\newblock {Local Hausdorff dimension}.
\newblock {\em Acta Informatica}, 32:491--507, 1995.

\bibitem[Kos14]{koshy14}
T.~Koshy.
\newblock {\em {Pell and Pell-Lucas numbers with applications}}.
\newblock {Springer}, 2014.

\bibitem[KS88]{KollarShepherdbarron88}
J.~Koll\'ar and N.~{Shepherd-Barron}.
\newblock {Threefolds and deformations of surface singularities}.
\newblock {\em Invent. Math.}, 91(2):299--338, 1988.

\bibitem[LT07]{LangTan07}
M.~L. Lang and S.~P. Tan.
\newblock {A simple proof of the Markoff conjecture for prime powers}.
\newblock {\em Geom. Dedicata}, 129:15--22, 2007.
	
\bibitem[Per54]{Perron1}
O.~Perron.
\newblock {\em Die Lehre von den Kettenbr\"uchen, Bd. I}.
\newblock Teubner, 3rd edition, 1954.

\bibitem[Per18]{Perling18}
M.~Perling.
\newblock {Combinatorial aspects of exceptional sequences on (rational)
	surfaces}.
\newblock {\em Math. Z.}, 288(1--2):243--286, 2018.

\bibitem[Reu19]{Reutenauer19}
C.~Reutenauer.
\newblock {\em {From Christoffel words to Markoff numbers}}.
\newblock {Oxford University Press}, 2019.

\bibitem[Rud89]{Rudakov89a}
A.~N. Rudakov.
\newblock {Markov numbers and exceptional bundles on $\mathbb{P}^2$}.
\newblock {\em Math. USSR Izvestiya}, 32(1):99--112, 1989.

\bibitem[Slo]{OEIS}
N.~J.~A Sloane.
\newblock {The Online Encyclopedia of Integer Sequences}.
\newblock Published electronically at {\tt http://oeis.org}.

\bibitem[Wah81]{Wahl81}
J.~Wahl.
\newblock {Smoothings of normal surface singularities}.
\newblock {\em Topology}, 20(3):219--246, 1981.

\bibitem[Zag82]{Zagier82}
D.~Zagier.
\newblock {On the number of Markoff numbers below a given bound}.
\newblock {\em Math. Comp.}, 39(160):709--723, 1982.

\end{thebibliography}

\end{document}